\numberwithin{equation}{section}
\newtheorem{cor}[equation]{Corollary}
\newtheorem{lemma}[equation]{Lemma}
\newtheorem{prop}[equation]{Proposition}
\newtheorem{theorem}[equation]{Theorem}
\theoremstyle{definition}
\newtheorem{rem}[equation]{Remark}
\def\IN{\mathbb N}
\def\IR{\mathbb R}
\def\IZ{\mathbb Z}
\def\eps{\varepsilon}
\def\IRP{\mathbb RP}
\newcommand{\mult}{\operatorname{mult}}
\newcommand{\supp}{\operatorname{supp}}
\newcommand{\area}{\operatorname{area}}
\newcommand{\Sph}{\mathbb{S}}
\newcommand{\real}{\operatorname{Re}}
\begin{document}

\title{Handle attachment and the normalized first eigenvalue}
\author{Henrik Matthiesen}
\address
{HM: Department of Mathematics, University of Chicago,
5734 S. University Ave, Chicago, Illinois 60637}
\email{hmatthiesen@math.uchicago.edu}
\author{Anna Siffert}
\address
{AS: Max Planck Institute for Mathematics,
Vivatsgasse 7, 53111 Bonn}
\email{siffert@mpim-bonn.mpg.de}
\date{\today}
\subjclass[2010]{35P15, 49Q05, 49Q10, 58E11}
\keywords{Laplace operator, topological spectrum, sharp eigenvalue bound, minimal surface, shape optimization}

%%%%%%%%%%%%%%%%%%%%%%%%%%%%%%%%%%%%%%%%%%%%%%%%%%%%%
\begin{abstract}
We show that the first eigenvalue of a closed Riemannian surface normalized by the area can be strictly increased
by attaching a cylinder or a cross cap.
As a consequence we obtain the existence of maximizing metrics for the normalized first eigenvalue on any closed surface
of fixed topological type.
Since these metrics are induced by (possibly branched) minimal immersions into spheres, we find new examples of immersed minimal surfaces
in spheres.
\end{abstract}
%%%%%%%%%%%%%%%%%%%%%%%%%%%%%%%%%%%%%%%%%%%%%%%%%%%%%

\maketitle

%%%%%%%%%%%%%%%%%%%%%%%%%%%%%%%%%%%%%%%%%%%%%%%%%%%
\tableofcontents
%%%%%%%%%%%%%%%%%%%%%%%%%%%%%%%%%%%%%%%%%%%%%%%%%%%

%%%%%%%%%%%%%%%%%%%%%%%%%%%%%%%%%%%%
\section{Introduction} \label{sec_intro}
%%%%%%%%%%%%%%%%%%%%%%%%%%%%%%%%%%%%
For a closed Riemannian surface $(\Sigma,g)$ the (positive) Laplace operator acting on functions has discrete spectrum. 
We list its eigenvalues (counted with multiplicity) as
$$
0=\lambda_0(\Sigma,g) < \lambda_1(\Sigma,g) \leq \lambda_2(\Sigma,g) \dots \to \infty.
$$
The connection between the eigenvalues and the geometry of $(\Sigma,g)$ has drawn a lot of attention in modern differential geometry.
It turns out that there are some fundamental differences between the case of surfaces and higher dimensional manifolds.
Our interest here is in surfaces only and we restrict our discussion to this case.
There is a number of particularly beautiful results connecting the first eigenvalue and the topology of the underlying surface.
This starts with Hersch's celebrated sharp bound for
the first non-trivial eigenvalue on the two-sphere stating that
$$
\lambda_1(\Sph^2,g) \area(\Sph^2,g) \leq 8 \pi
$$
with equality only for round metrics \cite{hersch}.
The Hersch bound was then generalized by Yang and Yau in \cite{yang_yau} 
to orientable surfaces of higher genus, in which case one has that
$$
\lambda_1(\Sigma,g) \area(\Sigma,g) \leq 8 \pi (\gamma+1).
$$ 
A few years laters, El Soufi and Ilias \cite{EI} improved this bound to
\begin{equation} \label{eq_intro_upper}
\lambda_1(\Sigma,g) \area(\Sigma,g) \leq 8 \pi \left \lfloor \frac{\gamma+3}{2}\right \rfloor.
\end{equation}
Later, Li and Yau gave another proof of this bound using the conformal volume \cite{li_yau} which extends to non-orientable surfaces as well \cite{karpukhin}.
Typically, this bound is not sharp -- see e.g. \cite{esgj} and the discussion therein and also the recent preprint by Karpukhin \cite{karpukhin-2}, in which it is proved that
\eqref{eq_intro_upper} is strict if the genus of $\Sigma$ is at least three.
A natural question arising from these results is if there is a sharp upper bound for the scale invariant quantity
\begin{equation} \label{eq_intro_1}
\bar \lambda_1(\Sigma) = \lambda_1(\Sigma) \area(\Sigma),
\end{equation}
that depends only on the topology of the surface $\Sigma$.
Petrides proved in \cite{petrides} that, in order to rule out degenerations of the conformal class of a carefully chosen
maximizing sequence in moduli space, one has to show that the sharp constant for  \eqref{eq_intro_1} strictly increases in terms of the topology 
of the underlying surface, see also \cite{fs,MS1}.
A first example of this type of phenomena was already present in Nadirashvili's work \cite{nadirashvili}, where he deals with the moduli space of flat tori.
It was widely believed that this type of monotonicity should hold for surfaces of any genus.
The goal of this paper is to settle this conjecture by proving the following result.

\begin{theorem} \label{thm_main}
Let $(\Sigma,g)$ be a closed Riemannian surface with at most finitely many conical singularities.
If $\Sigma'$ is obtained from $\Sigma$ by attaching a cylinder or a cross cap, there is a smooth metric $g'$ on $\Sigma'$, such that
\begin{equation*}
\lambda_1(\Sigma,g') \area(\Sigma',g')> \lambda_1(\Sigma,g) \area(\Sigma,g).
\end{equation*}
\end{theorem}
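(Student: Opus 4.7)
I would prove the theorem by constructing a one-parameter family of surgery metrics $g_\varepsilon'$ on $\Sigma'$ and showing that $\lambda_1(g_\varepsilon') \area(g_\varepsilon') > \lambda_1(g) \area(g)$ for all sufficiently small $\varepsilon > 0$. Pick two regular points $p_1, p_2 \in \Sigma$ away from the conical locus (one point $p$ in the cross-cap case). In conformal normal coordinates about each $p_i$, excise the disk of radius $\varepsilon$ and glue in a flat cylinder $[-L, L] \times \Sph^1_\varepsilon$ of circumference $2 \pi \varepsilon$ (respectively a Möbius band of analogous dimensions), using smooth collar transitions. This produces a smooth metric $g_\varepsilon'$ on $\Sigma'$ with $\area(g_\varepsilon') = \area(g) + 4 \pi L \varepsilon + O(\varepsilon^2)$.

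\textbf{Spectral comparison.} The central task is to show $\lambda_1(\Sigma', g_\varepsilon') \to \lambda_1(\Sigma, g)$ and to quantify the convergence rate. The upper bound $\lambda_1(g_\varepsilon') \leq \lambda_1(g) + o(1)$ is obtained by min--max, using a first eigenfunction of $(\Sigma, g)$ extended across the neck by a harmonic cutoff; the energy cost is controlled by the logarithmic capacity $O(1/|\log \varepsilon|)$ of the excised disks. For the lower bound, decompose an eigenfunction of $(\Sigma', g_\varepsilon')$ into a body part supported on the original surface and a neck part supported on the handle. Transverse modes on a cylinder of circumference $2 \pi \varepsilon$ have frequencies $\gtrsim \varepsilon^{-2}$, so they are harmless; longitudinal modes on a cylinder of fixed length $L$ are bounded below by a constant depending only on $L$. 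Together these show no new low-frequency mode arises from the handle in the limit.

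\textbf{Producing the strict gain.} With the crude capacity bound, the area gain $\lambda_1(g) \cdot 4 \pi L \varepsilon = O(\varepsilon)$ is dominated by the eigenvalue loss $\area(g) \cdot O(1/|\log \varepsilon|)$, so the naive test function is insufficient. I would therefore choose the attachment points carefully in relation to a first eigenfunction $u$ of $(\Sigma, g)$: pick $p_1, p_2$ so that $u(p_1) = u(p_2)$, which is possible because $u$ changes sign on $\Sigma$. Extend $u$ across the neck by linear interpolation in the longitudinal cylinder coordinate; this gives a genuinely continuous test function whose Dirichlet energy on the neck is $O(\varepsilon)$ rather than $O(1/|\log \varepsilon|)$. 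A finer matching, incorporating the first-order Taylor expansion of $u$ at the attachment points, can push the eigenvalue defect to $o(\varepsilon)$, at which point the area gain strictly dominates.

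\textbf{Main obstacle.} The principal difficulty is the multiplicity case. If $\lambda_1(\Sigma, g)$ has multiplicity $k > 1$, the decomposition of eigenfunctions above must control $k$ eigenvalues simultaneously, and the choice of attachment points must be compatible with the entire $k$-dimensional eigenspace rather than a single eigenfunction. This typically requires a topological argument on the configuration space of placements of $p_1, p_2$, in the spirit of Hersch's center-of-mass trick generalized via a fixed-point method, producing a pair (or single point, in the cross-cap case) against which an entire subspace of test functions gives the sharpened Rayleigh estimate. The conical singularities enter only mildly: surgery is performed at regular points, and one relies on the standard spectral theory of the Laplacian on surfaces with isolated conical singularities to legitimize the eigenfunction manipulations on the body.
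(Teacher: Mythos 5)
There is a genuine gap, and it sits exactly where the main difficulty of the problem lies. Since the area gain from the handle is only of order $\eps$, the theorem requires a \emph{lower} bound $\lambda_1(\Sigma',g_\eps')\geq\lambda_1(\Sigma,g)-o(\eps)$. Your paragraph ``Producing the strict gain'' instead builds \emph{test functions} (extending $u$ across the neck after matching $u(p_1)=u(p_2)$, then Taylor-correcting), and test functions only produce \emph{upper} bounds for $\lambda_1(\Sigma')$, which are useless for the conclusion. Your actual lower-bound argument (body/neck decomposition, transverse frequencies $\gtrsim\eps^{-2}$) only yields convergence $\lambda_1(\Sigma')\to\lambda_1(\Sigma)$ with no rate, and the rate is everything here. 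Made quantitative via the harmonic-extension decomposition on the neck, the a priori error is the cross term between the eigenfunction and the harmonic extension $h$ of its boundary values, which costs $O(\eps^{1/2}\log(1/\eps))$ (this is precisely the error in \cref{thm_eig_lim}); even in the non-resonant regime of a short neck, absorbing $\int_{\mathrm{neck}}h^2\sim\phi(p_i)^2\,\area(\mathrm{neck})$ produces a loss of the \emph{same} order $\eps$ as the area gain, with a constant proportional to $\phi(p_i)^2$ that you do not control. Imposing $u(p_1)=u(p_2)$ does not remove this: the obstruction is the value $\phi(p_i)$ itself, not the difference, and when $\lambda_1(\Sigma)$ has multiplicity $K>1$ there is in general no point where the whole eigenspace vanishes, so no placement of the handle (and no center-of-mass argument) makes the offending term disappear.

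This is also why the flat product cylinder is the wrong model. The paper shows (quoting the sharp asymptotics of \cite{MS2}) that for the product metric the coupling quantity $-\int_{\partial}\partial_\nu\psi_0\,d\mathcal H^1$ is of order $\eps^{1/2}$ and that in the resonant regime the eigenvalue loss really is $\eps^{1/2}$, which can never be beaten by an $O(\eps)$ area gain. The construction actually used is a truncated hyperbolic cusp $\Sph^1\times[1,e^{1/\eps^\alpha}]$ with metric $(\kappa y)^{-2}(\eps^2d\theta^2+dy^2)$, chosen so that this coupling is $\sim\eps^{3\alpha/2+1/2}=o(\eps)$ for $\alpha>1/3$; the curvature parameter $\kappa$ is then tuned as a function of $\eps$ so that the Dirichlet ground state of the handle interacts with $\lambda_1(\Sigma)$ in a controlled way, and an iteration across scales (built on quasimodes, an $L^2$-decomposition of the first two eigenfunctions, and the Neumann comparison $\mu_1(\Sigma\setminus B_\eps)\geq\lambda_1(\Sigma)-C\eps^2$ of \cref{thm_lower_neumann}) pins $\lambda_1(\Sigma_{\eps,\kappa_\eps})$ above $\lambda_1(\Sigma)-o(\eps)$. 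None of this machinery is optional decoration: it is what replaces the step you summarize as ``a finer matching can push the eigenvalue defect to $o(\eps)$.''
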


Here, by an \emph{isolated conical singularity} we mean a point $x \in \Sigma$ near which the metric in appropriate coordinates is given by
$f(z)(|z|^k + O(|z|^{k+1}))|dz|^2$ as $z \to 0$ with $f$ a smooth positive function.
We want to point out that this is far more restrictive than the standard definition.
Although we do not think that this restriction is necessary in \cref{thm_main} it is sufficient in order to obtain \cref{thm_max} below and the argument gets a bit simpler.
We would like to point out that that the non-strict version of \cref{thm_main} has been obtained by Colbois and El Soufi in \cite{ces} using a result of Ann{\'e} \cite{anne}.

As indicated above, when combined with work of Petrides \cite{petrides} and work by the authors \cite{MS1}, this in particular implies the existence of maximizing metrics for the normalized first eigenvalue on any closed surface.

\begin{theorem} \label{thm_max}
Let $\Sigma$ be a closed surface. Then there is a metric $g$ on $\Sigma$, which is smooth away from at most
finitely many conical singularities, such that
$$
\lambda_1(\Sigma,h) \area(\Sigma,h) \leq \lambda_1(\Sigma,g) \area(\Sigma,g)
$$
for any smooth metric $h$ on $\Sigma$.
\end{theorem}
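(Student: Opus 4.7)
The plan is to deduce \cref{thm_max} from \cref{thm_main} by an induction on topological complexity, combined with the conformal precompactness framework of Petrides \cite{petrides} and its refinement in our earlier work \cite{MS1}. Write $\Lambda(\Sigma) := \sup_{g}\bar\lambda_1(\Sigma,g)$ where the supremum ranges over smooth Riemannian metrics on $\Sigma$; by the El Soufi--Ilias bound \eqref{eq_intro_upper} (and its non-orientable analog via Karpukhin \cite{karpukhin}), $\Lambda(\Sigma)<\infty$. Order closed surfaces by genus plus number of cross caps. The base cases are $\Sph^2$ and $\IRP^2$, where Hersch's theorem and its analog furnish smooth round maximizers realizing $\Lambda$.

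For the inductive step, assume \cref{thm_max} has been established for every closed surface $\Sigma'$ obtainable from $\Sigma$ by removing a handle or a cross cap. Fix any such $\Sigma'$ and let $g'$ be the maximizer on $\Sigma'$ provided by the induction hypothesis; by assumption $g'$ is smooth away from at most finitely many conical singularities of the restricted type admitted in \cref{thm_main}. Applying \cref{thm_main} to $(\Sigma',g')$ with the attachment that recovers $\Sigma$ from $\Sigma'$ produces a smooth metric $g$ on $\Sigma$ with
\[
\bar\lambda_1(\Sigma,g) \;>\; \bar\lambda_1(\Sigma',g') \;=\; \Lambda(\Sigma').
\]
Taking the supremum over admissible $\Sigma'$ yields the \emph{strict} topological monotonicity
\[
\Lambda(\Sigma) \;>\; \max_{\Sigma'} \Lambda(\Sigma'),
\]
which is precisely the hypothesis demanded by the existence machinery.

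With strict monotonicity in hand, I would select a maximizing sequence $g_n$ of smooth unit-area metrics on $\Sigma$ and uniformize each within its conformal class. The crux is controlling the conformal structure in moduli space: by the degeneration analysis of Petrides \cite{petrides} (extended in \cite{MS1} to cover the non-orientable case and maximizers with conical singularities), if the sequence of conformal classes escapes every compact subset of moduli space, then collars pinch and a subsequential limit produces a metric on a simpler surface $\Sigma'$ whose normalized eigenvalue is at least $\Lambda(\Sigma)$; this forces $\Lambda(\Sigma)\leq \Lambda(\Sigma')$, contradicting the strict inequality above. Hence the conformal classes stay in a compact part of moduli space, one can extract a weak limit, and the regularity theory of \cite{petrides,MS1} identifies this limit with a maximizing metric that is smooth away from at most finitely many conical singularities of the required type.

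The main obstacle I anticipate is a bookkeeping one rather than a conceptual one: verifying that the conical singularity convention used here is compatible with the ones in \cite{petrides,MS1}, so that the maximizer produced by the induction hypothesis for $\Sigma'$ indeed satisfies the hypothesis of \cref{thm_main}, and that the conical singularities produced by the precompactness argument for $\Sigma$ are likewise of the restricted type. Once this compatibility is checked, the induction closes and \cref{thm_max} follows.
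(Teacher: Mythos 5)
Your proposal is correct and follows essentially the same route as the paper: an induction on topological complexity in which \cref{thm_main}, applied to the maximizer on the simpler surface, supplies the strict monotonicity $\Lambda_1(\Sigma)>\Lambda_1(\Sigma')$ required by Petrides' existence theorem. The compatibility issue you flag at the end is exactly the point the paper addresses: the maximizer from \cite[Theorem 2]{petrides} has the form $|\nabla\Phi|_h\, h/\lambda_1(\Sigma,g)$ for a harmonic map $\Phi$ into a sphere, so its conical singularities are precisely of the restricted type admitted in \cref{thm_main}, which closes the induction.
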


It is worth pointing out that these metrics are always induced by a branched minimal immersion $\Phi \colon \Sigma \to \mathbb{S}^N$ for some $N \geq 2$, \cite{Ilias_ElSoufi, nadirashvili}.
In particular, \cref{thm_max} immediately implies the following existence result for branched minimal immersions.

\begin{theorem} \label{thm_min}
Let $\Sigma$ be a closed surface.
Then there is a branched minimal immersion $\Phi \colon \Sigma \to \mathbb{S}^N$ for some $N \geq 2$,
which is
induced by first eigenfunctions of the metric $\Phi^* g_{\it{can.}}$.
\end{theorem}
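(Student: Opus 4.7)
The plan is to derive Theorem~\ref{thm_min} from Theorem~\ref{thm_max} together with the by-now standard characterization of metrics extremizing the normalized first eigenvalue, which is what the references to Ilias--El Soufi and Nadirashvili mentioned right before the statement point to.

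First, I would apply Theorem~\ref{thm_max} to obtain a metric $g$ on $\Sigma$, smooth outside at most finitely many conical singularities, such that $\bar\lambda_1(\Sigma,g) = \lambda_1(\Sigma,g)\area(\Sigma,g)$ is maximal among all smooth metrics on $\Sigma$. The key then is to show that any such maximizer must be induced (up to rescaling) by first eigenfunctions: that is, one can find first eigenfunctions $\phi_0,\dots,\phi_N$ such that
\begin{equation*}
\sum_{i=0}^N \phi_i^2 \equiv 1, \qquad \sum_{i=0}^N |\nabla \phi_i|_g^2 \equiv \tfrac{1}{2}\lambda_1(\Sigma,g),
\end{equation*}
so that the map $\Phi=(\phi_0,\dots,\phi_N)\colon\Sigma\to\Sph^N$ pulls back the round metric to $\tfrac{1}{2}\lambda_1(\Sigma,g)\,g$. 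Harmonicity of $\Phi$ follows from the eigenvalue equation $\Delta_g\phi_i = \lambda_1(\Sigma,g)\phi_i$ combined with $|\Phi|\equiv 1$, while the conformality condition $|\partial_x\Phi|^2=|\partial_y\Phi|^2$ and $\langle\partial_x\Phi,\partial_y\Phi\rangle=0$ is exactly the content of the two displayed identities above. Hence $\Phi$ is a (weakly) conformal harmonic map, which in dimension two is a (possibly branched) minimal immersion wherever $d\Phi\neq 0$.

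To extract the two identities I would run the standard variational argument of Nadirashvili and El Soufi--Ilias: consider a one-parameter family of smooth conformal perturbations $g_t = e^{2t\psi} g$ (with $\psi$ smooth and supported away from the conical points) and differentiate $t\mapsto \bar\lambda_1(\Sigma,g_t)$ at $t=0$. Because $\lambda_1$ need not be simple, one only gets left and right derivatives, and maximality forces these to have opposite signs for every $\psi$; this is packaged into a convex-duality/Hahn--Banach step that produces a probability measure on the unit sphere of the first eigenspace whose expectation of $\psi(\phi^2 - \tfrac{1}{2}\lambda_1|\nabla\phi|_g^{-2}\cdot\text{something})$ vanishes. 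Choosing an orthonormal basis adapted to this measure yields the eigenfunctions $\phi_0,\dots,\phi_N$ above. I would also vary by general (non-conformal) perturbations supported away from the singularities to rule out that the immersion is only weakly conformal in a trivial sense.

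The main obstacle is handling the conical singularities of $g$: the classical extremal-metric results are stated for smooth metrics, and one has to verify that the variational computation goes through when the variations are compactly supported in the smooth part, and that the resulting eigenfunctions $\phi_i$ extend across the conical points to define a continuous, indeed smooth branched minimal map into $\Sph^N$. For this I would use that the Laplace spectrum is well-behaved on surfaces with the restrictive conical singularities defined in the paper (so that $\phi_i$ lie in a good Sobolev space and satisfy elliptic regularity away from the cone points), and that near each cone point the equation $\Delta_g \phi_i = \lambda_1 \phi_i$ together with boundedness forces $\phi_i$ to extend smoothly in the conformal coordinate; the condition $\sum\phi_i^2 \equiv 1$ then shows that the induced metric $\Phi^*g_{\text{can.}}$ is a smooth (nonsingular) multiple of $g$ wherever $d\Phi\ne 0$, and the branching locus of $\Phi$ absorbs the conical points. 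Once this regularity is in place, $\Phi\colon\Sigma\to\Sph^N$ is the desired branched minimal immersion.
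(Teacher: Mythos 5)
Your proposal is correct and follows exactly the route the paper takes: apply \cref{thm_max} to produce a maximizing metric and then invoke the El Soufi--Ilias/Nadirashvili characterization of extremal metrics as those induced by sphere-valued harmonic maps built from first eigenfunctions (the paper simply cites \cite{Ilias_ElSoufi, nadirashvili} for this step, while you spell it out). The only quibble is a harmless normalization: from $\sum_i \phi_i^2 \equiv 1$ and $\Delta\phi_i = \lambda_1 \phi_i$ one gets $\sum_i |\nabla\phi_i|^2 \equiv \lambda_1$ (not $\tfrac12\lambda_1$), so that $\Phi^* g_{\it{can.}} = \tfrac{\lambda_1}{2} g$; and conformality itself is the full tensor identity $\sum_i d\phi_i \otimes d\phi_i = \tfrac{\lambda_1}{2} g$, which, as you note, requires the non-conformal variations rather than the trace identity alone.
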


By cominbining \cref{thm_main} and \cref{thm_max} we can obtain some additional information on the structure of these minimal immersions.
For a closed surface $\Sigma$, we write
$$
\Lambda_1(\Sigma)= \sup_{g} \bar \lambda_1(\Sigma,g),
$$
where the supremum is taken over all smooth metrics.
It is not hard to show that we can equivalently take the supremum over all metrics with isolated conical singularities.
Note that \cref{thm_main} and \cref{thm_max} imply that
\begin{equation} \label{eq_intro_mono}
\Lambda_1(\Sigma_{\gamma}) < \Lambda_1(\Sigma_{\gamma+1})
\end{equation}
for any $\gamma \in \IN$, where $\Sigma_{\gamma}$ denotes
the orientable surface of genus $\gamma$.
It is also known from \cite{bm, bbd} (see \cite{fs14}) that we have
\begin{equation} \label{eq_intro_asymp_lower}
\Lambda_1(\Sigma_\gamma) \geq \frac{3}{4} \pi (\gamma-1)
\end{equation}
for  $\gamma \in \IN$ sufficiently large.\footnote{The constant here can be improved according to the current state of the art concerning Selberg's conjecture.}
Let us moreover denote for $B \in \IN$ given by
$
N(B)
$
the number of natural numbers $\gamma \leq B$ such that any maximzing metric for $\lambda_1 \cdot \area$ on $\Sigma_\gamma$
is induced by a branched, full, minimal immersion $\Sigma_\gamma \to \Sph^N$ with $N \geq 3$.

\begin{cor} \label{cor_branched}
We have for the function $N(B)$ defined above that
$$
N(B) \gtrsim  \log(B).
$$
In particular, there are infinitely many values of $\gamma \in \IN$ such that no maximizing metric for the normalized first eigenvalue on $\Sigma_{\gamma}$ can be induced by a branched cover over $\Sph^2$.
\end{cor}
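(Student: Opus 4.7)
The plan is to combine the strict monotonicity \eqref{eq_intro_mono}---an immediate consequence of \cref{thm_main} and \cref{thm_max}---with the El Soufi--Ilias upper bound \eqref{eq_intro_upper}, exploiting the observation that a maximizer induced by a full branched minimal immersion $\Phi \colon \Sigma_\gamma \to \Sph^2$ forces $\Lambda_1(\Sigma_\gamma)$ to be a positive integer multiple of $8\pi$. A pigeonhole then limits how many $\gamma \leq B$ can admit such a representation, and the complementary $\gamma$'s are precisely those counted by $N(B)$.

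Concretely, I would proceed in three steps. (1) Suppose the maximizing metric $g$ on $\Sigma_\gamma$ has the form $g = \Phi^* g_{\it{can.}}$ for some full branched minimal immersion $\Phi \colon \Sigma_\gamma \to \Sph^2$. Since the target is two-dimensional, $\Phi$ is a branched conformal covering of $\Sph^2$ of some degree $d_\gamma \in \IN$. The components of $\Phi$ are eigenfunctions of the induced metric with eigenvalue $2$, and the characterization of maximizers of $\bar\lambda_1$ as metrics induced by \emph{first} eigenfunctions (invoked in the discussion preceding \cref{thm_min}) ensures that this eigenvalue $2$ actually equals $\lambda_1(\Sigma_\gamma, g)$. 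Combined with $\area(\Sigma_\gamma, g) = 4\pi d_\gamma$ this yields
\[
\Lambda_1(\Sigma_\gamma) = 8\pi d_\gamma \in 8\pi\, \IN.
\]
(2) The upper bound \eqref{eq_intro_upper} forces $d_\gamma \leq \lfloor (\gamma+3)/2 \rfloor \leq (B+3)/2$ whenever $\gamma \leq B$. (3) The strict monotonicity \eqref{eq_intro_mono} ensures that different $\gamma$'s yield different integers $d_\gamma$, so the number of $\gamma \leq B$ whose maximizer factors through $\Sph^2$ is at most $\lfloor (B+3)/2 \rfloor$, whence
\[
N(B) \geq B - \left\lfloor \tfrac{B+3}{2} \right\rfloor \gtrsim B,
\]
which is in fact stronger than the claimed $\gtrsim \log B$. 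The ``in particular'' clause is then immediate since $N(B) \to \infty$.

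I do not anticipate a serious technical obstacle; the argument is a direct pigeonhole once the two ingredients \eqref{eq_intro_mono} and \eqref{eq_intro_upper} are in place. The only conceptual point that needs care is the identification $\lambda_1(\Sigma_\gamma, g) = 2$ in step~(1), which is precisely the content of the characterization of $\bar\lambda_1$-maximizers as metrics induced by first eigenfunctions---without this identification one would only obtain $\bar\lambda_1(g) \leq 8\pi d_\gamma$, which does not place $\Lambda_1(\Sigma_\gamma)$ in $8\pi\, \IN$ and thus destroys the congruence obstruction.
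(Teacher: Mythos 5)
Your argument is correct, and it takes a genuinely different --- and in fact sharper --- route than the paper's. The paper argues \emph{locally}: it fixes $\gamma_0$ and shows that the window $[\gamma_0,\gamma_0+\lceil(13\gamma_0+51)/16\rceil+1]$ must contain a good genus, using the asymptotic lower bound \eqref{eq_intro_asymp_lower} to anchor the starting degree $l_0\gtrsim\gamma_0$; since these windows have length proportional to $\gamma_0$, stacking them disjointly up to $B$ yields only logarithmically many good values, which is why the corollary is stated with $\log B$. Your pigeonhole is \emph{global}: by the integrality $\Lambda_1(\Sigma_\gamma)=8\pi d_\gamma$ and the strict monotonicity \eqref{eq_intro_mono}, the degrees attached to the bad genera $\gamma\le B$ are strictly increasing, hence distinct positive integers, and \eqref{eq_intro_upper} caps each of them by $\lfloor(B+3)/2\rfloor$; so at most $\lfloor(B+3)/2\rfloor$ genera up to $B$ can be bad and $N(B)\ge B-\lfloor(B+3)/2\rfloor\gtrsim B$. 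This dispenses entirely with the lower bound \eqref{eq_intro_asymp_lower} and strengthens the stated conclusion from $\log B$ to a bound linear in $B$; the ``in particular'' clause follows just as immediately. Your step (1) --- that a maximizer factoring through $\Sph^2$ forces $\Lambda_1(\Sigma_\gamma)\in 8\pi\,\IN$ --- coincides with the paper's appeal to the area formula, and your insistence that the eigenvalue $2$ be the \emph{first} eigenvalue (which is exactly the content of the El Soufi--Ilias/Nadirashvili characterization of maximizers) is precisely the point on which the integrality, and hence the whole argument, hinges.
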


For $\gamma=3,4$ this was proved in \cite{karpukhin-2} (using \cref{thm_main}, more precisely the version already contained in \cite{MS2}).

Interestingly, our argument does not work for non-orientable surface since the best known constant in the analogue of \eqref{eq_intro_upper} for non-orientable surfaces
is $16 \pi$ \cite{karpukhin}.

\begin{proof}
Let $\gamma_* \in \IN$ be chose such that \eqref{eq_intro_asymp_lower} holds for any $\gamma \geq \gamma_*$.
We show that for any $\gamma_0 \geq \gamma_*$ any interval
 $[\gamma_0,\gamma_0 + \lceil (13 \gamma_0  + 51)/16\rceil +1 ]$
 contains some $\gamma \in \IN$ such no maximizing metric on $\Sigma_{\gamma}$ can be induced by a branched cover over $\Sph^2$.
This implies the assertion by an easy counting argument.
 
Assume that there is a  maximizing metric on $\Sigma_{\gamma_0}$
induced by a branched minimal immersion $\Sigma_{\gamma_0} \to \Sph^2$ for some $\gamma_0 \geq \gamma_*$.
Then, by the area formula, there is $l_0 \in \IN$ such that
$$
\Lambda_1(\Sigma_{\gamma_0}) = 8 \pi l_0 \geq \frac{3}{4} \pi (\gamma_0 - 1),
$$
where the last inequality is precisely \eqref{eq_intro_asymp_lower}.
If there are also maximizing metrics on $\Sigma_{\gamma_0+1} , \dots, \Sigma_{\gamma_0+k}$ induced by branched minimal immersions $\Sigma_{\gamma_0+j} \to \Sph^2$, we can combine this with \eqref{eq_intro_mono} to find that we need to have
$$
\Lambda_1(\Sigma_{\gamma_0+k}) \geq 8 \pi (l_0+k) \geq \frac{3}{4} \pi (\gamma_0 - 1) + 8 \pi k.
$$
For $ k \geq \lceil (13 \gamma_0 + 51)/16\rceil+1$ this contradicts the upper bound \eqref{eq_intro_upper} implying our claim.
\end{proof}

Some special cases of \cref{thm_max} were known for some time.
For the two-sphere this follows immediately from Hersch's sharp bound given above.
Li and Yau obtained the analogue of the Hersch bound for the real projective plane \cite{li_yau}.
The case of the torus was settled by Nadirashvili in his groundbreaking paper \cite{nadirashvili}.
His arguments are fundamentally different from all previous work.
Instead of proving a bound for all metrics that can be verified to be sharp for some metrics, he first proves the existence of a maximizer and then uses a result of Montiel and Ros \cite{montiel_ros} (see also \cite{ckm}) to conclude that the maximizing metric has to be flat.
This it then combined with earlier work of Berger that gives the sharp bound among flat metrics.
The case of the Klein Bottle follows from the work in \cite{ckm, esgj, JNP, nadirashvili}.

Very recently, the conjecture that the Yang--Yau inequality is sharp for a family of metrics on 
surfaces of genus two (see \cite{JLNN}),
was confirmed by Nayatani and Shoda in \cite{nayatani_shoda}. 
When combined with our work in \cite{MS2} this in particular already implies the existence of a maximizer on the surface of genus three.
Although we are only concerned with the first eigenvalue, let us mention that
there is also a number of results on higher eigenvalues -- see e.g.\
\cite{nadirashvili-sire, petrides-2}, and
 \cite{knpp, nadirashvili-2, nadirashvili_sire_3}, 
for the case of  $\Sph^2$,
and \cite{karpukhin-3,nadirashvili_penskoi} for the case of $\IRP^2$.
These results are somewhat different from those for the first eigenvalue since one can not rule out bubbling for higher eigenvalues.

Very similar results are known also for the corresponding problem in the case of the Steklov operator on compact surfaces with boundary.
This was pioneered by Fraser and Schoen in \cite{fs} culminating in their celebrated existence result for maximizers in the case of genus zero
surfaces.
A key step in their proof is to obtain a monotonicity result similar to \cref{thm_main}.
Although the overall strategy (in particular the idea to use a two parameter family of surfaces) and also some technical steps (e.g.\ the proof of \cref{thm_lower_neumann}) of our proof are strongly inspired by their ideas, 
our arguments are much longer and much more technical.
We want to emphasize that this is not of purely technical nature but that there is serious obstruction to obtain a convergence rate for the
first eigenvalue that is better than the gain in area for closed surfaces.
This is explained in \cref{sec_ideas} and follows from the results in \cite{MS2}.

Before we move on to the more technical aspects of the paper, we want to describe the na{\"i}ve idea of our strategy
to prove \cref{thm_main}. 
Starting from $(\Sigma,g)$ we attach a degenerating cylinder or cross cap, respectively, to $\Sigma$ and 
study the asymptotic behavior of the first eigenvalue.
As the cylinder or cross cap, respectively, becomes very small one hopes that the loss in the first eigenvalue is small compared to the gain in area coming from the attached part.

In the glueing construction that we use we have to deal with several new difficulties.
Some of them are imminent from our construction.
The degenerating handle that we attach is given by a carefully truncated negatively curved cusp.
The non-truncated cusp has continuous spectrum, which introduces an \emph{a priori} unbounded number of approximate eigenvalues on the glued surface that could be relevant to locate the first eigenvalue.
On the other hand, the gain in scaling, that originates precisely in the non-discreteness of the spectrum, for the first Dirichlet eigenfunction of these truncated cusp is one of the driving forces of our argument.
In view of the sharp asymptotics obtained by the authors in \cite{MS2}, some of these difficulties seem to be unavoidable if one tries to follow our approach.
Another key problem is to understand the interaction of the spectrum of the truncated cusp with the spectrum of $\Sigma$.
While the arguments in \cite{MS2} also rely on the same type of phenomenon, the arguments there do not give any control on a sufficiently small scale.

Finally, let us mention some interesting open problems.
By \eqref{eq_intro_upper} and \eqref{eq_intro_asymp_lower} we know that
$$
A=\limsup_{\gamma \to \infty} \frac{\Lambda_1(\Sigma_\gamma)}{\gamma} \in \left[ \frac{3}{4} \pi , 4 \pi \right],
$$
which is the asymptotically sharp value of the linear bound for the normalized first eigenvalue.
It would be interesting to know the exact value of $A$.
In a related direction, one can ask if the full limit of $\Lambda_1(\Sigma_\gamma)/\gamma$ exists for $\gamma \to \infty$.

A natural question arising from \cref{cor_branched} and \cite{karpukhin-2} is if there are examples of surfaces with genus at least five, or even with arbitrarily large genus,
for which a maximizing metric is induced by a branched cover of $\Sph^2$.
In a similar direction, one might also ask if those maximizing metrics that are not obtained as branched covers of $\Sph^2$ have a distinguished geometry, e.g.\ if they can be hyperbolic.

\smallskip

\textbf{Outline.}
In \cref{sec_attach} we explain the exact construction of the two parameter family of surfaces $\Sigma_{\eps,\kappa}$ used in the proof of \cref{thm_main}
and explain some of our main ideas. 
We start in \cref{sec_cyl} with a discussion of the spectrum of the attached cylinders and cross caps.
In \cref{sec_pt_bd} we prove pointwise estimates for eigenfunctions with uniformly bounded eigenvalue 
near the points, where we attach the cylinder.
In the case of Neumann eigenfunctions on $\Sigma$ with a tiny ball removed one can also prove an $L^2$-gradient estimate along the boundary of that ball, this is done in
\cref{sec_neu}.
As an application we prove that the first eigenvalue has multiplicity one in many cases.
Next, as a warmup, in \cref{sec_eig_lim} we use the pointwise estimates from \cref{sec_pt_bd} to compute the spectrum as the cylinder or cross cap collapses, but without any good control on the rate of convergence.
In \cref{sec_quasimodes} we construct several good approximate solutions to the eigenvalue equation.
These are used in \cref{sec_conc} to get a good description of the first two eigenfunctions.
Finally, in \cref{sec_asymp} we combine the results from \cref{sec_neu} and \cref{sec_conc} to get a comparison result for the first eigenvalue of $\Sigma_{\eps,\kappa}$ and the first eigenvalue of $\Sigma$, which implies the the main technical result by an iteration argument (across scales) along carefully selected parameters $\kappa$. Finally, we establish the main result in \cref{sec_proofs_main_2} by a smoothing argument.

\smallskip

\textbf{Acknowledgements.}
The first named author would like to thank his former advisor Werner Ballmann for a helpful discussion on Green's functions and Andr{\'e} Neves for asking him the question leading to \cref{cor_branched}.
The second named author would like to thank the Max Planck Institute for Mathematics in Bonn for financial support and excellent working conditions.

%%%%%%%%%%%%%%%%%%%%%%%%%%%%%%%%%%%%%%%%%%%%%%
\section{The surfaces $\Sigma_{\eps,\kappa}$ and main ideas of the proof} \label{sec_attach}
%%%%%%%%%%%%%%%%%%%%%%%%%%%%%%%%%%%%%%%%%%%%%%

The proof of \cref{thm_main} consists of two separate but very similar parts.
Namely, we need to show the same type of result for attaching cylinders and for attaching cross caps. 
In the following two subsections we describe for these two cases the construction of two-parameter families of surfaces $\Sigma_{\eps,\kappa}$ that we will use as competitors for the problem.
In Subsection\,\ref{main_technical} we state and discuss the main technical result needed to prove
\cref{thm_main}. 
Finally, in the last subsection we give some ideas of the proof of the main technical result.

%%%%%%%%%%%%%%%%%%%%%%%%%%%%%%%%%%%%%%%%%%%%%%
\subsection{Attaching a cylinder} \label{attach_cylinder}
%%%%%%%%%%%%%%%%%%%%%%%%%%%%%%%%%%%%%%%%%%%%%%
Let $(\Sigma,g)$ be a closed Riemannian surface with at most finitely many conical singularities.
We start with the case of attaching a cylinder to $\Sigma$.

\smallskip

Given $\eps>0$ and $\alpha \in (1/3,\infty)$, we let 
$$
C_{\eps,\kappa} = \Sph^1 \times [1,R_{\eps,\alpha}],
$$
where $R_{\eps,\alpha} = \exp(1/\eps^{\alpha})$, endowed with the metric 
$$
g_{\eps,\kappa}=
\frac{1}{(\kappa y)^2} (\eps^2 d\theta^2 + dy^2),
$$
with $\theta \in [0,2\pi), y \in [1,R_{\eps,\alpha}]$, and $\kappa\in (0,\infty)$.
Note that this is a truncation of the standard constant curvature cusp of curvature $-\kappa^2$. 
Later on, the parameter $\alpha$ will be fixed, whereas $\eps$ and $\kappa$ will change.
In order to keep the notation a bit simpler we will not indicate the dependence on $\alpha$, unless it is of any particular importance. 
\smallskip

Let $x_0,x_1 \in \Sigma$ be two distinct points, such that $g$ is smooth near both of them.
Since $\Sigma$ is a surface it is locally conformally flat. In other words, we
can pick coordinate 
neighbourhoods $U_i$ containing $x_i,$
such that $g$ is conformal to the Euclidean metric in $U_i,$ that is $g=f g_e$ with $f$ a smooth, positive function and $g_e$ the Euclidean metric.
By rescaling these coordinates if necessary, we may assume that $f(x_i)=1$.
For given $k\in\mathbb{N}$, let $B_{\eps^k}(x_i)= B_{g_e}(x_i,\eps^k)$ be a ball centered at $x_i$ with radius equals $\eps^k$ with respect to $g_e.$
We then let 
$$
\Sigma_{\eps,\kappa} = 
\Sigma \setminus (B_{\eps^k}(x_0) \cup B_{\eps^k}(x_1)) \cup_{\partial C_{\eps,\kappa}} C_{\eps,\kappa},
$$
where we attach the component $\Sph^1 \times \{1\}$ of $\partial C_{\eps,\kappa}$ to $\partial B_{\eps^k}(x_0)$ and the component
$\Sph^1 \times \{R_{\eps,\alpha}\}$ of $\partial C_{\eps,\kappa}$ to $\partial B_{\eps^k}(x_1)$.
This surface comes naturally endowed with a singular metric given by the metric $g$ on $\Sigma \setminus (B_{\eps^k}(x_0) \cup B_{\eps^k}(x_1))$
and by the metric $g_{\eps,\kappa}$ on $C_{\eps,\kappa}$.
It will be important later, that, for any $\eps,\kappa$ fixed, this metric is the limit of smooth metrics in an appropriate sense (see \cref{sec_proofs_main_2} for details).

\begin{figure}[ht]
	\centering
  \includegraphics[width=12cm]{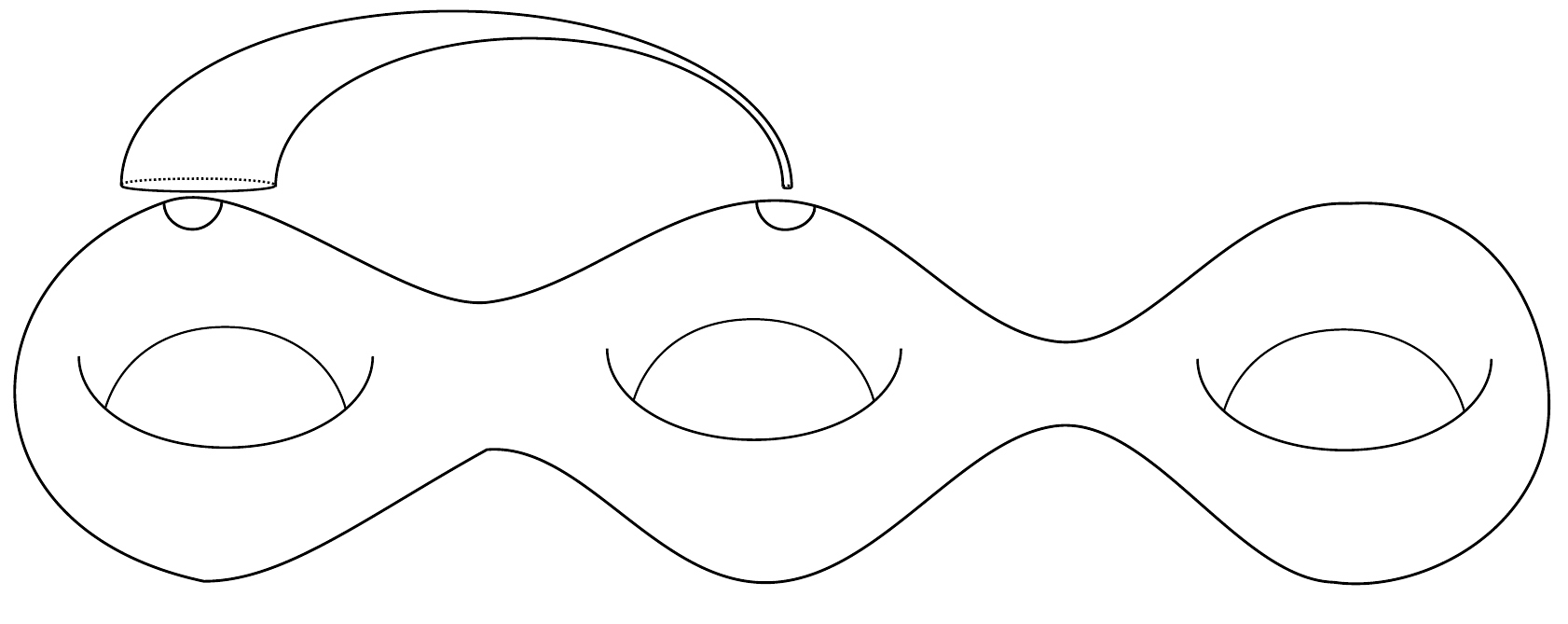}
	\caption{The construction of $\Sigma_{\eps,\kappa}$ involving the three different scales: the long boundary component of length $\sim\eps$, the short boundary component of length $\sim e^{-1/\eps^\alpha}$, the removed balls of radius $\sim \eps^k$. }
	\label{fig}
\end{figure}

%%%%%%%%%%%%%%%%%%%%%%%%%%%%%%%%%%%%%%%%%%%%%%
\subsection{Attaching a cross cap} \label{attach_cross_cap}
%%%%%%%%%%%%%%%%%%%%%%%%%%%%%%%%%%%%%%%%%%%%%%

The construction for attaching cross caps is similar to that of attaching a cylinder.

\smallskip

We start with $D_{\eps,\kappa}$ which is given by two copies $C_1$ and $C_2$ of $C_{\eps,\kappa}$ glued together along the boundary components $\Sph^1 \times \{R_{\eps,\alpha}\}$.
The group $\IZ/2\IZ$ acts freely and isometrically on $D_{\eps,\kappa}$ by $ C_1 \ni (\theta , y) \mapsto (\theta + \pi, y) \in C_2$, where we identify $\Sph^1 = \IR/ 2 \pi \IZ$.
The quotient 
$$
M_{\eps,\kappa} = D_{\eps,\kappa} / (\IZ / 2 \IZ)
$$
has the topology of a cross cap (also called a M{\"o}bius strip).

Let $x_0 \in \Sigma$ such that $g$ is smooth near $x_0.$
Let $U$ be a coordinate neighborhood containing $x_0,$
such that $g$ is conformal to the Euclidean metric in $U,$ that is $g=f g_e$ with $f$ a smooth, positive function and $g_e$ the Euclidean metric,
where we may assume also here that $f(x_i)=1$.
Let $B_{\eps^k}(x_0)=B_{g_e}(x_0,\eps^k)$ be a ball centered at $x_0$ with radius $\eps^k$ with respect to $g_e.$
We let
\[\
\Sigma_{\eps,\kappa}=(\Sigma \setminus B_{\eps^k}(x_0)) \cup_{\partial M_{\eps,\kappa}} M_{\eps,\kappa}.
\]
As for the case of cylinders this comes with the metric given by $g$ on $\Sigma \setminus B_{\eps^k}(x_0)$ and the metric induced by $g_{\eps,\kappa}$ on $M_{\eps,\kappa}$.
Also in this case, for $\eps$ and $\kappa$ fixed, the metric $g_{\eps,\kappa}$ can be obtained as the limit of smooth metrics.

%%%%%%%%%%%%%%%%%%%%%%%%%%%%%%%%%%%%%%%%%%%
\subsection{The main technical result}\label{main_technical}
%%%%%%%%%%%%%%%%%%%%%%%%%%%%%%%%%%%%%%%%%%%

In both constructions, we restrict to parameters $\kappa \in [\kappa_0,\kappa_1]$,
where $\kappa_i$ are chosen such that
$$
\kappa_0^2/4 < \lambda_1(\Sigma) < \kappa_1^2/4 < \lambda_{K+1}(\Sigma),
$$ 
where $K=\mult(\lambda_1(\Sigma))$ denotes the multiplicity of $\lambda_1(\Sigma)$.
We can now state the precise technical result that will easily imply \cref{thm_main}.

\begin{theorem} \label{thm_main_technical}
For $k \geq 9,$ there is $\alpha_0>1/3$ such that in both constructions of the surfaces $\Sigma_{\eps,\kappa}$  above with fixed $\alpha \in (1/3,\alpha_0)$, 
there are parameters $\kappa_\eps \in [\kappa_0,\kappa_1]$,
such that
\begin{equation} \label{eq_main_technical}
\lambda_1(\Sigma_{\eps,\kappa_{\eps}}) \geq \lambda_1(\Sigma) - o(\eps)
\end{equation}
as $\eps \to 0$.
\end{theorem}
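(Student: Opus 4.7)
\smallskip

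\textbf{Proof proposal.} My plan is to bound $\lambda_1(\Sigma_{\eps,\kappa})$ from below by comparing it with two simpler eigenvalue problems, the Neumann Laplacian on $\Sigma$ with the small balls removed and the Dirichlet Laplacian on the truncated cusp $C_{\eps,\kappa}$, and then to exploit the one--parameter freedom in $\kappa\in[\kappa_0,\kappa_1]$ to sidestep the thin window where these two spectra resonate and the collar coupling would push $\lambda_1(\Sigma_{\eps,\kappa})$ below $\lambda_1(\Sigma)-o(\eps)$.

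The first step is to use the quasimode construction of \cref{sec_quasimodes} and the description of the first eigenfunctions in \cref{sec_conc}: any $L^2$--normalized first eigenfunction $u_{\eps,\kappa}$ of $\Sigma_{\eps,\kappa}$ decomposes, up to a small error, as $a_\eps\phi_{\eps,\kappa}+b_\eps\psi_{\eps,\kappa}$, where $\phi_{\eps,\kappa}$ comes from a Neumann eigenfunction on $\Sigma\setminus(B_{\eps^k}(x_0)\cup B_{\eps^k}(x_1))$ (resp.\ $\Sigma\setminus B_{\eps^k}(x_0)$ in the cross cap case) and $\psi_{\eps,\kappa}$ is the $\theta$--independent Dirichlet quasimode on $C_{\eps,\kappa}$. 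Substituting this ansatz into the variational principle reduces the computation of $\lambda_1(\Sigma_{\eps,\kappa})$ to the lowest root of an explicit $2\times 2$ secular equation (or $(K+1)\times(K+1)$ if one retains the whole $\lambda_1(\Sigma)$ eigenspace). Its diagonal entries are the first Neumann eigenvalue on $\Sigma$ with the small disks removed, which lies within $o(\eps)$ of $\lambda_1(\Sigma)$ thanks to the $L^2$ boundary gradient bound of \cref{sec_neu}, and the first Dirichlet eigenvalue on $C_{\eps,\kappa}$, which by separation of variables in the cusp metric equals $\kappa^2/4+\pi^2/(\log R_{\eps,\alpha})^2+\text{l.o.t.}=\kappa^2/4+O(\eps^{2\alpha})$; non--zero $\theta$--modes are depressed by the $\eps^{-2}$ prefactor and so do not compete. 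The off--diagonal coupling is estimated via the pointwise bounds of \cref{sec_pt_bd}, which allow $\phi_{\eps,\kappa}$ to be replaced by a constant on the tiny circle $\partial B_{\eps^k}$, combined with the explicit radial profile of $\psi_{\eps,\kappa}$ at the collar.

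The lowest root of the $2\times 2$ problem exceeds the smaller diagonal entry minus a perturbation that is only large when the two diagonals nearly coincide, i.e.\ when $\kappa^2/4$ falls into a small $\eps$--dependent window around $\lambda_1(\Sigma)$. Since $\kappa\mapsto\kappa^2/4$ has bounded positive derivative on $[\kappa_0,\kappa_1]$, this bad set has small measure, and a pigeonhole selects a good $\kappa_\eps$; the iteration ``across scales'' is required because the Dirichlet cusp eigenvalue carries further asymptotic corrections at orders $\eps^{2\alpha},\eps^{4\alpha},\ldots$, each introducing its own thin bad window, so $\kappa_\eps$ must be chosen in the complement of a nested sequence of such sets, each small on its own scale. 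I expect the main obstacle to be the sharpness of the coupling estimate: since the area gain from attaching $C_{\eps,\kappa}$ is of order $\eps$, the ultimate inequality \cref{thm_main} demands that the loss in the first eigenvalue be strictly $o(\eps)$, which forces the quasimode and secular analysis to be carried out with that precision. This is why the soft argument of Colbois--El~Soufi, which relies only on continuity of the spectrum under the Ann\'e perturbation, yields only the non--strict version of \cref{thm_main} and nothing sharper, and it is also the reason for the delicate layered construction in \cref{sec_quasimodes}.
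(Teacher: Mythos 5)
Your setup (the two model problems, the quasimode decomposition $u_{\eps,\kappa}\approx a_\eps\phi_{\eps,\kappa}+b_\eps\psi_{\eps,\kappa}$, the lower bound $\mu_1(\Sigma\setminus B_{\eps^k})\geq\lambda_1(\Sigma)-C\eps^{2k}$, and the role of the boundary coupling) matches the paper's framework, but the mechanism by which you select $\kappa_\eps$ is the opposite of what is needed, and this is a fatal gap. You propose to \emph{avoid} the resonance window where $\kappa^2/4$ is close to $\lambda_1(\Sigma)$, arguing that off resonance the perturbation of the lowest secular root is small and that the bad set of $\kappa$ has small measure. But off resonance the first eigenfunction concentrates on $\Sigma$, and the expansion coming from the quasimode $\tilde\phi_{\eps,\kappa,0}$ (\cref{expan_surf}) gives $\lambda_1(\Sigma_{\eps,\kappa})=\lambda_1(\Sigma)-\lambda_1(\Sigma)\phi_0(x_0)\int_{C_{\eps,\kappa}}u_{\eps,\kappa}/\tilde m_0+\dots$, where $u_{\eps,\kappa}$ is roughly equal to $\phi_0(x_0)$ on the attached piece, so that $\int_{C_{\eps,\kappa}}u_{\eps,\kappa}\sim\phi_0(x_0)\,\area(C_{\eps,\kappa})\sim\eps$. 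When $\phi_0(x_0)\neq 0$ the loss in $\lambda_1$ for non-resonant $\kappa$ is therefore genuinely of order $\eps$ (the paper can only bound $|\int_{C}u|$ by $C\eps\log(1/\eps)$ in general, see \cref{L^2_mean_bd} and the discussion in \cref{sec_ideas}), i.e.\ comparable to the area gain, for essentially \emph{all} $\kappa$ outside the interaction regime -- there is no thin bad window to pigeonhole around. This is precisely the obstruction from \cite{MS2} that the paper flags as the reason the argument cannot be a soft perturbation of two decoupled spectra.

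The paper's actual strategy is to \emph{seek out} the resonance: it chooses $\kappa_\eps$ so that the first eigenfunction splits its $L^2$-mass between the two pieces ($n_{\eps,\kappa_\eps,1}/m_{\eps,\kappa_\eps,1}=1$, located via an intermediate value argument in \cref{L^2_lem_loc_scale}), because only in that regime is $\lambda_1(\Sigma_{\eps,\kappa})$ controlled relative to $\lambda_0(C_{\eps,\kappa})$ with the favorable error $a_{\eps,\kappa,0}\sim\eps^{3\alpha/2+1/2}=o(\eps)$ coming from the cusp geometry (\cref{expan_cyl}). The remaining -- and hardest -- step is to show that at this interaction scale one necessarily has $\lambda_1(\Sigma)-\lambda_0(C_{\eps,\kappa_\eps})=o(\eps)$; this is \cref{prop_iter}, proved by testing $\mu_1(\Sigma\setminus B_{\eps^k})$ with a combination $u_{\eps,\kappa}+\beta_{\eps,\kappa}v_{\eps,\kappa}$ of the first \emph{two} eigenfunctions and iterating the resulting inequality across a geometrically decreasing sequence of interaction scales $(1+\theta)^l2^{-l}d_0\eps\log(1/\eps)$. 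Your reading of the iteration as excising nested bad windows created by higher corrections $\eps^{2\alpha},\eps^{4\alpha},\dots$ to the cusp eigenvalue is therefore also not what happens: the iteration successively tightens the gap $\lambda_1(\Sigma)-\lambda_0(C_{\eps,\kappa})$ at carefully re-chosen parameters $\kappa_{\eps,l}$, starting from the crude $\eps\log(1/\eps)$ bound of \cref{L^2_lem_pre_bd}. To repair your argument you would have to replace the measure-theoretic selection by this interaction analysis, since the quantity you need to control, $\phi_0(x_0)\int_{C_{\eps,\kappa}}u_{\eps,\kappa}$, cannot be made $o(\eps)$ by varying $\kappa$ away from resonance.
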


In view of our results in \cite{MS2} we expect the restriction $\alpha>1/3$ to be unavoidable to get the bound \eqref{eq_main_technical}
for the specific parameters $\kappa_\eps$ that we use.
We do not know, how relevant the restriction $\alpha<\alpha_0$ really is.
In our arguments it is used to approximate eigenvalues and eigenfunctions efficiently. 
It seems that our proof should work with $\alpha_0=1/2$. 
We decided not to keep track of this too carefully anymore towards the end of the proof to make the argument not even more technical than it already is.
One way in which the assumption on $\alpha < \alpha_0$ arises is that we can locate eigenvalues on $\Sigma_{\eps,\kappa}$ corresponding to the Dirichlet spectrum of $C_{\eps,\kappa}$ up to scale $\eps^{3\alpha/2 +1/2}$.
On the other hand, the gap between two consecutive Dirichlet eigenvalues of $C_{\eps,\kappa}$ is on scale $\eps^{2 \alpha}$.
Therefore, it is useful to assume at least $\alpha<1$.
This allows us to treat things very much as if the spectrum of the hyperbolic cusp were discrete, although it is crucial for our construction that the spectrum is in fact continuous.
In a similar fashion the assumption on $k$ is very non-sharp. 
It should be easy to replace it at least by $k>1$.

%%%%%%%%%%%%%%%%%%%%%%%%%%%%%%%%%%%%%%%%%%%%%%
\subsection{Some ideas in the proof of \cref{thm_main_technical}}  \label{sec_ideas}
%%%%%%%%%%%%%%%%%%%%%%%%%%%%%%%%%%%%%%%%%%%%%%
The heart of our proof consists of two parts that we try to illuminate a bit in this section.
For simplicity, let us assume for the moment that we attach a cross cap to $\Sigma$ close to a point $x_0 \in \Sigma$, and that $\lambda_1(\Sigma)$ is simple.
Moreover, we also assume that the up to sign unique normalized first $\lambda_1(\Sigma)$-eigenfunction $\phi_0$ satisfies
\begin{equation} \label{eq_obs}
\phi_0(x_0)\neq 0.
\end{equation}
Of course, since we assume $\lambda_1(\Sigma)$ to be simple we could always choose $x_0$ such that $\phi_0(x_0)=0$, which makes it much easier to obtain the conclusion from \cref{thm_main} (see e.g.\ \cite[Theorem 1.1]{MS2}).
However, most of the main problems and ideas needed for the general case are well illustrated in the situation described above.

A rough summary of the two main steps is as follows.
The first one is to locate eigenvalues on $\Sigma_{\eps,\kappa}$, that are contained in $[\delta,\Lambda_{K+1}(\Sigma)-\delta]$ for some small $\delta>0$, relative to $\lambda_1(\Sigma)$ or a Dirichlet eigenvalue of $M_{\eps,\kappa}$.
At the same time, we obtain good quantitative control on how the corresponding eigenfunctions on $\Sigma_{\eps,\kappa}$ can be decomposed into linear combinations of  $\phi_0$ and Dirichlet eigenfunctions on $M_{\eps,\kappa}$.
We will explain below, that the choice of the metrics on $M_{\eps,\kappa}$ is exactly in such a way that we can locate those eigenvalues of $\Sigma_{\eps,\kappa}$ 
corresponding to Dirichlet eigenvalues of $M_{\eps,\kappa}$ very efficiently on a scale below $\eps$.
On the other hand, there is a serious obstacle originating from \eqref{eq_obs}.
In general, because of \eqref{eq_obs}, we can locate the corresponding eigenvalue on $\Sigma_{\eps,\kappa}$ only up to scale $\eps \log(1/\eps)$.
The main idea then, which is strongly inspired by \cite{fs}, is that there has to be a choice of $\kappa$ depending on $\eps$ for which there is some interaction of the two different types of eigenvalues.\footnote{It is easy to arrange this in such a way that we can observe this for the first eigenvalue of $\Sigma_{\eps,\kappa}$.}
Here, by interaction we mean that the corresponding eigenfunction has some amount of its $L^2$-norm concentrated on $\Sigma \setminus B_{\eps^k}$, but also some amount on $M_{\eps,\kappa}$.
In other words, the picture of having eigenvalues of two different types is not very accurate in some regime of parameters $\kappa$, while it is actually very accurate near $\kappa_0$ and $\kappa_1$.
In particular, at $\kappa_0$ the first eigenfunction has to be very concentrated on $M_{\eps,\kappa_0}$, and
at $\kappa_1$ the first eigenfunction has to be very concentrated on $\Sigma \setminus B_{\eps^k}$.
For this parameter it is then easy to locate the first eigenvalue very precisely in relation to $\lambda_0(M_{\eps,\kappa})$
thanks to the good convergence rate of these types of eigenvalues.
This part of the argument is quite lengthy and technical.
The reason for this is twofold.
On the one hand, we need to obtain quite strong quantitative control of the decomposition of eigenfunctions for the second step.
On the other hand, we have to deal with an unbounded number of Dirichlet-eigenvalues on $M_{\eps,\kappa}$ close to $\lambda_0(M_{\eps,h})$.
However, the problem that remains to be overcome turns out to be much more subtle.
Namely, in a second step we still have to show that the interaction phenomenon can only occur when $\lambda_0(M_{\eps,\kappa})$ and $\lambda_1(\Sigma)$ are very close to each other.
This is one of the key difficulties that is not present in \cite{fs}.
We obtain this by means of a subtle iteration argument (across scales) successively improving our understanding on smaller scales.
One of the key computations here is already contained in the proof of \cref{prop_first_simple}.
This computation, which heavily relies on some special properties of the cross caps $M_{\eps,\kappa}$,
suggests
 that if the gap $\lambda_2(\Sigma_{\eps,\kappa}) - \lambda_1(\Sigma_{\eps,\kappa})$ is small one can hope\footnote{This gap is only the enumerator of the error term not present in \cref{prop_first_simple}, which is why the actual argument is much more difficult than the proof of \cref{prop_first_simple}.}
 to show that also $\lambda_1(\Sigma)-\lambda_1(\Sigma_{\eps,\kappa})$ is very small.
We achieve this final step by building a test function for the first Neumann eigenvalue
of $\Sigma \setminus B_{\eps^k}(x_0)$\footnote{This is very close to $\lambda_1(\Sigma)$, see \cref{sec_neu}.}
out of the first two eigenfunctions on $\Sigma_{\eps,\kappa}$.
Controlling the error terms in this computation heavily relies on the good approximate solutions to the eigenvalue equation constructed in \cref{sec_quasimodes} and the quantitative decomposition of eigenfunctions, that we obtain in \cref{sec_conc}.
Because of the specific form of the error terms arising the estimate that we obtain this way turns out to be useful only on very specific scales of the interaction phenomenon.
By carefully adjusting $\kappa$, adopted to different ratios of concentration, this allows us to iteratively improve the size of the gap $\lambda_1(\Sigma)-\lambda_0(M_{\eps,\kappa})$ across scales until it is of size $o(\eps)$, while at the same time we keep control on the distance of the first eigenvalue to $\lambda_0(M_{\eps,\kappa})$.

The motivation for our construction stems from \cite[Theorem 1.6]{MS2}, which gives strong information what properties the metric on the attached 
cross cap should have, if one wants to use parameters $\kappa$ for which $\lambda_0(M_{\eps,\kappa})$ and $\lambda_1(\Sigma)$ are close to each other.
In general the rate of convergence in this range can in general not be expected to be any better than
\begin{equation} \label{eq_ideas_approx}
a_{\eps,\kappa,0}= - \int_{\partial M_{\eps,\kappa}} \partial_\nu \psi_{\eps,\kappa,0} \, d \mathcal{H}^1,
\end{equation}
where $\psi_{\eps,\kappa,0}$ is a non-negative, normalized first Dirichlet eigenfunction on $M_{\eps,\kappa}$ and $\nu$ the outward pointing normal vector field along $\partial M_{\eps,\kappa}$.
In \cite{MS2} this was proved if the attached cross cap carries the product metric that collapses along the fibers.
In that case the term is of order $\eps^{1/2}$ resulting in the sharp convergence rate $\eps^{1/2}$ for parameters as above.
Also note that $\eps^{1/2}$ is somewhat the natural scaling of the problem as long as the metric on the collapsing part resembles a model
with an isolated eigenvalue at $\lambda_0$.
At the same time, $a_{\eps,\kappa,0}$ is also the scale up to which we are able to locate eigenvalues with eigenfunction that have some $L^2$-norm concentrated on $M_{\eps,\kappa}$.

Motivated by the remarks above, our construction starts from the well-known observation that the standard hyperbolic cusp
$$
\mathcal{C}=\{z = x+iy  \in \mathbb{H}^2  \ : \  y\geq 1\} / (z \mapsto z+1),
$$ 
which has finite area, and continuous spectrum $[1/4,\infty)$,
admits a generalized $\lambda_0$-eigenfunction that lies in $L^1(\mathcal{C})$ but not in $L^2(\mathcal{C})$ (hence the word generalized).
In particular, the $L^2$-normalized $\lambda_0$-eigenfunctions of a compact exhaustion of $\mathcal{C}$ have $L^1$-norm tending to zero\footnote{Note that, by integration by parts, the $L^1$-norm corresponds to \eqref{eq_ideas_approx} up to a multiplicative constant.}.
The truncation of the cusp in our construction of the surfaces $\Sigma_{\eps,\kappa}$ above is chosen in exactly such a way that the failure of the generalized $L^1$-normalized eigenfunction to be in $L^2$
can be observed on a specified scale.

An important consequence of this is that one can find very good approximate eigenfunctions on $\Sigma_{\eps,\kappa}$ by extending Dirichlet eigenfunctions appropriately (see \cref{sec_quasimodes}).
More precisely, the scale on which these will fail to solve the eigenvalue equation is (up to sign and multiplicative constant) precisely given by $a_{\eps,\kappa,0} \sim \eps^{3\alpha/2+1/2}$
if $\psi_{\eps,\kappa}$ is an
$L^2$-normalized Dirichlet eigenfunction on $M_{\eps,\kappa}$.
For $\alpha>1/3$ this will be of size $o(\eps)$, so that we have very good control (in comparison with the area term on scale $\eps$)
on the convergence rate of eigenvalues corresponding to Dirichlet eigenfunctions on $C$.

%%%%%%%%%%%%%%%%%%%%%%%%%%%%%%%%%%%%%%%%%%%%%%
\section{Spectrum and eigenfunctions of the truncated hyperbolic cusp}  \label{sec_cyl}
%%%%%%%%%%%%%%%%%%%%%%%%%%%%%%%%%%%%%%%%%%%%%%
In this section we discuss the spectrum and eigenfunctions of the cylinders and cross caps that we attach to the initial surface $\Sigma$.
Since we scale down the metric in the fiber direction, mainly the rotationally symmetric part of the spectrum will be relevant for us.

\smallskip

For $R>0$ fixed, we write 
$$
C = \Sph^1 \times [1,R]
$$
endowed with the metric 
$$
\frac{1}{\kappa^2 y^2} (d \theta^2 + dy^2).
$$

Let $D$ be given by two copies $C_1$ and $C_2$ of $C$ glued to each other along the boundary component $\{y=R\}$.
On $D$, there is a fixed-point free, isometric involution $\tau$ given by $C_1 \ni (\theta , y) \mapsto (\theta + \pi, y) \in C_2$, where we identify $\Sph^1 = \IR/ 2 \pi \IZ$.
We denote by 
$$
M =D/ \tau
$$
the corresponding quotient that has the topology of a cross cap (also called M{\"o}bius strip).
The involution $\tau$ induces an involution $\tau^*$ on $L^2(M)$, which splits the latter into the $+1$ and the $-1$ eigenspaces corresponding
to even respectively odd functions with respect to $\tau$ on $D$.
This in turn implies that any eigenvalue problem on $D$ splits into the two separate corresponding eigenvalue problems on $C$.
Along $\{y=R\}$ the first of these has additional Dirichlet boundary conditions (corresponding to the $-1$ eigenspace of $\tau^*$) and the second one 
additional Neumann boundary conditions (corresponding to the $+1$-eigenspace of $\tau^*$)\footnote{Note that the eigenfunctions are at least $C^{1,\alpha}$ since the metric is Lipschitz.}.
Finally, note that the latter one corresponds to the corresponding eigenvalue problem on $M$, since the lifted functions are exactly the even functions.

\smallskip

Next, we note that for $t \in \IR$ the functions
$$
f_t(\theta,y) = \sin(t/\kappa \log y) y^{1/2}
$$ 
and 
$$
g_t(\theta,y) = \cos(t/\kappa \log y) y^{1/2}
$$
are both solutions to the eigenvalue equation $(\Delta - \lambda)u=0$ on $C$ with
$$
\lambda_t = \frac{\kappa^2}{4}+t^2.
$$
By the uniqueness of solutions to the corresponding ODE, it follows that any rotationally symmetric eigenfunction on $C$ with eigenvalue at least $\kappa^2/4$ has to 
be a linear combination of $f_t$ and $g_t$ for some $t \in \IR$.
Also note that, since $f_{-t}=-f_t$ and $g_{-t}=g_{t}$, it is sufficient to consider $t \in [0,\infty)$.

\smallskip

We denote by $(\lambda_l(C))_{l \geq 0}$ the Dirichlet eigenvalues and by
$(\mu_l(C))_{l \geq 0}$ the Neumann eigenvalues of the Laplacian on $C$.
Moreover, we write $(\lambda_l^0(C))_{l \geq 0}$ for the Dirichlet eigenvalues and
$(\mu_l^0(C))_{l \geq 0}$ for the Neumann eigenvalues of the Laplacian acting on rotationally symmetric 
functions on $C$.
We use analogous notation for eigenvalue problems on $M$.

\begin{lemma} \label{lem_spec_cyl}
For $l \geq 0$, we have that 
$$
\lambda_l^0(C) =\frac{\kappa^2}{4} + (l+1)^2\frac{\kappa^2 \pi^2}{\log^2R}.
$$
for the rotationally symmetric part of the Dirichlet spectrum.
\end{lemma}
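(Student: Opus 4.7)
The plan is to reduce the problem to a one-dimensional ODE in $y$ and then solve it explicitly using the functions $f_t, g_t$ already introduced above.

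First, I observe that a rotationally symmetric function $u(y)$ has Laplacian $\Delta u=-(\kappa y)^2 u''(y)$. Indeed, with $\sqrt{\det g}=(\kappa y)^{-2}$ and $g^{yy}=(\kappa y)^2$, the product $\sqrt{\det g}\, g^{yy}\equiv 1$, so
\[
\Delta u=-\frac{1}{\sqrt{\det g}}\,\partial_y\bigl(\sqrt{\det g}\, g^{yy}\, u'(y)\bigr)=-(\kappa y)^2 u''(y).
\]
Thus the eigenvalue equation $\Delta u=\lambda u$ becomes the Euler-type ODE $u'' + \lambda/(\kappa y)^2\, u = 0$, which under the substitution $s=\log y$ reduces to a constant-coefficient equation with characteristic exponents $r=\tfrac12 \pm \tfrac12 \sqrt{1-4\lambda/\kappa^2}$.

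Second, I rule out eigenvalues $\lambda \leq \kappa^2/4$ by direct case analysis. For $\lambda<\kappa^2/4$ the general solution is a real linear combination of the power functions $y^{1/2 + s/\kappa}$ and $y^{1/2 - s/\kappa}$, where $s>0$ is determined by $\lambda$; both basis functions equal $1$ at $y=1$, so the Dirichlet condition there forces proportionality to $y^{1/2+s/\kappa}-y^{1/2-s/\kappa}$, which vanishes at $y=R$ only when $s=0$. At $\lambda=\kappa^2/4$, the general solution is spanned by $y^{1/2}$ and $y^{1/2}\log y$, and the Dirichlet condition at $y=1$ selects the second basis function, which does not vanish at $y=R$. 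Hence no rotationally symmetric Dirichlet eigenvalue lies in $(0, \kappa^2/4]$.

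Third, for $\lambda=\kappa^2/4+t^2$ with $t>0$, I invoke the observation from the paper that every rotationally symmetric solution is a linear combination of $f_t$ and $g_t$. Since $g_t(1)=1$ and $f_t(1)=0$, the Dirichlet condition at $y=1$ forces $u$ to be a multiple of $f_t$, and the remaining condition $f_t(R)=R^{1/2}\sin((t/\kappa)\log R)=0$ yields exactly $t=(l+1)\kappa\pi/\log R$ for $l=0,1,2,\dots$. Substituting back gives the stated formula, and the monotonicity of $t\mapsto \kappa^2/4+t^2$ on $t>0$ ensures the enumeration by $l$ matches the ordering of the spectrum. There is no serious obstacle here; the only mildly delicate point is the separate verification that no eigenvalues occur at or below $\kappa^2/4$, which is why I treat that regime explicitly before extracting the main family.
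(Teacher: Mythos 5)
Your proof is correct and follows essentially the same route as the paper: reduce to the Euler ODE $u''+\lambda(\kappa y)^{-2}u=0$, note that the Dirichlet condition at $y=1$ selects the $f_t$ branch, and read off $t=(l+1)\kappa\pi/\log R$ from the zeros of the sine. The one step you handle differently is the exclusion of eigenvalues in $(0,\kappa^2/4]$: the paper observes that $f_{t_0}$ with $t_0=\kappa\pi/\log R$ is a positive Dirichlet eigenfunction, hence the ground state, which forces $\lambda_0^0(C)>\kappa^2/4$ and lets it invoke the earlier classification of solutions with $\lambda\geq\kappa^2/4$; you instead do a direct case analysis of the characteristic exponents, including the borderline case $\lambda=\kappa^2/4$ where the solution space is spanned by $y^{1/2}$ and $y^{1/2}\log y$. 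Your version is marginally more self-contained, since the paper's statement that solutions with $\lambda\geq\kappa^2/4$ are combinations of $f_t$ and $g_t$ degenerates at $t=0$ (where $f_0\equiv 0$ and the logarithmic solution is missed), a gap the paper's positivity argument silently sidesteps and your explicit treatment closes.
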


\begin{proof}
For any $t \in [0,\infty)$, the function $f_t$ vanishes on $\{y=1\}$.
If we take $t_0 = \kappa \pi / \log R$, the function $f_{t_0}$ vanishes also on $\{y=R\}$ and is positive in the interior of $C$.
In particular, we find that $\lambda_0(C) = \kappa^2/4 + t_0^2$.
As explained above, this implies that any rotationally symmetric Dirichlet eigenfunction on $C$ is given by some $f_t$.
Consequently, since $f_t(R)=0$ if and only if $t = \frac{(l+1)\kappa \pi}{\log R}$ for $l \in \IN$, the assertion follows.
\end{proof}

\begin{lemma} \label{lem_gr_st_cc}
We have that
$$
\frac{\kappa^2}{4} \leq \lambda_0(M) \leq \frac{\kappa^2}{4} + \frac{\kappa^2 \pi^2}{\log^2R} 
$$
\end{lemma}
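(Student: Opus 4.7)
The plan is to handle the two inequalities separately, both via the Rayleigh characterization of $\lambda_0(M)$ combined with simple one-variable manipulations in the $y$-coordinate.

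For the upper bound I would use as a test function $f_{t_0}$ with $t_0 := \kappa \pi/\log R$, i.e.\ $f_{t_0}(\theta,y) = \sin((t_0/\kappa)\log y)\, y^{1/2}$, which by \cref{lem_spec_cyl} (for $l=0$) is the first rotationally symmetric Dirichlet eigenfunction on $C$, with eigenvalue $\kappa^2/4 + \kappa^2\pi^2/\log^2 R$. Since $f_{t_0}$ is rotationally symmetric, assigning the same formula on both copies $C_1$ and $C_2$ yields a $\tau$-invariant function on $D$, which I can descend to a function on $M$. The two pieces match continuously along $\{y=R\}$ because $f_{t_0}(R)=0$, so the extension is Lipschitz; together with the vanishing at $\{y=1\}$ this puts it in $H^1_0(M)$. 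Its Rayleigh quotient is exactly $\lambda_0^0(C) = \kappa^2/4 + \kappa^2\pi^2/\log^2 R$, giving the upper bound.

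For the lower bound I would apply a Hardy inequality in $y$. I work on the double cover $D$, where a function on $M$ lifts to a $\tau$-invariant function on $D$, and $\lambda_0(M)=\lambda_0(D)$ holds because the positive ground state on $D$ is unique up to scaling and $\tau$ is an isometry, forcing the ground state to be $\tau$-invariant. In coordinates one has
$$\int_D |\nabla u|_g^2\, dA_g = \int_{\Sph^1}\!\!\int \bigl(u_\theta^2 + u_y^2\bigr)\, dy\, d\theta, \qquad \int_D u^2\, dA_g = \frac{1}{\kappa^2}\int_{\Sph^1}\!\!\int \frac{u^2}{y^2}\, dy\, d\theta,$$
where the inner $y$-integrals run over the two copies $C_1$ and $C_2$. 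A Dirichlet function $u$ vanishes at $y=1$ on each copy, so expanding $\int_1^R \bigl(u_y - u/(2y)\bigr)^2\, dy \geq 0$ and integrating by parts yields the familiar Hardy inequality $\int_1^R u_y^2\, dy \geq \tfrac14 \int_1^R u^2/y^2\, dy + u(R)^2/(2R)$ on each copy. Summing over both copies, discarding the nonnegative boundary term at $y=R$ and the $u_\theta^2$ contribution, and integrating in $\theta$, I get $\int_D |\nabla u|_g^2\, dA_g \geq (\kappa^2/4)\int_D u^2\, dA_g$, so $\lambda_0(M) = \lambda_0(D) \geq \kappa^2/4$.

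I do not expect any genuine obstacle here. The only mild subtleties are the identification $\lambda_0(M)=\lambda_0(D)$ and the corner of $f_{t_0}$ across $\{y=R\}$, but in both cases Lipschitz regularity and the standard Rayleigh characterization are enough.
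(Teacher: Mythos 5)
Your proof is correct, but it is genuinely different from the one in the paper. The paper first reduces the Dirichlet problem on $M$ to the mixed problem on $C$ (Dirichlet along $\{y=1\}$, Neumann along $\{y=R\}$) via the $\tau$-symmetry, and then finds the ground state \emph{explicitly}: the map $t \mapsto \left.\partial_y f_t\right|_{y=R}$ is continuous, positive for small $t>0$ and negative at $t=\kappa\pi/\log R$, so by the intermediate value theorem there is $t_0 \in (0,\kappa\pi/\log R)$ for which $f_{t_0}$ satisfies the Neumann condition at $y=R$; since $f_{t_0}$ then has a sign, it must be the $\lambda_0(M)$-eigenfunction, and both inequalities follow at once from $\lambda_0(M)=\kappa^2/4+t_0^2$. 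You instead prove the two inequalities by separate variational arguments: the upper bound by reflecting the first Dirichlet eigenfunction of $C$ across $\{y=R\}$ to get a Lipschitz test function in $W^{1,2}_0(M)$ with Rayleigh quotient exactly $\lambda_0^0(C)$, and the lower bound by the Hardy inequality in the $y$-variable after lifting to $D$ (for which you only need the trivial direction $\lambda_0(M)\geq\lambda_0(D)$, so the uniqueness-of-the-ground-state discussion is dispensable). Your route is more robust — the Hardy argument is the standard reason the hyperbolic cusp has spectrum bounded below by $\kappa^2/4$ and would survive perturbations of the truncation — but it does not identify the eigenfunction. The paper's construction of the ground state as $f_{t_0}$ with $0<t_0<\kappa\pi/\log R$ is reused later, e.g.\ in the normal-derivative estimate of \cref{spec_cross_cor_approx_order}, so for the purposes of the paper the explicit approach carries additional information that a purely variational proof would have to recover separately.
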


\begin{proof}
As explained above, a rotationally symmetric Dirichlet eigenfunction on $M$ corresponds to a rotationally symmetric eigenfunction on $C$ with Dirichlet boundary
conditions along $\{y=1\}$ and Neumann boundary conditions along $\{y=R\}$.
Therefore, since the metric on $C$ is conformal to the product metric, $f_t$ is such an eigenfunction if and only if
$$
\left. \partial_y f_t \right|_{y=R}
= 0.
$$
The function $t \mapsto \left. \partial_y f_t \right|_{y=R} $ is continuous.
Moreover, it is positive for $t>0$ very small and negative for $t=\kappa \pi / \log R$.
It follows that there is $0<t_0<\kappa \pi / \log R$ such that $f_{t_0}$ satisfies the Neumann boundary conditions along $\{y=R\}$.
Since $t_0<\kappa \pi / \log R$ it follows that $f_{t_0}$ is non-negative, which in particular implies that it has to be a $\lambda_0(M)$-eigenfunction.
\end{proof}

We have the following general comparison principle for the rotationally symmetric part of the spectrum on $M$. 

\begin{lemma} \label{lem_basic_spec_comp}
For any $l \in \IN$, the rotationally symmetric Dirichlet and Neumann eigenvalues satisfy
$$
\mu_l^0(M) \leq \lambda_l^0(M) \leq \mu_{l+1}^0(M).
$$
\end{lemma}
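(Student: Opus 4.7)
The plan is to apply the Courant--Fischer min-max principle restricted to the subspace of rotationally symmetric functions, where the boundary trace at $\partial M$ collapses to a one-dimensional object.

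First I would set up the functional-analytic framework. The $\Sph^1$-action by rotation in the $\theta$ variable is isometric and commutes with $\tau$, so it descends to an isometric action on $M$ that commutes with the Laplacian. Consequently $L^2(M)$ splits into isotypic components, and the closed subspace $\mathcal{H}^0 \subset L^2(M)$ of rotationally invariant functions is preserved by the Laplacian under either boundary condition. I would denote by $\mathcal{H}^0_D = \mathcal{H}^0 \cap H^1_0(M)$ and $\mathcal{H}^0_N = \mathcal{H}^0 \cap H^1(M)$ the corresponding form domains; compactness of $M$ and standard elliptic theory give discrete spectra $\{\lambda_l^0(M)\}$ and $\{\mu_l^0(M)\}$ together with the variational characterizations
\begin{equation*}
\lambda_l^0(M) = \min_{\substack{V \subset \mathcal{H}^0_D \\ \dim V = l+1}} \max_{0 \neq u \in V} \frac{\int_M |\nabla u|^2\, dA}{\int_M u^2 \, dA}, \qquad \mu_l^0(M) = \min_{\substack{V \subset \mathcal{H}^0_N \\ \dim V = l+1}} \max_{0 \neq u \in V} \frac{\int_M |\nabla u|^2\, dA}{\int_M u^2 \, dA}.
\end{equation*}
The lower inequality $\mu_l^0(M) \leq \lambda_l^0(M)$ then follows at once from the inclusion $\mathcal{H}^0_D \subset \mathcal{H}^0_N$: every admissible subspace in the Dirichlet min-max is admissible in the Neumann one.

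For the upper inequality $\lambda_l^0(M) \leq \mu_{l+1}^0(M)$, the key observation is that a rotationally symmetric $H^1$-function on $M$ has constant trace on $\partial M$ (a single circle on which $\Sph^1$ acts transitively), so the boundary trace map
\begin{equation*}
T \colon \mathcal{H}^0_N \to \IR, \qquad T(u) = u|_{\partial M},
\end{equation*}
lands in a one-dimensional space and has kernel exactly $\mathcal{H}^0_D$. I would then take $V \subset \mathcal{H}^0_N$ of dimension $l+2$ spanned by rotationally symmetric Neumann eigenfunctions associated with $\mu_0^0(M),\dots,\mu_{l+1}^0(M)$, on which the Rayleigh quotient is bounded by $\mu_{l+1}^0(M)$. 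By rank-nullity, $V \cap \mathcal{H}^0_D = \ker(T|_V)$ has dimension at least $l+1$, and using it as a competitor in the Dirichlet min-max for $\lambda_l^0(M)$ yields the claim.

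The whole argument is essentially a one-line application of min-max once the codimension-one statement for the boundary trace is in place, so the ``main obstacle'' is really only to justify the reduction to the invariant subspace and to check that $\partial M$ is a single circle on which the rotation descends transitively. Both points are direct from the explicit construction $M = D/\tau$ of \cref{sec_cyl}, so I expect the proof to be short.
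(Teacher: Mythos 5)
Your proof is correct and uses the same key idea as the paper: since $\partial M$ is a single rotationally symmetric circle, the boundary trace of a rotationally invariant $H^1$-function is a single scalar, so passing from the span of the first $l+2$ Neumann eigenfunctions to Dirichlet-admissible test functions costs only one dimension. The paper phrases this by producing one test function in that span which vanishes on $\partial M$ and is orthogonal to the first $l$ Dirichlet eigenfunctions, whereas you exhibit an $(l+1)$-dimensional trial subspace for the min-max; these are equivalent formulations of the same argument.
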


\begin{proof}
The first of these inequalities always holds by the min-max principle.
To prove the second inequality, we take a non-trivial linear combination $w$ of the first $(l+2)$ rotationally symmetric Neumann eigenfunctions, which is orthogonal to the first $l$ rotationally symmetric Dirichlet eigenfunctions and vanishing along $\partial M$. 
We can do this for dimensional reasons using that $\partial M$ is connected and rotationally symmetric.
We then find that
\begin{equation*}
\lambda_l^0 (M) \leq \frac{\int_M |\nabla w|^2}{\int_M |w|^2} \leq \mu_{l+1}^0(M),
\end{equation*}
which establishes the claim.
\end{proof}

We now compute the rotationally symmetric part of the Neumann eigenvalues of $M$ explicitly.

\begin{lemma} \label{lem_spec_neu}
For $l \geq 1$, we have that
$$
\mu_l^0(M) =\mu_l^0(C)= \frac{\kappa^2}{4} + l^2\frac{\kappa^2 \pi^2}{\log^2R}
$$
\end{lemma}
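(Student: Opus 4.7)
The plan is to reduce the claim to an ODE computation on $C$ and solve it explicitly.

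First, I would establish $\mu_l^0(M) = \mu_l^0(C)$ using the unfolding already set up in the section. Pulling a rotationally symmetric Neumann eigenfunction $u$ on $M$ back along the double cover $D \to M$ gives a $\tau$-invariant eigenfunction on $D$; the $C^{1,\alpha}$-regularity across the interior seam $\{y=R\}$ of $D$, combined with $\tau$-invariance, forces $\partial_y u|_{y=R} = 0$ on either copy of $C$. Conversely, any rotationally symmetric Neumann eigenfunction on $C$ (Neumann at both endpoints) extends to a $\tau$-invariant Neumann eigenfunction on $D$, which descends to $M$. This identifies the two rotationally symmetric Neumann spectra with multiplicity.

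Second, I would solve the rotationally symmetric Neumann problem on $C$ explicitly for $\lambda > \kappa^2/4$. By the ODE uniqueness noted above, any rotationally symmetric eigenfunction with such $\lambda$ has the form $u = a g_t + b f_t$ where $\lambda = \kappa^2/4 + t^2$ and $t > 0$. A direct calculation gives
\[
\partial_y u = y^{-1/2}\bigl[(a/2 + bt/\kappa)\cos((t/\kappa)\log y) + (b/2 - at/\kappa)\sin((t/\kappa)\log y)\bigr].
\]
Imposing $\partial_y u = 0$ at $y=1$ yields $a = -2bt/\kappa$, and substituting into the condition at $y=R$ reduces it to $b(1/2 + 2t^2/\kappa^2)\sin((t/\kappa)\log R) = 0$. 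Since the bracket is strictly positive, non-triviality forces $(t/\kappa)\log R = l\pi$ for some $l \geq 1$, i.e.\ $t = l\kappa\pi/\log R$, and each $l$ yields a one-dimensional eigenspace.

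Third, I would rule out rotationally symmetric Neumann eigenvalues in the range $(0,\kappa^2/4]$, so that the eigenvalues found in the previous step are in fact $\mu_l^0(C)$ for $l \geq 1$. For $\lambda \in (0, \kappa^2/4)$ the radial Euler ODE has two real exponents $s_\pm = 1/2 \pm \sqrt{1/4 - \lambda/\kappa^2}$, and the $2\times 2$ Neumann system at $y=1$ and $y=R$ has determinant $s_+ s_-(R^{s_--1} - R^{s_+-1}) \neq 0$, admitting only the trivial solution; the borderline $\lambda = \kappa^2/4$ is excluded analogously using the basis $y^{1/2}, y^{1/2}\log y$. Together with the trivial eigenvalue $\mu_0^0(C) = 0$ from the constants, this accounts for the entire rotationally symmetric Neumann spectrum and gives the claimed formula. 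I expect no real obstacle: the argument is entirely a separation-of-variables computation together with the symmetry reduction to $C$ already in place, and the only care needed is to track signs and the sine/cosine factors when imposing the two boundary conditions simultaneously.
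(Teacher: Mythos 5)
Your proof is correct and its core is the same as the paper's: unfold the Neumann problem on $M$ to the doubled cylinder and use the $\tau$-symmetry to identify it with the Neumann problem on $C$, then solve the rotationally symmetric problem explicitly via $af_t+bg_t$, obtaining the condition $a=-2bt/\kappa$ at $y=1$ and hence $\sin((t/\kappa)\log R)=0$ at $y=R$. The one place you diverge is in excluding rotationally symmetric Neumann eigenvalues in $(0,\kappa^2/4]$: the paper gets this from the comparison chain $\mu_1^0(C)=\mu_1^0(M)\geq\lambda_0^0(M)\geq\kappa^2/4$ (via \cref{lem_gr_st_cc} and \cref{lem_basic_spec_comp}), whereas you solve the Euler ODE directly in that range and check the $2\times 2$ Neumann determinant is nonzero. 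Your route is self-contained and, by treating the basis $y^{1/2},\,y^{1/2}\log y$ at the borderline value $\lambda=\kappa^2/4$, it handles explicitly a case the paper's one-line reduction to $af_t+bg_t$ passes over; the paper's route is shorter because it reuses lemmas already proved.
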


\begin{proof}
As explained earlier, it suffices to prove the second equality.
Since $\mu_1^0(C) = \mu_1^0(M)\geq \lambda_0^0(M) \geq \kappa^2/4$, by \cref{lem_gr_st_cc} and \cref{lem_basic_spec_comp}, it suffices to consider functions $af_t + bg_t$ with $a,b \in \IR$ and $t \geq 0$.
Now the function $a f_t + b g_t $ satisfies the Neumann boundary condition along $\{y=1\}$ if and only if
$2 at/\kappa =- b$.
Therefore, the Neumann boundary condition along $\{y=R\}$ implies that $\sin(t \log R / \kappa)=0$, which easily implies the assertion
\end{proof}

From \cref{lem_spec_cyl} and \cref{lem_spec_neu} we immediately obtain the following corollary.

\begin{cor} \label{spec_comp_dir_neu}
We have that
$$
\mu_1^0(C) \geq \lambda_0^0(C).
$$
\end{cor}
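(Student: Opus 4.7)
The plan is to prove this directly by comparing the explicit formulas supplied by \cref{lem_spec_cyl} and \cref{lem_spec_neu}. Indeed, evaluating the Dirichlet formula at $l=0$ gives
$$
\lambda_0^0(C) = \frac{\kappa^2}{4} + \frac{\kappa^2 \pi^2}{\log^2 R},
$$
while evaluating the Neumann formula at $l=1$ gives
$$
\mu_1^0(C) = \frac{\kappa^2}{4} + \frac{\kappa^2 \pi^2}{\log^2 R}.
$$
Hence $\mu_1^0(C) = \lambda_0^0(C)$, and in particular $\mu_1^0(C) \geq \lambda_0^0(C)$. There is no obstacle — the result is a direct numerical comparison of the two formulas already established in the preceding lemmas. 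The only thing to note is that the corollary is in fact an equality; the inequality form is presumably stated because only one direction will be needed in later arguments (for example, to compare Dirichlet and Neumann ground state eigenvalues when building test functions across the cylinder).

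If one wanted a slightly more conceptual presentation, one could point out the underlying reason: among the solutions $f_t, g_t$ to the rotationally symmetric eigenvalue equation, the Dirichlet eigenfunction $f_{t_0}$ on $C$ uses $t_0 = \kappa \pi / \log R$ because $f_t$ automatically vanishes on $\{y=1\}$ and the condition on $\{y=R\}$ forces $\sin(t_0 \log R / \kappa) = 0$. For the first non-trivial Neumann rotational eigenfunction, the proof of \cref{lem_spec_neu} shows that the spectral parameter satisfies exactly the same equation $\sin(t\log R/\kappa) = 0$, which yields the same value of $t$ and therefore the same eigenvalue. Both approaches give the same one-line proof.
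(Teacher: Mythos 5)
Your proof is correct and is exactly the paper's argument: the corollary is stated as an immediate consequence of \cref{lem_spec_cyl} and \cref{lem_spec_neu}, and plugging in $l=0$ and $l=1$ respectively gives the equality $\mu_1^0(C)=\lambda_0^0(C)=\kappa^2/4+\kappa^2\pi^2/\log^2 R$, hence the stated inequality.
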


Finally, we need a lower bound on the $L^2(C)$-norm for an eigenfunction $f_{t_l}$.

\begin{lemma} \label{spec_l2_bd}
We have that
$$
\int_C |f_{t_l}|^2 \geq \frac{\pi}{2 \kappa^2} \log R
$$
for any $\lambda_l^0(C)$ eigenfunction $f_{t_l}$.
\end{lemma}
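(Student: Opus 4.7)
The plan is a direct computation: unpack the area form of the hyperbolic metric, perform the substitution $u=\log y$, and invoke the explicit value of $t_l$ from \cref{lem_spec_cyl} to reduce the calculation to integrating $\sin^2$ over an integer multiple of half-periods.

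First I would write out the integrand. With $f_{t_l}(\theta,y) = \sin(t_l/\kappa \cdot \log y)\, y^{1/2}$ and area form $dA = \frac{1}{\kappa^2 y^2}\,d\theta\, dy$, we get
\begin{equation*}
\int_C |f_{t_l}|^2\, dA = \frac{2\pi}{\kappa^2} \int_1^R \frac{\sin^2(t_l \log y /\kappa)}{y}\, dy.
\end{equation*}
The $y^{1/2}$ factor in $f_{t_l}$ is precisely what's needed so that its square against $1/y^2$ produces the clean integrand $1/y$, making the substitution $u=\log y$ natural.

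Next I would substitute $u = \log y$, so that $du = dy/y$ and $u$ ranges over $[0, \log R]$. This turns the integral into
\begin{equation*}
\frac{2\pi}{\kappa^2}\int_0^{\log R} \sin^2\!\left(\frac{t_l}{\kappa} u\right) du.
\end{equation*}
By \cref{lem_spec_cyl}, $t_l = (l+1)\kappa \pi/\log R$, so $t_l/\kappa = (l+1)\pi/\log R$ and the argument of $\sin^2$ sweeps from $0$ to $(l+1)\pi$ as $u$ ranges over $[0,\log R]$. Since $\sin^2$ has mean value $1/2$ over any interval of length an integer multiple of $\pi$, this evaluates exactly to $\frac{2\pi}{\kappa^2}\cdot \frac{\log R}{2} = \frac{\pi \log R}{\kappa^2}$, which is twice the claimed lower bound and hence yields the result.

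There is no real obstacle here; this is essentially a one-line computation once the area form has been unpacked. The only thing to note is that the eigenvalue labelling in \cref{lem_spec_cyl} is rigged so that the oscillation $\sin(t_l\log y/\kappa)$ completes exactly $(l+1)$ half-periods on $[1,R]$ in the logarithmic variable, which is what makes the integral explicit without any remainder term.
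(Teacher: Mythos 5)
Your computation is correct, and it is essentially the paper's argument carried one step further: both proofs begin by unpacking the hyperbolic area form so that the $y^{1/2}$ factor cancels against $y^{-2}$ and the integral reduces to $\frac{2\pi}{\kappa^2}\int_1^R \sin^2((t_l/\kappa)\log y)\,\frac{dy}{y}$, and both rely on the fact that $t_l=(l+1)\kappa\pi/\log R$ makes the phase sweep exactly $l+1$ half-periods. Where you differ is the endgame: the paper settles for a lower bound by restricting to the subintervals where $|\sin|\ge 1/\sqrt{2}$ and summing their logarithmic lengths, whereas you substitute $u=\log y$ and evaluate the integral exactly, obtaining $\frac{\pi}{\kappa^2}\log R$ — twice the stated lower bound. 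Your version is cleaner, slightly sharper, and makes transparent the remark following the lemma that the bound is optimal in its dependence on $R$; the paper's sublevel-set argument buys nothing extra here, so there is no gap on either side.
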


\begin{proof}
Note that $|\sin((t_l/\kappa) \log y )| \geq 1/\sqrt{2}$ if and only if $(t_l/ \kappa) \log y  \in [(4j+1)\pi/4,(4j+3)\pi/4 ]$ for some $j \in \IN$.
Let $I_j^l:=\left[\exp\left(\frac{(4j+1)\kappa \pi}{4t_l}\right),\exp\left(\frac{(4j+3) \kappa \pi}{4t_l}\right)\right]$.
Since $t_l=(l+1) \kappa \pi / \log R$, we have that $I_0^l,\dots,I_l^l \subset [1,R]$.
Therefore, we obtain
\begin{equation*}
\begin{split}
\int_C |f_{t_l}|^2
&\geq
\frac{2 \pi}{\kappa^2}  \int_1^R |\sin( (t_l / \kappa ) \log y )|^2 y \frac{dy}{y^2}
\geq
\frac{\pi}{\kappa^2}\sum_{j=0}^{l} \int_{I_j^l} \frac{1}{y} dy
\\
& \geq
\frac{\pi}{\kappa^2}\sum_{j=0}^{l} \frac{2 \kappa \pi}{4 t_l} 
=
\frac{\pi}{\kappa^2} \sum_{j=0}^{l}  \frac{\log R}{2(l+1)}
\geq \frac{\pi}{2 \kappa^2} \log R,
\qedhere
\end{split}
\end{equation*}
\end{proof}

We would like to remark that the bound is sharp in its dependence on $R$, since the function $y \mapsto y^{1/2}$ has $L^2$-norm $\log^{1/2}R$.

We now return to the degenerating family of cylinders $C_{\eps,\kappa}$, i.e.\ $C$ with 
$R_\eps=\exp(1/\eps^\alpha)$ and metric 
$$
g_{\eps,\kappa} = \frac{1}{(\kappa y)^2} (\eps^2 d\theta^2 + dy^2).
$$
Similarly, we have the family of degenerating cross caps $M_{\eps,\kappa}$ obtained from the cylinder $C_{\eps,\kappa}$ by doubling and dividing by the action of the involution $\tau$.

Recall that we have fixed $0<\kappa_0<\kappa_1$.
First of all, note that there is some $\eps_0>0$ such that if $\eps \leq \eps_0$, then any eigenfunction on $C_{\eps,\kappa}$ with eigenvalue below $\lambda_{K+1}(\Sigma)+1$ has to be rotationally symmetric, where we denote by $K$ the multiplicity of $\lambda_1(\Sigma)$.
In fact, it follows immediately from separation of variables, that all non rotationally symmetric eigenfunctions need to have eigenvalue at least $\kappa_0^2/\eps^2$. for any $\kappa \in [\kappa_0,\kappa_1]$.
From now on, we will only consider parameters $\eps \leq \eps_0$.
In particular, this ensures that all the results from above apply to all the eigenvalues on $C_{\eps,\kappa}$ below $\lambda_{K+1}(\Sigma)+1$.
Of course, the very same discussion applies to the cross cap $M_{\eps,\kappa}$.

The boundary of $C_{\eps,\kappa}$ consists of two connected components.
We will denote these by
$\partial_1 C_{\eps,\kappa} :=\{y=1\}$ and $\partial_{R_\eps} C_{\eps,\kappa}:=\{y=R_\eps\}$.

\begin{cor} \label{spec_cusp_cor_approx_order}
There is a constant $C=C(\kappa_0,\kappa_1)$ with the following property.
Assume that $\lambda_l(C_{\eps,\kappa}) \leq \lambda_{K+1}(\Sigma)+1$ and denote by
$\psi_{\eps,\kappa,l}$
an $l$-th normalized Dirichlet eigenfunction on $C_{\eps,\kappa}$.
We have 
\begin{equation} \label{eq_spec_cusp_cor_approx_order}
\left| \int_{\partial_1 C_{\eps,\kappa}} \partial_\nu \psi_{\eps,\kappa,l} d\mathcal{H}^1\right| \leq C (l+1) \eps^{3/2 \alpha +1/2}
\end{equation}
and
\begin{equation} \label{eq_spec_cusp_cor_approx_order_2}
\left| \int_{\partial_{R_\eps} C_{\eps,\kappa}} \partial_\nu \psi_{\eps,\kappa,l} d\mathcal{H}^1\right| \leq C (l+1) \eps^{3/2 \alpha +1/2} R_\eps^{-1/2}
\end{equation}
if $\kappa \in [\kappa_0,\kappa_1]$ and $\eps \leq \eps_0$.
\end{cor}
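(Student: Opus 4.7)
The plan is to reduce to a direct computation on explicit eigenfunctions. Since all eigenvalues in the range $[0,\lambda_{K+1}(\Sigma)+1]$ are rotationally symmetric for $\eps \leq \eps_0$, and the rotationally symmetric Dirichlet spectrum on $C_{\eps,\kappa}$ (which, up to rescaling $\theta = \eps\tilde\theta$, has the same cusp metric $\tfrac{1}{(\kappa y)^2}(d\tilde\theta^2+dy^2)$ in the universal cover) is simple with eigenfunctions proportional to $f_{t_l}(\theta,y) = y^{1/2}\sin((t_l/\kappa)\log y)$ where $t_l = (l+1)\kappa\pi/\log R_\eps = (l+1)\kappa\pi\eps^\alpha$, the normalized Dirichlet eigenfunction in the $l$-th slot must be
\[
\psi_{\eps,\kappa,l} = \pm c_{\eps,\kappa,l}\, f_{t_l},
\qquad c_{\eps,\kappa,l}^{-2} = \int_{C_{\eps,\kappa}} |f_{t_l}|^2\, dv_{g_{\eps,\kappa}} .
\]
Using the volume form $dv_{g_{\eps,\kappa}} = \frac{\eps}{\kappa^2 y^2}\,d\theta\, dy$, the bound from \cref{spec_l2_bd} gives $\int_{C_{\eps,\kappa}}|f_{t_l}|^2 \geq \frac{\pi\eps}{2\kappa^2}\log R_\eps = \frac{\pi}{2\kappa^2}\eps^{1-\alpha}$, so
\[
c_{\eps,\kappa,l} \leq C(\kappa_0,\kappa_1)\,\eps^{(\alpha-1)/2}.
\]

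Next I would compute the boundary integrals explicitly. A direct differentiation gives
\[
\partial_y f_{t_l}(y) = \tfrac12 y^{-1/2}\sin((t_l/\kappa)\log y) + y^{-1/2}\tfrac{t_l}{\kappa}\cos((t_l/\kappa)\log y),
\]
which at $y=1$ equals $t_l/\kappa$ and at $y=R_\eps$ equals $(-1)^{l+1}R_\eps^{-1/2}t_l/\kappa$ since $\sin((t_l/\kappa)\log R_\eps)=0$. The unit outward normals are $\nu = -\kappa\partial_y$ at $\partial_1 C_{\eps,\kappa}$ and $\nu = \kappa R_\eps\partial_y$ at $\partial_{R_\eps} C_{\eps,\kappa}$, and the induced length measures are $\frac{\eps}{\kappa}\,d\theta$ and $\frac{\eps}{\kappa R_\eps}\,d\theta$ respectively. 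Integrating around the circle ($\int d\theta = 2\pi$) produces
\[
\left|\int_{\partial_1 C_{\eps,\kappa}} \partial_\nu f_{t_l}\,d\mathcal{H}^1\right| = \frac{2\pi\eps\,t_l}{\kappa},
\qquad
\left|\int_{\partial_{R_\eps} C_{\eps,\kappa}} \partial_\nu f_{t_l}\,d\mathcal{H}^1\right| = \frac{2\pi\eps\,t_l}{\kappa\,R_\eps^{1/2}}.
\]

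Finally, I would combine these with the normalization constant and the explicit value $t_l = (l+1)\kappa\pi\eps^\alpha$. Multiplying, the power of $\eps$ assembles as $\eps\cdot \eps^\alpha \cdot \eps^{(\alpha-1)/2} = \eps^{3\alpha/2+1/2}$, and the $(l+1)$ factor comes from $t_l$, yielding the two claimed bounds with a constant depending only on $\kappa_0$ and $\kappa_1$. There is no serious obstacle here: everything rests on the explicit ODE solution $f_t$ from \cref{sec_cyl} together with the sharp-in-$R$ lower bound \cref{spec_l2_bd}, and the only book-keeping to be careful about is the factor of $\eps$ from the fiber scaling in the metric and its cancellations in the normalization and in the length measure along the two boundary circles.
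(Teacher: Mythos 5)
Your proposal is correct and follows essentially the same route as the paper: identify the rotationally symmetric Dirichlet eigenfunction explicitly as a multiple of $f_{t_l}$, bound the normalization constant from above via the $L^2$ lower bound of \cref{spec_l2_bd} (with the extra factor of $\eps$ from the fiber scaling), and compute the normal derivative and boundary length element at each of the two boundary circles. The exponent bookkeeping $\eps^{(\alpha-1)/2}\cdot\eps^{\alpha}\cdot\eps=\eps^{3\alpha/2+1/2}$ and the extra $R_\eps^{-1/2}$ at the short boundary match the paper's computation exactly.
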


\begin{proof}
By the preceding lemma, the normalized eigenfunction is given by
$$
\left( c_{\eps,\kappa,l} \eps \right)^{-1/2} \sin((t_l/\kappa) \log y) y^{1/2}
$$
with $c_{\eps,k,l} \geq C \log R_\eps = C \eps^{-\alpha}$, where $C=C(\kappa_0,\kappa_1)$.
Along the longer boundary component $\partial_1 C_{\eps,\kappa}$ we have 
$$
\partial_\nu \left( \sin((t_l/\kappa) \log y) y^{1/2} \right) = \kappa \left. \partial_y \left( \sin((t_l/\kappa) \log y) y^{1/2} \right) \right|_{y=1} = t_l,
$$
and $|t_l| \leq C(l+1) \eps^\alpha$.
Along the shorter boundary component $\partial_{R_\eps} C_{\eps,\kappa}$ we have 
$$
\partial_\nu \left( \sin((t_l/\kappa) \log y) y^{1/2} \right)= \kappa \left. y \left( \partial_y  \sin((t_l/\kappa) \log y) y^{1/2}\right) \right|_{y=R_\eps} = t_l R_\eps^{1/2}.
$$
Therefore we find 
\begin{equation*}
\begin{split}
\left| \int_{\partial_1 C_{\eps,\kappa}} \partial_\nu \psi_{\eps,\kappa,l} d\mathcal{H}^1\right| 
&\leq
C (l+1) \left( c_{\eps,\kappa,l} \eps \right)^{-1/2} \eps^{\alpha} \eps
\\
&\leq C (l+1) \eps^{3\alpha/2+1/2}.
\end{split}
\end{equation*}
Similarly, we get
\begin{equation*}
\begin{split}
\left| \int_{\partial_{R_\eps} C_{\eps,\kappa}} \partial_\nu \psi_{\eps,\kappa,l} d\mathcal{H}^1\right| 
&\leq
C (l+1) \left( c_{\eps,\kappa,l} \eps \right)^{-1/2} \eps^{\alpha}R_{\eps}^{1/2} \eps R_\eps^{-1}
\\
&\leq C (l+1) \eps^{3\alpha/2+1/2} R_\eps^{-1/2},
\end{split}
\end{equation*}
which is exactly the assertion.
\end{proof}

In the case of a cross cap the boundary has only one connected component. 
The almost exact same computation that led to the estimate along the longer boundary component $\partial_1C_{\eps,\kappa}$ of $C_{\eps,\kappa}$ 
implies the analogous estimate for the cross caps $M_{\eps,\kappa}$.
There are some minimal changes needed for the $\lambda_0$-eigenfunction.

\begin{cor} \label{spec_cross_cor_approx_order}
There is a constant $C=C(\kappa_0,\kappa_1)$ with the following property.
Assume that $\lambda_l(M_{\eps,\kappa}) \leq \lambda_{K+1}(\Sigma)+1$ and denote by
$\psi_{\eps,\kappa,l}$
an $l$-th normalized Dirichlet eigenfunction on $M_{\eps,\kappa}$.
We have that
\begin{equation} \label{eq_spec_cross_cor_approx_order}
\left| \int_{\partial M_{\eps,\kappa}} \partial_\nu \psi_{\eps,\kappa,l} d\mathcal{H}^1\right| \leq C (l+1) \eps^{3/2 \alpha +1/2},
\end{equation}
if $\kappa \in [\kappa_0,\kappa_1]$ and $\eps \leq \eps_0$.
\end{cor}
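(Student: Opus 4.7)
The plan is to mirror the single-boundary computation from \cref{spec_cusp_cor_approx_order} at $\partial_1C_{\eps,\kappa}$, with a small adjustment to accommodate the ground state. First I would lift $\psi_{\eps,\kappa,l}$ through $D_{\eps,\kappa}\to M_{\eps,\kappa}$. The hypothesis $\lambda_l(M_{\eps,\kappa})\leq\lambda_{K+1}(\Sigma)+1$ makes the lift rotationally symmetric, exactly as in the discussion preceding \cref{spec_cusp_cor_approx_order}. On each copy of $C_{\eps,\kappa}$ inside $D_{\eps,\kappa}$, the lift solves the eigenvalue equation with Dirichlet data at $\partial_1C_{\eps,\kappa}$ and, because $C^1$-matching of the two $\tau$-related copies is forced across the glued circle $\{y=R_\eps\}$, Neumann data at $\partial_{R_\eps}C_{\eps,\kappa}$. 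By the analysis leading to \cref{lem_gr_st_cc} and \cref{lem_spec_neu}, the lift is therefore of the form $a\,f_{t_l}$, where $t_l>0$ is the $l$-th positive root of
\[
\tan\!\Big(\tfrac{t_l}{\kappa}\log R_\eps\Big)=-\tfrac{2t_l}{\kappa}.
\]

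The next step is to record the two a priori bounds that drive the estimate. The transcendental equation places $u_l:=(t_l/\kappa)\log R_\eps$ in $((2l+1)\pi/2,(l+1)\pi)$ for every $l\geq 0$, which yields
\[
|t_l|\leq (l+1)\kappa\pi/\log R_\eps=(l+1)\kappa\pi\,\eps^{\alpha}.
\]
The only novelty compared with the cylinder is that $u_0\in(\pi/2,\pi)$ rather than equal to $\pi$; this is the ``minimal change'' needed for the $\lambda_0$-eigenfunction, but the bound above is uniform in $l$. For the $L^2$-normalisation, using the substitution $s=(t_l/\kappa)\log y$ and $\sqrt{\det g_{\eps,\kappa}}=\eps/(\kappa y)^2$, I would compute
\[
c_{\eps,\kappa,l}:=\int_{M_{\eps,\kappa}}|f_{t_l}|^2=\frac{2\pi\eps}{\kappa\,t_l}\int_0^{u_l}\sin^2 s\,ds\geq C\,\eps\log R_\eps=C\,\eps^{1-\alpha},
\]
which is the cross-cap analogue of \cref{spec_l2_bd}; it follows from $\int_0^{u_l}\sin^2 s\,ds\geq u_l/4$ (valid for $u_l\geq 1$, and hence for every $l\geq 0$ since $u_l\geq\pi/2$) combined with the upper bound on $t_l$.

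Finally I would evaluate the boundary integral directly. With respect to $g_{\eps,\kappa}$ the outward unit conormal along $\partial M_{\eps,\kappa}=\{y=1\}$ is $-\kappa\partial_y$, so $\partial_\nu f_{t_l}|_{y=1}=-t_l$, while $\mathcal H^1(\partial M_{\eps,\kappa})=2\pi\eps/\kappa$. Combining these,
\[
\left|\int_{\partial M_{\eps,\kappa}}\partial_\nu\psi_{\eps,\kappa,l}\,d\mathcal H^1\right|=c_{\eps,\kappa,l}^{-1/2}\,|t_l|\,\tfrac{2\pi\eps}{\kappa}\leq C\,\eps^{(\alpha-1)/2}\cdot(l+1)\,\eps^{\alpha}\cdot\eps=C(l+1)\,\eps^{3\alpha/2+1/2},
\]
which is the desired bound. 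I do not expect any genuine obstacle here: everything beyond the transcendental description of $t_0$ is a direct transcription of the computation at $\partial_1C_{\eps,\kappa}$ carried out in \cref{spec_cusp_cor_approx_order}.
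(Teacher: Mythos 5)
Your proposal is correct and follows exactly the route the paper intends: the paper dispatches this corollary with the remark that it is "the almost exact same computation" as for $\partial_1 C_{\eps,\kappa}$ in \cref{spec_cusp_cor_approx_order}, and your computation is precisely that, with the needed adjustments (the transcendental equation $\tan(u_l)=-2u_l/\log R_\eps$ replacing the exact values $t_l=(l+1)\kappa\pi/\log R_\eps$, which still yields $|t_l|\le(l+1)\kappa\pi\eps^{\alpha}$, and the direct bound $\int_0^{u}\sin^2 s\,ds\ge u/4$ replacing the interval-counting argument of \cref{spec_l2_bd}, which was stated only for the pure Dirichlet roots). All constants and exponents check out, so the argument is complete.
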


\begin{rem} \label{spec_rem_sharp}
Note that, by the remark on the sharpness of the bound in \cref{spec_l2_bd} and our computation of the eigenvalues, we find in particular that
\eqref{eq_spec_cusp_cor_approx_order} and \eqref{eq_spec_cross_cor_approx_order} are sharp in the exponent of $\eps$ for the first eigenfunction.
\end{rem}

%%%%%%%%%%%%%%%%%%%%%%%%%%%%%%%%%%%%%%%%%%%%%%
\section{Pointwise estimates for eigenfunctions}  \label{sec_pt_bd}
%%%%%%%%%%%%%%%%%%%%%%%%%%%%%%%%%%%%%%%%%%%%%%

Let $x \in \Sigma$ be the center of a ball $B_{\eps^k}(x)$ which is removed from $\Sigma$ in the construction of $\Sigma_{\eps,\kappa}$.
In the case of attaching a cylinder we have $x \in \{x_0,x_1\}$, in the case of attaching a cross cap we have $x=x_0$.

In order to understand the spectrum of $\Sigma_{\eps,\kappa}$, we need some bounds for eigenfunctions with bounded energy on $\partial B_{\eps^k}(x)$.
For ease of notation, we assume that the ball $B_1(x) \subset \Sigma$
can be endowed with conformal coordinates.
In the case of attaching a cylinder, we also assume that the two balls $B_1(x_0)$ and $B_1(x_1)$ are disjoint. 

\begin{lemma} \label{lem_pt_bd}
There are constants $C=C(\Sigma,\Lambda)$ and $\eps_1>0$ such that the following holds.
Let $u_{\eps,\kappa}$ be a $L^2$-normalized eigenfunction on $\Sigma_{\eps,\kappa}$ with eigenvalue $\lambda_{\eps,\kappa} \leq \Lambda$ 
where $\eps \leq \eps_1$.
If we use Euclidean polar coordinates $(r,\theta)$ centered at $x$ we have the uniform pointwise bounds
\begin{equation} \label{max_eigen_sup_bd}
	|u_{\eps,\kappa}|(r,\theta) \leq C \log \left(\frac{1}{r} \right),
\end{equation}
for $ \eps^k \leq r \leq 1/2$
and
\begin{equation} \label{max_eigen_lipschitz_bd}
	|\nabla u_{\eps,\kappa}| (r, \theta) \leq \frac{C}{r}
\end{equation}
for $2 \eps^k \leq r \leq 1/2.$
\end{lemma}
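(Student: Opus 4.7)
The plan is to reduce everything to 2D elliptic analysis in the Euclidean conformal coordinates on $B_{1/2}(x)$, in which the eigenvalue equation becomes $\Delta_e u = \lambda f u$ for a smooth positive $f$ with $f(x)=1$ and $\lambda \leq \Lambda$. The overall structure is: (i) establish uniform $C^1$-control on a fixed subdomain away from $x$; (ii) derive an ODE for the angular average $\phi(r) := \frac{1}{2\pi}\int_0^{2\pi} u(r,\theta)\,d\theta$ on $[\eps^k, 1/2]$ and obtain a log-bound on $\phi$; (iii) use a Wirtinger inequality on circles to upgrade this mean bound to the correct $L^2$-bound on annuli, from which the interior Moser $L^\infty$--$L^2$ estimate yields the pointwise sup bound; and (iv) apply the same strategy to the partial derivatives of $u$ for the gradient bound.

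For (i), the fixed subdomain $\Sigma \setminus B_{1/2}(x)$ lies in $\Sigma_{\eps,\kappa}$ for all sufficiently small $\eps$ (since $\eps^k < 1/2$), and on a slightly larger fixed subdomain the eigenvalue equation has bounded $L^2$-data and bounded eigenvalue; standard interior elliptic regularity then gives uniform $C^1$-bounds, so that in particular $|u_{\eps,\kappa}|, |\nabla u_{\eps,\kappa}| \leq C$ on the fixed curve $\partial B_{1/2}(x)$. For (ii), integrating the equation over the circle of radius $r$ produces the ODE $(r\phi')' = \lambda r\, \overline{fu}(r)$ with $\overline{fu}(r) := \frac{1}{2\pi}\int_0^{2\pi}(fu)(r,\theta)\,d\theta$; two integrations starting from $r=1/2$, together with the Cauchy--Schwarz estimate $\int_s^{1/2} t|\overline{fu}(t)|\,dt \leq C \|u\|_{L^2(\Sigma)} \leq C$, yield $|\phi(r)| \leq C\log(1/r)$ on $[\eps^k, 1/2]$.

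For (iii), decompose $u(y) = \phi(|y|) + w(y)$ with $w$ of vanishing angular mean on each circle and consider the dyadic annulus $A_r := B_{2r}(x) \setminus B_{r/2}(x)$. The cross term vanishes by Fubini, so
\[
\|u\|_{L^2(A_r)}^2 = \|\phi\|_{L^2(A_r)}^2 + \|w\|_{L^2(A_r)}^2;
\]
the first summand is bounded by $Cr^2\log^2(1/r)$ by (ii), while Wirtinger's inequality on the circle of radius $r'$ gives $\int_0^{2\pi} w^2\,d\theta \leq \int_0^{2\pi}(\partial_\theta u)^2\,d\theta \leq (r')^2 \int_0^{2\pi}|\nabla u|^2\,d\theta$. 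Integrating over $r' \in [r/2, 2r]$ and using the global Dirichlet-energy identity $\int_{\Sigma_{\eps,\kappa}} |\nabla u|^2 = \lambda \int u^2 \leq \Lambda$ then yields $\|w\|_{L^2(A_r)}^2 \leq Cr^2$, hence $\|u\|_{L^2(A_r)} \leq Cr\log(1/r)$. For $|y_0 - x| = r \geq 2\eps^k$ the ball $B_{r/4}(y_0)$ is contained in $A_r$, so the standard 2D interior $L^\infty$--$L^2$ estimate applied to $\Delta_e u = \lambda f u$ gives $|u(y_0)| \leq Cr^{-1}\|u\|_{L^2(A_r)} \leq C\log(1/r)$; the remaining range $r \in [\eps^k, 2\eps^k]$ is handled by a boundary-adapted variant of the same estimate, using continuity of $u$ across the gluing curve.

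Finally, for (iv), the derivative $v := \partial_i u$ satisfies $\Delta_e v - \lambda f v = \lambda(\partial_i f) u$, whose right-hand side is bounded pointwise by $C\log(1/r)$ on $B_{r/4}(y_0)$ by the previous step; crucially, $v$ inherits the uniform global bound $\|v\|_{L^2(\Sigma_{\eps,\kappa})} \leq \sqrt\Lambda$ from the same Dirichlet-energy identity. Applying the 2D $L^\infty$--$L^2$ estimate now to $v$ on $B_{r/4}(y_0)$ yields $|v(y_0)| \leq Cr^{-1}\sqrt\Lambda + Cr^2\log(1/r) \leq C/r$, as claimed. The main technical point is step (iii): the mean bound on $\phi$ together with the plain normalization $\|u\|_{L^2}\leq 1$ is not enough, since a naive Moser iteration on $A_r$ would only give $|u| \leq C/r$ from $\|u\|_{L^2(A_r)} \leq 1$; the decisive input is that the non-radial part of $u$ has $L^2$-norm on $A_r$ at most $Cr\sqrt\Lambda$ via Wirtinger on circles combined with the global Dirichlet energy, producing the crucial extra factor of $r$ that sharpens $1/r$ into $\log(1/r)$.
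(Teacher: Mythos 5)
Your argument for the range $r \geq 2\eps^k$ is correct and takes a genuinely different route from the paper's. The paper proves the gradient bound first, by rescaling $w_r(z)=u_{\eps,\kappa}(rz)$ and applying interior elliptic estimates on fixed annuli, and then deduces \eqref{max_eigen_sup_bd} by integrating $|\nabla u_{\eps,\kappa}|\leq C/r$ radially inward from $\partial B_{1/2}$, where $u_{\eps,\kappa}$ is uniformly bounded. You go the other way: the angular mean is controlled by an ODE argument, the non-radial part is controlled on dyadic annuli by Wirtinger's inequality on circles combined with the conformally invariant global Dirichlet energy (the same mechanism the paper exploits in \cref{tangential_bd}), Moser's $L^\infty$--$L^2$ estimate then gives \eqref{max_eigen_sup_bd} directly, and \eqref{max_eigen_lipschitz_bd} follows by differentiating the equation. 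These computations are sound, and your closing remark --- that the extra factor of $r$ produced by the Wirtinger step is exactly what upgrades $1/r$ to $\log(1/r)$ --- identifies the right point.

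The gap is the sup bound on the remaining range $\eps^k \leq r \leq 2\eps^k$, which you dispatch as ``a boundary-adapted variant of the same estimate, using continuity of $u$ across the gluing curve.'' This is precisely where the glued geometry enters, and continuity across the interface gives nothing quantitative. For $|y_0-x|$ close to $\eps^k$ the ball $B_{r/4}(y_0)$ is not contained in $\Sigma\setminus B_{\eps^k}(x)$, and there is no boundary condition on $\partial B_{\eps^k}(x)$ against which to run a boundary estimate: the surface continues into $C_{\eps,\kappa}$ (resp.\ $M_{\eps,\kappa}$), so any estimate must use the cylinder side. The naive implementation --- an interior $L^\infty$--$L^2$ estimate on a two-sided collar of $\partial B_{\eps^k}(x)$, conformally uniformized so that both sides become fixed flat models --- fails, because the $L^2$-norm of $u_{\eps,\kappa}$ on the cylinder side of the collar \emph{in the uniformized metric} is not controlled by the normalization: the conformal factor relating $g_{\eps,\kappa}$ to the unit flat cylinder is of order $\eps$, so that norm can a priori be of order $\eps^{-1/2}$. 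The paper's proof is built exactly around this obstruction: on the collar $V_{\eps,\kappa}$ it works with $w_{\eps,\kappa}=u_{\eps,\kappa}-(u_{\eps,\kappa})_{V_{\eps,\kappa}}$, whose $L^2$-norm in the rescaled metric \emph{is} controlled, via a Poincar\'e inequality, by the conformally invariant Dirichlet energy; the inhomogeneous De Giorgi--Nash--Moser estimate (needed in place of Schauder theory, since the glued conformal factor is only bounded measurable across the interface) then bounds the oscillation of $u_{\eps,\kappa}$ between $\partial B_{\eps^k}$ and $\partial B_{2\eps^k}$ by a constant, which combined with your bound at radius $2\eps^k$ closes the range. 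Your proof needs this oscillation-plus-energy argument, or an equivalent substitute, to be complete.
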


Note that \cref{lem_pt_bd} is rleated\footnote{In fact it is not very hard to improve \eqref{max_eigen_sup_bd} to $\log^{1/2}(1/\eps)$, but we do not need this.} 
to the integral bound
\begin{equation} \label{eq_int_bd_sobolev}
\int_{\partial B_{\eps^k}(x)} |\varphi|^2 d \mathcal{H}^1
\leq 
C \eps^k \log(1/\eps^k) \|\varphi \|_{W^{1,2}(B_1(x) \setminus B_{\eps^k} (x))}^2
\end{equation}
that holds for any $\varphi \in W^{1,2}(\Sigma_{\eps,\kappa})$ and which can be proved by a straightforward computation in polar coordinates.

\begin{proof}
Recall that we can identify a conformally flat neighborhood of $x$ with $B_1=B(0,1)\subset \IR^2,$
such that $x=0.$
First, observe that up to radius $2\eps^k$ \eqref{max_eigen_sup_bd} is a direct consequence of \eqref{max_eigen_lipschitz_bd}.
In fact, by standard elliptic estimates \cite[Chapter 5.1]{taylor_1},  the functions $u_{\eps,\kappa}$ are 
uniformly bounded in $C^0(B_1 \setminus B_{1/2})$.
Given this, we can integrate the bound \eqref{max_eigen_lipschitz_bd} from $\partial B_{1/2}$ to $\partial B_r$ and find \eqref{max_eigen_sup_bd}.

The bound \eqref{max_eigen_lipschitz_bd} follows from standard elliptic estimates after rescaling the scale $r$ to a fixed scale.
More precisely, we consider the rescaled functions $w_r(z):=u_{\eps,\kappa}(r z).$
On $B_1\setminus B_{\eps^k}$ the metric of $\Sigma$ is uniformly bounded from above and below by the Euclidean metric.
Hence we can perform all computations in the Euclidean metric.
We have
\begin{equation} \label{max_eigen_L2grdbd}
\int_{B_3\setminus B_{1/2}} |\nabla w_r|^2 = \int_{B_{3r}\setminus B_{r/2}} |\nabla u_{\eps,\kappa}|^2,
\end{equation}
since the Dirichlet energy is conformally invariant in dimension two.

Since the Laplace operator is conformally covariant in dimension two, $w_r$ solves the equation
\begin{equation} \label{max_eigen_eqn_sc}
\Delta_e w_r=r^2 f_r \lambda_{\eps,\kappa} w_r,
\end{equation}
with $f_r(z)=f(rz)$ a smooth function and $\Delta_e$ the Euclidean Laplacian.
Since $f \in C^\infty,$ we have uniform $C^\infty$-bounds on $f_r$ for $r \leq 1.$
Taking derivatives, we find that
\begin{equation} \label{max_eigen_eqn_der}
\Delta_e \nabla w_r = r^2 \lambda_{\eps,\kappa} \nabla (f_r w_r),
\end{equation}
where also the gradient is taken with respect to the Euclidean metric.
Since $\lambda_{\eps,\kappa} \leq \Lambda$, the scaling invariance of the Dirichlet energy implies that
\begin{equation*}
\begin{split}
\lambda_{\eps,\kappa}^2 \int_{B_3 \setminus B_{1/2}} |\nabla(f_rw_r)|^2
&=\lambda_{\eps,\kappa}^2 \int_{B_{3r} \setminus B_{r/2}} |\nabla(f u_{\eps,\kappa})|^2
\\
& \leq 
2 \lambda_{\eps,\kappa}^2 \int_{B_{3r} \setminus B_{r/2}} f^2 |\nabla u_{\eps,\kappa}|^2 + u_{\eps,\kappa}^2 |\nabla f|^2
\\
& \leq
C
\end{split}
\end{equation*}
by assumption.
In particular, the right hand side of \eqref{max_eigen_eqn_der} is bounded by 
$Cr^2$ in $L^2(B_3\setminus B_{1/2}).$
Therefore, by elliptic estimates \cite[Chapter 5.1]{taylor_1} we have
\begin{equation*}
\sup_{ \{1\leq s \leq 2\} }|\nabla w_r|(s, \theta) \leq C r^2 + C |\nabla w_r|_{L^2(B_3 \setminus B_{1/2})} \leq C,
\end{equation*}
which scales to
\begin{equation*}
\sup_{\{r\leq s \leq 2r\}}|\nabla u_{\eps,\kappa}| (s,\theta) \leq \frac{C}{r},
\end{equation*}
with $C$ independent of $r.$
This proves the estimate for $r \geq 2 \eps^k.$

To get the estimate \eqref{max_eigen_sup_bd} for the remaining radii we invoke the De Giorgi--Nash--Moser estimate.
In order to do so it is useful to observe that for $\eps$ sufficiently small any component of the boundary of $C_{\eps,\kappa}$ and hence also of $M_{\eps,\kappa}$
has an open neighbourhood 
$$
U_{\eps,\kappa} \subset C_{\eps,\kappa}
$$
which is conformal to $\Sph^1(1) \times [0,4)$ with metric given by
$$
h_{\eps,\kappa} (d\theta^2 + dy^2),
$$
for some function $h_{\eps,\kappa}>0$.
At this point we fix $\eps_1>0$ such that this holds for any $\kappa \in [\kappa_0,\kappa_1]$ if $\eps \leq \eps_1$.

We then consider
$$
V_{\eps,\kappa} = U_{\eps,\kappa} \cup ((B_{4\eps^k}(x) \setminus B_{\eps^k}(x)) \cap \Sigma)
$$ 
which is an open neighbourhood of $\partial B_{\eps^k}(x)$ in $\Sigma_{\eps,\kappa}$.
On $V_{\eps,\kappa}$ we rescale the original metric $g_{\eps,\kappa}$ by the singular conformal factor 
\begin{equation*}
f_{\eps,\kappa}
= 
\begin{cases}
\eps^{-2k} & \ \text{in} \ (B_{4\eps^k}(x) \setminus B_{\eps^k}(x)) \cap \Sigma,
\\
h_{\eps,\kappa}^{-1} & \ \text{in} \ V_{\eps,\kappa} \setminus \Sigma,
\end{cases}
\end{equation*}
i.e. we consider the metric $l_{\eps,\kappa}=f_{\eps,\kappa} g_{\eps,\kappa}$.

Consider the function 
$$
w_{\eps,\kappa} = u_{\eps,\kappa} - (u_{\eps,\kappa})_{V_{\eps,\kappa}},
$$
where $(u)_{V_{\eps,\kappa}}$ denotes the mean value of $u$ on $V_{\eps,\kappa}$ with respect to the rescaled metric $l_{\eps,\kappa}$.
By the conformal invariance of the Dirichlet energy, we find that $w_{\eps,\kappa}$ has gradient bounded in $L^2$ with respect to the rescaled metric,
\begin{equation} \label{dnm_1}
\int_{V_{\eps,\kappa}} |\nabla w_{\eps,\kappa}|^2 dA_{l_{\eps,\kappa}}\leq C,
\end{equation}
where the constant $C$ depends only on $\Lambda$.
It is easy to see that the rescaled metric $l_{\eps,\kappa}$ on $V_{\eps,\kappa}$ is uniformly bounded from above and below almost everywhere by a fixed metric.
In fact, on $V_{\eps,\kappa} \setminus \Sigma$ the metric $l_{\eps,\kappa}$ is the metric of a fixed flat cylinder, and on $V_{\eps,\kappa} \cap \Sigma$ the metric $l_{\eps,\kappa}$ is close to the standard flat metric on (a subset of) the unit disk.
Therefore, there is a constant $C$ independent of $\eps$ and $\kappa$ such that
\begin{equation} \label{dnm_2}
\int_{V_{\eps,\kappa}} |w_{\eps,\kappa}|^2 dA_{l_{\eps,\kappa}} \leq C \int_{V_{\eps,\kappa}} |\nabla w_{\eps,\kappa}|^2 dA_{l_{\eps,\kappa}}.
\end{equation}
Now observe that $w_{\eps,\kappa}$ is a weak solution to the equation 
\begin{equation} \label{conf_diff_eqn}
\Delta_{l_{\eps,\kappa}} w_{\eps,\kappa} = \frac{1}{f_{\eps,\kappa}} \Delta_{g_{\eps,\kappa}} u_{\eps,\kappa} = \frac{1}{f_{\eps,\kappa}} \lambda_{\eps,\kappa} u_{\eps,\kappa},
\end{equation}
thanks to the conformal covariance of the Laplacian in dimension two.
Finally, note that the right hand side of \eqref{conf_diff_eqn} is bounded in $L^2(V_{\eps,\kappa},dA_{l_{\eps,\kappa}})$.
Thanks to this, \eqref{dnm_1}, \eqref{dnm_2}, and \eqref{conf_diff_eqn} we can apply the inhomogeneous De Giorgi--Nash--Moser estimates (see e.g.\ \cite[Chapter 14.9]{taylor_3}) to obtain
\begin{equation*}
\sup_{p \in \partial B_{\eps^k}, q \in \partial B_{2\eps^k} } |u_{\eps,\kappa}(p) - u_{\eps,\kappa}(q)|
=
\sup_{p \in \partial B_{\eps^k}, q \in \partial B_{2\eps^k} } |w_{\eps,\kappa}(p)-w_{\eps,\kappa}(q)| \leq C.
\end{equation*}
When combined with the bound up to radius $2\eps^k$ proved above, this implies
 \eqref{max_eigen_sup_bd}.
\end{proof}

%%%%%%%%%%%%%%%%%%%%%%%%%%%%%%%%%%%%%%%%%%%%%%
\section{A lower bound for $\mu_1(\Sigma \setminus B_\eps)$ and the multiplicity of the first eigenvalue} \label{sec_neu}

%%%%%%%%%%%%%%%%%%%%%%%%%%%%%%%%%%%%%%%%%%%%%%

In this section we provide a lower bound for the first Neumann eigenvalue of $\Sigma \setminus B_{\eps}(x)$.
We then apply this to show that $\lambda_1(\Sigma_{\eps,\kappa})$ has to be simple if it is not very close to $\lambda_1(\Sigma)$
already.

%%%%%%%%%%%%%%%%%%%%%%%%%%%%%%%%%%%%%%%%%%%%%%
\subsection{A lower bound for $\mu_1(\Sigma \setminus B_\eps)$}
%%%%%%%%%%%%%%%%%%%%%%%%%%%%%%%%%%%%%%%%%%%%%%

It is convenient to state the main result of this section more generally for compact surfaces with boundary.
Let $\Sigma$ be a compact surface with smooth, possibly empty, boundary.
Assume that $\Sigma$ is smooth away from finitely many conical singularities and that $x \in \Sigma \setminus \partial \Sigma$
such that the metric is smooth near $x$.
We denote by $B_{\eps}(x)$ a ball of radius $\eps$ with respect 
to the Euclidean distance in normalized conformal coordinates.

The goal of this section is to prove the following result, that might be of some independent interest.
We chose to give the proof at this point since it uses some of the arguments from the preceding section.
It will only be in the next section that our interest in this bound will become clear.

\begin{theorem} \label{thm_lower_neumann}
There is a constant $C$ depending only on $B_1(x) \subset \Sigma$, such that
the first Neumann eigenvalue of $\Sigma \setminus B_{\eps}(x)$ satisfies
$$
\mu_1(\Sigma \setminus B_\eps(x)) 
\geq \mu_1(\Sigma) - C \eps^2.
$$
\end{theorem}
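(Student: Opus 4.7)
The strategy is to extend a normalized first non-trivial Neumann eigenfunction $u$ on $\Sigma \setminus B_\eps(x)$ to a test function $\tilde u \in H^1(\Sigma)$ by Euclidean-harmonic extension inside $B_\eps(x)$, and then to feed $v = \tilde u - \bar{\tilde u}$ (with its $L^2$-mean subtracted) into the Rayleigh quotient for $\mu_1(\Sigma)$. Normalize $u$ so that $\int_{\Sigma \setminus B_\eps} u = 0$ and $\|u\|_{L^2} = 1$, and write $\mu = \mu_1(\Sigma \setminus B_\eps(x))$ (assuming $\mu \leq 2\mu_1(\Sigma)$, else the inequality is trivial). Working in conformal coordinates on $B_1(x) \cong B_1(0) \subset \IR^2$ in which $g = fg_e$ with $f(0) = 1$, let $h$ be the Euclidean-harmonic extension of $u|_{\partial B_\eps}$ to $B_\eps$, and define $\tilde u = u$ on $\Sigma \setminus B_\eps$ and $\tilde u = h$ on $B_\eps$. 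The crucial estimate is
\[
\int_{B_\eps} |\nabla h|^2 \leq C\eps^2.
\]

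To prove this I would Fourier-expand $u(r,\theta) = \sum_{k \in \IZ} u_k(r) e^{ik\theta}$ on rings around $x$; a direct computation in polar coordinates gives $\int_{B_\eps} |\nabla h|^2 = 2\pi \sum_{k \neq 0} |u_k(\eps)|^2 |k|$, reducing matters to geometric decay in $|k|$. Each $u_k$ satisfies a radial second-order ODE coming from $-\Delta_e u = \mu f u$, which for small $r$ is a smooth perturbation of the Bessel equation, with general solution behaving to leading order like $a_k r^{|k|} + b_k r^{-|k|}$. The Neumann condition $u_k'(\eps) = 0$ forces $b_k = a_k \eps^{2|k|}(1 + o(1))$, and matching at a fixed inner radius $\delta > 0$ with $\partial B_\delta$ bounded away from both $\partial B_\eps$ and $\partial\Sigma$ (where interior elliptic regularity gives $\|u\|_{L^\infty(\partial B_\delta)} \leq C$ uniformly in $\eps$) pins down $a_k \approx u_k(\delta)/\delta^{|k|}$. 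This yields $|u_k(\eps)| \leq C(\eps/\delta)^{|k|} |u_k(\delta)|$, and summing with Parseval and the trace bound $\|u\|_{L^2(\partial B_\delta)}^2 \leq C\|u\|_{H^1(\Sigma\setminus B_\eps)}^2 \leq C(1 + \mu)$ produces $\sum_{k \neq 0} |u_k(\eps)|^2 |k| \leq C\eps^2$. An analogous analysis of the zero mode $u_0$ together with the maximum principle gives $\|h\|_{L^\infty(B_\eps)} \leq C$, so the mean correction satisfies $|\bar{\tilde u}| \leq \vol(\Sigma)^{-1} |\int_{B_\eps} h| \leq C\eps^2$.

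Plugging $v = \tilde u - \bar{\tilde u}$ into the Rayleigh quotient, the numerator is $\int_\Sigma |\nabla v|^2 = \mu + \int_{B_\eps} |\nabla h|^2 \leq \mu + C\eps^2$ and the denominator is $\int_\Sigma v^2 = \int_\Sigma \tilde u^2 - \vol(\Sigma) \bar{\tilde u}^2 \geq 1 - C\eps^4$, giving $\mu_1(\Sigma) \leq \mu + C\eps^2$, which rearranges to the claim. The main obstacle will be the rigorous handling of the conformal factor in the radial ODE: because $\mu f u = \mu u + \mu (f-1)u$, the Fourier modes couple through $f - 1$, and one has to show that this coupling is a lower-order perturbation of the exact Bessel equation. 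Since $f - 1$ vanishes at $x$ and $u$ is uniformly bounded on any fixed annulus away from $\partial B_\eps$, this perturbation is subleading in $r$ near the origin and can be controlled by a standard perturbation argument, preserving the crucial bound $|u_k(\eps)| \leq C(\eps/\delta)^{|k|}$.
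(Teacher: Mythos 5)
Your overall architecture is the same as the paper's: extend the eigenfunction across $B_\eps$, subtract the mean, and plug into the Rayleigh quotient for $\mu_1(\Sigma)$, with everything hinging on the single estimate that the extension costs only $C\eps^2$ of Dirichlet energy (equivalently, $\sum_{k\neq 0}|k|\,|u_k(\eps)|^2\leq C\eps^2$). You have correctly identified both the strategy and the crux. The gap is in how you propose to prove the crux. The modewise inequality $|u_k(\eps)|\leq C(\eps/\delta)^{|k|}|u_k(\delta)|$ cannot be true as stated once the modes couple: the right-hand side vanishes whenever $u_k(\delta)=0$, while the coupling through $\mu(f-1)u$ (and through the curvature of the metric entering $f$) will in general produce a nonzero $u_k(\eps)$ sourced by the neighbouring modes. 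So the inhomogeneous term is not a perturbation that "preserves" your bound mode by mode; it destroys the decoupling on which the two-parameter $a_kr^{|k|}+b_kr^{-|k|}$ matching rests. There is also a uniformity-in-$k$ issue even for the exact Helmholtz equation (Bessel asymptotics must be controlled uniformly in the order), which you do not address.

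The paper's proof of \cref{tangential_bd} shows how to repair exactly this point, and the repair is not a routine perturbation argument. One first splits $u_\eps=w_\eps+v_\eps$, where $w_\eps$ solves $\Delta w_\eps=\mu\tilde u_\eps$ on the \emph{full} ball $B_1$ with zero boundary values, so that elliptic regularity gives a uniform $C^{1,\alpha}$ bound on $w_\eps$; the remainder $v_\eps$ is then \emph{exactly} harmonic on the annulus, and only there does the clean Fourier expansion $a+b\log r+\sum(c_nr^n+\overline{c_{-n}}r^{-n})e^{in\theta}$ apply. The positive-power part is controlled by the $L^2$ bound on the annulus (your Parseval step survives here), while the dangerous negative-power and $\log r$ parts are controlled not by matching at $r=\delta$ but by the Rellich-type identity that $\rho\int_{\partial B_\rho}\bigl((\partial_T h)^2-(\partial_r h)^2\bigr)\,d\mathcal{H}^1$ is $\rho$-independent, combined with the pointwise bound $|\partial_\nu v_\eps|=|\partial_\nu w_\eps|\leq C$ on $\partial B_\eps$ that the Neumann condition supplies. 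If you replace your ODE-matching step by this decomposition (or find another way to genuinely decouple the modes before invoking the two-solution structure), the rest of your argument — including the $L^\infty$ control of the zero mode and the $1-C\eps^4$ lower bound on the denominator — goes through.
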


Note that $\mu_1(\Sigma) = \lambda_1(\Sigma)$ in case $\Sigma$ is a closed surfaces.
From standard elliptic estimates we get the following bounds for a Neumann eigenfunction and its gradient.

\begin{lemma} \label{neu_ell_est}
Let $u_\eps$ be a normalized $\mu_1(\Sigma \setminus B_\eps(x))$-eigenfunction.
If we use Euclidean polar coordinates $(r,\theta)$ centered at $x,$ we have the uniform pointwise bounds
\begin{equation} \label{sup_bd}
	|u_{\eps}|(r,\theta) \leq C \log \left(\frac{1}{r} \right),
\end{equation}
and
\begin{equation} \label{lipschitz_bd}
	|\nabla u_\eps| (r, \theta) \leq \frac{C}{r}
\end{equation}
for any $\eps \leq r \leq 1/2.$
\end{lemma}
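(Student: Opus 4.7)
The plan is to mirror the strategy of Lemma \ref{lem_pt_bd} almost verbatim, with two adaptations: first, we only have one scale to worry about (there is no competition between $\eps$ and $\eps^k$, and no attached cusp), and second, the Neumann boundary condition along $\partial B_\eps(x)$ allows us to push the estimates all the way down to $r=\eps$ rather than stopping at $r=2\eps^k$ as in Lemma \ref{lem_pt_bd}. A preliminary observation is that by plugging the constant test function into the Rayleigh quotient (or an obvious cutoff of the first Neumann eigenfunction of $\Sigma$), one obtains $\mu_1(\Sigma\setminus B_\eps(x))\leq\mu_1(\Sigma)+O(1)$ uniformly in $\eps$, so that the eigenvalue is bounded by a constant depending only on $\Sigma$. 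Together with the $L^2$-normalization this makes the Dirichlet energy of $u_\eps$ uniformly bounded.

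For the gradient bound in the range $2\eps\leq r\leq 1/2$, I would proceed exactly as in Lemma \ref{lem_pt_bd}: identify a conformal neighborhood of $x$ with the Euclidean disk, rescale $w_r(z)=u_\eps(rz)$ on the annulus $B_3\setminus B_{1/2}$, exploit the conformal invariance of the Dirichlet energy in dimension two to transfer the bound $\int_{B_{3r}\setminus B_{r/2}}|\nabla u_\eps|^2\leq C$ to the fixed scale, use the conformal covariance of the Laplacian to convert the eigenvalue equation into $\Delta_e w_r=r^2 f_r\mu_1 w_r$, differentiate to get an equation for $\nabla w_r$ with $L^2$-bounded right-hand side, and then apply interior elliptic regularity on $B_2\setminus B_1$ followed by rescaling.

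For the remaining range $\eps\leq r\leq 2\eps$ the rescaling has to respect the Neumann boundary. I would rescale at scale $\eps$: set $v(z)=u_\eps(\eps z)$ on the fixed annulus $\{1\leq |z|\leq 4\}$. Then $v$ satisfies the equation $\Delta_e v=\eps^2 f(\eps z)\mu_1 v$ (with a uniformly smooth coefficient), its Dirichlet energy on $\{1\leq|z|\leq 4\}$ is bounded by conformal invariance of the Dirichlet energy, and along $\{|z|=1\}$ it satisfies the Neumann condition $\partial_\nu v=0$ (in the conformal coordinates $\partial B_\eps(x)$ is a round circle and the outer conormal is preserved under the conformal change up to a positive scalar). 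Standard Schauder/$W^{2,p}$ estimates up to a Neumann boundary (see e.g.\ \cite[Chapter 5.7]{taylor_1}) applied to $v$ on $\{1\leq|z|\leq 2\}$ yield a uniform $C^{1,\alpha}$ bound $\|v\|_{C^{1,\alpha}}\leq C$ up to $\{|z|=1\}$. Unwinding the rescaling gives $|u_\eps|\leq C$ and $|\nabla u_\eps|\leq C/\eps\leq C/r$ on $\{\eps\leq r\leq 2\eps\}$.

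Finally, to obtain the $\log$-type pointwise bound \eqref{sup_bd} I would argue as in the proof of Lemma \ref{lem_pt_bd}: standard elliptic estimates give a uniform bound $|u_\eps|\leq C$ on $\{1/2\leq r\leq 1\}$, and then integrating the gradient bound \eqref{lipschitz_bd} along a radial segment from $r=1/2$ to any $r\in[\eps,1/2]$ yields
\[
|u_\eps|(r,\theta)\leq C+\int_r^{1/2}\frac{C}{s}\,ds\leq C\log(1/r).
\]
The only mildly delicate step is the application of elliptic regularity up to the Neumann boundary in the range $r\sim\eps$; the smoothness of $\partial B_\eps(x)$ and the uniform control on the metric and on the eigenvalue make this routine, so no serious obstacle is expected.
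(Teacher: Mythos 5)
Your overall strategy coincides with the paper's: for $r\geq 2\eps$ one reruns the interior rescaling argument from the proof of \cref{lem_pt_bd} verbatim, for $\eps\leq r\leq 2\eps$ one uses the same rescaling together with elliptic estimates up to the Neumann boundary (the paper cites exactly \cite[Chapter 5.7]{taylor_1} for this), and \eqref{sup_bd} then follows by integrating \eqref{lipschitz_bd} radially from $r=1/2$. The preliminary uniform bound on $\mu_1(\Sigma\setminus B_\eps(x))$ and the observation that the Euclidean and metric Neumann conditions agree in conformal coordinates are both correct.

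There is, however, a gap in the boundary step as written: the conclusion $\|v\|_{C^{1,\alpha}(\{1\leq|z|\leq 2\})}\leq C$ does not follow from the data you have. Every Schauder or $W^{2,p}$ estimate up to the boundary controls $v$ in terms of the right-hand side \emph{and} of $\|v\|_{L^2}$ on the larger annulus, and the latter is not uniformly bounded: $\int_{1\leq|z|\leq4}|v|^2\,dz=\eps^{-2}\int_{\eps\leq|w|\leq4\eps}|u_\eps|^2\,dw$, which the normalization $\|u_\eps\|_{L^2}=1$ only bounds by $\eps^{-2}$; only the Dirichlet energy of $v$ is scale-invariant. (This is precisely why, in the interior range, the paper differentiates the equation and estimates $\nabla w_r$ rather than $w_r$ itself, and why the boundary step in the proof of \cref{lem_pt_bd} first subtracts the mean value $(u_{\eps,\kappa})_{V_{\eps,\kappa}}$.) The repair is routine: work with $v-\bar v$, where $\bar v$ is the mean of $v$ over the annulus. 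Then $\|v-\bar v\|_{L^2}\leq C\|\nabla v\|_{L^2}\leq C$ by Poincar\'e, the right-hand side $\eps^2 f(\eps z)\mu_1 v=\eps^2 f\mu_1(v-\bar v)+\eps^2 f\mu_1\bar v$ is $O(\eps)$ in $L^2$ since $|\bar v|\leq C\eps^{-1}$ and the zeroth-order coefficient carries the factor $\eps^2$, and the Neumann boundary estimates (with a short bootstrap from $W^{2,2}$ to $W^{2,p}$) give $\|v-\bar v\|_{C^{1,\alpha}}\leq C$, hence $|\nabla v|\leq C$ and $|\nabla u_\eps|\leq C/\eps$ on $\{\eps\leq r\leq 2\eps\}$, which is all the lemma needs. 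Note also that your intermediate claim $|u_\eps|\leq C$ on $\{\eps\leq r\leq 2\eps\}$ is stronger than \eqref{sup_bd} and is not produced by the corrected argument; integrating the gradient bound only yields $C\log(1/\eps)$ there, which is exactly what the lemma asserts.
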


\begin{proof}
For radii $r \geq 2 \eps$ this follows from the proof of \cref{lem_pt_bd}.
For the remaining radii, we use the same argument but apply elliptic boundary estimates
\cite[Chapter 5.7]{taylor_1}.
\end{proof}

Besides the pointwise bound  on $u_\eps$ from the previous lemma, the proof of \cref{thm_lower_neumann} relies on a good estimate on the $L^2$-norm of the tangential gradient of  $ u_\eps$ along $\partial B_\eps(x).$
Below, we denote by $\partial_T u_{\eps}$ the gradient of $\left. u_{\eps} \right|_{\partial B_{\eps}}.$
We assume once more for ease of notation that $B_1(x)$ can be endowed with conformal coordinates.

\begin{lemma} \label{tangential_bd}
For a normalized $\mu_1(\Sigma \setminus B_\eps(x))$-eigenfunction $u_\eps$, we have that
\begin{equation} \label{goal}
\int_{\partial B_\eps(x)} |\partial_T  u_\eps|^2 d \mathcal{H}^1 \leq C \eps.
\end{equation}
\end{lemma}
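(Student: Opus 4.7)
Plan: I would work in the conformal coordinates around $x$, in which $g = f g_e$ with $f(0) = 1$, so that $u_\eps$ satisfies $\Delta_e u_\eps = \mu_1 f u_\eps$ on the Euclidean annulus $A_\eps = B_1 \setminus B_\eps$ with the Neumann condition $\partial_r u_\eps|_{\partial B_\eps} = 0$, and Fourier-expand in the angular variable, $u_\eps(r,\theta) = \sum_{n \in \IZ} a_n(r) e^{in\theta}$. By Parseval,
\begin{equation*}
\int_{\partial B_\eps} |\partial_T u_\eps|^2 \, d\mathcal{H}^1 = \frac{2\pi}{\eps}\sum_{n \in \IZ} n^2 |a_n(\eps)|^2,
\end{equation*}
so the goal reduces to establishing $\sum_n n^2 |a_n(\eps)|^2 \leq C\eps^2$.

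For each $n \neq 0$, the coefficient $a_n$ solves the inhomogeneous Bessel-type ODE $a_n'' + a_n'/r - n^2 a_n/r^2 = h_n(r)$ with Neumann condition $a_n'(\eps) = 0$, where $h_n$ is the $n$-th Fourier coefficient of $\mu_1 f u_\eps$. The homogeneous basis $\{r^n, r^{-n}\}$ together with the Neumann condition forces $a_n(r) = \alpha_n(r^n + \eps^{2n} r^{-n}) + y_{p,n}(r)$, where $y_{p,n}$ is the variation-of-parameters particular solution with $y_{p,n}(\eps) = y_{p,n}'(\eps) = 0$. Thus $a_n(\eps) = 2\alpha_n \eps^n$, and the outer matching $a_n(1) = \alpha_n(1+\eps^{2n}) + y_{p,n}(1)$ yields
\begin{equation*}
|a_n(\eps)|^2 \leq C \eps^{2n}\bigl(|a_n(1)|^2 + |y_{p,n}(1)|^2\bigr).
\end{equation*}

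The first piece contributes $\sum_n n^2 \eps^{2n}|a_n(1)|^2 \leq \eps^2 \sum_n n^2 |a_n(1)|^2 \leq C\eps^2$, using the interior $C^1$ estimate $\|u_\eps\|_{C^1(\partial B_1)} \leq C$ coming from \cref{neu_ell_est} and Parseval on $\partial B_1$. For the second, inserting the pointwise bound $|u_\eps(r,\theta)| \leq C\log(1/r)$ from \cref{neu_ell_est} gives $|h_n(r)| \leq C\log(1/r)$, and evaluating the explicit formula $y_{p,n}(1) = \frac{1}{2n}\int_\eps^1 (s^{-n+1} - s^{n+1}) h_n(s)\, ds$ mode by mode shows that the integrals grow at worst like $\eps^{2-n}\log(1/\eps)$ as $n$ increases; the prefactor $\eps^{2n}$ more than compensates, yielding $\sum_n n^2 \eps^{2n}|y_{p,n}(1)|^2 \leq C\eps^4 \log^2(1/\eps)$.

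The main technical point is the mode-by-mode bookkeeping, namely verifying that the growing-in-$n$ inverse-power integrals defining $y_{p,n}(1)$ are uniformly absorbed by the $\eps^{2n}$ factor produced by the Neumann condition. The sharp $C\eps$ bound is saturated by the first harmonic $n=1$, reflecting that the Neumann-eigenfunction $u_\eps$ is approximately affine near $x$ at scale $\eps$, so its angular oscillation around $\partial B_\eps$ is of pointwise size $O(\eps)$.
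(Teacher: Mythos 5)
Your proof is correct, but it takes a genuinely different route from the paper's. The paper first splits $u_\eps=w_\eps+v_\eps$, where $w_\eps$ is a uniformly $C^{1,\alpha}$ solution of the inhomogeneous problem on all of $B_1(x)$ and $v_\eps$ is harmonic on the annulus; it then controls the positive-frequency part of the Laurent expansion of $v_\eps$ by the global $L^2$ bound, and the singular part $h_2=b\log r+\sum c_{-n}r^{-n}e^{-in\theta}$ by the Rellich--Pohozaev-type identity that $\rho\int_{\partial B_\rho}\left((\partial_T h_2)^2-(\partial_r h_2)^2\right)d\mathcal{H}^1$ is independent of $\rho$, combined with the uniform bound $|\partial_\nu v_\eps|=|\partial_\nu w_\eps|\le C$ on $\partial B_\eps$ coming from the Neumann condition. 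You instead keep the full eigenvalue equation and exploit the Neumann condition mode by mode: it locks the ratio of the two homogeneous solutions, $a_n=\alpha_n(r^{|n|}+\eps^{2|n|}r^{-|n|})+y_{p,n}$, so that $a_n(\eps)=2\alpha_n\eps^{|n|}$ decays geometrically in $|n|$, and the variation-of-parameters integrals are absorbed by this factor. Your route is more elementary and avoids both the auxiliary function $w_\eps$ and the conserved quantity, at the price of the uniform-in-$n$ bookkeeping for $y_{p,n}$; the paper's route avoids all per-mode analysis of the inhomogeneity. Two harmless points to tidy up: the outer matching should be done at a fixed radius such as $r=1/2$ (where \cref{neu_ell_est} and interior elliptic estimates give the needed $C^1$ bound), since $\partial B_1$ sits at the edge of the chart; and the $n=2$ mode actually contributes $O(\eps^4\log^4(1/\eps))$ rather than $O(\eps^4\log^2(1/\eps))$, which is still $o(\eps^2)$ and does not affect the conclusion.
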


\begin{proof}

We denote by $\tilde u_\eps$ the function obtained by extending $u_\eps$ harmonically to $B_\eps(x),$
where $u_\eps$ denotes a normalized $\mu_1(\Sigma \setminus B_\eps(x))$-eigenfunction.
By a scaling argument, $\tilde u_\eps$ is uniformly bounded in $W^{1,2}(\Sigma)$ in terms of the $W^{1,2}(\Sigma \setminus B_\eps(x))$-norm of $u_\eps$ \cite[p.\,40]{rt}.

Let $w_\eps$ be the unique weak solution to
\begin{equation*}
\begin{cases}
\Delta w_\eps = \mu_1(\Sigma \setminus B_\eps(x))\tilde u_\eps &   \text{in} \ B_1(x)
\\
\hspace{0.3cm} w_\eps = 0  & \text{on} \ \partial B_1(x).
\end{cases}
\end{equation*}
By elliptic estimates, $w_\eps$ is bounded in $W^{3,2}(B_{1/2}(x)),$
which embeds into $C^{1,\alpha}(B_{1/2}(x))$ for any $\alpha<1.$
We can then write
\begin{equation*}
u_\eps=w_\eps  + v_\eps,
\end{equation*}
with $v_\eps \in W^{1,2}(B_1(x)\setminus B_\eps(x))$ a harmonic function. 

Note that the bound \eqref{goal} clearly holds for $w_\eps,$ so it suffices to consider $v_\eps.$
If we denote by $\nu$ the inward pointing normal of $B_\eps(x),$ we have
\begin{equation} \label{infty_bd_norm_der}
|\partial_\nu v_\eps| =| \partial_\nu u_\eps -\partial_\nu w_\eps|=|\partial_\nu w_\eps|
\leq C
\end{equation}
along $\partial B_\eps(x),$ since $w_\eps$ is bounded in $C^{1,\alpha}(B_{1/2}).$ 
Since the Laplace operator is conformally covariant in dimension two, $v_\eps$ is also harmonic with respect to the Euclidean metric.
Therefore, it follows from separation of variables, that we can expand $v_\eps$ in Fourier modes
\begin{equation*}
v=a + b \log(r)+\sum_{n \in \IZ^*} (c_n r^n + \overline{c_{-n}} r^{-n})e^{in\theta}.
\end{equation*}
Here and in the following we suppress the index $\eps.$
Note that we have 
\begin{equation} \label{sum_bd_1}
2 \pi \sum_{n \in \IZ^*} \int_{\eps}^1 |c_n|^2 r^{2n+1} + 2 \real(c_nc_{-n}) r + |c_{-n}|^2 r^{-2n+1} \, dr
\leq \int_{B_1 \setminus B_\eps} |v|^2 
\leq C.
\end{equation}
For $\eps \leq 1/2$ and $n \geq 2$ we can use Young's inequality to find
\begin{equation} \label{sum_bd_2}
\begin{split}
 |c_n|^2 & \int_\eps^1 r^{2n+1} dr  + 2 \real(c_nc_{-n})  \int_\eps^1 r dr + |c_{-n}|^2 \int_\eps^1 r^{-2n+1} dr
\\
& = \frac{|c_n|^2}{2n+2}(1-\eps^{2n+2}) +2 \real(c_nc_{-n}) (1-\eps^2) + \frac{ |c_{-n}|^2}{2n-2} (\eps^{-2n+2}-1)
\\
& \geq \frac{|c_n|^2}{2n+2}(1-\eps^{2n+2}-(n+1)\delta_n) + \frac{ |c_{-n}|^2}{2n-2}\left(\eps^{-2n+2}-1 - \frac{n-1}{\delta_n} \right)
\\
& \geq \frac{|c_n|^2}{8(n+1)} + \frac{ |c_{-n}|^2}{2n-2}(\eps^{-2n+2}-2)
\\
& \geq \frac{|c_n|^2}{8(n+1)},
\end{split} 
\end{equation}
with  $\delta_n=1/(2(n+1))$.
A similar computation gives the same bound for $n=1$.
Of course, the same computation applies to negative $n$.
Therefore, we find from \eqref{sum_bd_1} and \eqref{sum_bd_2}, that
 \begin{equation*}
h_1=\sum_{n>0} (c_n e^{i n \theta } + \overline{c_{-n}} e^{-in\theta}) r^n
\end{equation*}
extends to a harmonic function on all of $B_1,$ which is bounded in $L^2(B_1),$ whence in $C^\infty(B_{1/2}).$\\
Therefore, we are left with bounding the tangential derivative of the harmonic function
\begin{equation*}
h_2=v-h_1-a=b \log(r)+\sum_{n>0} \left( c_{-n} e^{-i n \theta} + \overline{c_{-n}}e^{in\theta} \right) r^{-n}.
\end{equation*}
We now use that the quantity
\begin{equation*}
\rho \int_{\partial B_\rho} \left( (\partial_T h_2)^2 - (\partial_r h_2)^2\right) d \mathcal{H}^1
\end{equation*}
is independent of $\rho$, what can be verified by a straightforward computation.
For $\rho \to \infty$ the term $\rho \int_{\partial B_\rho}(\partial_T h_2)^2 d \mathcal{H}^1$ vanishes, since the integrand decays at least like $\rho^{-3}.$
For the other term, note that $\partial_r \log(r)$ and $\partial_r (h_2 - b \log(r))$
are orthogonal in $L^2(\partial B_\rho).$
Therefore, we have 
\begin{equation*}
\begin{split}
\int_{\partial B_\rho} (\partial_r h_2)^2 d\mathcal{H}^1
&= b^2 \int_{\partial B_\rho} (\partial_r \log(r))^2 d\mathcal{H}^1 
+\int_{\partial B_\rho} (\partial_r(h_2-b\log(r)))^2 d\mathcal{H}^1
\\
& =  \frac{2 \pi b^2}{\rho} + O(\rho^{-3})
\end{split}
\end{equation*}
since the integrand of the second summand decays at least like $\rho^{-4}$ as $\rho \to \infty.$
In conclusion,
\begin{equation} \label{harm_const}
\rho \int_{\partial B_\rho} ((\partial_T h_2)^2 - (\partial_r h_2)^2) d\mathcal{H}^1= - 2 \pi b^2
\end{equation}
for any $\rho \geq \eps.$
This yields
\begin{equation*} 
\int_{\partial B_\eps(x)} (\partial_T h_2)^2 d \mathcal{H}^1 
= \int_{\partial B_\eps(x)} (\partial_r h_2)^2 d \mathcal{H}^1 - \frac{2 \pi b^2}{\eps}
\leq C \eps.
\end{equation*}
thanks to \eqref{infty_bd_norm_der}.
\end{proof}

We can now give the proof of \cref{thm_lower_neumann}

\begin{proof}[Proof of \cref{thm_lower_neumann}]
Let $u_\eps$ be a normalized $\mu_1(\Sigma \setminus B_\eps)$-eigenfunction.
Once again, $\tilde u_\eps$ denotes the $W^{1,2}$-function on $\Sigma$ obtained by extending $u_\eps$ harmonically to $B_\eps$.
We want to use 
$$
v_\eps = \tilde u_\eps - \int_\Sigma \tilde u_\eps
$$ 
as a test function for $\mu_1(\Sigma)$.

We get from \cref{neu_ell_est} that
$$
\left| \int_\Sigma \tilde u_\eps \right| \leq  \int_{B_\eps} |\tilde u_\eps| \leq C|\log(\eps)| \eps^2,
$$
which in turn implies
\begin{equation} \label{eq_mv_bd}
\begin{split}
\int_\Sigma |v_\eps|^2 
&\geq 
\int_{\Sigma \setminus B_\eps} |u_\eps|^2 - 2 \left( \int_\Sigma \tilde u_\eps \right)^2
\\
&\geq
1 - c \eps^4 \log(1/\eps)^2.
\end{split}
\end{equation}
In order to estimate the energy of $\tilde u_\eps$ on $B_\eps$ we may assume for a moment that 
\begin{equation} \label{wlog}
\int_{\partial B_\eps} u_\eps d \mathcal{H}^1=0,
\end{equation}
since subtracting a constant only results in subtracting a constant from $\tilde u_\eps$ in $B_\eps$,
which does not change the energy of $\tilde u_\eps$ in $B_\eps.$
We use $$\hat u_\eps (r, \theta)= \frac{r}{\eps} u_\eps(\eps,\theta)$$ as a competitor.
In order to estimate its energy, we use that \eqref{wlog}, the Poincar{\'e} inequality, and \cref{tangential_bd} imply
\begin{equation} \label{poincare}
\int_{\partial B_\eps} |u_\eps|^2 d \mathcal{H}^1 
\leq C \eps^2 \int_{\partial B_\eps} (\partial_T u_\eps)^2 d \mathcal{H}^1 
\leq C \eps^3.
\end{equation}
Therefore, we get
\begin{equation*} 
\begin{split}
\int_{B_\eps} |\nabla \hat u_{\eps}|^2 
&\leq \frac{C}{\eps^2} \int_0^{2\pi} \int_0^\eps  \left(|u_\eps|^2(\eps,\theta) + (\partial_\theta u_\eps)^2(\eps,\theta) \right)r dr d\theta
\\
& \leq \frac{C}{\eps} \int_{\partial B_\eps} |u_\eps|^2d \mathcal{H}^1 + C \eps \int_{\partial B_\eps} (\partial_T u_\eps)^2d \mathcal{H}^1
\\
& \leq C \eps^2,
\end{split}
\end{equation*}
where we have used \eqref{poincare} and \cref{tangential_bd}.
This implies that
\begin{equation} \label{eq_grd_est}
\int_\Sigma |\nabla \tilde u _\eps|^2 \leq \mu_1(\Sigma \setminus B_\eps) + C\eps^2,
\end{equation}
since $u_\eps$ is assumed to be normalized.
Combining \eqref{eq_mv_bd} and \eqref{eq_grd_est}, we obtain
\begin{equation*}
\mu_1(\Sigma)
\leq
\frac{\int_\Sigma |\nabla  v_\eps|^2}{ \int_\Sigma |v_\eps|^2}
\leq
\frac{\mu_1(\Sigma \setminus B_\eps) + C\eps^2}{1 - c \eps^4\log^2(1/\eps)}
\leq  \mu_1(\Sigma \setminus B_\eps) + C \eps^2.
\qedhere
\end{equation*}
\end{proof}

%%%%%%%%%%%%%%%%%%%%%%%%%%%%%%%%%%
\subsection{The multiplicity of the first eigenvalue} \label{sec_mult_first}
%%%%%%%%%%%%%%%%%%%%%%%%%%%%%%%%%%

As an application of the bound from \cref{thm_lower_neumann} we can prove that that the first eigenvalue of $\Sigma_{\eps,\kappa}$ has to be simple
if the gap $\lambda_1(\Sigma) - \lambda_1(\Sigma_{\eps,\kappa})$ is not too small.

We start with the following lemma.
\begin{lemma} \label{first_upper_bd}
There is $\eps_{2}=\eps_{2}(\Sigma,k,\kappa_0)>0$ such that 
$$
\lambda_1(\Sigma_{\eps,\kappa}) \leq \lambda_0(C_{\eps,\kappa})
$$
if $\eps \leq \eps_{2}$.
\end{lemma}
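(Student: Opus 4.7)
The plan is to apply the Courant--Fischer min-max characterization of $\lambda_1$ to a carefully chosen two-dimensional test space $V \subset W^{1,2}(\Sigma_{\eps,\kappa})$ in which the two basis elements have disjoint supports. Because the supports are disjoint, both the Dirichlet quadratic form and the $L^2$ inner product will be represented by diagonal matrices, so the two generalized eigenvalues are just the individual Rayleigh quotients, and it suffices to control each of them separately.

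For the first basis vector I would take $\psi$, an $L^2$-normalized first Dirichlet eigenfunction on $C_{\eps,\kappa}$ (in the cylinder case) or on $M_{\eps,\kappa}$ (in the cross cap case, using \cref{lem_gr_st_cc} to bound its eigenvalue by $\lambda_0^0(C_{\eps,\kappa}) = \lambda_0(C_{\eps,\kappa})$), extended by zero to all of $\Sigma_{\eps,\kappa}$. Since $\psi$ vanishes on the entire boundary where it is extended, the extension lies in $W^{1,2}$; its Rayleigh quotient equals $\lambda_0(C_{\eps,\kappa})$ in the cylinder case and is $\leq \lambda_0(C_{\eps,\kappa})$ in the cross cap case. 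For the second basis vector I would take a logarithmic cutoff $\chi$ supported on the $\Sigma$-side: $\chi \equiv 0$ on the attached tube/cross cap and on the transition annulus $B_{\eps^k}(x) \setminus \partial B_{\eps^k}(x)$ in the glued model, $\chi \equiv 1$ outside a fixed ball $B_{1/2}(x)$ around each attachment point, and $\chi(s,\theta) = \log(s/\eps^k)/\log(1/(2\eps^k))$ on the annulus $B_{1/2}(x) \setminus B_{\eps^k}(x)$ (repeated at $x_0, x_1$ in the cylinder case). A direct computation in polar coordinates gives
\[
\int_{\Sigma_{\eps,\kappa}} |\nabla \chi|^2 \leq \frac{C}{|\log \eps|}, \qquad \int_{\Sigma_{\eps,\kappa}} \chi^2 \geq c > 0,
\]
uniformly in $\kappa \in [\kappa_0,\kappa_1]$, with $c$ depending only on $\Sigma$ and $C$ on $k$.

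By construction $\psi$ and $\chi$ have disjoint supports in $\Sigma_{\eps,\kappa}$, so in the basis $\{\psi,\chi\}$ both the stiffness matrix $A_{ij} = \int \nabla v_i \cdot \nabla v_j$ and the mass matrix $B_{ij} = \int v_i v_j$ are diagonal. The two generalized eigenvalues of $(A,B)$ are therefore $\lambda_0(C_{\eps,\kappa})$ (or something smaller, in the cross cap case) and $\int|\nabla \chi|^2/\int \chi^2 \leq C/(c|\log \eps|)$. Using the uniform lower bound $\lambda_0(C_{\eps,\kappa}) \geq \kappa_0^2/4$ from \cref{lem_gr_st_cc}, for all sufficiently small $\eps$ (say $\eps \leq \eps_2(\Sigma,k,\kappa_0)$) the second quotient is strictly smaller than the first, so
\[
\sup_{u \in V \setminus \{0\}} \frac{\int |\nabla u|^2}{\int u^2} = \lambda_0(C_{\eps,\kappa}),
\]
and the Courant--Fischer principle yields $\lambda_1(\Sigma_{\eps,\kappa}) \leq \lambda_0(C_{\eps,\kappa})$. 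The only mildly subtle point is ensuring that the cutoff has logarithmically small energy---any linear or compactly-supported cutoff on scale $\eps^k$ would give $\int |\nabla \chi|^2 = O(1)$, which is not small enough---but the capacity-type logarithmic profile handles this cleanly.
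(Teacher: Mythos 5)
Your proof is correct and follows essentially the same route as the paper: a two-dimensional test space spanned by the zero-extended first Dirichlet eigenfunction of the attached piece and a logarithmic cutoff supported on the $\Sigma$-side, with disjoint supports so the two Rayleigh quotients can be compared separately, the cutoff's quotient being $O(1/\log(1/\eps))$ and hence eventually below $\lambda_0(C_{\eps,\kappa}) \geq \kappa_0^2/4$. The only cosmetic difference is the choice of transition annulus for the cutoff, which is immaterial.
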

\begin{proof}
We give the argument only for attaching a cylinder, the case of a cross cap is analogous.
Let $\eta_\eps \colon \Sigma \setminus (B_{\eps^k}(x_0) \cup B_{\eps^k}(x_1)) \to [0,1]$ be the log-cut-off function that is given by
\begin{equation} \label{eq_log_cut_off}
\eta_\eps(x) = 
\begin{cases}
1 & \ \text{if} \ x \in B_1(x_0) \setminus B_{\eps^{k/2}}(x_0)
\\
1-\frac{\log(|x|/\eps^{k/2})}{\log(\eps^{k/2})} & \ \text{if} \ x \in B_{\eps^{k/2}}(x_0) \setminus B_{\eps^k}(x_0),
\end{cases}
\end{equation}
near $x_0$ and similarly near $x_1$.
We extend this in the obvious locally constant way to all of $\Sigma_{\eps,\kappa}$.

We use the two dimensional space of test functions spanned by $\eta_\eps$  
and a $\lambda_0(C_{\eps,\kappa})$-eigenfunction $\phi_{\eps,\kappa,0}$ (extended by $0$ to all of $\Sigma_{\eps,\kappa}$).
Since these two have disjoint support any linear combination $w$ of them has 
\begin{equation*}
\begin{split}
\int_{\Sigma_{\eps,\kappa}} |\nabla w|^2 
&\leq 
\max(C/\log(1/\eps),\lambda_0(C_{\eps,\kappa})) \int_{\Sigma_{\eps,\kappa}} |w|^2
\\
&\leq
\lambda_0(C_{\eps,\kappa}) \int_{\Sigma_{\eps,\kappa}} |w|^2
\end{split}
\end{equation*}
for $\eps$ sufficiently small, depending on $\Sigma,k,\kappa_0$.
\end{proof}

\begin{prop} \label{prop_first_simple}
There is $\eps_{3}=\eps_{3}(\Sigma,k,\kappa_0)>0$ with the following property.
For $\eps \leq \eps_{3}$ and  $\kappa \in [\kappa_0,\kappa_1]$ such that $\lambda_1(\Sigma_{\eps,\kappa}) \leq \lambda_1(\Sigma) - \eps^{k}$
the first eigenvalue $\lambda_1(\Sigma_{\eps,\kappa})$ is simple.
\end{prop}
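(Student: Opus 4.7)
The plan is to argue by contradiction and exploit \cref{thm_lower_neumann} on the two-dimensional eigenspace that is assumed to exist. Suppose $\lambda_1(\Sigma_{\eps,\kappa})$ had multiplicity at least two, with $L^2$-orthonormal eigenfunctions $u_1, u_2$, and write $H_{\eps,\kappa}$ for the attached cylinder or cross cap.

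First, I would use the one-parameter freedom in the eigenspace to produce a good test function. The linear functional $(a,b)\mapsto \int_{\Sigma\setminus B_{\eps^k}}(au_1+bu_2)$ on $\IR^2$ has non-trivial kernel, so one can pick $u = au_1+bu_2$ with $a^2+b^2=1$ and $\int_{\Sigma\setminus B_{\eps^k}} u = 0$. Since $u$ is orthogonal to constants on $\Sigma_{\eps,\kappa}$, this is equivalent to $\int_{H_{\eps,\kappa}} u = 0$. Set $A = \int_{\Sigma\setminus B_{\eps^k}} u^2$ and $B = \int_{H_{\eps,\kappa}} u^2$, so that $A+B=1$.

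The next step is to split the Dirichlet energy between the two pieces and apply Neumann-type lower bounds on each. On the first piece, $u|_{\Sigma\setminus B_{\eps^k}}$ has vanishing mean and so is admissible for $\mu_1(\Sigma\setminus B_{\eps^k})$; \cref{thm_lower_neumann}, applied with the ball of radius $\eps^k$, gives
\[
\int_{\Sigma\setminus B_{\eps^k}} |\nabla u|^2 \;\geq\; \bigl(\lambda_1(\Sigma) - C\eps^{2k}\bigr)\,A.
\]
On $H_{\eps,\kappa}$, the restriction $u|_{H_{\eps,\kappa}}$ again has vanishing mean and is admissible for the first non-zero Neumann eigenvalue $\mu_1(H_{\eps,\kappa})$. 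Since non-rotationally symmetric Neumann modes on $H_{\eps,\kappa}$ have Rayleigh quotient of order $\eps^{-2}$, \cref{lem_spec_neu} identifies $\mu_1(H_{\eps,\kappa})$ with the explicit rotationally symmetric value $\kappa^2/4 + \kappa^2\pi^2\eps^{2\alpha}$, and
\[
\int_{H_{\eps,\kappa}} |\nabla u|^2 \;\geq\; \mu_1(H_{\eps,\kappa})\,B.
\]
Summing yields $\lambda_1(\Sigma_{\eps,\kappa}) \geq (\lambda_1(\Sigma) - C\eps^{2k})\,A + \mu_1(H_{\eps,\kappa})\,B$.

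The proof then concludes with a case split on $\mu_1(H_{\eps,\kappa})$, and the second case is the real obstacle. If $\mu_1(H_{\eps,\kappa}) \geq \lambda_1(\Sigma) - C\eps^{2k}$, the weighted average on the right-hand side is at least $\lambda_1(\Sigma) - C\eps^{2k}$, which together with the hypothesis $\lambda_1(\Sigma_{\eps,\kappa})\leq\lambda_1(\Sigma)-\eps^k$ forces $\eps^k \leq C\eps^{2k}$, impossible for $\eps_3$ small. If instead $\mu_1(H_{\eps,\kappa}) < \lambda_1(\Sigma) - C\eps^{2k}$ --- the regime where $\kappa$ is close to $\kappa_0$ and where the Rayleigh comparison alone does not suffice --- I would combine the inequality with the upper bound $\lambda_1(\Sigma_{\eps,\kappa}) \leq \lambda_0(H_{\eps,\kappa}) \leq \mu_1(H_{\eps,\kappa})$ supplied by \cref{first_upper_bd} together with \cref{spec_comp_dir_neu} (and \cref{lem_basic_spec_comp} for the cross cap). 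Rewriting the right-hand side as $\mu_1(H_{\eps,\kappa}) + \bigl[(\lambda_1(\Sigma)-C\eps^{2k}) - \mu_1(H_{\eps,\kappa})\bigr]A$ with strictly positive bracketed coefficient then forces $A=0$. Consequently $u \equiv 0$ on $\Sigma\setminus B_{\eps^k}$ and, by continuity, $u|_{\partial H_{\eps,\kappa}} = 0$, so that $u|_{H_{\eps,\kappa}}$ is a Dirichlet eigenfunction with eigenvalue $\lambda_0(H_{\eps,\kappa})$ and therefore a non-zero multiple of the strictly positive first Dirichlet eigenfunction $\phi_{\eps,\kappa,0}$. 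This is incompatible with the constraint $\int_{H_{\eps,\kappa}} u = 0$, which completes the contradiction.
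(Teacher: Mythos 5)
Your proposal is correct and follows essentially the same route as the paper: select a zero-mean combination from the two-dimensional eigenspace, split the Rayleigh quotient between $\Sigma\setminus B_{\eps^k}$ and the handle, apply \cref{thm_lower_neumann} on the surface part and $\lambda_1(\Sigma_{\eps,\kappa})\leq\lambda_0(H_{\eps,\kappa})\leq\mu_1(H_{\eps,\kappa})$ on the handle, and dispose of the degenerate case $A=0$ via the sign of the first Dirichlet eigenfunction. Your case split on the size of $\mu_1(H_{\eps,\kappa})$ is a cosmetic reorganization of the paper's single inequality chain; both arguments are otherwise identical.
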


\begin{proof}
We argue by contradiction and assume that $\lambda_1(\Sigma_{\eps,\kappa})$ has multiplicity at least two.
In this case we may choose a first eigenfunction $u_{\eps,\kappa}$ such that
\begin{equation} \label{eq_orthog}
\int_{C_{\eps,\kappa}} u_{\eps,\kappa} =-\int_{\Sigma \setminus B_{\eps^k}} u_{\eps,\kappa} = 0,
\end{equation}
where for simplicity we write $B_{\eps^k} = B_{\eps^k}(x_0) \cup B_{\eps^k}(x_1)$.
Therefore, we find that
\begin{equation*}
\begin{split}
\int_{\Sigma \setminus B_{\eps^k}} |\nabla u_{\eps,\kappa}|^2 
&\leq
\lambda_1(\Sigma_{\eps,\kappa}) \int_{\Sigma \setminus B_{\eps^k}} |u_{\eps,\kappa}|^2
+
\lambda_1(\Sigma_{\eps,\kappa}) \int_{C_{\eps,\kappa}} |u_{\eps,\kappa}|^2
-
\int_{C_{\eps,\kappa}} |\nabla u_{\eps,\kappa}|^2
\\
& \leq
\lambda_1(\Sigma_{\eps,\kappa}) \int_{\Sigma \setminus B_{\eps^k}} |u_{\eps,\kappa}|^2
+
\frac{\lambda_1(\Sigma_{\eps,\kappa})} {\mu_1(C_{\eps,\kappa})} \int_{C_{\eps,\kappa}} |\nabla u_{\eps,\kappa}|^2
-
\int_{C_{\eps,\kappa}} |\nabla u_{\eps,\kappa}|^2
\\
& \leq
\lambda_1(\Sigma_{\eps,\kappa}) \int_{\Sigma \setminus B_{\eps^k}} |u_{\eps,\kappa}|^2,
\end{split}
\end{equation*}
where the last step used \cref{first_upper_bd} and $\lambda_0(C_{\eps,\kappa}) \leq \mu_1(C_{\eps,\kappa})$ thanks to \cref{spec_comp_dir_neu}.

On the other hand, because of \eqref{eq_orthog}, we also have that
$$
\mu_1(\Sigma \setminus B_{\eps^k}) \int_{\Sigma \setminus B_{\eps^k}} | u_{\eps,\kappa}|^2 
\leq
\int_{\Sigma \setminus B_{\eps^k}} |\nabla u_{\eps,\kappa}|^2.
$$
Therefore, if we can show that $u_{\eps,\kappa}$ does not vanish almost everywhere on $\Sigma \setminus B_{\eps^k}$ it follows from the two estimates above and \cref{thm_lower_neumann} that
$$
\lambda_1(\Sigma) - C \eps^{2k} \leq \mu_1(\Sigma \setminus B_{\eps^k}) \leq \lambda_1(\Sigma_{\eps,\kappa})
$$
contradicting our assumptions for $\eps$ sufficiently small.

Suppose that $u_{\eps,\kappa}$ vanishes almost everywhere on $\Sigma \setminus B_{\eps,\kappa}$. 
Since $u_{\eps,\kappa} \in C^0(\Sigma_{\eps,\kappa})$ (this follows from the proof of \cref{lem_pt_bd}), this implies that it is a solution to $(\Delta - \lambda_1(\Sigma_{\eps,\kappa}))u_{\eps,\kappa}=0$ that vanishes along $\partial C_{\eps,\kappa}$.
Since $\lambda_1(\Sigma_{\eps,\kappa}) \leq \lambda_0(C_{\eps,\kappa})$ and $u_{\eps,\kappa}$ is non-constant this implies $\lambda_1(\Sigma_{\eps,\kappa}) = \lambda_0(C_{\eps,\kappa})$.
But then $u_{\eps,\kappa}$ has a sign on $C_{\eps,\kappa}$, contradicting \eqref{eq_orthog}
\end{proof}

The iteration argument in \cref{sec_iter} eventually leading to \cref{thm_main_technical} starts from the very same computation if $\lambda_1(\Sigma_{\eps,\kappa})$
and $\lambda_2(\Sigma_{\eps,\kappa})$ are not equal but very close to each other.
Because we will only be able to work with a linear combination of the first two eigenfunctions there will be error terms arising that are not present in the computation above.
Controlling these error terms is one of our main technical issues.

%%%%%%%%%%%%%%%%%%%%%%%%%%%%%%%%%%
\section{Limits of eigenvalues} \label{sec_eig_lim}
%%%%%%%%%%%%%%%%%%%%%%%%%%%%%%%%%%

In this section we prove a first result on the behavior of the eigenvalues of $\Sigma_{\eps,\kappa}$ as $\eps$ approaches $0$.
The proof of our main result will make use of the much more precise, and much more technical, estimates in \cref{sec_conc}, from which one could very easily obtain the lower bound on the eigenvalues proved below.
However, we think that this less technical section could be very helpful to the reader to get a feeling for some features of the construction and a few of the techniques invoked in \cref{sec_conc}.
  
\begin{theorem} \label{thm_eig_lim}
Let $0<\kappa_0<\kappa<\kappa_1$.
We have that
$$
\lim_{\eps \to 0}\lambda_l(\Sigma_{\eps,\kappa}) =  \min \left(\lambda_l(\Sigma), \frac{\kappa^2}{4} \right)
$$ 
for any $l \in \IN$ locally uniformly in $\kappa$.
\end{theorem}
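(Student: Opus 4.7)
The plan is to prove matching upper and lower bounds via the min-max principle together with a compactness argument; since all constants in the estimates depend only on $\kappa_0$ and $\kappa_1$, the convergence is automatically locally uniform in $\kappa$. Write $\Lambda_l:=\min(\lambda_l(\Sigma),\kappa^2/4)$.

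For the upper bound $\limsup_{\eps\to 0}\lambda_l(\Sigma_{\eps,\kappa})\leq\Lambda_l$, I would construct an $(l+1)$-dimensional test space as follows. Let $r$ be the largest integer with $\lambda_{r-1}(\Sigma)\leq\Lambda_l$ and put $s=l+1-r$. Use the first $r$ eigenfunctions of $\Sigma$ multiplied by the log-cut-off $\eta_\eps$ from \eqref{eq_log_cut_off} (which vanishes on $B_{\eps^k}(x_i)$, equals $1$ outside $B_{\eps^{k/2}}(x_i)$, and satisfies $\int|\nabla\eta_\eps|^2=O(1/\log(1/\eps))$), together with the first $s$ Dirichlet eigenfunctions of $C_{\eps,\kappa}$ (resp.\ $M_{\eps,\kappa}$) extended by zero. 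These two groups have disjoint supports, so the Rayleigh quotient of any element of their span decouples, and \cref{lem_spec_cyl} (resp.\ \cref{lem_gr_st_cc} combined with \cref{lem_basic_spec_comp}) gives that the $s$-th Dirichlet eigenvalue of the attached piece equals $\kappa^2/4+O(\eps^{2\alpha})$. A standard perturbation argument, using that the $\eta_\eps\phi_i$ are approximately $L^2$-orthonormal, then yields $\lambda_l(\Sigma_{\eps,\kappa})\leq\Lambda_l+o(1)$.

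For the lower bound I would argue by contradiction. If the conclusion failed, there would exist $\delta>0$ and $\eps_n\to 0$ with $\lambda_l(\Sigma_{\eps_n,\kappa})\leq\Lambda_l-\delta$, and hence $L^2$-orthonormal eigenfunctions $u_0^n,\dots,u_l^n$ on $\Sigma_{\eps_n,\kappa}$ whose eigenvalues $\lambda_i^n\leq\Lambda_l-\delta$ all lie strictly below $\kappa^2/4$. By \cref{lem_pt_bd} the $u_i^n$ are uniformly bounded together with their gradients on compact subsets of $\Sigma\setminus\{x_0,x_1\}$; after passing to a subsequence and invoking standard elliptic regularity they converge in $C^1_{loc}(\Sigma\setminus\{x_0,x_1\})$ to weak solutions $u_i^\infty$ of $(\Delta-\Lambda_i)u_i^\infty=0$ with $\Lambda_i\leq\Lambda_l-\delta$. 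The pointwise bound $|u_i^\infty|\leq C\log(1/r)$ is square-integrable near the punctures, so by the removable singularity theorem each $u_i^\infty$ extends to a smooth eigenfunction of $\Sigma$.

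The step I expect to be the main obstacle is to show that no $L^2$-mass escapes into the attached piece, so that the $u_i^\infty$ actually form an orthonormal set on $\Sigma$. Since $\lambda_i^n<\kappa^2/4$, separation of variables on $C_{\eps_n,\kappa}$ shows that every non-zero Fourier mode in $\theta$ has eigenvalue bounded below by $\kappa_0^2/\eps_n^2$, so only the rotationally symmetric part of $u_i^n|_{C_{\eps_n,\kappa}}$ can carry nontrivial mass. That symmetric part satisfies the Euler ODE $-\kappa^2 y^2 u''=\lambda_i^n u$ and hence has the form $\alpha y^{s_+}+\beta y^{s_-}$ with $s_\pm=\tfrac12(1\pm\sqrt{1-4\lambda_i^n/\kappa^2})\in\IR$. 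The pointwise bound from \cref{lem_pt_bd} evaluated on the two boundary components of $\partial C_{\eps_n,\kappa}$ (at $y=1$ and $y=R_{\eps_n}$) forces $|\beta|\leq C\log(1/\eps_n)$ and $|\alpha|\leq C\log(1/\eps_n)\, R_{\eps_n}^{-s_+}$, and a direct computation of the $L^2$-norm using $dA_{g_{\eps_n,\kappa}}=\eps_n(\kappa y)^{-2}d\theta\,dy$ and $R_{\eps_n}=e^{1/\eps_n^\alpha}$ shows that the $y^{s_+}$-contribution is controlled by $R_{\eps_n}^{-1/2}$ and the $y^{s_-}$-contribution by $\sqrt{\eps_n}$; in particular $\int_{C_{\eps_n,\kappa}}|u_i^n|^2\to 0$. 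The same argument applies to $M_{\eps_n,\kappa}$ via its double cover. Consequently $\int_{\Sigma\setminus B_{\eps_n^k}}|u_i^n|^2\to 1$ and $\int_{\Sigma\setminus B_{\eps_n^k}}u_i^n u_j^n\to 0$; dominated convergence with dominating function $C\log(1/r)\in L^2$ then shows $\{u_i^\infty\}_{i=0}^l$ is an $L^2$-orthonormal system of eigenfunctions of $\Sigma$ with eigenvalues $\leq\lambda_l(\Sigma)-\delta$, contradicting the variational characterization of $\lambda_l(\Sigma)$.
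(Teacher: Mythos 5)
Your upper bound is essentially the paper's: disjointly supported test functions built from log-cut-off eigenfunctions of $\Sigma$ and Dirichlet eigenfunctions of the attached piece, combined via the min-max principle. Your lower bound, however, takes a genuinely different route. The paper argues directly and quantitatively: it splits an eigenfunction into its restriction to $\Sigma\setminus B_{\eps^k}$ and, on $C_{\eps,\kappa}$, into the harmonic extension of the boundary values plus a $W^{1,2}_0$-remainder, then applies the Neumann bound $\mu_1(\Sigma\setminus B_{\eps^k})\geq \lambda_1(\Sigma)-C\eps^2$ from \cref{thm_lower_neumann} together with $\lambda_0(C_{\eps,\kappa})\geq\kappa^2/4$, obtaining the explicit rate $C\eps^{1/2}\log(1/\eps)$ (and treating higher $l$ by choosing suitable linear combinations). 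You instead argue by contradiction and compactness, using the explicit Euler solutions $\alpha y^{s_+}+\beta y^{s_-}$ on the truncated cusp to rule out mass escaping into the handle. Your route is self-contained for the qualitative statement and avoids \cref{thm_lower_neumann} entirely, but produces no rate of convergence; the paper's argument is quantitative, which is in the spirit of the later sections even though the theorem as stated does not require it.

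Two points in your mass-non-concentration step need repair, though both are fixable. First, to discard the non-symmetric Fourier modes you invoke the lower bound $\kappa_0^2/\eps^2$ for non-symmetric eigenvalues of $C_{\eps,\kappa}$; but the restriction of an eigenfunction of $\Sigma_{\eps,\kappa}$ to $C_{\eps,\kappa}$ is a solution with arbitrary boundary data, not an eigenfunction of the cylinder, so that spectral gap does not apply directly. A correct argument is either a per-mode maximum principle for $u_m''=q_m u_m$ with $q_m\geq \eps^{-2}/2$, or, more simply, the Poincar\'e inequality in the $\theta$-direction applied to the projection $u^\perp$ onto the non-zero modes: since the Fourier decomposition is orthogonal for the Dirichlet form, one gets $\int_{C_{\eps,\kappa}}|u^\perp|^2\,dA\leq C\eps^2\int_{C_{\eps,\kappa}}|\nabla u^\perp|^2\,dA\leq C\eps^2\Lambda$. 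Second, solving the $2\times 2$ system for $(\alpha,\beta)$ from the two boundary values yields $|\alpha|\leq C\log(1/\eps)\,R_\eps^{-(s_+-s_-)}$ rather than $R_\eps^{-s_+}$; since $s_+-s_-=\sqrt{1-4\lambda/\kappa^2}$ is bounded below by a positive constant when $\lambda\leq\kappa^2/4-\delta$, the resulting $L^2$-contribution $|\alpha|^2\eps\,R_\eps^{2s_+-1}/(2s_+-1)\leq C\eps\log^2(1/\eps)R_\eps^{-(2s_+-1)}$ still tends to zero, so your conclusion stands.
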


\begin{proof}
We only give the proof for the case of attaching a cylinder. 
The case of a cross cap is completely analogous.

\smallskip

%%%%%%%%%%%%%%%%%%%%%%%%%%%%%%%%%%%%%
\textsc{Step 1:} 
\textit{Upper bound on eigenvalues.}
\smallskip
%%%%%%%%%%%%%%%%%%%%%%%%%%%%%%%%%%%%%

For simplicity, we write $B_{\eps^k}=B_{\eps^k}(x_0) \cup B_{\eps^k}(x_1)$.
Let $\eta_\eps \colon \Sigma \setminus B_{\eps^k} \to [0,1]$ be the log-cut-off function from \eqref{eq_log_cut_off} that cuts off
near $x_0$ and $x_1$.
For $\phi \colon \Sigma \to \IR$ an $L^2(\Sigma)$-normalized eigenfunction with eigenvalue $\lambda$,
the function $\eta_\eps \phi$ extends to a smooth function on $\Sigma_{\eps,\kappa}$.
We have that
\begin{equation*}
\begin{split}
\int_{\Sigma_{\eps,\kappa}} |\nabla (\eta_\eps \phi) |^2
&\leq 
\int_{\Sigma_{\eps,\kappa}} \eta_{\eps}^2 |\nabla \phi|^2 
+2 \int_{\Sigma_{\eps,\kappa}} \phi \eta_\eps \nabla \phi \nabla \eta_\eps
+ \int_{\Sigma_{\eps,\kappa}} \phi^2 |\nabla \eta_\eps|^2 
\\
&\leq
\int_{\Sigma \setminus B_{\eps^k}} |\nabla \phi|^2 
+ C \left( \int_{\Sigma \setminus B_{\eps^k}} |\nabla \eta_\eps|^2  \right)^{1/2}
+C \int_{\Sigma \setminus B_{\eps^k}} |\nabla \eta_\eps|^2 
\\
& \leq 
\lambda + \frac{C}{\log^{1/2}(1/\eps)}
\end{split}
\end{equation*}
since $\phi$ and $\nabla \phi$ are bounded on $\Sigma$.

Moreover, if $(\phi_1,\phi_2)$ as above are orthonormal we have that
$$
\left|\int_{\Sigma_{\eps,\kappa}} (\eta_\eps \phi_i) (\eta_\eps \phi_j) - \delta_{ij} \right|
\leq \int_{B_{\eps^{k/2}}} |\phi_i \phi_j|
\leq C \eps^{k}
$$
and similarly
$$
\left| \int_{\Sigma_{\eps,\kappa}} \nabla (\eta_\eps \phi_i) \nabla (\eta_\eps \phi_j) - \delta_{ij} \right|
\leq
\frac{C}{\log^{1/2}(1/\eps)}
$$
for their gradients.
Using these two estimates the min-max principle easily implies that
$$
\limsup_{\eps \to 0} \lambda_l(\Sigma_{\eps,\kappa}) \leq \lambda_l(\Sigma)
$$
for any $l \in \IN$ uniformly in $\kappa \in [\kappa_0,\kappa_1]$.

Similarly, given a non-trivial Dirichlet eigenfunction $\psi_{\eps,\kappa}$ on $C_{\eps,\kappa}$ with eigenvalue $\lambda_{\eps,\kappa}$ note that $\psi_{\eps,\kappa}$ extends by $0$ to a Lipschitz function on all of $\Sigma_{\eps,\kappa}$ with
$$
\int_{\Sigma_{\eps,\kappa}} |\nabla \psi_{\eps,\kappa}|^2 = \lambda_{\eps,\kappa} \int_{\Sigma_{\eps,\kappa}} |\psi_{\eps,\kappa}|^2.
$$
Since there are infinitely many Dirichlet eigenvalues on $C_{\eps,\kappa}$ converging to $\kappa^2/4$, we get that
also
$$
\limsup_{\eps \to 0} \lambda_l(\Sigma_{\eps,\kappa}) \leq \frac{\kappa^2}{4}
$$
for any $k \in \IN$.

\smallskip

%%%%%%%%%%%%%%%%%%%%%%%%%%%%%%%%%%%%%
\textsc{Step 2:} 
\textit{Lower bound on eigenvalues.}
\smallskip
%%%%%%%%%%%%%%%%%%%%%%%%%%%%%%%%%%%%%

Given a normalized non-constant eigenfunction $u_{\eps,\kappa}$ on $\Sigma_{\eps,\kappa}$ with uniformly bounded eigenvalue $\lambda_{\eps,\kappa}$, let $v_{\eps,\kappa} \in W^{1,2}(\Sigma \setminus B_{\eps^k})$ denote the restriction of $u_{\eps,\kappa}$ to $\Sigma \setminus B_{\eps^k}$
and $w_{\eps,\kappa} \in W_0^{1,2}(C_{\eps,\kappa})$
the function given by
$$
w_{\eps,\kappa}=u_{\eps,\kappa}-h_{\eps,\kappa},
$$
where $h_{\eps,\kappa} \colon C_{\eps,\kappa} \to \IR$ is the harmonic extension to $C_{\eps,\kappa}$ of $\left. u_{\eps,\kappa} \right|_{\partial C_{\eps,\kappa}} \in W^{1/2,2}(\partial M_{\eps,\kappa})$.

It follows from \cref{lem_pt_bd} and the maximum principle that 
$$
|h_{\eps,\kappa}| \leq C \log(1/\eps^k).
$$
This in turn implies in combination with H{\"o}lder's inequality that
\begin{equation} \label{eq_conv_1}
\int_{C_{\eps,\kappa}} |w_{\eps,\kappa}|^2 \geq \int_{C_{\eps,\kappa}} |u_{\eps,\kappa}|^2 - C \eps^{1/2} \log(1/\eps).
\end{equation}
Note that we have that
\begin{equation*}
\int_{C_{\eps,\kappa}} \nabla (u_{\eps,\kappa}-h_{\eps,\kappa}) \cdot \nabla h_{\eps,\kappa} =0,
\end{equation*}
by integration by parts, since $h_{\eps,\kappa}$ is harmonic and $(u_{\eps,\kappa}-h_{\eps,\kappa})$ vanishes along $\partial C_{\eps,\kappa}$.
This easily implies
\begin{equation} \label{eq_conv_2}
\int_{C_{\eps,\kappa}} |\nabla (u_{\eps,\kappa} - h_{\eps,\kappa})|^2  
= 
\int_{C_{\eps,\kappa}} |\nabla u_{\eps,\kappa}|^2  - \int_{C_{\eps,\kappa}} |\nabla h_{\eps,\kappa}|^2
\leq \int_{C_{\eps,\kappa}} |\nabla u_{\eps,\kappa}|^2.
\end{equation}
We also have that
\begin{equation} \label{eq_conv_3}
\left| (u_{\eps,\kappa})_{\Sigma \setminus B_{\eps^k}} \right|
=
\left| \int_{\Sigma \setminus B_{\eps^k}} u_{\eps,\kappa} \right|
\leq
\int_{C_{\eps,\kappa}} |u_{\eps,\kappa}|
\leq
C \eps^{1/2}
\end{equation}
thanks to H{\"o}lder's inequality and since $(u_{\eps,\kappa})_{\Sigma_{\eps,\kappa}}=0$. 

Therefore, combining \eqref{eq_conv_1}, \eqref{eq_conv_2}, and \eqref{eq_conv_3}, we find that
\begin{equation*}
\begin{split}
\int_{\Sigma_{\eps,\kappa}} |\nabla u_{\eps,\kappa}|^2
&\geq
\int_{\Sigma \setminus B_{\eps^k}} |\nabla u_{\eps,\kappa}|^2 + \int_{C_{\eps,\kappa}} |\nabla w_{\eps,\kappa}|^2
\\
&\geq
\mu_1(\Sigma \setminus B_{\eps^k}) \int_{\Sigma \setminus B_{\eps^k}} |u_{\eps,\kappa} - (u_{\eps,\kappa})_{\Sigma \setminus B_{\eps^k}}|^2
+
\lambda_0(C_{\eps,\kappa}) \int_{C_{\eps,\kappa}} |w_{\eps,\kappa}|^2
\\
&\geq
\mu_1(\Sigma \setminus B_{\eps^k}) \int_{\Sigma \setminus B_{\eps^k}} |u_{\eps,\kappa}|^2 - C \eps 
+\lambda_0(C_{\eps,\kappa}) \int_{C_{\eps,\kappa}} |u_{\eps,\kappa}|^2 - C \eps^{1/2} \log(1/\eps)
\\
&\geq
\min\{\lambda_1(\Sigma),\kappa^2/4\} \int_{\Sigma_{\eps,\kappa}} |u_{\eps,\kappa}|^2 - C \eps^{1/2} \log(1/\eps),
 \end{split}
\end{equation*}
where we have used \cref{thm_lower_neumann} in the last step.
The last estimate clearly implies 
$$
\lambda_{\eps,\kappa} \geq \min\{\lambda_1(\Sigma),\kappa^2/4\} - C \eps^{1/2} \log(1/\eps),
$$
since $u_{\eps,\kappa}$ is normalized.
This proves the lower bound up to index $K-1$.
The general case follows by the exact same computation if we take $u_{\eps,\kappa}$ to be a linear combination of non-trivial eigenfunctions orthogonal to the first $l$ eigenfunctions on $\Sigma$.
\end{proof}

%%%%%%%%%%%%%%%%%%%%%%%%%%%%%%%%%%%%%%%%%%%%%%%%%%%%%%%%%%%%%%
\section[Quasimodes]{Construction of quasimodes} \label{sec_quasimodes}
%%%%%%%%%%%%%%%%%%%%%%%%%%%%%%%%%%%%%%%%%%%%%%%%%%%%%%%%%%%%%%

In this section we construct several quasimodes.
These are test functions on $\Sigma_{\eps,\kappa}$ that are approximate solutions to the eigenvalue equation.
Our main interest in quasimodes stems from the following observation of Ann{\'e}, which we state in a simplified version adapted to our specific setting.
We denote by $(u_{\eps,\kappa,l})_{l \in \IN}$ an orthonormal basis of $L^2(\Sigma_{\eps,\kappa})$ consisting of eigenfunctions.

\begin{lemma}[{\cite[Proposition $1$]{anne_3}}] \label{lem_anne_quasimod}
For any $\Lambda>0$, 
there is a uniform constant $C>0$ with the following property.
Let $f \in W^{1,2}(\Sigma_{\eps,\kappa})$ be a function with $1/2 \leq \|f\|_{L^2(\Sigma_{\eps,\kappa})}\leq 2$ such that
$$
\left| 
\int_{\Sigma_{\eps,\kappa}} \nabla f \nabla \varphi - \lambda \int_{\Sigma_{\eps,\kappa}}  f \varphi \,
\right|
\leq \delta \|\varphi\|_{W^{1,2}(\Sigma_{\eps,\kappa})}
$$
for some $\delta>0$ and any $\varphi \in W^{1,2}(\Sigma_{\eps,\kappa})$, where $\lambda \leq \Lambda$.
Let $0<s<1$ and write 
$$
g=\sum_{\{ l \ \colon \ |\lambda_l(\Sigma_{\eps,\kappa})-\lambda| > s \}} \langle f , u_{\eps,\kappa,l} \rangle_{L^2(\Sigma_{\eps,\kappa})} u_{\eps,\kappa,l}.
$$
Then
$$
\int_{\Sigma_{\eps,\kappa}} |g|^2 + \int_{\Sigma_{\eps,\kappa}} |\nabla g|^2 \leq C \frac{\delta^2}{s^2}.
$$
\end{lemma}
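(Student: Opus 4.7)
The plan is to work in the $L^2(\Sigma_{\eps,\kappa})$-orthonormal basis of eigenfunctions and convert the weak hypothesis into a dual-norm statement on the coefficient sequence. I would first expand $f = \sum_l a_l u_{\eps,\kappa,l}$ with $a_l = \langle f, u_{\eps,\kappa,l}\rangle_{L^2}$, so that Bessel gives $\sum_l a_l^2 = \|f\|_{L^2}^2 \leq 4$. Each eigenfunction satisfies the weak identity $\int \nabla u_{\eps,\kappa,l}\cdot \nabla \psi = \lambda_l \int u_{\eps,\kappa,l}\psi$ for every $\psi \in W^{1,2}$; taking $\psi = f$ gives $\int \nabla f \cdot \nabla u_{\eps,\kappa,l} = \lambda_l a_l$, and then testing the hypothesis against $\varphi = u_{\eps,\kappa,l}$ (whose $W^{1,2}$-norm is $\sqrt{1+\lambda_l}$) yields the mode-by-mode estimate
\begin{equation*}
|(\lambda_l - \lambda) a_l| \leq \delta \sqrt{1+\lambda_l}.
\end{equation*}

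To upgrade this to a summed estimate, I would test the hypothesis against a specific finite combination. For each $N$, set $c_l = (\lambda_l - \lambda) a_l/(1+\lambda_l)$ and $\varphi_N = \sum_{l=0}^N c_l u_{\eps,\kappa,l}$. A direct computation gives
\begin{equation*}
\|\varphi_N\|_{W^{1,2}}^2 = \sum_{l=0}^N (1+\lambda_l) c_l^2 = \sum_{l=0}^N \frac{(\lambda_l-\lambda)^2 a_l^2}{1+\lambda_l},
\end{equation*}
while the left hand side of the hypothesis equals $\sum_{l=0}^N (\lambda_l - \lambda)c_l a_l$, which is exactly the same non-negative sum. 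Canceling a square root and sending $N \to \infty$ by monotone convergence produces the dual estimate
\begin{equation*}
\sum_l \frac{(\lambda_l - \lambda)^2 a_l^2}{1+\lambda_l} \leq \delta^2.
\end{equation*}

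To conclude, I would bound $\|g\|_{W^{1,2}}^2 = \sum_{l \in I}(1+\lambda_l) a_l^2$ with $I = \{l : |\lambda_l - \lambda| > s\}$ by splitting $I$ according to the size of $\lambda_l$. On $I_{\mathrm{low}} = I \cap \{\lambda_l \leq 2\Lambda+2\}$ the bound $(1+\lambda_l)^2 \leq (3+2\Lambda)^2$ together with $(\lambda_l-\lambda)^2 > s^2$ gives
\begin{equation*}
(1+\lambda_l) a_l^2 = \frac{(1+\lambda_l)^2}{(\lambda_l-\lambda)^2}\cdot \frac{(\lambda_l-\lambda)^2 a_l^2}{1+\lambda_l} \leq \frac{(3+2\Lambda)^2}{s^2}\cdot \frac{(\lambda_l-\lambda)^2 a_l^2}{1+\lambda_l};
\end{equation*}
on $I_{\mathrm{high}} = \{\lambda_l > 2\Lambda+2\}$, which is automatically contained in $I$ (assuming $s \leq 1$), the inequality $|\lambda_l - \lambda| \geq \lambda_l/2 \geq (1+\lambda_l)/4$ yields $(1+\lambda_l) a_l^2 \leq 16 (\lambda_l-\lambda)^2 a_l^2/(1+\lambda_l)$. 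Summing both contributions and invoking the dual estimate gives $\|g\|_{W^{1,2}}^2 \leq C(\Lambda) \delta^2/s^2$. The statement is in essence a Riesz representation computation for the residual functional $\varphi \mapsto \int \nabla f \cdot \nabla \varphi - \lambda \int f \varphi$ on $W^{1,2}$, so I do not anticipate any real obstacle; the only mild subtlety is the splitting of $I$, as a uniform argument would lose a non-summable factor of $\lambda_l/s^2$ on the high-frequency tail.
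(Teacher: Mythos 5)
Your proof is correct, but it takes a genuinely different route from the paper's. The paper splits $g=g_1+g_2$ according to the \emph{sign} of $\lambda_l-\lambda$ (modes below $\lambda-s$ versus above $\lambda+s$), tests the hypothesis against $g_i$ itself, and exploits that $\int\nabla f\cdot\nabla g_i=\int|\nabla g_i|^2$ and $\int fg_i=\int|g_i|^2$ together with the definite sign of $\lambda_l-\lambda$ on each piece to get $s\|g_i\|_{L^2}^2\le\delta\|g_i\|_{W^{1,2}}$; a short bootstrap ($\|g_i\|_{W^{1,2}}\le(\lambda+1)\|g_i\|_{L^2}+\delta$, plus a case distinction on whether $\|g_i\|_{L^2}\le\delta$) then closes the argument. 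You instead test against the truncated Riesz representative $\varphi_N=\sum_{l\le N}\frac{(\lambda_l-\lambda)a_l}{1+\lambda_l}u_{\eps,\kappa,l}$ of the residual functional, which produces the global weighted estimate $\sum_l(\lambda_l-\lambda)^2a_l^2/(1+\lambda_l)\le\delta^2$, and then convert it termwise on $\{|\lambda_l-\lambda|>s\}$ via the low/high frequency split. Your route buys a strictly stronger intermediate statement (quantitative control on \emph{every} mode weighted by its spectral distance to $\lambda$, not just those outside the window) and avoids the paper's case distinction and sign-splitting; the paper's route is shorter, needing only two test functions and no tail analysis. Both correctly handle the only real traps: in yours, the identification of the left-hand side of the hypothesis with $\|\varphi_N\|_{W^{1,2}}^2$ and the observation that a uniform use of $|\lambda_l-\lambda|>s$ alone would not control the factor $(1+\lambda_l)^2/(\lambda_l-\lambda)^2$ on the high-frequency tail, which is exactly why the split at $\lambda_l=2\Lambda+2$ is needed.
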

For sake of completeness, we have included a proof in \cref{sec_anne}.

%%%%%%%%%%%%%%%%%%%%%%%%%%%%%%%%%%%%%%%%%%%%%%%%%%%%%%%%%%%%%%
\subsection{Quasimodes concentrated on $\Sigma$} \label{sec_quasimodes_1}
%%%%%%%%%%%%%%%%%%%%%%%%%%%%%%%%%%%%%%%%%%%%%%%%%%%%%%%%%%%%%%

We start with the quasimodes concentrated on $\Sigma$, which we construct roughly by extending a $\lambda_1(\Sigma)$-eigenfunction $\phi$ to $M_{\eps,\kappa}$ by $\phi(x_0)$ and similarly in the case of cylinders near the long boundary component.

%%%%%%%%%%%%%%%%%%%%%%%%%%%%%%%%%%%%%%%%%%%%%%%%%%%%
\subsubsection{The case of cross caps}
%%%%%%%%%%%%%%%%%%%%%%%%%%%%%%%%%%%%%%%%%%%%%%%%%%%%

We first discuss the case of $M_{\eps,\kappa}$ in detail.
Let $\eta \colon [1,2] \to [0,1]$ be a function with 
$\eta(1)=0$
and
$\eta(2)=1$.
We then define a cut-off function $\eta_\eps \colon \Sigma_{\eps,\kappa} \to [0,1]$ by
\begin{equation*}
\eta_{\eps}=
\begin{cases}
1 & \ \text{in} \  \Sigma \setminus B_{2 \eps^k} (x_0)
\\
\eta(\eps^{-k} r) & \ \text{in} \ B_{2 \eps^k} (x_0)\setminus B_{\eps^k} (x_0)
\\
0 & \ \text{on} \ M_{\eps,\kappa},
\end{cases}
\end{equation*}
where we use (Euclidean) radial coordinates $(\theta, r)$ in $B_{2 \eps^k}(x_0)$.

For $\phi$ a $L^2(\Sigma)$-normalized $\lambda_1(\Sigma)$-eigenfunction, we define a new function 
\begin{equation*}
\tilde \phi_{\eps,\kappa}=
\begin{cases}
\phi & \ \text{in} \  \Sigma \setminus B_{2 \eps^k}
\\
\eta_\eps u + (1-\eta_\eps)\phi(x_0) & \ \text{in} \ B_{2 \eps^k} \setminus B_{\eps^k}
\\
\phi(x_0) & \ \text{on} \ M_{\eps,\kappa}.
\end{cases}
\end{equation*}

If $\phi(x_0)=0$ the function $\tilde \phi_\eps$ turns out to be a very good quasimode.
If $\phi(x_0)\neq 0$, we can identify the lowest order term on which $\phi_{\eps,\kappa}$ fails to solve the eigenvalue equation very explicitly.
Before we can actually prove this, we need to recall the following observation.

\begin{lemma} \label{mono_unif_sobolev}
Let $1<p<\infty$, then there is $C_p$ independent of $\eps$ and $k$, such that
\begin{equation*}
\| \varphi \|_{L^p(\Sigma \setminus B_{\eps^k}(x_0))}
\leq C_p \|\varphi\|_{W^{1,2}(\Sigma \setminus B_{\eps^k}(x_0))}.
\end{equation*}
\end{lemma}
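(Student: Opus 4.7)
The plan is to construct, for each $\eps$ and $k$, a bounded linear extension operator
$$
E_{\eps,k} \colon W^{1,2}(\Sigma \setminus B_{\eps^k}(x_0)) \to W^{1,2}(\Sigma),
$$
whose operator norm is bounded by a constant $C$ independent of $\eps$ and $k$. Once this is in hand, the result follows immediately from the Sobolev embedding $W^{1,2}(\Sigma) \hookrightarrow L^p(\Sigma)$ for every $p < \infty$. This embedding still holds even though $\Sigma$ may have finitely many conical singularities, since near each conical point the metric is comparable, up to a bounded conformal factor, to a flat model in which the standard two-dimensional embedding applies.

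I would define the extension via conformal inversion across the boundary circle $\partial B_{\eps^k}(x_0)$. More precisely, fix a smooth cutoff $\chi$ compactly supported in $B_1(x_0)$ and equal to $1$ on $B_{1/2}(x_0)$, work in the conformal polar coordinates $(r,\theta)$ near $x_0$, and set
$$
(E_{\eps,k}\varphi)(r,\theta) :=
\begin{cases}
\varphi(r,\theta) & \text{if } r \geq \eps^k, \\
(\chi\varphi)\!\left(\eps^{2k}/r,\,\theta\right) & \text{if } r \leq \eps^k.
\end{cases}
$$
The map $r \mapsto \eps^{2k}/r$ fixes $\partial B_{\eps^k}(x_0)$ pointwise, so continuity across the interface is automatic and $E_{\eps,k}\varphi \in W^{1,2}(\Sigma)$. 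The key point is that pushing the $W^{1,2}$-norm of the reflected piece back to the annulus $B_1(x_0) \setminus B_{\eps^k}(x_0)$ via the substitution $s = \eps^{2k}/r$ produces Jacobian factors that, in dimension two, exactly cancel: the angular contribution is controlled by the conformal invariance of the Dirichlet integral, while the radial contribution and the $L^2$ term both yield factors $\eps^{4k}/r^{3}\, dr$ which, under $s = \eps^{2k}/r$, transform into integrable weights on $(\eps^k,1)$ independent of $\eps$. A short direct calculation then gives
$$
\|E_{\eps,k}\varphi\|_{W^{1,2}(\Sigma)} \leq C\, \|\chi\varphi\|_{W^{1,2}(\Sigma \setminus B_{\eps^k}(x_0))} \leq C'\, \|\varphi\|_{W^{1,2}(\Sigma \setminus B_{\eps^k}(x_0))}
$$
with $C,C'$ independent of $\eps$ and $k$.

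The main subtlety is precisely the uniformity in $\eps$ and $k$. A naive choice such as a harmonic extension into $B_{\eps^k}(x_0)$ would be controlled only by the $W^{1/2,2}$-norm of the trace, which in turn carries logarithmic factors $\log(1/\eps^k)$ that blow up as the hole shrinks; similarly, extension by a constant fails to match boundary values in $W^{1,2}$. The conformal inversion is the right device because the Dirichlet energy is conformally invariant in two dimensions, so the reflection produces a genuine norm-preserving extension on the scale-free annulus, and the cutoff $\chi$ only costs an $\eps$-independent constant supported in a fixed compact subset of $B_1(x_0)$.
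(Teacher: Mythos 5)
Your argument is correct and has the same skeleton as the paper's: produce an extension operator $W^{1,2}(\Sigma \setminus B_{\eps^k}(x_0)) \to W^{1,2}(\Sigma)$ with norm uniform in $\eps$ and $k$, then apply the Sobolev embedding $W^{1,2}(\Sigma)\hookrightarrow L^p(\Sigma)$. The difference is in how the extension is obtained: the paper simply invokes the \emph{harmonic} extension into the hole and cites Rauch--Taylor \cite[p.\,40]{rt}, where its uniform boundedness is proved by a scaling argument, whereas you build an explicit extension by conformal inversion $r\mapsto \eps^{2k}/r$ composed with a fixed cutoff. Your computation of the Jacobian factors is right: the Dirichlet integral is exactly preserved by the inversion and the $L^2$ term only shrinks, so the construction is self-contained and arguably more transparent than an external citation. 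One point in your closing paragraph is wrong, however: the harmonic extension is \emph{not} a ``naive choice that fails.'' It minimizes the Dirichlet energy among all $W^{1,2}$ extensions with the given trace, so your own inversion extension shows a fortiori that the harmonic extension has uniformly bounded gradient energy; the $L^2$ part inside the shrinking disk is then easily controlled (subtract the boundary mean and use the maximum principle or Poincar\'e on a disk of radius $\eps^k$). No $\log(1/\eps^k)$ loss occurs, and indeed this is precisely the operator the paper uses. This misstatement does not affect the validity of your proof, but you should delete or correct that remark.
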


\begin{proof}
This follows since the harmonic extension operator $W^{1,2}(\Sigma \setminus B_{\eps^k}(x_0)) \to W^{1,2}(\Sigma)$
is uniformly bounded.
See e.g.\ \cite[p.\ 40]{rt}, where this is proved by a scaling argument.
The conclusion then follows by combining this with the Sobolev embedding $W^{1,2}(\Sigma) \hookrightarrow L^p(\Sigma)$.
\end{proof}

\begin{lemma} \label{mono_quasimode_1}
We have for
the function $\tilde \phi_{\eps,\kappa}$ defined above and any $\varphi \in W^{1,2}(\Sigma_{\eps,\kappa})$, that
\begin{equation*}
\left| \int_{\Sigma_{\eps,\kappa}} \nabla \tilde \phi_{\eps,\kappa} \cdot \nabla \varphi - \lambda_1(\Sigma) \int_{\Sigma_{\eps,\kappa}} \tilde \phi_{\eps,\kappa} \varphi  + \lambda_1(\Sigma) \phi(x_0) \int_{M_{\eps,\kappa}} \varphi \, \right|
\leq
C \eps^{k/2} \| \varphi\|_{W^{1,2}(\Sigma_{\eps,\kappa})},
\end{equation*}
where the constant depends only on $\Sigma$.
\end{lemma}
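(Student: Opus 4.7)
My plan is to split the three-term expression $E := \int_{\Sigma_{\eps,\kappa}}\nabla\tilde\phi_{\eps,\kappa}\cdot\nabla\varphi - \lambda_1(\Sigma)\int_{\Sigma_{\eps,\kappa}}\tilde\phi_{\eps,\kappa}\varphi + \lambda_1(\Sigma)\phi(x_0)\int_{M_{\eps,\kappa}}\varphi$ according to the three regions where $\tilde\phi_{\eps,\kappa}$ is prescribed: the bulk $\Sigma\setminus B_{2\eps^k}(x_0)$ where $\tilde\phi_{\eps,\kappa}=\phi$, the transition annulus $A_\eps := B_{2\eps^k}(x_0)\setminus B_{\eps^k}(x_0)$ where the cutoff is active, and $M_{\eps,\kappa}$ where $\tilde\phi_{\eps,\kappa}\equiv \phi(x_0)$ is constant. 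The cross-cap contributes nothing to the gradient integral, and its contribution $\lambda_1(\Sigma)\phi(x_0)\int_{M_{\eps,\kappa}}\varphi$ to the $L^2$-pairing is exactly cancelled by the last term of $E$ that appears in the statement — this is the structural reason that correction term is present.

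On $A_\eps$ I would use $\nabla\tilde\phi_{\eps,\kappa}= \eta_\eps\nabla\phi + (\phi-\phi(x_0))\nabla\eta_\eps$ and split the gradient-pairing as $\eta_\eps\nabla\phi\cdot\nabla\varphi = \nabla\phi\cdot\nabla(\eta_\eps\varphi) - \varphi\,\nabla\phi\cdot\nabla\eta_\eps$. The key point is that $\eta_\eps\varphi$ vanishes on $\partial B_{\eps^k}(x_0)$ and hence extends by zero to a genuine $W^{1,2}(\Sigma)$ function, so the global eigenvalue identity $\int_\Sigma \nabla\phi\cdot\nabla\psi=\lambda_1(\Sigma)\int_\Sigma \phi\psi$ applies with $\psi = \eta_\eps\varphi$. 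A book-keeping computation then collapses all the ``bulk-type'' terms against the $\lambda_1(\Sigma)\int_{\Sigma_{\eps,\kappa}}\tilde\phi_{\eps,\kappa}\varphi$ term, leaving
\begin{equation*}
E = -\lambda_1(\Sigma)\phi(x_0)\int_{A_\eps}(1-\eta_\eps)\varphi - \int_{A_\eps}\varphi\,\nabla\phi\cdot\nabla\eta_\eps + \int_{A_\eps}(\phi-\phi(x_0))\,\nabla\eta_\eps\cdot\nabla\varphi,
\end{equation*}
which is an integral over the thin annulus $A_\eps$ only.

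To bound these three remainder terms I would use $|A_\eps|\sim \eps^{2k}$, the elementary bounds $\|\nabla\eta_\eps\|_\infty\lesssim \eps^{-k}$ and $\|\nabla\phi\|_\infty + \|\phi\|_\infty \leq C$ near $x_0$, together with smoothness of $\phi$ yielding $|\phi-\phi(x_0)|\leq C\eps^k$ on $A_\eps$. The second remainder is the delicate one since $\nabla\phi\cdot\nabla\eta_\eps$ is only bounded by $\eps^{-k}$ in $L^\infty$; I would control it via H\"older with $p=4$, giving $\|\nabla\phi\cdot\nabla\eta_\eps\|_{L^{4/3}(A_\eps)}\lesssim \eps^{-k}|A_\eps|^{3/4}\lesssim \eps^{k/2}$, and then applying \cref{mono_unif_sobolev} to pass from $\|\varphi\|_{L^4(\Sigma\setminus B_{\eps^k})}$ to $\|\varphi\|_{W^{1,2}(\Sigma_{\eps,\kappa})}$ with a constant independent of $\eps$. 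The first and third remainders are softer: by Cauchy--Schwarz each is bounded by $C\eps^k\|\varphi\|_{W^{1,2}(\Sigma_{\eps,\kappa})}$, and $\eps^k\leq \eps^{k/2}$ since $k\geq 9$.

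The only conceptual obstacle is the second term: a naive use of Cauchy--Schwarz together with $\|\nabla\eta_\eps\|_\infty\lesssim \eps^{-k}$ gives only $C\|\varphi\|_{L^2(A_\eps)}$, which does not decay. It is the Sobolev improvement \cref{mono_unif_sobolev} — uniform in $\eps$ — that trades this $\eps^{-k}$ singularity for a polynomial decay via the gain $|A_\eps|^{3/4} = c\eps^{3k/2}$ from H\"older with $p=4$. Everything else is bookkeeping: ensuring the extension-by-zero step is justified (which it is since $\eta_\eps\varphi\in W^{1,2}$ vanishes on $\partial B_{\eps^k}(x_0)$), and verifying the telescoping of bulk terms against the $\lambda_1(\Sigma)\tilde\phi_{\eps,\kappa}\varphi$ integral once one writes $\int_{\Sigma\setminus B_{\eps^k}}\phi\eta_\eps\varphi=\int_{\Sigma\setminus B_{2\eps^k}}\phi\varphi+\int_{A_\eps}\phi\eta_\eps\varphi$.
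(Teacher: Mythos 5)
Your proposal is correct and follows essentially the same route as the paper: the same splitting $\eta_\eps\nabla\phi\cdot\nabla\varphi=\nabla\phi\cdot\nabla(\eta_\eps\varphi)-\varphi\,\nabla\phi\cdot\nabla\eta_\eps$ with the extension-by-zero of $\eta_\eps\varphi$ feeding into the eigenvalue identity, the same three residual integrals over the annulus, and the same H\"older-with-$p=4$ plus \cref{mono_unif_sobolev} treatment of the delicate $\varphi\,\nabla\phi\cdot\nabla\eta_\eps$ term (your $L^{4/3}$ bound on $\nabla\phi\cdot\nabla\eta_\eps$ is just the dual formulation of the paper's $\|\varphi\|_{L^4}\,\area(B_{2\eps^k})^{1/4}\,\|\nabla\eta_\eps\|_{L^2}$ estimate). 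The only cosmetic remark is that $\eps^k\le\eps^{k/2}$ needs nothing beyond $\eps\le 1$, not $k\ge 9$.
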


\begin{proof}
We compute
\begin{equation} \label{mono_part_int}
\begin{split}
\int_{\Sigma_{\eps,\kappa}} \nabla \tilde \phi_{\eps,\kappa} \cdot \nabla \varphi
=&
\int_{\Sigma_{\eps,\kappa}} \nabla \phi \cdot \nabla (\eta_\eps \varphi) 
-
\int_{\Sigma_{\eps,\kappa}} \varphi \nabla \phi \cdot \nabla \eta_\eps
+
\int_{\Sigma_{\eps,\kappa}} (\phi-\phi(x_0)) \nabla \eta_\eps \cdot \nabla \varphi
\\
=&
\lambda_1(\Sigma)\int_{\Sigma_{\eps,\kappa}} \phi \eta_\eps \varphi
-
\int_{\Sigma_{\eps,\kappa}} \varphi \nabla \phi \cdot \nabla \eta_\eps
+
\int_{\Sigma_{\eps,\kappa}} (\phi-\phi(x_0)) \nabla \eta_\eps \cdot \nabla \varphi
\\
=&
\lambda_1(\Sigma )\int_{\Sigma_{\eps,\kappa}}\tilde \phi_{\eps,\kappa} \varphi
-
\lambda_1(\Sigma) \phi(x_0) \int_{M_{\eps,\kappa}} \varphi
-
\lambda_1(\Sigma) \phi_0(x_0) \int_{\Sigma \setminus B_{\eps^k}} (1-\eta_\eps) \varphi
\\
&-
\int_{\Sigma_{\eps,\kappa}} \varphi \nabla \phi \cdot \nabla \eta_\eps
+
\int_{\Sigma_{\eps,\kappa}} (\phi-\phi(x_0)) \nabla \eta_\eps \cdot \nabla \varphi,
\end{split}
\end{equation}
since $\eta_\eps \varphi \in W^{1,2}(\Sigma)$ and $\supp \nabla \eta_\eps \subset \Sigma \setminus B_{\eps^k}$.
Let us estimate the last three terms separately.
The first of these is small by H{\"o}lder's inequality,
\begin{equation}
\begin{split}
\left| \lambda_1(\Sigma) \phi_0(x_0) \int_{\Sigma \setminus B_{\eps^k}} (1-\eta_\eps) \varphi \right|
&\leq
C  \area{B_{2\eps^k}}^{1/2} \left( \int_{\Sigma \setminus B_{\eps^k}} |\varphi|^2 \right)^{1/2}
\\
&\leq
C \eps^k \|\varphi\|_{L^2}(\Sigma_{\eps,h}).
\end{split}
\end{equation}
For the second term, we proceed as follows:
Since $\phi$ is smooth, there is a constant $C$ such that $|\nabla \phi| \leq C$.
Therefore, we can invoke  H{\"o}lder's inequality, the scaling invariance of the Dirichlet energy, and \cref{mono_unif_sobolev} to find
\begin{equation} \label{mono_term_1}
\begin{split}
\left| \int_{\Sigma_{\eps,\kappa}} \varphi \nabla \phi \cdot \nabla \eta_\eps \, \right|
& \leq 
C \left( \int_{\Sigma_{\eps,\kappa}}  |\varphi|^p \right)^{1/p} 
\area(B_{2\eps^k})^{1/q}
\left( \int_{\Sigma_{\eps,\kappa}}  |\nabla \eta_\eps|^2 \right)^{1/2}
\\
& \leq C \eps^{2k/q} \| \varphi \|_{W^{1,2}(\Sigma_{\eps,\kappa})},
\end{split}
\end{equation}
where we used that it suffices to integrate over $\supp \nabla \eta_\eps \subset B_{2\eps^k}$ and $1/p+1/q=1/2$.

We now estimate the last term from \eqref{mono_part_int}.
Since $\phi$ is smooth, there is a constant $C$, such that
\begin{equation*}
|\phi - \phi(x_0)| \leq C \eps^k
\end{equation*}
in $B_{2\eps^k}$.
Since $\supp \nabla \eta_\eps \subset B_{2\eps^k}$, this implies
\begin{equation} \label{mono_term_2}
\begin{split}
\left| \int_{\Sigma_{\eps,\kappa}} (\phi - \phi(x_0))\nabla \eta_\eps \cdot \nabla \varphi \, \right|
&\leq 
C \eps^k \left(\int_{\Sigma_{\eps,\kappa}} |\nabla \eta_\eps|^2 \right)^{1/2}
\left(\int_{\Sigma_{\eps,\kappa}} |\nabla \varphi|^2 \right)^{1/2}
\\
& \leq C \eps^k \|\varphi\|_{W^{1,2}(\Sigma_{\eps,\kappa})}
\end{split}
\end{equation}
by H{\"o}lder's inequality and the scaling invariance of the Dirichlet energy.
If we specify to $p=q=4$ in \eqref{mono_term_1} and combine this with \eqref{mono_part_int} and \eqref{mono_term_2}, the assertion follows.
\end{proof}

Because of \cref{mono_quasimode_1}, all the eigenfunctions vanishing in $x_0$ will be mostly irrelevant in order to locate the first eigenvalue of $\Sigma_{\eps,\kappa}$.
It is also already clear that we expect the role of $\phi_0$ to be drastically different -- see also \cite{MS2}.

%%%%%%%%%%%%%%%%%%%%%%%%%%%%%%%%%%%%%%%%%%%%%%%%%%%%
\subsubsection{The case of cylinders} \label{subsec_quasimod_cyl}
%%%%%%%%%%%%%%%%%%%%%%%%%%%%%%%%%%%%%%%%%%%%%%%%%%%%

If we attach the cylinder $C_{\eps,\kappa}$ near the points $x_0$ and $x_1$ we can carry out a similar construction.
However, there is one important change in the construction near $x_1$.
In this case we denote by $\eta_\eps \colon \Sigma \to \IR$ the function given by cutting off near $x_0$ and $x_1$ using the cut-off function $\eta$ from above.
Moreover, we let $\rho_{\eps,\kappa} \colon C_{\eps,\kappa} \to [0,1]$ be given by
\begin{equation} \label{eq_cut_off_cyl}
\rho_{\eps,\kappa}(\theta,y)=
\begin{cases}
\eps^k (y-(R_\eps-1/\eps^k)) & \ \text{if} \ y \in [R_\eps-1/\eps^k,R_\eps]
\\
0 & \ \text{else},
\end{cases}
\end{equation}
where we assume that $\eps \leq \eps_{4}$, such that $R_\eps-1/\eps^k \geq 1$. 
Thanks to the conformal invariance of the Dirichlet energy, we have that
\begin{equation} \label{eq_grad_cut_off}
\int_{C_{\eps,\kappa}} |\nabla \rho_{\eps,\kappa}|^2 \leq C \eps^{k+1},
\end{equation}
for some fixed constant $C>0$.
Note that we may also decrease $\eps_4$ such that
\begin{equation} \label{eq_area_cut_off}
\area(\supp \nabla \rho_{\eps,\kappa}) \leq
 \frac{C \eps}{(\eps^k R_{\eps} -1)R_{\eps}}
\leq
 \eps^{k+1}
\end{equation}
if $\eps \leq \eps_4$. 

If $\phi \colon \Sigma \to \IR$ is again a $\lambda_1(\Sigma)$-eigenfunction we define
$$
\tilde \phi_{\eps,\kappa} = 
\begin{cases}
\phi &  \ \text{in} \ \Sigma \setminus ( B_{2\eps^k}(x_1) \cup B_{2 \eps^k (x_0)})
\\
\eta_{\eps} \phi + (1-\eta_\eps) \phi(x_1) & \ \text{in} \ B_{2\eps^k}(x_1)
\\
\eta_{\eps} \phi + (1-\eta_\eps) \phi(x_0) & \ \text{in} \ B_{2\eps^k}(x_0)
\\
\rho_{\kappa,\eps} \phi(x_1) + (1-\rho_{\eps,\kappa}) \phi(x_0) & \ \text{on} \ C_{\eps,\kappa}
\end{cases}
$$

A computation almost completely analogous to the proof of \cref{mono_quasimode_1} using also \eqref{eq_grad_cut_off} and \eqref{eq_area_cut_off}
then implies the following lemma.

\begin{lemma} \label{mono_quasimode_2}
We have for
the function $\tilde \phi_{\eps,\kappa}$ defined above and any $\varphi \in W^{1,2}(\Sigma_{\eps,\kappa})$ that
\begin{equation*}
\left| \int_{\Sigma_{\eps,\kappa}} \nabla \tilde \phi_\eps \cdot \nabla \varphi - \lambda_1(\Sigma) \int_{\Sigma_{\eps,\kappa}} \tilde \phi_\eps \varphi
+\lambda_1(\Sigma) \phi(x_0) \int_{C_{\eps,\kappa}} \varphi
 \, \right|
\leq
C \eps^{k/2} \| \varphi\|_{W^{1,2}(\Sigma_{\eps,\kappa})},
\end{equation*}
where the constant $C$ depends only on $\Sigma$.
\end{lemma}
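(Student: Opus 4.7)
The plan is to follow the template of \cref{mono_quasimode_1} closely, with the additional error terms coming from the cylinder region controlled by \eqref{eq_grad_cut_off} and \eqref{eq_area_cut_off}.

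First I would compute $\nabla\tilde\phi_{\eps,\kappa}$ piecewise: it equals $\nabla\phi$ on $\Sigma\setminus(B_{2\eps^k}(x_0)\cup B_{2\eps^k}(x_1))$; it equals $\eta_\eps\nabla\phi+(\phi-\phi(x_i))\nabla\eta_\eps$ on the annulus $B_{2\eps^k}(x_i)\setminus B_{\eps^k}(x_i)$ (just as before); and it equals $(\phi(x_1)-\phi(x_0))\nabla\rho_{\eps,\kappa}$ on $C_{\eps,\kappa}$. Regrouping then gives
\begin{align*}
\int_{\Sigma_{\eps,\kappa}}\nabla\tilde\phi_{\eps,\kappa}\cdot\nabla\varphi
&=\int_\Sigma\nabla\phi\cdot\nabla(\eta_\eps\varphi)-\int_{\Sigma_{\eps,\kappa}}\varphi\,\nabla\phi\cdot\nabla\eta_\eps\\
&\phantom{=}+\sum_{i=0,1}\int(\phi-\phi(x_i))\nabla\eta_\eps\cdot\nabla\varphi+(\phi(x_1)-\phi(x_0))\int_{C_{\eps,\kappa}}\nabla\rho_{\eps,\kappa}\cdot\nabla\varphi,
\end{align*}
where $\eta_\eps\varphi\in W^{1,2}(\Sigma)$ after extension by zero across the two deleted balls. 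Using that $\phi$ is a $\lambda_1(\Sigma)$-eigenfunction, the first integral equals $\lambda_1(\Sigma)\int_\Sigma\phi\eta_\eps\varphi$.

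Next I would compare $\int_\Sigma\phi\eta_\eps\varphi$ with $\int_{\Sigma_{\eps,\kappa}}\tilde\phi_{\eps,\kappa}\varphi$: since $\tilde\phi_{\eps,\kappa}-\eta_\eps\phi$ equals $(1-\eta_\eps)\phi(x_i)$ on each annulus and $\phi(x_0)+\rho_{\eps,\kappa}(\phi(x_1)-\phi(x_0))$ on $C_{\eps,\kappa}$, direct substitution produces the desired correction $-\lambda_1(\Sigma)\phi(x_0)\int_{C_{\eps,\kappa}}\varphi$ together with two further remainders, namely $\lambda_1(\Sigma)\sum_i\phi(x_i)\int(1-\eta_\eps)\varphi$ supported on the annuli and $\lambda_1(\Sigma)(\phi(x_1)-\phi(x_0))\int_{C_{\eps,\kappa}}\rho_{\eps,\kappa}\varphi$ supported on $\supp\rho_{\eps,\kappa}$.

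Finally I would bound all five error terms. The two annular terms $\int\varphi\,\nabla\phi\cdot\nabla\eta_\eps$ and $\int(\phi-\phi(x_i))\nabla\eta_\eps\cdot\nabla\varphi$ are treated exactly as in \cref{mono_quasimode_1}: H\"older with $p=q=4$, \cref{mono_unif_sobolev}, the conformal invariance of the Dirichlet energy, and the Taylor bound $|\phi-\phi(x_i)|\leq C\eps^k$ on $B_{2\eps^k}(x_i)$ yield $C\eps^{k/2}\|\varphi\|_{W^{1,2}(\Sigma_{\eps,\kappa})}$. The zeroth-order annular term $\lambda_1(\Sigma)\sum_i\phi(x_i)\int(1-\eta_\eps)\varphi$ is controlled by $C\eps^k\|\varphi\|_{L^2}$ via H\"older and $\area(B_{2\eps^k})\leq C\eps^{2k}$. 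The two genuinely new terms are cleanly handled by the cut-off estimates for $\rho_{\eps,\kappa}$: by \eqref{eq_grad_cut_off},
\[
\Bigl|(\phi(x_1)-\phi(x_0))\int_{C_{\eps,\kappa}}\nabla\rho_{\eps,\kappa}\cdot\nabla\varphi\Bigr|\leq C\|\nabla\rho_{\eps,\kappa}\|_{L^2}\|\nabla\varphi\|_{L^2}\leq C\eps^{(k+1)/2}\|\varphi\|_{W^{1,2}},
\]
while by \eqref{eq_area_cut_off} together with $0\leq\rho_{\eps,\kappa}\leq 1$ and Cauchy--Schwarz,
\[
\Bigl|\lambda_1(\Sigma)(\phi(x_1)-\phi(x_0))\int_{C_{\eps,\kappa}}\rho_{\eps,\kappa}\varphi\Bigr|\leq C\area(\supp\rho_{\eps,\kappa})^{1/2}\|\varphi\|_{L^2}\leq C\eps^{(k+1)/2}\|\varphi\|_{L^2}.
\]
Summing produces the required bound $C\eps^{k/2}\|\varphi\|_{W^{1,2}(\Sigma_{\eps,\kappa})}$. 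There is no serious obstacle beyond bookkeeping; the one conceptual point is that the interpolation region near $\{y=R_\eps\}$ is so small, in both area and gradient energy, that it contributes strictly lower-order error than the annular cut-off around $\{y=1\}$, so the same leading correction $-\lambda_1(\Sigma)\phi(x_0)\int_{C_{\eps,\kappa}}\varphi$ as in the cross-cap case suffices.
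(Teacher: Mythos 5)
Your proposal is correct and is exactly the computation the paper has in mind: the authors do not write this proof out, stating only that it is "almost completely analogous to the proof of \cref{mono_quasimode_1} using also \eqref{eq_grad_cut_off} and \eqref{eq_area_cut_off}", and your write-up is a faithful execution of that — the annular terms are handled verbatim as in \cref{mono_quasimode_1}, and the two new terms from $\rho_{\eps,\kappa}$ are correctly controlled at order $\eps^{(k+1)/2}$ by the gradient and area bounds, which is dominated by the $\eps^{k/2}$ from the annuli.
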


%%%%%%%%%%%%%%%%%%%%%%%%%%%%%%%%%%%%%%%%%%%%%%%%%%%%
\subsubsection{Asymptotic expansions from quasimodes - part $1$}
%%%%%%%%%%%%%%%%%%%%%%%%%%%%%%%%%%%%%%%%%%%%%%%%%%%%

We now fix an orthonormal basis $(\phi_0,\dots,\phi_{K-1} )$ of $\lambda_1(\Sigma)$-eigenfunctions, such that 

\begin{equation} \label{eq_normal_basis}
\phi_1(x_0)= \dots =\phi_{K-1}(x_0)=0.
\end{equation}
We denote the extension of $\phi_l$ to $\Sigma_{\eps,\kappa}$ constructed above by $\tilde \phi_{\eps,\kappa,l}$.
By testing against an eigenfunction on $\Sigma_{\eps,\kappa}$, we obtain from
\cref{mono_quasimode_2} the following expansion of the corresponding eigenvalue.
Of course, if we use \cref{mono_quasimode_1} in place of \cref{mono_quasimode_2} we obtain the analogous statement if we attach a cross cap instead of a cylinder.

\begin{cor} \label{expan_surf}
Let $u_{\eps,\kappa}$ be a normalized eigenfunction on $\Sigma_{\eps,\kappa}$ with eigenvalue $\lambda_{\eps,\kappa}$.
Write 
$$
\tilde m_0 = \int_{\Sigma_{\eps,\kappa}} u_{\eps,\kappa} \tilde \phi_{\eps,\kappa,0}
$$
and
$$
\tilde m = \sum_{j=1}^{K-1}  \left| \int_{\Sigma_{\eps,\kappa}} u_{\eps,\kappa} \tilde \phi_{\eps,\kappa,j} \right|.
$$
Then we have that
$$
\lambda_{\eps,\kappa} 
= 
\lambda_1(\Sigma) -
\frac{\lambda_1(\Sigma) \phi_0(x_0)\int_{C_{\eps,\kappa}}u_{\eps,\kappa} + O(\eps^{k/2}) }{\tilde m_0}
=
\lambda_1(\Sigma) - \frac{O(\eps^{k/2})}{\tilde m}
$$
as $\eps \to 0$ and the error terms depend only on $\Sigma$ provided $\tilde m_0 \neq 0$, or $\tilde m \neq 0$ respectively. 
\end{cor}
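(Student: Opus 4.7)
The plan is to test the quasimode estimate of \cref{mono_quasimode_2} (respectively \cref{mono_quasimode_1} in the cross cap case) against the eigenfunction $u_{\eps,\kappa}$ itself. Since $u_{\eps,\kappa}$ solves the eigenvalue equation weakly on $\Sigma_{\eps,\kappa}$, we have
\[
\int_{\Sigma_{\eps,\kappa}} \nabla \tilde\phi_{\eps,\kappa,l} \cdot \nabla u_{\eps,\kappa} \; = \; \lambda_{\eps,\kappa} \int_{\Sigma_{\eps,\kappa}} \tilde\phi_{\eps,\kappa,l} u_{\eps,\kappa},
\]
so substituting $\varphi = u_{\eps,\kappa}$ into the quasimode bound collapses the first two integrals into a single difference and yields
\[
\Bigl| (\lambda_{\eps,\kappa} - \lambda_1(\Sigma)) \int_{\Sigma_{\eps,\kappa}} \tilde\phi_{\eps,\kappa,l} u_{\eps,\kappa} + \lambda_1(\Sigma) \phi_l(x_0) \int_{C_{\eps,\kappa}} u_{\eps,\kappa} \Bigr| \leq C \eps^{k/2} \|u_{\eps,\kappa}\|_{W^{1,2}(\Sigma_{\eps,\kappa})}.
\]
The right-hand side is $O(\eps^{k/2})$: since $\|u_{\eps,\kappa}\|_{L^2}=1$ and the eigenvalues relevant to the statement are uniformly bounded (implicit in the assertion that the error depends only on $\Sigma$, and confirmed by \cref{thm_eig_lim}), the identity $\int|\nabla u_{\eps,\kappa}|^2 = \lambda_{\eps,\kappa}$ gives uniform control of the $W^{1,2}$-norm.

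For the first equality I take $l=0$. By definition $\int_{\Sigma_{\eps,\kappa}} \tilde\phi_{\eps,\kappa,0}\, u_{\eps,\kappa} = \tilde m_0$, and dividing through by $\tilde m_0$ (under the hypothesis $\tilde m_0 \neq 0$) and solving for $\lambda_{\eps,\kappa}$ produces exactly
\[
\lambda_{\eps,\kappa} = \lambda_1(\Sigma) - \frac{\lambda_1(\Sigma)\phi_0(x_0)\int_{C_{\eps,\kappa}} u_{\eps,\kappa} + O(\eps^{k/2})}{\tilde m_0}.
\]

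For the second equality I exploit the normalization \eqref{eq_normal_basis}, which gives $\phi_l(x_0)=0$ for every $l = 1, \ldots, K-1$. The cylinder term in the displayed inequality then vanishes identically, leaving
\[
|\lambda_{\eps,\kappa} - \lambda_1(\Sigma)| \cdot \Bigl| \int_{\Sigma_{\eps,\kappa}} \tilde\phi_{\eps,\kappa,l}\, u_{\eps,\kappa} \Bigr| \leq C \eps^{k/2}
\]
for each such $l$. Summing over $l = 1, \ldots, K-1$ and applying the triangle inequality (which is precisely why $\tilde m$ is defined with absolute values) yields $|\lambda_{\eps,\kappa} - \lambda_1(\Sigma)|\,\tilde m \leq (K-1)C\eps^{k/2}$, which is the claimed expression. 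The whole argument is essentially an immediate pairing of the two ingredients (the quasimode bound and the eigenvalue equation), so there is no serious obstacle; the only point that requires care is the uniform $W^{1,2}$-control of $u_{\eps,\kappa}$, and once the range of eigenvalues is fixed this is transparent.
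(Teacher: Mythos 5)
Your proposal is correct and follows essentially the same route as the paper: test the quasimode identities of \cref{mono_quasimode_1}/\cref{mono_quasimode_2} against $u_{\eps,\kappa}$, use the weak eigenvalue equation to replace the Dirichlet pairing by $\lambda_{\eps,\kappa}\int \tilde\phi_{\eps,\kappa,j}u_{\eps,\kappa}$, and exploit $\phi_j(x_0)=0$ for $j\geq 1$. Your way of handling the absolute values in $\tilde m$ (summing the individual inequalities) is a harmless variant of the paper's device of summing sign-adjusted quasimodes, and your remark on the implicit uniform bound on $\lambda_{\eps,\kappa}$ needed to control $\|u_{\eps,\kappa}\|_{W^{1,2}}$ is a point the paper leaves tacit.
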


\begin{proof}
The only thing not immediate from \cref{mono_quasimode_2} is why we can use the absolute value in the definition of $\tilde m$.
By multiplying $\phi_j$ by $-1$ we may assume that each of the summands is positive and then use the sum of all these eigenfunctions as test function.
\end{proof}

This will be particularly useful once we have the results from \cref{sec_conc}, that give some understanding of the numbers $\tilde m_0$ and $\tilde m$  under some assumptions on $\lambda_{\eps,\kappa}.$
Moreover, this is the first piece of evidence that the eigenfunction $\phi_0$ should play a distinguished role for the interaction of the two parts of the spectrum of $\Sigma_{\eps,\kappa}$.

%%%%%%%%%%%%%%%%%%%%%%%%%%%%%%%%%%%%%%%%%%%%%%%%%%%%%%%%%%%%%%
\subsection{Quasimodes concentrated on a cylinder or cross cap}
%%%%%%%%%%%%%%%%%%%%%%%%%%%%%%%%%%%%%%%%%%%%%%%%%%%%%%%%%%%%%%

We want to construct quasimodes from the Dirichlet eigenfunctions of the cross cap $M_{\eps,\kappa}$ or the cylinder $C_{\eps,\kappa}$, respectively.
In order to obtain a good quasimode we need to find a good extension of such a Dirichlet eigenfunction to $\Sigma \setminus B_{\eps^k}$.
In principal one would like to use the Green's function of $\Delta-\lambda$ with pole at $x_0$.
While this works very well for a \emph{fixed} choice of the parameter $\kappa$, we need to be more careful when considering the whole family $\Sigma_{\eps,\kappa}$.
The presence of a non-trivial kernel of $\Delta - \lambda_l(M_{\eps,\kappa})$ on $\Sigma$ for some $\kappa$ forces us to modify
the Green's function a bit. 
It is worth pointing out, that, while the gain in the scale up to which this extension solves the eigenvalue equation is minimal when compared to  the Dirichlet eigenfunction itself, it will play an absolutely
crucial role that we gain a great deal of qualitative understanding from this.

%%%%%%%%%%%%%%%%%%%%%%%%%%%%%%%%%%%%%%%%%%%%%%%%%%%%%%%%%%%%%%
\subsubsection{The Green's function of $(\Delta_\Sigma - \lambda)$}
%%%%%%%%%%%%%%%%%%%%%%%%%%%%%%%%%%%%%%%%%%%%%%%%%%%%%%%%%%%%%%

We need some preliminaries on a function closely related to the Green's function of the operator $\Delta-\lambda$ on $\Sigma$.
For the convenience of the reader, the short \cref{sec_green} contains a proof of the facts on Green's functions that we make use of below.
Recall that if we normalize $\area(\Sigma)=1$ the Green's function $G(x,y)$ of $\Delta$ solves

\begin{equation*}
\Delta_y G(x,y)=\delta_x - 1.
\end{equation*}
in the sense of distributions.
Near the diagonal, the Green's function is asymptotic to the Green's function of the Euclidean plane.
More precisely, for $x \in \Sigma$ fixed, we have that
\begin{equation} \label{green_asymp_1}
G(x,y)=\frac{1}{2 \pi} \log\left(\frac{1}{|x-y|}\right) + \psi_x(y)
\end{equation}
where $|x-y|$ is the distance with respect to the Euclidean metric in conformal coordinates near $x$ normalized such that $g=fg_e$ with $f(x)=1$ and $\psi_x$ is a smooth function.
Off the diagonal, $G$ is a smooth function.
In particular, we find that
\begin{equation} \label{integral_green_bd}
\int_\Sigma |G(x,y)|^p dy \leq C
\end{equation}
for any $1\leq p < \infty$ and some uniform constant $C=C(\Sigma,p)$.

Let $(\phi_0,\dots,\phi_{K-1})$ be an orthonormal basis of the $\lambda_1(\Sigma)$-eigenspace as in \eqref{eq_normal_basis}. We consider the function
\begin{equation*}
f(y)=G(x_0,y)-\sum_{i=0}^{K-1} \int_\Sigma G(x_0,z) \phi_i(z)dz \, \phi_i(y),
\end{equation*}
which is well-defined by H{\"o}lder's inequality and \eqref{integral_green_bd}.
Also from \eqref{integral_green_bd} and Minkowski's inequality, we find that
\begin{equation} \label{f_lp_bd}
\int_\Sigma |f|^p \leq C,
\end{equation}
for a constant $C=C(\Sigma,p)$.
In particular, for any $\lambda \in (0,\lambda_2(\Sigma))$ there is unique solution $u_\lambda \in W^{1,2}(\Sigma)$ that is orthogonal to $\langle \phi_0, \dots , \phi_{K-1} \rangle$ and such that
\begin{equation*}
(\Delta-\lambda)u_\lambda = \lambda f+1
\end{equation*}
since $f$ and the constant functions are orthogonal to the kernel and hence also the cokernel of $(\Delta-\lambda)$
(which for the relevant $\lambda$ is trivial if $\lambda \neq \lambda_1(\Sigma)$ and  equal to $\langle \phi_0, \dots , \phi_{K-1} \rangle$ if $\lambda = \lambda_1(\Sigma)$).
It follows from \eqref{f_lp_bd} and standard elliptic estimates that $u_\lambda$ is uniformly bounded in $W^{2,p}(\Sigma)$ as long as $\lambda \in [\delta_0 ,\lambda_{K+1}(\Sigma)-\delta_0]$ for some small $\delta_0>0$.
We now fix $\delta_0>0$ once and for all such that 
\begin{equation}
\label{delta}
\delta_0 < \lambda_1(\Sigma) < \lambda_{K+1}(\Sigma) - \delta_0. 
\end{equation}
The Sobolev embedding theorem yields that $u_\lambda$ is uniformly bounded in $C^{1,\alpha}(\Sigma)$ for some $\alpha>0$.
Consider the function 
\begin{equation*}
H_\lambda(y)=G(x_0,y)+u_\lambda(y),
\end{equation*}
which solves
\begin{equation} \label{eq_green_proj_kernel}
\begin{split}
(\Delta - \lambda) H_\lambda
&= 
- \lambda \sum_{i=0}^{K-1}\int_\Sigma G(x_0,z) \phi_i(z)dy \, \phi_i
\\
& =
- \frac{\lambda}{\lambda_1(\Sigma)} \phi_0(x_0)\phi_0
\end{split}
\end{equation}
in $\Sigma \setminus \{x_0\}$ by the normalization \eqref{eq_normal_basis}.

Since $u_\lambda$ is uniformly bounded in $C^{1,\alpha}(\Sigma)$ for a fixed $\alpha>0$, we find from \eqref{green_asymp_1} that the function
\begin{equation*}
e_{\lambda}(y):=H_\lambda(y)- \frac{1}{2 \pi} \log\left(\frac{1}{|x_0-y|}\right) 
\end{equation*}
is uniformly bounded in $C^{1,\alpha}(\Sigma)$.
Therefore, 
\begin{equation*}
e_{\eps,\lambda}:= \frac{2\pi e_\lambda(x_0)}{\log(1/\eps^k)}=o(1)
\end{equation*}
as $\eps \to 0$ uniformly in $\lambda \in [\delta_0,\lambda_2(\Sigma)-\delta_0]$.

The right hand side of \eqref{eq_green_proj_kernel} is the second reason for the special role of $\phi_0$.

%%%%%%%%%%%%%%%%%%%%%%%%%%%%%%%%%%%%%%%%%%%%%%%%
\subsubsection{Quasimodes concentrated on the cross cap}
%%%%%%%%%%%%%%%%%%%%%%%%%%%%%%%%%%%%%%%%%%%%%%%%

For $\eps \leq \eps_0$ and $l \in \IN$ such that $\lambda_l(M_{\eps,\kappa}) \leq \lambda_{K+1}(\Sigma)+1$ the corresponding
$L^2(M_{\eps,\kappa})$-normalized Dirichlet eigenfunction $\psi_{\eps,\kappa,l}$ is rotationally symmetric by the choice of $\eps_0$ -- see the text preceding \cref{spec_cusp_cor_approx_order}.
The eigenvalue
$\lambda_l(M_{\eps,\kappa})$ is given approximately by $\kappa^2/4+(l+1)^2 \kappa^2 \pi^2 \eps^{2 \alpha}$.
We want to find a good extension $\tilde \psi_{\eps,\kappa,l} \colon \Sigma_{\eps,\kappa} \to \IR$.
Essentially, we do this using the function $H_\lambda$ constructed above, scaled in such  a way that the normal derivatives along $\partial B_{\eps^k}(x_0) = \partial M_{\eps,\kappa}$ cancel out.

We write\footnote{We use the convention that the domain of integration also indicates which normal and measure we use.
This is particularly important along $\partial M_{\eps,\kappa}=\partial B_{\eps^k}$, where these differ significantly.}
$$
a_{\eps,\kappa,l} =  -\int_{\partial M_{\eps,\kappa}} \partial_\nu \psi_{\eps,\kappa,l} d \mathcal{H}^1
$$
for the factor by which we have to scale $H_\lambda$ on $\Sigma \setminus B_{\eps^k}(x_0)$.
Recall that \cref{spec_cusp_cor_approx_order} gives
$$
|a_{\eps,\kappa,l}| \leq C (l+1)  \eps^{3\alpha / 2  +1/2}
$$
for some fixed constant $C>0$ as long as $\kappa \in [\kappa_0,\kappa_1]$ (with $\kappa_0>0$).

Recall the cut-off functions $\eta_\eps \colon \Sigma \setminus B_{\eps^k}(x_0) \to  [0,1]$ defined in \cref{sec_quasimodes_1}.
We define $\tilde \psi_{\eps,\kappa,l} \in W^{1,2}(\Sigma_{\eps,\kappa})$ as follows,
\begin{equation*}
\tilde \psi_{\eps,\kappa,l}(y)=
\begin{cases}
 a_{\eps,\kappa,l} \left(\eta_\eps H_\lambda(y) + (1-\eta_\eps)(y)   \left(\frac{1}{2\pi}\log \left( \frac{1}{|x_0-y|} \right) + e_{\lambda}(x_0) \right) \right) & \text{on} \ \Sigma \setminus B_{\eps^k} (x_0)
\\
\psi_{\eps,\kappa,l} + a_{\eps,\kappa,l}(1+e_{\eps,\lambda}) \frac{1}{2\pi}\log\left(\frac{1}{\eps^k}\right) & \text{on} \ M_{\eps,\kappa},
\end{cases}
\end{equation*}
where $\lambda=\lambda_l(M_{\eps,\kappa})$.
By construction, $\tilde \psi_{\eps,\kappa,l}$ is a Lipschitz function, in particular $\tilde \psi_{\eps,\kappa,l} \in W^{1,2}(\Sigma_{\eps,\kappa})$.
The key property of $\tilde \psi_{\eps, \kappa,l}$ 
is that we have
\begin{equation} \label{eq_cancel_1}
\begin{split}
\int_{\partial (\Sigma \setminus B_{\eps^k}(x_0))} \varphi \partial_{\nu_{\it{eucl}}} \tilde \psi_{\eps,\kappa,l}d\mathcal{H}^1_{\it{eucl}}
&=
\frac{a_{\eps,\kappa,l}}{2\pi} \int_{\partial B_{\eps^k}(x_0)} \varphi \partial_r \log(r) d \mathcal{H}^1_{\it{eucl}}
\\
&=
\frac{a_{\eps,\kappa,l}}{2\pi \eps^k} \int_{\partial B_{\eps^k}(x_0)} \varphi d \mathcal{H}^1_{\it{eucl}}
\\
&=
\frac{\kappa a_{\eps,\kappa,l}}{2\pi \eps} \int_{\partial M_{\eps,\kappa}} \varphi  d \mathcal{H}^1
\\
&=
-\int_{\partial M_{\eps,\kappa}} \varphi \partial_{\nu}  \tilde \psi_{\eps,\kappa,l} d\mathcal{H}^1,
\end{split}
\end{equation}
since $\partial_\nu \tilde \psi_{\eps,\kappa,l}$ is a constant function along $\partial M_{\eps,\kappa}$.
Moreover, in dimension two, we have that
\begin{equation} \label{eq_cancel_2}
  \int_{\partial (\Sigma \setminus B_{\eps^k}(x_0))} \varphi \partial_\nu \tilde \psi_{\eps,k,l} d \mathcal{H}^1 
=
\int_{\partial (\Sigma \setminus B_{\eps^k}(x_0))} \varphi \partial_{\nu_{\it{eucl}}} \tilde \psi_{\eps,\kappa,l}d\mathcal{H}^1_{\it{eucl}},
\end{equation}
since $g$ is conformal to the Euclidean metric on $\partial (\Sigma \setminus B_{\eps^k})$.

This will allow us to measure very precisely on which scale $\tilde \psi_{\eps,\kappa,l}$ fails to be in the kernel of $(\Delta-\lambda)$ on $\Sigma_{\eps,\kappa}$.

\begin{lemma} \label{lem_asymp_1}
For any $\varphi \in W^{1,2}(\Sigma_{\eps,\kappa})$ we have that
\begin{equation} \label{eq_asymp_1}
\begin{split}
\int_{\Sigma_{\eps,\kappa}}
&
 \nabla \tilde \psi_{\eps,\kappa,l} \nabla \varphi 
- \lambda_l(M_{\eps,\kappa})
\int_{\Sigma_{\eps,\kappa}}  \tilde \psi_{\eps,\kappa,l}  \varphi
=
- a_{\eps,\kappa,l} \frac{\lambda_l(M_{\eps,\kappa})}{\lambda_1(\Sigma)}\phi_0(x_0) \int_{\Sigma \setminus B_{2\eps^k}}  \phi_0 \varphi 
\\
& - a_{\eps,\kappa,l} \log(1/\eps^k) \frac{\lambda_l(M_{\eps,\kappa})}{2 \pi}(1+e_{\eps,\lambda_l(M_{\eps,\kappa})}) \int_{M_{\eps,\kappa}} \varphi + \|\varphi\|_{W^{1,2}(\Sigma_{\eps,\kappa})} O(\eps^k \log(1/\eps^k)).
\end{split}
\end{equation}
as $\eps \to 0$, uniformly in $\kappa \in [\kappa_0,\kappa_1]$ and uniformly for $\lambda_l(M_{\eps,\kappa}) \in [\delta_0,\lambda_{K+1}(\Sigma)-\delta_0].$
\end{lemma}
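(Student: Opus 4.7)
The plan is to split $\Sigma_{\eps,\kappa}$ as $\Omega_\eps \cup M_{\eps,\kappa}$ where $\Omega_\eps = \Sigma \setminus B_{\eps^k}(x_0)$, apply Green's identity separately on each piece, and exploit the boundary cancellation provided by \eqref{eq_cancel_1} and \eqref{eq_cancel_2}. Writing $\lambda = \lambda_l(M_{\eps,\kappa})$ throughout, the starting point is
\begin{equation*}
\int_{\Sigma_{\eps,\kappa}}\nabla\tilde\psi_{\eps,\kappa,l}\cdot\nabla\varphi = \int_{\Omega_\eps}(\Delta\tilde\psi_{\eps,\kappa,l})\varphi + \int_{M_{\eps,\kappa}}(\Delta\tilde\psi_{\eps,\kappa,l})\varphi + \int_{\partial\Omega_\eps}\varphi\,\partial_{\nu}\tilde\psi_{\eps,\kappa,l} + \int_{\partial M_{\eps,\kappa}}\varphi\,\partial_{\nu}\tilde\psi_{\eps,\kappa,l},
\end{equation*}
in which the last two terms cancel exactly by \eqref{eq_cancel_1}--\eqref{eq_cancel_2}, because $\tilde\psi_{\eps,\kappa,l}$ was engineered to have a constant normal derivative along $\partial B_{\eps^k}(x_0)$ with exactly the right constant matching $-\partial_\nu\psi_{\eps,\kappa,l}$ on the $M_{\eps,\kappa}$ side.

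On $M_{\eps,\kappa}$, I would write $\tilde\psi_{\eps,\kappa,l} = \psi_{\eps,\kappa,l} + C_l$ with $C_l = a_{\eps,\kappa,l}(1+e_{\eps,\lambda})\log(1/\eps^k)/(2\pi)$; since $\Delta C_l = 0$ and $\Delta\psi_{\eps,\kappa,l} = \lambda\psi_{\eps,\kappa,l}$, the $M$-bulk contribution is $\lambda\int_M\psi_{\eps,\kappa,l}\varphi$, and subtracting $\lambda\int_M\tilde\psi_{\eps,\kappa,l}\varphi$ yields precisely the second claimed leading term $-\lambda C_l\int_{M_{\eps,\kappa}}\varphi$. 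On $\Omega_\eps$, I would use the decomposition $\tilde\psi_{\eps,\kappa,l}/a_{\eps,\kappa,l} = H_\lambda - (1-\eta_\eps)h$ with $h(y) = e_\lambda(y) - e_\lambda(x_0)$, apply \eqref{eq_green_proj_kernel} to get $\Delta H_\lambda = \lambda H_\lambda - (\lambda/\lambda_1(\Sigma))\phi_0(x_0)\phi_0$, and combine with $-\lambda\int_{\Omega_\eps}\tilde\psi_{\eps,\kappa,l}\varphi$. The $\lambda H_\lambda$ contributions cancel, leaving the leading term $-a_{\eps,\kappa,l}(\lambda/\lambda_1(\Sigma))\phi_0(x_0)\int_{\Omega_\eps}\phi_0\varphi$, which after replacing $\Omega_\eps$ by $\Sigma\setminus B_{2\eps^k}$ at cost $O(\eps^k)\|\varphi\|_{L^2}$ matches the first claimed leading term.

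All other contributions are errors from the cut-off $(1-\eta_\eps)h$, namely $-a_{\eps,\kappa,l}\int_{\Omega_\eps}\Delta((1-\eta_\eps)h)\varphi$ and $a_{\eps,\kappa,l}\lambda\int_{B_{2\eps^k}}(1-\eta_\eps)h\varphi$, together with the $\Omega_\eps$-vs-$\Sigma\setminus B_{2\eps^k}$ truncation error. Using that $e_\lambda$ is uniformly Lipschitz in $\lambda \in [\delta_0,\lambda_{K+1}(\Sigma)-\delta_0]$ one gets $|h| \leq C\eps^k$ on $B_{2\eps^k}$ and $|\nabla h|$ bounded; by the conformal invariance of the Dirichlet energy the chosen cut-off has $\|\nabla\eta_\eps\|_{L^2}$ bounded uniformly in $\eps$; and by the trace inequality \eqref{eq_int_bd_sobolev} the boundary contributions on $\partial B_{\eps^k}$ arising from one more integration by parts are controlled by $C\eps^{k/2}\sqrt{\log(1/\eps^k)}\|\varphi\|_{W^{1,2}}$, which multiplied by $a_{\eps,\kappa,l} = O(\eps^{3\alpha/2+1/2})$ is utterly negligible. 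Each remaining term is of size $O(\eps^k)\|\varphi\|_{W^{1,2}}$, so the total error fits into $O(\eps^k\log(1/\eps^k))\|\varphi\|_{W^{1,2}}$.

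The main obstacle is less in any individual estimate than in arranging the uniformity. The restriction $\lambda \in [\delta_0,\lambda_{K+1}(\Sigma)-\delta_0]$ from \eqref{delta} is essential because at $\lambda = \lambda_1(\Sigma)$ the operator $\Delta - \lambda$ has a kernel; the preparation replacing $G(x_0,\cdot)$ by $H_\lambda$ via subtracting the projection onto the $\lambda_1$-eigenspace is what makes $u_\lambda$ and hence $e_\lambda$ uniformly $C^{1,\alpha}$ on this interval, and the normalization \eqref{eq_normal_basis} is what isolates $\phi_0$ as the unique basis element with $\phi_0(x_0) \neq 0$, explaining why only this single spectral projection appears in the leading right hand side of \eqref{eq_asymp_1}.
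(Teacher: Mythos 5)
Your proposal follows essentially the same route as the paper: split over $\Sigma\setminus B_{\eps^k}(x_0)$ and $M_{\eps,\kappa}$, kill the boundary terms via \eqref{eq_cancel_1}--\eqref{eq_cancel_2}, read off the second leading term from the constant shift on $M_{\eps,\kappa}$, and read off the first from \eqref{eq_green_proj_kernel}; your decomposition $\tilde\psi_{\eps,\kappa,l}/a_{\eps,\kappa,l}=H_\lambda-(1-\eta_\eps)h$ with $h=e_\lambda-e_\lambda(x_0)$ is algebraically identical to the paper's direct expansion of $(\Delta-\lambda)\tilde\psi_{\eps,\kappa,l}$ on the annulus. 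The structure and all leading terms are right.

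One quantitative claim in your error analysis is wrong as stated, though the underlying term is fine. You assert that the boundary contribution on $\partial B_{\eps^k}$ from the extra integration by parts is $C\eps^{k/2}\sqrt{\log(1/\eps^k)}\,\|\varphi\|_{W^{1,2}}$ and that multiplying by $a_{\eps,\kappa,l}=O(\eps^{3\alpha/2+1/2})$ makes it ``utterly negligible.'' But $a_{\eps,\kappa,l}\,\eps^{k/2}\sim\eps^{3\alpha/2+1/2+k/2}$, and since $3\alpha+1<k$ for $k\geq 9$ and $\alpha<9/16$, this is \emph{much larger} than the target error $\eps^k\log(1/\eps^k)$ — so the estimate you wrote down does not close the proof. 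The fix is that your bound drops a factor of $\mathcal{H}^1(\partial B_{\eps^k})^{1/2}\sim\eps^{k/2}$: since $\partial_\nu((1-\eta_\eps)h)$ is uniformly bounded on $\partial B_{\eps^k}$ (using $|h|\leq C\eps^k$ and $|\nabla\eta_\eps|\leq C\eps^{-k}$), Cauchy--Schwarz together with \eqref{eq_int_bd_sobolev} gives $\int_{\partial B_{\eps^k}}|\varphi|\,|\partial_\nu((1-\eta_\eps)h)|\,d\mathcal{H}^1\leq C\eps^{k/2}\cdot\eps^{k/2}\log^{1/2}(1/\eps^k)\|\varphi\|_{W^{1,2}}=C\eps^k\log^{1/2}(1/\eps^k)\|\varphi\|_{W^{1,2}}$, which does fit. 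Alternatively you can avoid the extra integration by parts altogether, as the paper does, by estimating $(\Delta-\lambda)\tilde\psi_{\eps,\kappa,l}$ directly in $L^2$ of the annulus (all the terms, including $(1-\eta_\eps)\Delta h$ which is of pointwise size $\log(1/\eps^k)$ there, have $L^2$-norm $O(\log(1/\eps^k))$ over a set of area $O(\eps^{2k})$) and pairing with $\|\varphi\|_{L^2(B_{2\eps^k}\setminus B_{\eps^k})}\leq C\eps^k\log^{1/2}(1/\eps^k)\|\varphi\|_{W^{1,2}}$.
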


It will be crucial later that this only needs $\lambda_l$ to be controlled rather than $l$. 
For the definition of $\delta_0$ see (\ref{delta}).

\begin{rem} \label{rem_order}
Note that, by H{\"o}lder's inequality,
\begin{equation*}
\begin{split}
\left |a_{\eps,\kappa,l} \log(1/\eps^k)  \int_{M_{\eps,\kappa}} \varphi \right|
&\leq 
|a_{\eps,\kappa,l}| \log(1/\eps^k) \area(M_{\eps,\kappa})^{1/2}  \|\varphi\|_{L^2(\Sigma_{\eps,\kappa})}
\\
& \leq
 C |a_{\eps,\kappa,l}| \eps^{1/2} \log(1/\eps^k)  \|\varphi\|_{L^2(\Sigma_{\eps,\kappa})},
 \end{split}
\end{equation*}
so that the first summand on the right hand side of \eqref{eq_asymp_1} is (typically) the term of lower order. 
\end{rem}

\begin{proof}
We fix $l \in \IN$ and write $\lambda=\lambda_l(M_{\eps,\kappa})$.\
Integration by parts gives that
\begin{equation*}
\begin{split}
 \int_{\Sigma_{\eps,\kappa}}  \nabla \tilde \psi_{\eps,k,l} \nabla \varphi
 &=
 \int_{\Sigma \setminus B_\eps(x_0)} \nabla \varphi \nabla \tilde \psi_{\eps,k,l}
 + 
 \int_{M_{\eps,\kappa}} \nabla \varphi \nabla \tilde \psi_{\eps,k,l}
 \\
 &=
 \int_{\Sigma \setminus B_\eps(x_0)} \varphi \Delta \tilde \psi_{\eps,k,l} 
 -
 \int_{\partial B_\eps(x_0)} \varphi \partial_\nu \tilde \psi_{\eps,k,l} d \mathcal{H}^1
 \\
 &+
  \int_{M_{\eps,\kappa}} \varphi \Delta \tilde \psi_{\eps,k,l} 
  +
  \int_{\partial M_{\eps,\kappa}} \varphi \partial_{\nu} \tilde \psi_{\eps,k,l} d \mathcal{H}^1
  \\
  & =
   \int_{\Sigma \setminus B_\eps(x_0)} \varphi \Delta \tilde \psi_{\eps,k,l} 
   + \int_{M_{\eps,\kappa}} \varphi \Delta \tilde \psi_{\eps,k,l},
\end{split}
\end{equation*}
where the last step follows from \eqref{eq_cancel_1} and \eqref{eq_cancel_2}.

Moreover, we have that
\begin{equation*}
\int_{M_{\eps,\kappa}} \varphi \Delta \tilde \psi_{\eps,k,l} =
\lambda \int_{M_{\eps,\kappa}} \varphi \tilde \psi_{\eps,k,l} - \frac{\lambda}{2\pi} (1+e_{\eps,\lambda}) a_{\eps,\kappa,l} \log(1/\eps^{k})  \int_{M_{\eps,\kappa}} \varphi .
\end{equation*}
In order to estimate the integral on $\Sigma \setminus B_{\eps^k}(x_0)$ note that we have 
\begin{equation*}
\begin{split}
\int_{\Sigma \setminus B_{\eps^k}(x_0)} \varphi (\Delta - \lambda) \tilde \psi_{\eps,k,l}
=&
- \frac{\lambda}{\lambda_1(\Sigma)} a_{\eps,\kappa,l} \int_{\Sigma \setminus B_{2 \eps^k}(x_0)} \phi_0(x_0)\phi_0 \varphi
\\
&+
\int_{B_{2 \eps^k}(x_0) \setminus B_{\eps^k}(x_0)} \varphi(\Delta-\lambda)\tilde \psi_{\eps,k,l}.
\end{split}
\end{equation*}
It remains to estimate the second summand.
First note that, on $B_{2 \eps^k}(x_0) \setminus B_{\eps^k}(x_0)$, we have that
\begin{equation*}
\begin{split}
a_{\eps,\kappa,l}^{-1}(\Delta - \lambda) \tilde \psi_{\eps,k,l}
=&
- \eta_\eps \frac{\lambda}{\lambda_1(\Sigma)} \phi_0(x_0)\phi_0  
  + 
 2\nabla \eta_\eps \cdot \left( \nabla H_\lambda- \frac{1}{2\pi} \nabla \log \left( \frac{1}{|x_0-y|} \right) \right)
 \\
 &+
 \Delta \eta_\eps \left(H_\lambda -  \frac{1}{2 \pi}  \log \left( \frac{1}{|x_0-y|}\right) - e_\lambda(x_0) \right)
 \\
 &- 
 \lambda (1-\eta_\eps) \left( \frac{1}{2 \pi}  \log \left( \frac{1}{|x_0-y|}\right) + e_\lambda(x_0) \right)
 \end{split}
\end{equation*}
since $\Delta \log(1/|x_0-y|)=0$ thanks to the conformal covariance of the Laplacian.
Therefore, we find that
\begin{equation*}
\begin{split}
\left| \int_{B_{2 \eps^k}(x_0) \setminus B_{\eps^k}(x_0)} \right. & \left. \varphi (\Delta - \lambda) \tilde \psi_{\eps,k,l}
\right|
\leq 
C |a_{\eps,\kappa,l}|
\left( \int_{B_{2\eps^k} (x_0) \setminus B_{\eps^k}(x_0)} |\varphi|^2 \right)^{1/2} \times
\\
& 
\times \left( \left(
 \int_{B_{2\eps^k} \setminus B_{\eps^k}}  |\phi_0(x_0) \phi_0|^2 +  |\nabla \eta_\eps|^2 |\nabla e_\lambda|^2 +  |\Delta \eta_\eps|^2 |e_\lambda - e_\lambda (x_0)|^2 \right)^{1/2}  \right.
\\
&+ 
\left.
\left( \int_{B_{2\eps^k} (x_0) \setminus B_{\eps^k}(x_0)}  \lambda^2 |1-\eta_\eps|^2 |\log(1/\eps^k) +e_\lambda(x_0)|^2 \right)^{1/2} \right)
\\
\leq &
C |a_{\eps,\kappa,l}| \eps^k \log(1/\eps^k) (\eps^{2k} + C + C + C \eps^{2k} \log^2(1/\eps^k))^{1/2}
\\
\leq & 
C |a_{\eps,\kappa,l}| \eps^{k} \log(1/\eps^k)
\end{split}
\end{equation*}
for $\eps>0$ sufficiently small,
where we have used \eqref{eq_int_bd_sobolev} and $e_\lambda \in C^{1,\alpha}(\Sigma)$.
Also note that $|a_{\eps,\kappa,l}|$ is uniformly bounded as long as $\lambda$ is.
The assertion now follows from combining all the above estimates.
\end{proof}

%%%%%%%%%%%%%%%%%%%%%%%%%%%%%%%%%%%%%%%%%%%%%%%%
\subsubsection{Quasimodes concentrated on the cylinder}
%%%%%%%%%%%%%%%%%%%%%%%%%%%%%%%%%%%%%%%%%%%%%%%%

The construction of quasimodes $\tilde \psi_{\eps,\kappa,l}$ concentrated on the cylinder $C_{\eps,\kappa}$ is almost completely analogous to the construction for cross caps above.

We do not have to be so careful in extending the Dirichlet eigenfunction $\psi_{\eps,\kappa,l}$ near the short boundary component $\partial_{R_\eps} C_{\eps,\kappa} = \{y=R_\eps\}$ to $\Sigma \setminus B_{\eps^k}$ since, by \cref{spec_cusp_cor_approx_order}, the $L^1$-norm of the normal derivative $\partial_\nu \psi_{\eps,\kappa,l}$ is on a much smaller scale than
the corresponding norm on the long boundary component.
More precisely,
in this case we write
$$
a_{\eps,\kappa,l}
=-\int_{\partial_1 C_{\eps,\kappa}} \partial_\nu \psi_{\eps,\kappa,l} d \mathcal{H}^1,
$$
where $\psi_{\eps,\kappa,l}$ is a normalized $\lambda_l(C_{\eps,\kappa})$-eigenfunction that we assume to be rotationally symmetric.
We have from \cref{spec_cusp_cor_approx_order} that 
$$
|a_{\eps,\kappa,l}| \leq C (l+1) \eps^{3\alpha/2+1/2}
$$
as long as $\lambda_l(C_{\eps,\kappa}) \leq \lambda_{K+1}(\Sigma)$ and $\eps \leq \eps_0$.
On the other hand, also from \cref{spec_cusp_cor_approx_order}, we find that
\begin{equation} \label{eq_scale_short}
\begin{split}
\int_{\partial_{R_\eps} C_{\eps,\kappa}} |\varphi \partial_\nu \psi_{\eps,\kappa,l}| d \mathcal{H}^1
&\leq 
C (l+1) \eps^{3\alpha/2+1/2} \log(1/\eps) R_\eps^{-1/2} \|\varphi\|_{W^{1,2}(\Sigma_{\eps,\kappa})}
\\
& \leq
C \eps^{k} \|\varphi\|_{W^{1,2}(\Sigma_{\eps,\kappa})}
\end{split}
\end{equation}
if $\lambda_l(C_{\eps,\kappa}) \leq \lambda_{K+1}(\Sigma)$ and $\eps \leq \eps_5$
thanks to \eqref{eq_int_bd_sobolev}.

Something that we need to take care of that was not present in the case of a cross cap is that we need to extend $H_\lambda$ from $\partial B_{\eps^k}(x_1)$ to $C_{\eps,\kappa}$.
We do this exactly as in the construction preceeding \cref{mono_quasimode_2} by interpolating $H_\lambda$ with $H_\lambda(x_1)$ and then cutting-off the constant function $H_{\lambda}(x_1)$ near the corresponding
component of $\partial C_{\eps,\kappa}$.

In order to simplify the notation a little bit, we will simply write 
$$
B_{\eps^k} = B_{\eps^k}(x_0) \cup B_{\eps^k}(x_1)
$$
from here on and $\eta_\eps \colon \Sigma \setminus B_{\eps^k} \to [0,1]$ is the cut-off function constructed from $\eta$ above that cuts off near $x_0$ and near $x_1$.

The precise construction of the quasimodes  $\tilde \psi_{\eps,\kappa,l}$ is as follows:
\begin{equation*}
\tilde \psi_{\eps,\kappa,l}(y)=
\begin{cases}
 a_{\eps,\kappa,l} \left(\eta_\eps H_\lambda(y) + (1-\eta_\eps) (y)  \left(\frac{1}{2\pi}\log \left( \frac{1}{|x_0-y|} \right) + e_{\lambda}(x_0) \right) \right) & \text{on} \  \Sigma \setminus (B_{\eps^k} (x_0) \cup B_{2 \eps^k} (x_1))
\\
a_{\eps,\kappa,l} (\eta_\eps H_\lambda + (1- \eta_\eps) H_\lambda(x_1) ) & \text{in} \ B_{2 \eps^k}(x_1) \setminus B_{\eps^k}(x_1).
\\
\psi_{\eps,\kappa,l} + a_{\eps,\kappa,l}(\rho_{\eps,\kappa} H_\lambda(x_1)
+(1-\rho_{\eps,\kappa})(1+e_{\eps,\lambda}) \frac{1}{2\pi}\log\left(\frac{1}{\eps^k}\right))
& \text{on} \ C_{\eps,\kappa}
\end{cases}
\end{equation*}
where $\lambda=\lambda_l(C_{\eps,\kappa})$ and $\rho_{\eps,\kappa} \colon C_{\eps,\kappa} \to [0,1]$ is the cut-off function introduced in \eqref{eq_cut_off_cyl}.

The arguments from the proofs of \cref{mono_quasimode_2} and \cref{lem_asymp_1} using \eqref{eq_grad_cut_off} and \eqref{eq_area_cut_off} combined with
\eqref{eq_scale_short} imply the following.

\begin{lemma} \label{lem_asymp_2}
For any $\varphi \in W^{1,2}(\Sigma_{\eps,\kappa})$ we have that
\begin{equation} \label{eq_asymp_2}
\begin{split}
\int_{\Sigma_{\eps,\kappa}}
&
 \nabla \tilde \psi_{\eps,\kappa,l} \nabla \varphi 
- \lambda_l(C_{\eps,\kappa})
\int_{\Sigma_{\eps,\kappa}}  \tilde \psi_{\eps,\kappa,l}  \varphi
=
- a_{\eps,\kappa,l} \frac{\lambda_l(C_{\eps,\kappa})}{\lambda_1(\Sigma)} \phi_0(x_0) \int_{\Sigma \setminus B_{2\eps^k}}  \phi_0 \varphi 
\\
& - a_{\eps,\kappa,l} \log(1/\eps^k) \frac{\lambda_l(C_{\eps,\kappa})}{2 \pi}(1+e_{\eps,\lambda_l(C_{\eps,\kappa})}) \int_{C_{\eps,\kappa}} \varphi + \|\varphi\|_{W^{1,2}(\Sigma_{\eps,\kappa})} O(\eps^k \log(1/\eps^k)),
\end{split}
\end{equation}
as $\eps \to 0$, uniformly in $\kappa \in [\kappa_0,\kappa_1]$ and uniformly in $\lambda_l(C_{\eps,\kappa}) \in [\delta_0,\lambda_{K+1}(\Sigma)-\delta_0].$
\end{lemma}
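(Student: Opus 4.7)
The plan is to mimic the proof of \cref{lem_asymp_1}, keeping the same structure but accounting for the second attachment point $x_1$ and the additional cut-off $\rho_{\eps,\kappa}$ used on $C_{\eps,\kappa}$ near the short boundary component. First I would split
$$
\int_{\Sigma_{\eps,\kappa}} \nabla\tilde\psi_{\eps,\kappa,l}\cdot\nabla\varphi
= \int_{\Sigma\setminus B_{\eps^k}}\nabla\tilde\psi_{\eps,\kappa,l}\cdot\nabla\varphi
+\int_{C_{\eps,\kappa}}\nabla\tilde\psi_{\eps,\kappa,l}\cdot\nabla\varphi
$$
and integrate by parts on each piece, producing boundary contributions along $\partial B_{\eps^k}(x_0)\cup\partial B_{\eps^k}(x_1)$ from one side and along $\partial_1 C_{\eps,\kappa}\cup \partial_{R_\eps}C_{\eps,\kappa}$ from the other.

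Next I would treat the four boundary circles separately. On $\partial_1 C_{\eps,\kappa}$ paired with $\partial B_{\eps^k}(x_0)$ the construction is identical to the cross-cap case: the normal derivative of $\tilde\psi_{\eps,\kappa,l}$ is a constant on each side, equal in magnitude and opposite in sign by the very choice of the prefactor $a_{\eps,\kappa,l}=-\int_{\partial_1 C_{\eps,\kappa}}\partial_\nu\psi_{\eps,\kappa,l}\,d\mathcal{H}^1$, giving the exact cancellation \eqref{eq_cancel_1}, \eqref{eq_cancel_2}. On $\partial_{R_\eps} C_{\eps,\kappa}$ the contribution of $\psi_{\eps,\kappa,l}$ itself is already known to be $O(\eps^k)\|\varphi\|_{W^{1,2}}$ by \eqref{eq_scale_short}. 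The remaining contribution comes from the smooth extension $a_{\eps,\kappa,l}H_\lambda(x_1)$ (which is a constant, hence has no normal derivative from the $\Sigma$-side after accounting for the cut-off $\eta_\eps$ vanishing at $\partial B_{\eps^k}(x_1)$) and from the cut-off $\rho_{\eps,\kappa}H_\lambda(x_1)+(1-\rho_{\eps,\kappa})\tfrac{(1+e_{\eps,\lambda})}{2\pi}\log(1/\eps^k)$, whose gradient is supported in a set of area at most $\eps^{k+1}$ by \eqref{eq_area_cut_off} and has Dirichlet energy at most $C\eps^{k+1}$ by \eqref{eq_grad_cut_off}. Combining these with H\"older's inequality and \cref{mono_unif_sobolev} bounds the total boundary contribution near $\partial_{R_\eps}C_{\eps,\kappa}$ by $O(\eps^k\log(1/\eps^k))\|\varphi\|_{W^{1,2}}$.

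For the interior terms I would use $(\Delta-\lambda)H_\lambda=-\tfrac{\lambda}{\lambda_1(\Sigma)}\phi_0(x_0)\phi_0$ on $\Sigma\setminus\{x_0\}$ (equation \eqref{eq_green_proj_kernel}) to produce the first main term in \eqref{eq_asymp_2}, and $(\Delta-\lambda)\psi_{\eps,\kappa,l}=0$ on $C_{\eps,\kappa}$ together with the explicit formulas for $\Delta$ applied to the constants $H_\lambda(x_1)$ and $\tfrac{(1+e_{\eps,\lambda})}{2\pi}\log(1/\eps^k)$ on $C_{\eps,\kappa}$ to produce the second main term. The interpolation region $B_{2\eps^k}(x_0)\setminus B_{\eps^k}(x_0)$ is handled exactly as in \cref{lem_asymp_1}, using the uniform $C^{1,\alpha}$-bound on $e_\lambda$ and \eqref{eq_int_bd_sobolev}. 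The new interpolation region $B_{2\eps^k}(x_1)\setminus B_{\eps^k}(x_1)$ is simpler: since $H_\lambda$ is smooth near $x_1$, one has $|H_\lambda-H_\lambda(x_1)|\le C\eps^k$ there, and a computation following the proof of \cref{mono_quasimode_2} shows the resulting error is $O(\eps^k)\|\varphi\|_{W^{1,2}}$. Finally, on the support of $\nabla\rho_{\eps,\kappa}$ inside $C_{\eps,\kappa}$, the bounds \eqref{eq_grad_cut_off}, \eqref{eq_area_cut_off} together with a constant of size $O(\log(1/\eps^k))$ (the difference $H_\lambda(x_1)-\tfrac{(1+e_{\eps,\lambda})}{2\pi}\log(1/\eps^k)$) contribute at most $O(\eps^k\log(1/\eps^k))\|\varphi\|_{W^{1,2}}$.

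The main obstacle, relative to the cross-cap case, is the bookkeeping at the short boundary component $\partial_{R_\eps}C_{\eps,\kappa}$: one must verify that the gradient estimate \eqref{eq_scale_short}, coming from the decay $R_\eps^{-1/2}$ in \cref{spec_cusp_cor_approx_order}, compensates the fact that no cancellation is built into the construction at this boundary (unlike at $\partial_1 C_{\eps,\kappa}$, where it is imposed by the definition of $a_{\eps,\kappa,l}$). Once this is in place, summing the contributions and absorbing the smaller terms into $O(\eps^k\log(1/\eps^k))\|\varphi\|_{W^{1,2}(\Sigma_{\eps,\kappa})}$ yields the asserted expansion \eqref{eq_asymp_2}, uniformly in $\kappa\in[\kappa_0,\kappa_1]$ and in $\lambda_l(C_{\eps,\kappa})\in[\delta_0,\lambda_{K+1}(\Sigma)-\delta_0]$.
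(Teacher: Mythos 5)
Your plan is correct and is essentially the paper's own argument: the paper proves this lemma precisely by citing the computations of \cref{mono_quasimode_2} and \cref{lem_asymp_1} together with \eqref{eq_grad_cut_off}, \eqref{eq_area_cut_off} and \eqref{eq_scale_short}, which is exactly the combination you carry out. One small imprecision: the normal derivative of $a_{\eps,\kappa,l}(\eta_\eps H_\lambda+(1-\eta_\eps)H_\lambda(x_1))$ at $\partial B_{\eps^k}(x_1)$ is not zero (it is $a_{\eps,\kappa,l}\,\partial_\nu\eta_\eps\,(H_\lambda-H_\lambda(x_1))=O(a_{\eps,\kappa,l})$ since $\partial_\nu\eta_\eps=O(\eps^{-k})$ while $|H_\lambda-H_\lambda(x_1)|=O(\eps^k)$), but the resulting boundary integral is still absorbed into $O(\eps^k\log(1/\eps^k))\|\varphi\|_{W^{1,2}}$ — and it never appears at all if you handle the annulus $B_{2\eps^k}(x_1)\setminus B_{\eps^k}(x_1)$ in the weak form of \cref{mono_quasimode_2}, as you propose.
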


Let us also record for later use that
$$
\int_{C_{\eps,\kappa}} |\varphi (\psi_{\eps,\kappa,l} -  \tilde \psi_{\eps,\kappa,l})|
\leq
C |a_{\eps,\kappa,l}| \eps^{1/2} \log(1/\eps) \|\varphi\|_{L^2}
$$
thanks to H{\"o}lder's inequality.

%%%%%%%%%%%%%%%%%%%%%%%%%%%%%%%%%%%%%%%%%%%%%%%%%%%%
\subsubsection{Asymptotic expansions from quasimodes - part $2$}
%%%%%%%%%%%%%%%%%%%%%%%%%%%%%%%%%%%%%%%%%%%%%%%%%%%%

Like in the case of quasimodes concentrated on $\Sigma$ we can also in this case get asmymptotic expansions
of eigenvalues from testing the quasimodes against eigenfunctions.

\begin{cor} \label{expan_cyl}
Let $u_{\eps,\kappa}$ be a normalized eigenfunction on $\Sigma_{\eps,\kappa}$ with eigenvalue $\lambda_{\eps,\kappa}$.
For $l \in \IN$ such that $\lambda_l(C_{\eps,\kappa}) \in [\delta_0,\lambda_{K+1}(\Sigma)-\delta_0]$,
write 
$$
\tilde n_{l} = \int_{\Sigma_{\eps,\kappa}} u_{\eps,\kappa} \tilde \psi_{\eps,\kappa,l}.
$$
Then we have that
\begin{equation*}
\begin{split}
\lambda_{\eps,\kappa} 
&= 
\lambda_l(C_{\eps,\kappa}) -
\frac{a_{\eps,\kappa,l}  \frac{\lambda_l(C_{\eps,\kappa})}{\lambda_1(\Sigma)} \phi_0(x_0) \int_{\Sigma \setminus B_{2 \eps^k}}  \phi_0 u_{\eps,\kappa} }{\tilde n_l}
\\
-& \frac{ a_{\eps,\kappa,l} \log(1/\eps^k) \frac{\lambda_l(C_{\eps,\kappa})}{2 \pi}(1+e_{\eps,\lambda_l(C_{\eps,\kappa})}) \int_{C_{\eps,\kappa}} \varphi }{\tilde n_l}
\\
&+ \frac{ \|\varphi\|_{W^{1,2}(\Sigma_{\eps,\kappa})} O(\eps^k \log(1/\eps^k)) } {\tilde n_l }
\end{split}
\end{equation*}
as $\eps \to 0$ and the error term depends only on $\Sigma, \kappa_0,\kappa_1$, and $\delta_0$.
\end{cor}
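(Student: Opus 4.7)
The plan is to follow verbatim the strategy used in Corollary \ref{expan_surf}: insert $u_{\eps,\kappa}$ itself as the test function in the identity from Lemma \ref{lem_asymp_2}, invoke the weak form of the eigenvalue equation to turn the left-hand side into a multiple of $\tilde n_l$, and then divide. The whole substance of the estimate lives in Lemma \ref{lem_asymp_2}; the present corollary is a cosmetic rearrangement.

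Concretely, I would first observe that $\tilde\psi_{\eps,\kappa,l}\in W^{1,2}(\Sigma_{\eps,\kappa})$ by construction, so testing the weak eigenvalue equation for $u_{\eps,\kappa}$ against it gives
\begin{equation*}
\int_{\Sigma_{\eps,\kappa}} \nabla \tilde\psi_{\eps,\kappa,l}\cdot\nabla u_{\eps,\kappa}\;=\;\lambda_{\eps,\kappa}\int_{\Sigma_{\eps,\kappa}} \tilde\psi_{\eps,\kappa,l}\,u_{\eps,\kappa}\;=\;\lambda_{\eps,\kappa}\,\tilde n_l.
\end{equation*}
Next, applying \eqref{eq_asymp_2} with $\varphi=u_{\eps,\kappa}$ and subtracting $\lambda_l(C_{\eps,\kappa})\tilde n_l$ from both sides produces
\begin{equation*}
(\lambda_{\eps,\kappa}-\lambda_l(C_{\eps,\kappa}))\,\tilde n_l \;=\; -a_{\eps,\kappa,l}\tfrac{\lambda_l(C_{\eps,\kappa})}{\lambda_1(\Sigma)}\phi_0(x_0)\!\!\int_{\Sigma\setminus B_{2\eps^k}}\!\!\phi_0 u_{\eps,\kappa} \;-\; a_{\eps,\kappa,l}\log(1/\eps^k)\tfrac{\lambda_l(C_{\eps,\kappa})}{2\pi}(1+e_{\eps,\lambda_l})\!\int_{C_{\eps,\kappa}}\!\! u_{\eps,\kappa} \;+\; \|u_{\eps,\kappa}\|_{W^{1,2}(\Sigma_{\eps,\kappa})}\,O(\eps^k\log(1/\eps^k)).
\end{equation*}
Dividing by $\tilde n_l$ (tacitly assumed non-zero, exactly as in Corollary \ref{expan_surf}) and moving $\lambda_l(C_{\eps,\kappa})$ to the right-hand side yields the claimed expansion.

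The only loose end is to justify that $\|u_{\eps,\kappa}\|_{W^{1,2}(\Sigma_{\eps,\kappa})}$ is indeed uniformly bounded, so that the last term is genuinely $O(\eps^k\log(1/\eps^k))$ with constants depending only on $\Sigma,\kappa_0,\kappa_1,\delta_0$. Since $u_{\eps,\kappa}$ is $L^2$-normalized, $\|u_{\eps,\kappa}\|_{W^{1,2}}^2=1+\lambda_{\eps,\kappa}$, so this reduces to a uniform upper bound on $\lambda_{\eps,\kappa}$ in the relevant regime; such a bound is supplied by Lemma \ref{first_upper_bd} together with the hypothesis $\lambda_l(C_{\eps,\kappa})\leq \lambda_{K+1}(\Sigma)-\delta_0$, which control all eigenvalues that we will plug into the corollary. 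There is no real obstacle here: the proof is mechanical once Lemma \ref{lem_asymp_2} is in hand, and its value lies entirely in the explicit shape of the two leading correction terms, which is what allows us later to isolate the role of $\phi_0(x_0)$ and of the $L^1$-mass of $u_{\eps,\kappa}$ on $C_{\eps,\kappa}$ when analyzing the interaction scale.
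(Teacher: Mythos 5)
Your proposal is correct and is exactly the argument the paper intends: test the weak eigenvalue equation against $\tilde\psi_{\eps,\kappa,l}$, apply \cref{lem_asymp_2} with $\varphi=u_{\eps,\kappa}$, subtract, and divide by $\tilde n_l$ (implicitly assumed non-zero, just as in \cref{expan_surf}). Your closing remark on the uniform $W^{1,2}$-bound for $u_{\eps,\kappa}$ is a harmless extra, since the statement keeps $\|\varphi\|_{W^{1,2}}$ explicitly in the error term.
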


%%%%%%%%%%%%%%%%%%%%%%%%%%%%%%%%%%%%%%%%%%%%%%%%
\section{$L^2$-decomposition of eigenfunctions} \label{sec_conc}
%%%%%%%%%%%%%%%%%%%%%%%%%%%%%%%%%%%%%%%%%%%%%%%%

The main goal of this section is to understand how well eigenfunctions on $\Sigma_{\epsilon,\kappa}$ with eigenvalue close to $\lambda_1(\Sigma)$ or $\lambda_0(C_{\eps,\kappa})$ can be 
approximated by, or decomposed into, the quasimodes constructed in the previous section.
Afterwards we will use this to derive the preliminary lower bound  
$$
\lambda_2(\Sigma_{\eps,\kappa})
\geq \lambda_1(\Sigma) - d_0 \eps \log(1/\eps)
$$
for a large set of parameters $\kappa$ and some constant $d_0>0$.
While this looks very far away from the bound we want to obtain eventually, it will turn out to be extremely useful, because it also holds for $\kappa$ relatively small\footnote{In fact, for $\kappa$ not too small, we get the corresponding bound for $\lambda_1(\Sigma_{\eps,\kappa})$, but we will not make use of this.}.
In the next section we will use the results from this section to compare $\lambda_1(\Sigma_{\eps,\kappa})$ to $\mu_1(\Sigma \setminus B_{\eps^k})$
for very specific choices of the parameter $\kappa.$

A major technical difficulty is that the number of eigenvalues close to $\lambda_0(C_{\eps,\kappa})$
is unbounded for $\eps \to 0.$
However, if $\alpha<1$, it turns out that one can locate eigenvalues up to a scale smaller than the separation of 
two consecutive eigenvalues on $C_{\eps,\kappa}$.

In this section we will formulate and prove all results only for the case of attaching cylinders.
The case of cross caps is completely analogous invoking our previous estimates in the case of cross caps instead of those for cylinders.

%%%%%%%%%%%%%%%%%%%%%%%%%%%%%%%%%%%%%%%%%%%%%%%%
\subsection{Decomposing eigenfunctions on the cylinder} \label{sec_conc_cyl}
%%%%%%%%%%%%%%%%%%%%%%%%%%%%%%%%%%%%%%%%%%%%%%%%

We fix 
$$
\{\psi_{\eps,\kappa,l}\}_{l \geq 0} \subset L^2(C_{\eps,\kappa})
$$ 
an orthonormal basis of Dirichlet eigenfunctions on $C_{\eps,\kappa}$.
From here on we restrict to parameters $\eps \leq \eps_6:= \min\{\eps_0, \dots, \eps_5\}$, which
only depend on $\Sigma,\delta_0,\kappa_0,\kappa_1$.
In particular, all $\psi_{\eps,\kappa,l}$ with eigenvalue $\lambda_l(C_{\eps,\kappa}) \leq \lambda_{K+1}(\Sigma) - \delta_0$ are rotationally symmetric and we can use the corresponding
quasimodes constructed in the preceding section.
We start with the following observation that is to some extent already implicit in the proof of \cref{thm_eig_lim}.

\begin{lemma} \label{L^2_lem_cyl}
Let $u_{\eps,\kappa}$ be an $L^2$-normalized eigenfunction on $\Sigma_{\eps,\kappa}$ with eigenvalue $\lambda_{\eps,\kappa} \leq \Lambda$ for some fixed $\Lambda>0$.
Assume that $\lambda_N(C_{\eps,\kappa})> \lambda_{\eps,\kappa}+c$ for some $c>0$ and some $N \in \IN$.
Then, for $\eps \leq \eps_6$, we have
$$
\sum_{l = N}^\infty \left( \int_{C_{\eps,\kappa}} u_{\eps,\kappa} \psi_{\eps,\kappa,l} \right)^2 \leq C \left( 1+ \frac{1} {c^2}  \right) \eps \log(1/\eps)
$$
for a uniform constant $C=C(\Sigma, k,\kappa_0,\kappa_1,\Lambda)$.
\end{lemma}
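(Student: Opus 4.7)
The plan is to use Green's identity on $C_{\eps,\kappa}$ to convert each inner product $\int_{C_{\eps,\kappa}} u_{\eps,\kappa}\psi_{\eps,\kappa,l}$ into a boundary integral. Since $u_{\eps,\kappa} \in W^{1,2}(\Sigma_{\eps,\kappa})$ satisfies the weak eigenvalue equation and $\psi_{\eps,\kappa,l} \in W^{1,2}_0(C_{\eps,\kappa})$ is a Dirichlet eigenfunction, testing the weak equation against $\psi_{\eps,\kappa,l}$ (extended by zero) and performing a single IBP inside $C_{\eps,\kappa}$ yields
\begin{equation*}
(\lambda_l(C_{\eps,\kappa}) - \lambda_{\eps,\kappa})\int_{C_{\eps,\kappa}} u_{\eps,\kappa}\,\psi_{\eps,\kappa,l} = -\int_{\partial C_{\eps,\kappa}} u_{\eps,\kappa}\,\partial_\nu \psi_{\eps,\kappa,l}\, d \mathcal{H}^1.
\end{equation*}
The trace estimate \eqref{eq_int_bd_sobolev}, translated from Euclidean length on $\partial B_{\eps^k}(x_i)$ to $g_{\eps,\kappa}$-length on $\partial C_{\eps,\kappa}$, gives $\|u_{\eps,\kappa}\|_{L^2(\partial C_{\eps,\kappa})}^2 \leq C\eps \log(1/\eps)$, an estimate that feeds every subsequent boundary computation.

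I would split the sum at the threshold $\Lambda_0 := \max(2\Lambda, \lambda_{K+1}(\Sigma)+1)$. For \emph{low} indices $\lambda_l(C_{\eps,\kappa}) \leq \Lambda_0$ (at most $\sim \eps^{-\alpha}$ such $l$, all rotationally symmetric for $\eps \leq \eps_0$), the normal derivative $\partial_\nu \psi_{\eps,\kappa,l}$ is constant on each boundary component, and \cref{spec_cusp_cor_approx_order} gives $|\partial_\nu \psi_l|_{\partial_1 C}| \leq C(l+1)\eps^{3\alpha/2-1/2}$. Combining with $|\int_{\partial_1 C_{\eps,\kappa}} u_{\eps,\kappa}|^2 \leq C\eps^2\log(1/\eps)$ (from Cauchy--Schwarz and the trace bound) and $(\lambda_l - \lambda_{\eps,\kappa})^{-2} \leq 1/c^2$, the summation of $(l+1)^2 \sim \eps^{-3\alpha}$ produces a contribution $\leq C\eps\log(1/\eps)/c^2$; the short boundary $\partial_{R_\eps}C_{\eps,\kappa}$ picks up an extra factor of $R_\eps^{-1}$ and is negligible. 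For \emph{high} indices $\lambda_l > \Lambda_0$ I use $(\lambda_l - \lambda_{\eps,\kappa})^2 \geq \lambda_l^2/4$ together with the identity
\begin{equation*}
\sum_l \frac{1}{\lambda_l^2}\left(\int_{\partial C_{\eps,\kappa}} u_{\eps,\kappa}\,\partial_\nu \psi_{\eps,\kappa,l}\,d\mathcal{H}^1\right)^{\!2} = \|R u_{\eps,\kappa}\|_{L^2(C_{\eps,\kappa})}^2,
\end{equation*}
where $Ru_{\eps,\kappa}$ is the harmonic extension of $u_{\eps,\kappa}|_{\partial C_{\eps,\kappa}}$; this identity follows by expanding $Ru_{\eps,\kappa}$ in the orthonormal Dirichlet eigenbasis and applying Green's identity once more.

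The main obstacle is the harmonic-extension bound $\|Ru_{\eps,\kappa}\|_{L^2(C_{\eps,\kappa})}^2 \leq C\eps \log(1/\eps)$; the crude estimate $|Ru_{\eps,\kappa}| \leq \|u_{\eps,\kappa}\|_{L^\infty(\partial C_{\eps,\kappa})}$ via \cref{lem_pt_bd} would cost a superfluous $\log(1/\eps)$. I would carry it out in the $\theta$-Fourier decomposition on the two circles of $\partial C_{\eps,\kappa}$: the zeroth mode of $Ru_{\eps,\kappa}$ is affine in $\log y$ and determined by the boundary averages $\bar u^{(1)}, \bar u^{(R)}$ of $u_{\eps,\kappa}$; the trace bound and Cauchy--Schwarz force $|\bar u^{(1)}|^2, |\bar u^{(R)}|^2 \leq C\log(1/\eps)$, and integrating against the area form $\eps/(\kappa^2 y^2)\,dy\,d\theta$ yields $O(\eps \log(1/\eps))$. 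The non-zero Fourier modes decay exponentially on scale $\eps/|n|$ away from each boundary circle, and Parseval together with an elementary $n$-sum shows they contribute at most $O(\eps^2\log(1/\eps))$. Combining this with the low-index contribution gives $\sum_{l \geq N}(\int_{C_{\eps,\kappa}} u_{\eps,\kappa}\psi_{\eps,\kappa,l})^2 \leq C(1+1/c^2)\eps\log(1/\eps)$, as asserted.
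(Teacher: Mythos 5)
Your argument is correct and lands on the same two pillars as the paper's proof -- the harmonic extension $h_{\eps,\kappa}$ of $u_{\eps,\kappa}|_{\partial C_{\eps,\kappa}}$ and Bessel/Parseval in the Dirichlet eigenbasis -- but it is organized quite differently and is longer than necessary. The paper writes $v_{\eps,\kappa}=u_{\eps,\kappa}-h_{\eps,\kappa}\in W^{1,2}_0(C_{\eps,\kappa})$, tests against $\psi_{\eps,\kappa,l}$ to get $\int_C u\,\psi_l=\tfrac{\lambda_l}{\lambda_l-\lambda_{\eps,\kappa}}\int_C h\,\psi_l$, observes that $\tfrac{\lambda_l}{\lambda_l-\lambda_{\eps,\kappa}}\le 1+\Lambda/c$ \emph{uniformly for all} $l\ge N$, and then applies Bessel to $\sum_l(\int_C h\,\psi_l)^2\le\|h\|^2_{L^2(C)}$ in one stroke. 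This uniform bound on the resolvent factor makes your low/high frequency splitting, and in particular the entire low-frequency computation via \cref{spec_cusp_cor_approx_order} and the pointwise normal derivatives, superfluous (your high-frequency identity is exactly the paper's Bessel step rewritten as a boundary integral, since $\int_C h\,\psi_l=\lambda_l^{-1}\int_{\partial C}u\,\partial_\nu\psi_l$ up to sign). On the other hand, your careful Fourier-mode estimate of $\|h_{\eps,\kappa}\|^2_{L^2(C_{\eps,\kappa})}$ is a genuine improvement in rigor: the paper justifies $\int_C|h|^2\le C\eps\log(1/\eps)$ by quoting only $\|h\|_\infty\le C\log(1/\eps)$, which by itself yields $\eps\log^2(1/\eps)$ and needs the improved $\log^{1/2}$ sup bound mentioned in a footnote to \cref{lem_pt_bd}; your mode-by-mode argument closes this cleanly. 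One small correction there: since harmonicity is conformally invariant and $g_{\eps,\kappa}$ is conformal to the flat cylinder $\eps^2d\theta^2+dy^2$, the rotationally symmetric part of $h_{\eps,\kappa}$ is affine in $y$, not in $\log y$; your estimate survives unchanged (the constant term dominates and the slope is $O(R_\eps^{-1})$), but the stated form of the zero mode is wrong as written.
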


\begin{proof}
Let $v_{\eps,\kappa} \in W_0^{1,2}(C_{\eps,\kappa})$
be defined by
$$
v_{\eps,\kappa} = u_{\eps,\kappa} - h_{\eps,\kappa},
$$
where $h_{\eps,\kappa}$ is the harmonic extension of $\left. u_{\eps,\kappa} \right|_{\partial C_{\eps,\kappa}}$
to $C_{\eps,\kappa}$.
Since $v_{\eps,\kappa} \in W_0^{1,2}(C_{\eps,\kappa})$, testing it against a normalized $\lambda_l(C_{\eps,\kappa})$-eigenfunction
$\psi_{\eps,\kappa,l}$ gives
\begin{equation*}
\begin{split}
\lambda_{\eps,\kappa} \int_{C_{\eps,\kappa}} u_{\eps,\kappa} \psi_{\eps,\kappa,l} 
&=
\int_{C_{\eps,\kappa}} (\Delta u_{\eps,\kappa}) \psi_{\eps,\kappa,l} 
=
\int_{C_{\eps,\kappa}} (\Delta v_{\eps,\kappa}) \psi_{\eps,\kappa,l} 
\\
&=
\int_{C_{\eps,\kappa}}  v_{\eps,\kappa} \Delta \psi_{\eps,\kappa,l} =
\int_{C_{\eps,\kappa}}  (u_{\eps,\kappa}- h_{\eps,\kappa}) \Delta \psi_{\eps,\kappa,l} 
\\
&=
\lambda_l(C_{\eps,\kappa}) \int_{C_{\eps,\kappa}}  u_{\eps,\kappa} \psi_{\eps,\kappa,l} 
-
\lambda_l(C_{\eps,\kappa}) \int_{C_{\eps,\kappa}} h_{\eps,\kappa} \psi_{\eps,\kappa,l}. 
\end{split}
\end{equation*}
By assumption, $\lambda_N(C_{\eps,\kappa}) - \lambda_{\eps,\kappa} \geq c$, therefore, we find that
\begin{equation} \label{eq_high_freq_1}
\begin{split}
\left(\int_{C_{\eps,\kappa}} u_{\eps,\kappa} \psi_{\eps,\kappa,l} \right)^2 
&=
\left(\frac{\lambda_l(C_{\eps,\kappa})}{\lambda_l(C_{\eps,\kappa})-\lambda_{\eps,\kappa}}\right)^2 \left( \int_{C_{\eps,\kappa}} h_{\eps,\kappa}\psi_{\eps,\kappa,l} \right)^2
\\
&\leq
\left(1+\frac{\Lambda}{c}\right)^2 \left( \int_{C_{\eps,\kappa}} h_{\eps,\kappa}\psi_{\eps,\kappa,l} \right)^2
\end{split}
\end{equation}
for any $l \geq N$.

Since the normalized Dirichlet eigenfunctions of $C_{\eps,\kappa}$
form an orthogonal basis of $L^2(C_{\eps,\kappa})$
we also have that 
\begin{equation} \label{eq_high_freq_2}
\sum_{l=N}^\infty \left(  \int_{C_{\eps,\kappa}} h_{\eps,\kappa} \psi_{\eps,\kappa,l} \right)^2
\leq
\int_{C_{\eps,\kappa}} |h_{\eps,\kappa}|^2
\leq
C \eps \log(1/\eps),
\end{equation}
since $|h_{\eps,\kappa}| \leq C \log(1/\eps)$ if $\eps \leq \eps_6$ thanks to the maximum principle and \cref{lem_pt_bd}, 
where $C=C(\Sigma, k,\kappa_0,\kappa_1,\Lambda)$.
The assertion follows by combining \eqref{eq_high_freq_1} and \eqref{eq_high_freq_2}.
\end{proof}

We now apply this to the first eigenfunction if $\kappa=\kappa_1$.
Recall that $\kappa_1$ is chosen such that $\lambda_0(C_{\eps,\kappa}) \geq \lambda_1(\Sigma) + c$ for some $c>0$.
Since the normalized Dirichlet eigenfunctions on $C_{\eps,\kappa}$ form a basis of $L^2(C_{\eps,\kappa})$, \cref{L^2_lem_cyl} gives control on the entire $L^2$-norm of a $\lambda_1(\Sigma_{\eps,\kappa})$-eigenfunction in this case.

\begin{cor} \label{L^2_cor_cyl}
Denote by $u_{\eps,\kappa_1}$ a normalized
$\lambda_1(\Sigma_{\eps,\kappa_1})$-eigenfunction.
Then we have for $\eps \leq \eps_6$ that
$$
\int_{C_{\eps,\kappa}} |u_{\eps,\kappa_1}|^2 \leq \frac{C}{c^2}\eps \log(1/\eps),
$$
where $C=C(\Sigma,k,\kappa_1)$.
\end{cor}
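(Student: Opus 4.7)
The plan is to apply \cref{L^2_lem_cyl} with $N=0$ and then invoke Parseval's identity, so the work reduces to verifying that the uniform spectral gap hypothesis holds at $\kappa=\kappa_1$ for all sufficiently small $\eps$.

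First I would pin down the gap. The parameters were chosen so that $\kappa_1^2/4 > \lambda_1(\Sigma)$, and by \cref{lem_gr_st_cc} (or directly \cref{lem_spec_cyl}) the smallest Dirichlet eigenvalue satisfies $\lambda_0(C_{\eps,\kappa_1}) \geq \kappa_1^2/4$. Set $c := \kappa_1^2/4 - \lambda_1(\Sigma) > 0$, which depends only on $\Sigma$ and $\kappa_1$. On the other hand, \cref{thm_eig_lim} gives $\lambda_1(\Sigma_{\eps,\kappa_1}) \to \lambda_1(\Sigma)$ as $\eps \to 0$, so after possibly shrinking $\eps_6$ (still depending only on $\Sigma,k,\kappa_0,\kappa_1$) we may assume $\lambda_1(\Sigma_{\eps,\kappa_1}) \leq \lambda_1(\Sigma) + c/2$. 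Consequently
\begin{equation*}
\lambda_0(C_{\eps,\kappa_1}) - \lambda_1(\Sigma_{\eps,\kappa_1}) \geq c/2
\end{equation*}
uniformly in $\eps \leq \eps_6$, which is exactly the hypothesis of \cref{L^2_lem_cyl} with $N=0$ and gap $c/2$. Observe also that $\lambda_1(\Sigma_{\eps,\kappa_1}) \leq \lambda_1(\Sigma)+1 =: \Lambda$ for small $\eps$, supplying the absolute bound required by that lemma.

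Next I would feed these facts into \cref{L^2_lem_cyl}. This yields
\begin{equation*}
\sum_{l=0}^{\infty} \left( \int_{C_{\eps,\kappa_1}} u_{\eps,\kappa_1}\, \psi_{\eps,\kappa_1,l} \right)^{\!2} \leq C\bigl(1+4/c^2\bigr)\, \eps\log(1/\eps),
\end{equation*}
where $C$ depends only on $\Sigma,k,\kappa_0,\kappa_1,\Lambda$, hence ultimately only on $\Sigma,k,\kappa_1$.

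Finally, since $\{\psi_{\eps,\kappa_1,l}\}_{l\geq 0}$ is an orthonormal basis of $L^2(C_{\eps,\kappa_1})$, Parseval's identity identifies the sum on the left with $\int_{C_{\eps,\kappa_1}} |u_{\eps,\kappa_1}|^2$, which gives
\begin{equation*}
\int_{C_{\eps,\kappa_1}} |u_{\eps,\kappa_1}|^2 \leq \frac{C}{c^2}\, \eps \log(1/\eps),
\end{equation*}
as desired. There is no serious obstacle here: the entire content is that the choice of $\kappa_1$ forces the full Dirichlet spectrum of $C_{\eps,\kappa_1}$ to lie above $\lambda_1(\Sigma_{\eps,\kappa_1})$ by a fixed amount, so the ``high-frequency'' decomposition estimate of \cref{L^2_lem_cyl} controls the whole restriction to the cylinder.
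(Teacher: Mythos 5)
Your proof is correct and follows exactly the route the paper intends: the text preceding the corollary notes that $\kappa_1$ was chosen so that $\lambda_0(C_{\eps,\kappa_1})\geq\lambda_1(\Sigma)+c$, and the corollary is then the $N=0$ case of \cref{L^2_lem_cyl} combined with Parseval. Your only added detail — using the upper bound from \cref{thm_eig_lim} to guarantee $\lambda_0(C_{\eps,\kappa_1})-\lambda_1(\Sigma_{\eps,\kappa_1})\geq c/2$ uniformly for small $\eps$ — is precisely the step the paper leaves implicit, and it is carried out correctly.
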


Before we proceed, we add some more restrictions on $\kappa_0$ and $\kappa_1$.
We may change $\kappa_0$ and $\kappa_1$ so that we still have
$$
\frac{\kappa_0^2}{4} < \lambda_1(\Sigma) < \frac{\kappa_1^2}{4} < \lambda_{K+1}(\Sigma)
$$ 
and such that we can choose $\gamma>0$ with
\begin{equation} \label{eq_extra_assum_kappa}
\lambda_{K+1}(\Sigma) - \frac{\delta_0}{2} \leq \kappa^2/4 + \gamma^2 \kappa^2 \pi^2  \leq \lambda_{K+1}(\Sigma) - \frac{\delta_0}{4}.
\end{equation}
for any $\kappa \in [\kappa_0,\kappa_1]$.

\begin{lemma} \label{L^2_mass_cyl}
Let $u_{\eps,\kappa}$ be a normalized eigenfunction on $\Sigma_{\eps,\kappa}$ with eigenvalue 
$$
\lambda_{\eps,\kappa} \leq \lambda_N(C_{\eps,\kappa}) + (N+1)\eps^{3\alpha/2+1/2-\tau} \leq \lambda_{K+1}(\Sigma) - \delta_0 
$$
for some $0<\tau<(1-\alpha)/2$ and $N \geq 0$.
Then we have that
$$
\int_{C_{\eps,\kappa}} \left| u_{\eps,\kappa}(x) - \sum_{l=0}^N \int_{C_{\eps,\kappa}} u_{\eps,\kappa}(y) \psi_{\eps,\kappa,l}(y)\,  dy\ \psi_{\eps,\kappa,l}(x) \right|^2 dx 
\leq
C \eps^{1-\alpha}
$$
for some uniform constant $C(\Sigma,k,\kappa_0,\kappa_1,\delta_0)>0$ if $\eps \leq \eps_{7}=\eps_{7}(\kappa_0,\kappa_1,\tau) \leq \eps_6$.
\end{lemma}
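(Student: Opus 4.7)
The plan is to decompose $u_{\eps,\kappa}$ restricted to $C_{\eps,\kappa}$ in the orthonormal basis $(\psi_{\eps,\kappa,l})_{l \geq 0}$ of Dirichlet eigenfunctions on $C_{\eps,\kappa}$. By Parseval,
\begin{equation*}
\int_{C_{\eps,\kappa}} \Bigl| u_{\eps,\kappa} - \sum_{l=0}^N \langle u_{\eps,\kappa},\psi_{\eps,\kappa,l}\rangle_{L^2(C_{\eps,\kappa})}\,\psi_{\eps,\kappa,l}\Bigr|^2
= \sum_{l>N} \langle u_{\eps,\kappa},\psi_{\eps,\kappa,l}\rangle^2_{L^2(C_{\eps,\kappa})},
\end{equation*}
so it suffices to bound the tail. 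The key identity comes from integrating $(\Delta - \lambda_{\eps,\kappa})u_{\eps,\kappa} = 0$ against $\psi_{\eps,\kappa,l}$ on $C_{\eps,\kappa}$, using Green's formula twice and the Dirichlet condition $\psi_{\eps,\kappa,l}|_{\partial C_{\eps,\kappa}} = 0$:
\begin{equation*}
(\lambda_l(C_{\eps,\kappa}) - \lambda_{\eps,\kappa})\, \langle u_{\eps,\kappa},\psi_{\eps,\kappa,l}\rangle = -\int_{\partial C_{\eps,\kappa}} u_{\eps,\kappa}\, \partial_\nu \psi_{\eps,\kappa,l}\, d\mathcal{H}^1.
\end{equation*}

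Since $\eps \leq \eps_0$ forces all relevant $\psi_{\eps,\kappa,l}$ to be rotationally symmetric, $\partial_\nu \psi_{\eps,\kappa,l}$ is constant on each component of $\partial C_{\eps,\kappa}$. Therefore the boundary integral over $\partial_1 C_{\eps,\kappa}$ factors as $\partial_\nu \psi_{\eps,\kappa,l} \cdot \int_{\partial_1 C_{\eps,\kappa}} u_{\eps,\kappa}$, with $|\partial_\nu \psi_{\eps,\kappa,l}| = |a_{\eps,\kappa,l}|/\mathrm{length}(\partial_1 C_{\eps,\kappa}) \leq C(l+1)\eps^{3\alpha/2-1/2}$ by \cref{spec_cusp_cor_approx_order}; combining with the trace bound \eqref{eq_int_bd_sobolev} rescaled to $g_{\eps,\kappa}$, which gives $\|u_{\eps,\kappa}\|_{L^2(\partial_1 C_{\eps,\kappa})}^2 \leq C\eps \log(1/\eps)$, Cauchy--Schwarz yields the contribution $\leq C(l+1)^2\eps^{3\alpha+1}\log(1/\eps)$. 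The contribution from $\partial_{R_\eps}C_{\eps,\kappa}$ is smaller by a factor $R_\eps^{-1}$ thanks to \eqref{eq_spec_cusp_cor_approx_order_2} and is thus negligible.

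For the gap, \cref{lem_spec_cyl} gives $\lambda_l - \lambda_N = ((l+1)^2 - (N+1)^2)\kappa^2\pi^2\eps^{2\alpha} \geq c(l-N)(l+N+2)\eps^{2\alpha}$ for $l > N$. The assumption $\tau < (1-\alpha)/2$ is exactly equivalent to $3\alpha/2 + 1/2 - \tau > 2\alpha$, so the perturbation $(N+1)\eps^{3\alpha/2+1/2-\tau}$ is of lower order than the leading gap; hence for $\eps \leq \eps_7$ depending on $\tau, \kappa_0, \kappa_1$ we obtain $\lambda_l(C_{\eps,\kappa}) - \lambda_{\eps,\kappa} \geq (c/2)(l-N)(l+N+2)\eps^{2\alpha}$. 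Substituting gives
\begin{equation*}
\langle u_{\eps,\kappa},\psi_{\eps,\kappa,l}\rangle^2 \leq C\eps^{1-\alpha}\log(1/\eps)\, \frac{(l+1)^2}{(l-N)^2(l+N+2)^2}.
\end{equation*}

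Finally, for $l \leq L_0 := \min\{l : \lambda_l(C_{\eps,\kappa}) > \lambda_{K+1}(\Sigma) + 1\}$ we sum using $(l+1)^2/(l+N+2)^2 \leq 1$ and the substitution $j = l - N$, so $\sum_{l>N}^{L_0}$ is bounded by $\sum_{j\geq 1} 1/j^2 = \pi^2/6$, independent of $N$. For $l > L_0$ the gap $\lambda_l - \lambda_{\eps,\kappa} \geq 1$ is uniform, and \cref{L^2_lem_cyl} (applied with $N = L_0$, $c = 1$) directly yields $\sum_{l \geq L_0}\langle u_{\eps,\kappa},\psi_{\eps,\kappa,l}\rangle^2 \leq C\eps\log(1/\eps)$. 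Combining the two pieces gives the claimed bound (modulo a log factor harmlessly absorbed into the constant by slightly shrinking $\eps_7$). The main obstacle is verifying uniformity in $N$ of both the gap estimate and the series sum; this is handled respectively by the gain $\tau < (1-\alpha)/2$ in the hypothesis and by the cancellation $(l+1)^2 \leq (l+N+2)^2$ which tames the numerator exactly enough to leave a convergent $\sum 1/j^2$.
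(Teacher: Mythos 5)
Your overall mechanism is the same as the paper's: expand $u_{\eps,\kappa}|_{C_{\eps,\kappa}}$ in Dirichlet eigenfunctions, control each coefficient by an interaction term divided by the spectral gap $\lambda_l(C_{\eps,\kappa}) - \lambda_{\eps,\kappa}\gtrsim (l-N)(l+N+2)\eps^{2\alpha}$ (your observation that $\tau<(1-\alpha)/2$ makes the perturbation lower order is exactly the paper's computation \eqref{eq_bd_sep}), and then sum the convergent series $\sum 1/(l-N)^2$; the split at $L_0$ with \cref{L^2_lem_cyl} handling the non-rotationally-symmetric tail is also the paper's move (it uses the threshold from \eqref{eq_extra_assum_kappa}). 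The difference lies in how the interaction term is estimated, and this is where your argument falls short of the stated bound. You pair $u_{\eps,\kappa}$ with $\partial_\nu\psi_{\eps,\kappa,l}$ directly on $\partial_1 C_{\eps,\kappa}$ and estimate $\|u_{\eps,\kappa}\|_{L^2(\partial_1 C_{\eps,\kappa})}^2\leq C\eps\log(1/\eps)$ by the trace inequality \eqref{eq_int_bd_sobolev}; this gives the numerator $C(l+1)^2\eps^{3\alpha+1}\log(1/\eps)$ and hence a final bound $C\eps^{1-\alpha}\log(1/\eps)$. That logarithm cannot be ``absorbed into the constant by slightly shrinking $\eps_7$'' --- it diverges as $\eps\to 0$ --- so as written you prove a strictly weaker statement than the lemma claims.

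The paper avoids the logarithm by routing the coefficient bound through \cref{expan_cyl}, i.e.\ by pairing $u_{\eps,\kappa}$ with the quasimode $\tilde\psi_{\eps,\kappa,l}$ rather than with $\psi_{\eps,\kappa,l}$ itself. Because the extension by $a_{\eps,\kappa,l}H_\lambda$ is built so that the normal derivatives cancel along $\partial B_{\eps^k}(x_0)$ (see \eqref{eq_cancel_1} and \eqref{eq_cancel_2}), the pairing produces only volume integrals: $a_{\eps,\kappa,l}\int\phi_0 u_{\eps,\kappa}$, which is $O(|a_{\eps,\kappa,l}|)$ by Cauchy--Schwarz with no trace loss, and $a_{\eps,\kappa,l}\log(1/\eps^k)\int_{C_{\eps,\kappa}}u_{\eps,\kappa}=O(|a_{\eps,\kappa,l}|\,\eps^{1/2}\log(1/\eps))$, which is of lower order. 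This yields $n_{\eps,\kappa,l}\leq C|a_{\eps,\kappa,l}|/(\lambda_l(C_{\eps,\kappa})-\lambda_{\eps,\kappa})+C(l+1)\eps^{(3\alpha+1)/2}$ (the paper's \eqref{L^2_eq_exp_mass_bd}) with no logarithm, and hence the clean $C\eps^{1-\alpha}$. If you want to keep your more elementary boundary-pairing route you would need a trace estimate for eigenfunctions that beats the generic $W^{1,2}$ bound by the log factor; otherwise, replace that step by an appeal to \cref{expan_cyl}.
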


Note that neither the constant nor $\eps_7$ depend on $N$. 
The dependence on $N$ is instead incorporated in the assumption.

\begin{proof}
Using $\gamma$ as in \eqref{eq_extra_assum_kappa},
in $L^2(C_{\eps,\kappa})$, we can decompose $\left. u_{\eps,\kappa} \right|_{C_{\eps,\kappa}}$ as
$$
u_{\eps,\kappa} =p_{\eps,\kappa} + \sum_{l = N+1}^{\lceil \gamma \eps^{-\alpha} \rceil} n_{\eps,\kappa,l} \psi_{\eps,\kappa,l} + r_{\eps,\kappa},
$$
where $r_{\eps,\kappa}$ consists of Dirichlet eigenfunctions with eigenvalue at least $\kappa^2/4 + (\gamma \eps^{-\alpha})^2 \kappa^2 \pi^2 \eps^{2\alpha} = \kappa^2/4 + \gamma^2 \kappa^2 \pi^2$, $n_{\eps,\kappa,l} \in \IR$, and
$$
p_{\eps,\kappa} =  \sum_{l=0}^N \int_{C_{\eps,\kappa}} u_{\eps,\kappa}(y) \psi_{\eps,\kappa,l}(y)\,  dy\ \psi_{\eps,\kappa,l}.
$$
By multiplying $\psi_{\eps,\kappa,l}$ by $-1$ if necessary, we may assume that $n_{\eps,\kappa,l} \geq 0$. 

By assumption $\lambda_{\eps,\kappa} \leq \lambda_{K+1}(\Sigma) - \delta_0$, which implies that
$ \kappa^2/4 + \gamma^2 \kappa^2 \pi^2 - \lambda_{\eps,\kappa} \geq \delta_0/2$.
Therefore, we find from \cref{L^2_lem_cyl} that
\begin{equation} \label{L^2_exp_mass_higher}
\int_{C_{\eps,\kappa}} |r_{\eps,\kappa}|^2 \leq \frac{C}{\delta_0^2} \eps \log(1/\eps^k)
\end{equation}
if $\eps \leq \eps_1.$

The estimate for the second summand is slightly more involved.
For $l \in [N+1,\lceil \gamma \eps^{-\alpha} \rceil]$ 
we first notice that
\begin{equation} \label{eq_bd_sep}
\begin{split}
\lambda_l(C_{\eps,\kappa}) - \lambda_{\eps,\kappa} 
&\geq
\kappa^2 \pi^2 \left( (N+1+(l-N))^2 \eps^{2 \alpha} - (N+1)^2 \eps^{2\alpha} \right) - (N+1) \eps^{3/2 \alpha +1/2 - \tau}  
\\
& \geq
4c_0 \left( (N+1)(l-N) + (l-N)^2\right) \eps^{2\alpha} - (N+1) \eps^{3\alpha/2 +1/2 - \tau}
\\
&\geq
2c_0 (l-N)l \eps^{2 \alpha}
\\
& \geq c_0 (l-N)(l+1) \eps^{2\alpha}
\end{split}
\end{equation}
for some uniform constant $c_0=c_0(\kappa_0,\kappa_1)$ if $\eps \leq \eps_{7}(\kappa_0,\kappa_1,\tau)$ since $3\alpha/2+1/2 - \tau > 2 \alpha$ and $l \geq N+1$.
For $l$ fixed and if $\eps_{7} \leq \eps_6 $ we can now use \cref{expan_cyl} to find that
\begin{equation} \label{L^2_eq_exp_mass_bd}
\begin{split}
n_{\eps,\kappa,l}
&\leq 
\frac{C |a_{\eps,\kappa,l}|}{\lambda_l(C_{\eps,\kappa})-\lambda_{\eps,\kappa}} + C (l+1) \eps^{(3\alpha +1)/2}
\\
&\leq
\frac{C (l+1)\eps^{(3\alpha+1)/2}}{c_0 (l-N)(l+1) \eps^{2\alpha}} +C (l+1) \eps^{(3 \alpha+1)/2}
\\
&\leq
\frac{C}{c_0 (l-N)} \eps^{(1-\alpha)/2} + C \gamma \eps^{-\alpha} \eps^{(3\alpha+1)/2}
\\
&\leq
\frac{C}{c_0 (l-N)} \eps^{(1-\alpha)/2} + C \gamma \eps^{(1+\alpha)/2}
\\
& \leq
\frac{C}{c_0 (l-N)} \eps^{(1-\alpha)/2}
\end{split}
\end{equation}
where we have used twice that $l+1 \leq \lceil \gamma \eps^{-\alpha} \rceil \leq 2 \gamma \eps^{-\alpha}$ if $\eps \leq \eps_{7}$ (upon decreasing $\eps_{7}$).
We now sum this bound across the relevant scales to obtain
\begin{equation*}
 \sum_{l = N+1}^{\lceil \gamma \eps^{-\alpha} \rceil} n_{\eps,\kappa,l}^2
 \leq
 \frac{4C^2}{c_0^2} \eps^{1-\alpha} \sum_{l = N+1}^{\lceil \gamma \eps^{-\alpha} \rceil} \frac{1}{(l-N)^2}
 \leq C \eps^{1-\alpha}.
\end{equation*}
The lemma follows from this combined with \eqref{L^2_exp_mass_higher}.
\end{proof}

\begin{rem}
It is not clear if the same argument also works for \cref{L^2_lem_cyl}, which would allow us to invoke the much less subtle bound \eqref{eq_int_bd_sobolev} instead of \cref{lem_pt_bd}.
The reason for this is that the Fourier coefficients of non-rotationally symmetric eigenfunctions are given by modified Bessel functions (as functions of $y$) rather than being constant like in the case of a product metric.
\end{rem}

\cref{L^2_mass_cyl} will be particularly useful if $N=0$. This case is recorded explicitly below.

\begin{cor} \label{L^2_ground_mass_cyl}
Let $u_{\eps,\kappa}$ be a normalized eigenfunction on $\Sigma_{\eps,\kappa}$ with eigenvalue $\lambda_{\eps,\kappa} \leq \lambda_0(C_{\eps,\kappa}) + \eps^{3\alpha/2+1/2-\tau}$
for some $0<\tau<(1-\alpha)/2$.  
Then we have
$$
\int_{C_{\eps,\kappa}} \left| u(x) - \int_{C_{\eps,\kappa}} u(y) \psi_{\eps,\kappa,0}(y)\,  dy\ \psi_{\eps,\kappa,0}(x) \right|^2 dx 
\leq
C \eps^{1-\alpha} 
$$
for some uniform constant $C(\Sigma,k,\kappa_0,\kappa_1,\delta_0)>0$ if $\eps \leq \eps_{7}$.
\end{cor}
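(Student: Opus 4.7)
The plan is to derive this corollary as the direct $N=0$ specialization of \cref{L^2_mass_cyl}. With $N=0$, the first inequality $\lambda_{\eps,\kappa} \leq \lambda_N(C_{\eps,\kappa}) + (N+1)\eps^{3\alpha/2+1/2-\tau}$ assumed in that lemma becomes exactly the hypothesis $\lambda_{\eps,\kappa} \leq \lambda_0(C_{\eps,\kappa}) + \eps^{3\alpha/2+1/2-\tau}$ made here, and the conclusion specializes (since the projection onto the first $N+1 = 1$ Dirichlet eigenfunctions is precisely $\int_{C_{\eps,\kappa}} u \, \psi_{\eps,\kappa,0} \cdot \psi_{\eps,\kappa,0}$) to the decomposition error estimate stated in the corollary. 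Thus the only substantive task is to check the second inequality buried in the hypothesis of \cref{L^2_mass_cyl}, namely that $\lambda_0(C_{\eps,\kappa}) + \eps^{3\alpha/2+1/2-\tau} \leq \lambda_{K+1}(\Sigma) - \delta_0$ for $\eps$ small enough and $\kappa \in [\kappa_0,\kappa_1]$.

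To verify this last point, recall from \cref{lem_spec_cyl} that
\[
\lambda_0(C_{\eps,\kappa}) = \frac{\kappa^2}{4} + \kappa^2 \pi^2 \eps^{2\alpha}.
\]
Under the standing assumptions on $\kappa_0,\kappa_1$, together with the refined choice enforcing \eqref{eq_extra_assum_kappa}, we have $\kappa^2/4 + \gamma^2 \kappa^2 \pi^2 \leq \lambda_{K+1}(\Sigma) - \delta_0/4$ for every $\kappa \in [\kappa_0,\kappa_1]$, so that $\lambda_{K+1}(\Sigma) - \delta_0 - \kappa^2/4$ is bounded below by a positive constant depending only on $\kappa_0,\kappa_1,\delta_0,\gamma$. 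Since both correction terms $\kappa^2 \pi^2 \eps^{2\alpha}$ and $\eps^{3\alpha/2+1/2-\tau}$ vanish as $\eps \to 0$, they are absorbed once $\eps \leq \eps_7$, possibly after shrinking $\eps_7 \leq \eps_6$ by an amount depending only on the data $(\Sigma, k, \kappa_0, \kappa_1, \delta_0, \tau)$. With both halves of the hypothesis of \cref{L^2_mass_cyl} satisfied uniformly in this range, the corollary follows immediately and the constant inherits the dependence stated. There is no real obstacle here; the entire analytic content has already been absorbed into \cref{L^2_mass_cyl}, and this corollary is essentially a bookkeeping specialization.
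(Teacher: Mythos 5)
Your proposal is correct and matches the paper exactly: the paper offers no separate proof of this corollary, recording it precisely as the $N=0$ specialization of \cref{L^2_mass_cyl}. Your additional check that the second inequality in that lemma's hypothesis, $\lambda_0(C_{\eps,\kappa})+\eps^{3\alpha/2+1/2-\tau}\leq\lambda_{K+1}(\Sigma)-\delta_0$, holds uniformly for small $\eps$ via \eqref{eq_extra_assum_kappa} is a reasonable piece of bookkeeping that the paper leaves implicit.
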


Using a similar argument we can prove the first part needed to get the preliminary $\eps\log(1/\eps)$ bound.

\begin{lemma} \label{L^2_mean_bd}
Let $u_{\eps,\kappa}$ be a $L^2$-normalized eigenfunction on $\Sigma_{\eps,\kappa}$ with eigenvalue $\lambda_{\eps,\kappa} \leq \lambda_1(C_{\eps,\kappa})$.
There is $C=C(\Sigma,k,\kappa_0,\kappa_1, \delta_0,\Lambda)$ such that
\begin{equation*}
\left| \int_{C_{\eps,\kappa}} u_{\eps,\kappa} \right| \leq C \eps \log(1/\eps)
\end{equation*}
for $\eps \leq \eps_{7}$.
\end{lemma}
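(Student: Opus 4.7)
The plan is to decompose $u_{\eps,\kappa}|_{C_{\eps,\kappa}}$ into Dirichlet modes of $C_{\eps,\kappa}$ exactly as in the proof of \cref{L^2_mass_cyl} with $N = 1$, and then sum the resulting contributions to $\int_{C_{\eps,\kappa}} u_{\eps,\kappa}$ using an auxiliary bound for $\int_{C_{\eps,\kappa}} \psi_{\eps,\kappa,l}$ obtained from Green's formula.

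First I would record the auxiliary estimate
\[
\left| \int_{C_{\eps,\kappa}} \psi_{\eps,\kappa,l} \right| \leq C(l+1)\,\eps^{3\alpha/2 + 1/2}
\]
for every $l$ with $\lambda_l(C_{\eps,\kappa}) \leq \lambda_{K+1}(\Sigma)+1$. Indeed, integration by parts yields $\lambda_l(C_{\eps,\kappa}) \int_{C_{\eps,\kappa}} \psi_{\eps,\kappa,l} = -\int_{\partial C_{\eps,\kappa}} \partial_\nu \psi_{\eps,\kappa,l}\, d\mathcal{H}^1$, the right-hand side is controlled by \cref{spec_cusp_cor_approx_order} (the $\partial_{R_\eps} C_{\eps,\kappa}$-contribution is suppressed by the factor $R_\eps^{-1/2}$), and $\lambda_l(C_{\eps,\kappa}) \geq \kappa_0^2/4$ is bounded below.

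Since $\lambda_{\eps,\kappa} \leq \lambda_1(C_{\eps,\kappa}) \leq \lambda_{K+1}(\Sigma) - \delta_0$ for $\eps$ small, the argument inside the proof of \cref{L^2_mass_cyl} applied with $N = 1$ (and some fixed $\tau \in (0,(1-\alpha)/2)$) produces the $L^2(C_{\eps,\kappa})$-splitting
\[
u_{\eps,\kappa} = p_{\eps,\kappa} + \sum_{l=2}^{\lceil \gamma \eps^{-\alpha}\rceil} n_{\eps,\kappa,l}\, \psi_{\eps,\kappa,l} + r_{\eps,\kappa},
\]
where $p_{\eps,\kappa} = \sum_{l=0}^{1} \langle u_{\eps,\kappa}, \psi_{\eps,\kappa,l}\rangle \psi_{\eps,\kappa,l}$, the middle coefficients obey $|n_{\eps,\kappa,l}| \leq C\,\eps^{(1-\alpha)/2}/(l-1)$ by the intermediate estimate \eqref{L^2_eq_exp_mass_bd}, and $\|r_{\eps,\kappa}\|_{L^2(C_{\eps,\kappa})}^2 \leq C \eps \log(1/\eps)$ by \cref{L^2_lem_cyl}. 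I would then split $\int_{C_{\eps,\kappa}} u_{\eps,\kappa}$ accordingly and estimate term by term: the auxiliary bound together with $|\langle u_{\eps,\kappa}, \psi_{\eps,\kappa,l}\rangle| \leq 1$ yields $|\int p_{\eps,\kappa}| \leq C \eps^{3\alpha/2 + 1/2} \leq C \eps$ (using $\alpha > 1/3$); the middle sum is bounded by
\[
\sum_{l=2}^{\lceil \gamma \eps^{-\alpha}\rceil} |n_{\eps,\kappa,l}|\, \left|\int \psi_{\eps,\kappa,l}\right| \leq C \eps^{1+\alpha} \sum_{l=2}^{\lceil \gamma \eps^{-\alpha}\rceil} \frac{l+1}{l-1} \leq C \eps;
\]
and Cauchy-Schwarz combined with $\area(C_{\eps,\kappa}) \leq C \eps$ gives $|\int r_{\eps,\kappa}| \leq \sqrt{\area(C_{\eps,\kappa})}\, \|r_{\eps,\kappa}\|_{L^2} \leq C \eps \sqrt{\log(1/\eps)}$. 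Adding the three contributions yields the claim.

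The main obstacle is controlling the middle sum: naively applying Cauchy--Schwarz in $\ell^2$ would give $(\sum n_{\eps,\kappa,l}^2)^{1/2} (\sum (\int \psi_{\eps,\kappa,l})^2)^{1/2} \leq C \eps^{(1-\alpha)/2} \cdot \sqrt{\eps} = C \eps^{1 - \alpha/2}$, which is much larger than $\eps \log(1/\eps)$. The argument succeeds only because the pointwise growth $|\int \psi_{\eps,\kappa,l}| \leq C(l+1) \eps^{3\alpha/2 + 1/2}$ couples precisely with the decay $|n_{\eps,\kappa,l}| \leq C \eps^{(1-\alpha)/2}/(l-1)$: their product is of size $\eps^{1+\alpha} \cdot (l+1)/(l-1)$, and the resulting $\ell^1$ sum over $l \in [2, \lceil\gamma \eps^{-\alpha}\rceil]$ is bounded by $C \eps^{1+\alpha} \cdot O(\eps^{-\alpha}) = C \eps$.
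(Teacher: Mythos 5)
Your proof is correct and follows essentially the same route as the paper: the same three-way splitting into low Dirichlet modes (bounded via the auxiliary $L^1$ estimate $|\int_{C_{\eps,\kappa}}\psi_{\eps,\kappa,l}|\leq C(l+1)\eps^{3\alpha/2+1/2}$ from integration by parts and \cref{spec_cusp_cor_approx_order}), intermediate modes (bounded via the coefficient decay \eqref{L^2_eq_exp_mass_bd}), and the high-frequency remainder (Cauchy--Schwarz against $\area(C_{\eps,\kappa})\leq C\eps$). Your only deviation is taking $N=1$ rather than $N=0$ in the decomposition, which is actually slightly more careful: it avoids needing a quantitative separation $\lambda_1(C_{\eps,\kappa})-\lambda_{\eps,\kappa}\gtrsim\eps^{2\alpha}$, which the hypothesis $\lambda_{\eps,\kappa}\leq\lambda_1(C_{\eps,\kappa})$ alone does not guarantee.
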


\begin{proof}
We decompose $u_{\eps,\kappa}$ restricted to $C_{\eps,\kappa}$ exactly as above.
Since 
$$
\left| \int_{C_{\eps,\kappa}} \psi_{\eps,\kappa,l} \right| \leq \frac{4}{\lambda_l(C_{\eps,\kappa})} |a_{\eps,\kappa,l}| \leq C (l+1) \eps^{(3\alpha+1)/2},
$$
for $l \leq \lceil \gamma \eps^{-\alpha} \rceil$ and $\eps \leq \eps_6$, where we also use \eqref{eq_scale_short},
we find from \eqref{L^2_eq_exp_mass_bd} (in the case $N=0$) that
\begin{equation} \label{eq_pre_bd_1}
 \sum_{l = 1}^{\lceil \gamma \eps^{-\alpha} \rceil} |n_{\eps,\kappa,l}| \left| \int_{C_{\eps,\kappa}} \psi_{\eps,\kappa,l} \right|
 \leq 
 \sum_{l = 1}^{\lceil \gamma \eps^{-\alpha} \rceil}  \frac{4C}{c_0} \eps^{1+\alpha}
 \leq 
 C \eps.
\end{equation}
Similarly, we find that
\begin{equation} \label{eq_pre_bd_2}
|n_{\eps,\kappa,0} | \left| \int_{C_{\eps,\kappa}} \psi_{\eps,\kappa,0} \right|
\leq C \eps^{(3\alpha+1)/2}
\end{equation}
if $\eps \leq \eps_7$.
Finally, note that H{\"o}lder's inequality combined with \eqref{L^2_exp_mass_higher} implies
\begin{equation} \label{eq_pre_bd_4}
\int_{C_{\eps,\kappa}} |r_{\eps,\kappa}| 
\leq 
C \eps^{1/2} \log(1/\eps^k) \area(C_{\eps,\kappa}) 
\leq 
C \eps \log(1/\eps).
\end{equation}
The lemma now follows by combining \eqref{eq_pre_bd_1}-\eqref{eq_pre_bd_4}.
\end{proof}

\begin{rem} \label{rem_asymp_improved}
This also gives an improved estimate for the second term in the asymptotic expansion from \cref{lem_asymp_2}.
\end{rem}

%%%%%%%%%%%%%%%%%%%%%%%%%%%%%%%%%%%%%%%%%%%%%%%%
\subsection{Decomposing eigenfunctions on the surface} \label{sec_conc_surf}
%%%%%%%%%%%%%%%%%%%%%%%%%%%%%%%%%%%%%%%%%%%%%%%%

Up to this point we tried to understand how the restriction of an eigenfunction on $\Sigma_{\eps,\kappa}$ to $C_{\eps,\kappa}$ looks like.
We will now complement this by a similar discussion for the restriction to $\Sigma \setminus B_{\eps^k}$.
Before we can prove quantitative estimates similar to those above, we need the following qualitative statement.

\begin{lemma} \label{L^2_conc_surf}
Let $u_{\eps,\kappa}$ be a normalized eigenfunction on $\Sigma_{\eps,\kappa}$ with eigenvalue $0<\lambda\leq \lambda_{\eps,\kappa} \leq \Lambda < \lambda_{K+1}(\Sigma)$ for some constants $\lambda,\Lambda$. Then there are normalized $\lambda_1(\Sigma)$-eigenfunctions $\phi_{\eps,\kappa}$ such that
\begin{equation} \label{L^2_eq_surf_eigen}
\int_{\Sigma \setminus B_{\eps^k}} \left| u_{\eps,\kappa}(x)  - \int_{\Sigma \setminus B_{\eps^k}} u_{\eps,\kappa} (y) \phi_{\eps,\kappa}(y) \, dy \, \phi_{\eps,\kappa}(x) \right|^2 dx = o(1)
\end{equation}
as $\eps \to 0$ uniformly in $[\kappa_0,\kappa_1]$ and in all eigenfunctions $u_{\eps,\kappa}$ as above.
\end{lemma}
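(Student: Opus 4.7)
The plan is to argue by contradiction. Suppose the conclusion fails; then there exist $\delta>0$ and sequences $\eps_n \to 0$, $\kappa_n \in [\kappa_0,\kappa_1]$, together with normalized eigenfunctions $u_n = u_{\eps_n,\kappa_n}$ on $\Sigma_{\eps_n,\kappa_n}$ having eigenvalues $\lambda_n \in [\lambda,\Lambda]$, such that for every normalized $\lambda_1(\Sigma)$-eigenfunction $\phi$,
\begin{equation} \label{eq_plan_contra}
\int_{\Sigma \setminus B_{\eps_n^k}} \left( u_n - \left( \int_{\Sigma \setminus B_{\eps_n^k}} u_n \phi \right) \phi \right)^2 \geq \delta.
\end{equation}
After extracting subsequences we may assume $\kappa_n \to \kappa_\infty \in [\kappa_0,\kappa_1]$ and $\lambda_n \to \lambda_\infty \in [\lambda,\Lambda]$.

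The first main step is to extract a limiting eigenfunction on $\Sigma$. The pointwise bounds of \cref{lem_pt_bd} give $|u_n(r,\theta)| \leq C\log(1/r)$ and $|\nabla u_n(r,\theta)| \leq C/r$ with a constant $C$ independent of $n$. On any compact $W \subset \Sigma \setminus \{x_0,x_1,\mathcal{S}\}$, with $\mathcal{S}$ the set of conical singularities, $u_n$ is smooth on $W$ for $n$ large and solves $(\Delta - \lambda_n) u_n = 0$ with $\|u_n\|_{L^2(W)} \leq 1$; elliptic regularity yields uniform $C^j$ bounds on slightly smaller compacta, and a diagonal extraction yields $u_n \to u$ in $C^\infty_{\mathrm{loc}}(\Sigma \setminus \{x_0,x_1,\mathcal{S}\})$ with $(\Delta - \lambda_\infty) u = 0$ there. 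The inherited bound $|u|\leq C\log(1/r)$ together with $\int_W |\nabla u|^2 \leq \Lambda$ (by Fatou applied to $\int_{\Sigma_{\eps_n,\kappa_n}} |\nabla u_n|^2 = \lambda_n$) places $u$ in $W^{1,2}(\Sigma)$, because points carry vanishing $W^{1,2}$-capacity in dimension two; a routine density argument then upgrades $u$ to a global weak $\lambda_\infty$-eigenfunction on $\Sigma$. Since $0 < \lambda_\infty < \lambda_{K+1}(\Sigma)$ and the spectrum of $\Sigma$ contains no eigenvalue in $(0,\lambda_1(\Sigma)) \cup (\lambda_1(\Sigma),\lambda_{K+1}(\Sigma))$, the function $u$ is either identically zero or lies in the $\lambda_1(\Sigma)$-eigenspace.

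The pointwise bound also provides tail control: for every $\delta_0>0$,
\[
\int_{B_{\delta_0}(x_i) \setminus B_{\eps_n^k}(x_i)} |u_n|^2 \leq C \int_0^{\delta_0} r \log^2(1/r)\, dr =: f(\delta_0),
\]
with $f(\delta_0) \to 0$ as $\delta_0 \to 0$ (for $i \in \{0,1\}$, or only $i=0$ in the cross-cap case). Combined with locally uniform convergence on the complement of $\bigcup_i B_{\delta_0}(x_i)$ and a $\delta_0$-diagonal argument, this gives
\[
\int_{\Sigma \setminus B_{\eps_n^k}} |u_n|^2 \longrightarrow \int_\Sigma |u|^2 \quad \text{and} \quad \int_{\Sigma \setminus B_{\eps_n^k}} u_n \psi \longrightarrow \int_\Sigma u\psi
\]
for every continuous $\psi \in C^0(\Sigma)$. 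If $u \equiv 0$, then $\|u_n\|_{L^2(\Sigma \setminus B_{\eps_n^k})} \to 0$, so the left-hand side of \eqref{eq_plan_contra} is bounded by $4\|u_n\|_{L^2(\Sigma \setminus B_{\eps_n^k})}^2 \to 0$ for any $\phi$, contradicting $\delta>0$. Otherwise, set $\phi := u/\|u\|_{L^2(\Sigma)}$, a fixed normalized $\lambda_1(\Sigma)$-eigenfunction, and write $c_n := \int_{\Sigma \setminus B_{\eps_n^k}} u_n \phi$. Expanding the square,
\[
\int_{\Sigma \setminus B_{\eps_n^k}} (u_n - c_n \phi)^2 = \int_{\Sigma \setminus B_{\eps_n^k}} |u_n|^2 - 2c_n^2 + c_n^2 \int_{\Sigma \setminus B_{\eps_n^k}} |\phi|^2 ,
\]
and since $c_n \to \|u\|_{L^2(\Sigma)}$, $\int |\phi|^2 \to 1$, and $\int |u_n|^2 \to \|u\|^2$, the right-hand side tends to $0$, contradicting \eqref{eq_plan_contra}.

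The main obstacle is the degenerate case $u \equiv 0$: one must rule out that all the $L^2$-mass of $u_n$ accumulates in the thin annuli $B_{\delta_0}(x_i)\setminus B_{\eps_n^k}(x_i)$ at a rate invisible to locally uniform convergence. This is precisely where the logarithmic pointwise bound of \cref{lem_pt_bd} is essential; a mere local Dirichlet-energy bound would not suffice, since the log-capacity of a small disk in dimension two is exactly the borderline. In contrast to the quantitative $L^2$-decompositions of \cref{sec_conc_cyl}, the argument here is purely qualitative and provides no convergence rate --- the rate information required later will be recovered through the quasimode analysis of \cref{sec_quasimodes}.
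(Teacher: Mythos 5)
Your argument is correct and reaches the same destination as the paper's, but by a somewhat different technical route. The paper extends $u_{\eps,\kappa}$ \emph{harmonically} into $B_{\eps^k}$, uses the uniform boundedness of the extension operator (the reference \cite{rt} cited in \cref{mono_unif_sobolev}) to get a uniform $W^{1,2}(\Sigma)$ bound on $\tilde u_{\eps,\kappa}$, and then invokes Rellich compactness to obtain \emph{strong} $L^2(\Sigma)$ convergence in one stroke; the limit is identified as a $\lambda_1(\Sigma)$-eigenfunction (or zero) exactly as you do, via the removable-singularity/density argument. You instead avoid the extension entirely and recover strong $L^2$ convergence by hand, combining $C^\infty_{\mathrm{loc}}$ convergence away from $x_0,x_1$ with the uniform tail estimate $\int_{B_{\delta_0}\setminus B_{\eps_n^k}}|u_n|^2\leq C\int_0^{\delta_0} r\log^2(1/r)\,dr$ coming from \eqref{max_eigen_sup_bd}. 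This is a legitimate trade: you lean harder on the pointwise bounds of \cref{lem_pt_bd} (which the paper also uses here, but only to compare $\|u_{\eps,\kappa}\|_{L^2(\Sigma\setminus B_{\eps^k})}$ with $\|\tilde u_{\eps,\kappa}\|_{L^2(\Sigma)}$), and in exchange you never need the extension operator. The contradiction framing versus the paper's direct subsequence extraction is only cosmetic; both deliver the claimed uniformity in $\kappa$ and in the eigenfunction.

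One small omission: your tail control is stated only for the annuli around $x_0,x_1$, but to conclude $\int_{\Sigma\setminus B_{\eps_n^k}}|u_n|^2\to\int_\Sigma|u|^2$ you also need uniform smallness of $\int_{B_{\delta_0}(p)}|u_n|^2$ for $p$ a conical singularity, since you excluded $\mathcal{S}$ from the compact sets on which you have locally uniform convergence. This is routine with the paper's restrictive definition of conical singularity (the area form is $f(|z|^k+O(|z|^{k+1}))\,dA_e$, so one can pair the uniform $W^{1,2}$ energy bound with a Sobolev embedding in the flat conformal coordinates and H\"older's inequality to see that small balls carry uniformly little $dA_g$-mass), but it should be said; the $L^2(dA_g)$ bound alone does not control $L^2(dA_e)$ near such a point.
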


\begin{proof}
Let $\tilde u_{\eps,\kappa} \in W^{1,2}(\Sigma)$ be the function obtained by extending $u_{\eps,\kappa}$
harmonically to $B_{\eps^k}$.
It follows from \cref{lem_pt_bd} and the maximum principle that 
\begin{equation} \label{eq_l2_compare}
 \|u_{\eps,\kappa}\|_{L^2(\Sigma \setminus B_{\eps^k})} 
\leq 
\|\tilde u_{\eps,\kappa}\|_{L^2(\Sigma) }  
\leq  
\|u_{\eps,\kappa}\|_{L^2(\Sigma \setminus B_{\eps^k})} + C \eps^{k} \log(1/\eps)
\end{equation}
as $\eps \to 0$ uniformly in $\kappa \in [\kappa_0,\kappa_1]$.
In particular, the assertion follows if we can prove \eqref{L^2_eq_surf_eigen} with $u_{\eps,\kappa}$ replaced by $\tilde u_{\eps,\kappa}$.
In order to do so we use that $\tilde u_{\eps,\kappa}$ is uniformly bounded in $W^{1,2}(\Sigma)$ \cite[p.\,40]{rt}.
Therefore we may extract a subsequence $\eps_n \to 0$  and $\kappa_n \to \kappa_*$ such that $\tilde u_{\eps_n,\kappa_n} \to u_*$ weakly in $W^{1,2}(\Sigma)$ and strongly
in $L^2(\Sigma)$ and such that also the corresponding eigenvalues $\lambda_{{\eps_n},{\kappa_n}}$ converge to $\lambda_* \in [\lambda,\Lambda]$.
By weak convergence in $W^{1,2}(\Sigma)$ it is easy to see that $u_*$ solves
\begin{equation} \label{L^2_eq_eigen}
\Delta u_* = \lambda_* u_*
\end{equation}
in $\Sigma \setminus \{x_0,x_1\}$.

Since $u_* \in W^{1,2}(\Sigma)$ and $C_c^\infty (\Sigma \setminus \{x_0,x_1\}) \subset W^{1,2}(\Sigma)$ is dense, we obtain that $u_*$ in fact solves \eqref{L^2_eq_eigen} on all of $\Sigma$.
In particular, by the choice of $\lambda$ and $\Lambda$, we need to have $\lambda_* = \lambda_1(\Sigma)$ (or $u_* = 0$ in which case the assertion trivially holds).
Combined with strong convergence in $L^2(\Sigma)$ and \eqref{eq_l2_compare} this implies that
$$
u_* = \lim_{n \to \infty} \| u_{\eps_n,\kappa_n} \|_{L^2(\Sigma \setminus B_{\eps^k})} \phi
$$
for a normalized $\lambda_1(\Sigma)$-eigenfunction $\phi$.
\end{proof}

It turns out to be useful to have a version of the previous lemma also for a linear combination of eigenfunctions (with a potentially unbounded number of summands).

\begin{lemma} \label{L^2_conc_surf_combi}
Let $w_{\eps,\kappa}$ be an $L^2$-normalized function that is given by a linear combination of at most $M \leq o(1) \eps^{-1/2}$ eigenfunctions $\Sigma_{\eps,\kappa}$ whose eigenvalues are contained in
$[\lambda_1(\Sigma)-\eps^{1/4},\lambda_1(\Sigma)+\eps^{1/4}]$.
Then there are normalized $\lambda_1(\Sigma)$-eigenfunctions $\phi_{\eps,\kappa}$ such that
\begin{equation} \label{L^2_eq_surf_eigen}
\int_{\Sigma \setminus B_{\eps^k}} \left| w_{\eps,\kappa}(x)  - \int_{\Sigma \setminus B_{\eps^k}} w_{\eps,\kappa} (y) \phi_{\eps,\kappa}(y) \, dy \, \phi_{\eps,\kappa}(x) \right|^2 dx = o(1)
\end{equation}
as $\eps \to 0$ uniformly in $[\kappa_0,\kappa_1]$, the $o(1)$-term in the assumption and in all functions $w_{\eps,\kappa}$ as above.
\end{lemma}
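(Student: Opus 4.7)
The plan is to follow the argument of \cref{L^2_conc_surf} by passing to weak limits of harmonic extensions, but I first need to verify that the pointwise and Dirichlet-energy bounds available for a single eigenfunction survive the passage to a linear combination whose length $M$ may grow like $o(\eps^{-1/2})$. Writing $w_{\eps,\kappa} = \sum_{i=1}^M c_i u_i$ with $\{u_i\}$ orthonormal eigenfunctions and $\sum c_i^2 = 1$, one immediately has $\int_{\Sigma_{\eps,\kappa}} |\nabla w_{\eps,\kappa}|^2 = \sum c_i^2 \lambda_i \leq \lambda_1(\Sigma) + 1$, and since every $\lambda_i$ lies within $\eps^{1/4}$ of $\lambda_1(\Sigma)$, a Cauchy--Schwarz argument using Bessel's inequality yields the approximate eigenvalue identity
\[
\left| \int_{\Sigma_{\eps,\kappa}} \nabla w_{\eps,\kappa} \cdot \nabla \varphi - \lambda_1(\Sigma) \int_{\Sigma_{\eps,\kappa}} w_{\eps,\kappa} \varphi \right| \leq \eps^{1/4} \|\varphi\|_{L^2(\Sigma_{\eps,\kappa})}
\]
for every $\varphi \in W^{1,2}(\Sigma_{\eps,\kappa})$.

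Next I would harmonically extend $w_{\eps,\kappa}$ across $B_{\eps^k}$ to obtain $\tilde w_{\eps,\kappa} \in W^{1,2}(\Sigma)$. The crucial pointwise estimate comes from \cref{lem_pt_bd} combined with Cauchy--Schwarz: on $\partial B_{\eps^k}$ one has $|w_{\eps,\kappa}|^2 \leq M \sum_i |u_i|^2 \leq CM \log^2(1/\eps)$. The maximum principle propagates this bound to all of $B_{\eps^k}$, and since $M = o(\eps^{-1/2})$ and $k \geq 9$,
\[
\|\tilde w_{\eps,\kappa} - w_{\eps,\kappa}\|_{L^2(\Sigma)}^2 \leq CM \log^2(1/\eps)\,\eps^{2k} = o(1).
\]
Combined with the uniform bound on the harmonic extension operator $W^{1,2}(\Sigma \setminus B_{\eps^k}) \to W^{1,2}(\Sigma)$ from \cite[p.\,40]{rt}, this shows $\tilde w_{\eps,\kappa}$ is uniformly bounded in $W^{1,2}(\Sigma)$.

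The conclusion then follows by a compactness-contradiction argument. Suppose the lemma fails: then there are sequences $\eps_n \to 0$, $\kappa_n \in [\kappa_0,\kappa_1]$ and admissible $w_n$ for which the infimum over all normalized $\lambda_1(\Sigma)$-eigenfunctions $\phi$ of the $L^2(\Sigma \setminus B_{\eps_n^k})$-distance squared from $w_n$ to the line $\IR\phi$ stays bounded below by some $c > 0$. Extract a subsequence so that $\tilde w_n \rightharpoonup w_*$ weakly in $W^{1,2}(\Sigma)$ and strongly in $L^2(\Sigma)$. Testing the approximate eigenvalue identity against any $\varphi \in C_c^\infty(\Sigma \setminus \{x_0,x_1\})$, extended by zero across $\partial B_{\eps_n^k}$, and passing to the limit gives $(\Delta - \lambda_1(\Sigma))w_* = 0$ weakly on $\Sigma \setminus \{x_0, x_1\}$, hence on all of $\Sigma$ since points have zero $W^{1,2}$-capacity in dimension two. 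Choosing $\phi_n := w_*/\|w_*\|_{L^2(\Sigma)}$ when $w_* \neq 0$ (and any fixed normalized eigenfunction otherwise), strong $L^2$ convergence together with the $o(1)$ bound from the previous paragraph forces $\|w_n - \langle w_n, \phi_n\rangle \phi_n\|_{L^2(\Sigma \setminus B_{\eps_n^k})}^2 \to \|w_*\|^2 - \|w_*\|^2 = 0$, contradicting $c > 0$. The main obstacle is the pointwise control at $\partial B_{\eps^k}$: the balance $M\,\eps^{2k}\log^2(1/\eps) = o(1)$ is exactly what the hypotheses $M = o(\eps^{-1/2})$ and $k \geq 9$ secure, and the bound would degenerate quickly if either assumption were relaxed.
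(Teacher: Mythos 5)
Your proof is correct and follows essentially the same route as the paper: harmonic extension of $w_{\eps,\kappa}$ across $B_{\eps^k}$, an approximate eigenvalue identity tested against $\varphi\in C_c^\infty(\Sigma\setminus\{x_0,x_1\})$, and a compactness--contradiction argument identifying the weak $W^{1,2}$-limit (via the zero capacity of points) as a $\lambda_1(\Sigma)$-eigenfunction, with strong $L^2$ convergence closing the argument. The one genuine divergence is in the approximate eigenvalue identity: you bound $\sum_i c_i(\lambda_{j_i}-\lambda_1(\Sigma))\langle u_i,\varphi\rangle$ by Cauchy--Schwarz together with Bessel's inequality for the orthonormal family $\{u_i\}$, obtaining $\eps^{1/4}\|\varphi\|_{L^2}$ \emph{independently of} $M$, whereas the paper estimates the same sum by Young's inequality, gets $C\eps^{1/4}M^{1/2}$, and uses the hypothesis $M\leq o(1)\eps^{-1/2}$ precisely at that point. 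Consequently your claim that the hypothesis on $M$ only "bites" in the pointwise bound $M\log^2(1/\eps)\,\eps^{2k}=o(1)$ is accurate for \emph{your} argument (and shows the lemma tolerates a considerably larger $M$), but it is not where the paper uses it; within your proof that constraint is extremely slack, since $k\geq 1$ would already suffice there.
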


The proof is very similar to the argument for \cref{L^2_conc_surf}. 
Our choice of the scale $\eps^{1/4}$ is not very particular, but adopted to our applications.

\begin{proof}
By assumption, $w_{\eps,\kappa}$ is uniformly bounded in $W^{1,2}(\Sigma_{\eps,\kappa})$.
In particular, the argument from \cref{L^2_conc_surf} provides us with a weak limit $w_* \in W^{1,2}(\Sigma)$ and it suffices to prove that
$w_*$ is a $\lambda_1(\Sigma)$-eigenfunction, since the convergence is strong in $L^2(\Sigma)$.
Let us write
$$
w_{\eps,\kappa} = \sum_{i=1}^M \alpha_{\eps,\kappa,i} u_{\eps,\kappa,i},
$$
as a linear combination of eigenfunctions, where also $M$ is allowed to depend on $\eps$ and $\kappa$ as specified in the assumptions.
We denote by $\lambda_{j_i}(\Sigma_{\eps,\kappa})$ the eigenvalue corresponding to $u_{\eps,\kappa,i}$.
Given $\eta \in C_c^{\infty}(\Sigma \setminus \{x_0,x_1\})$, we have for $\eps$ sufficiently small that
\begin{equation*}
\begin{split}
	\left|
		\int_{\Sigma} \nabla \eta \nabla \tilde w_{\eps,\kappa} - \lambda_1(\Sigma) \int_{\Sigma} \eta \tilde w_{\eps,\kappa}
	\right|
	&=
	\left|
		\int_{\Sigma} \nabla \eta \nabla  w_{\eps,\kappa} - \lambda_1(\Sigma) \int_{\Sigma} \eta  w_{\eps,\kappa}
	\right|
	\\
	&=
	\left|
		\int_{\Sigma} \eta \Delta w_{\eps,\kappa} - \lambda_1(\Sigma) \int_{\Sigma} \eta \tilde w_{\eps,\kappa}
	\right|
	\\
	&=
	\left|
		\int_{\Sigma} \eta \sum_{i=1}^M \alpha_{\eps,\kappa,i}(\lambda_{j_i}(\Sigma_{\eps,\kappa}) - \lambda_1(\Sigma))u_{\eps,\kappa,i} 
	\right|
	\\
	& \leq
	C \eps^{1/4} \left( \int_{\supp \eta} \left (\sum_{i=1}^M |\alpha_{\eps,\kappa,i}u_{\eps,\kappa,i} | \right)^2 \right)^{1/2}
	\\
	& \leq
	C \eps^{1/4} \left( M \sum_{i=1}^M |\alpha_{\eps,\kappa,i}|^2 \right)^{1/2}
	\\
	& \leq o(1) \|w_{\eps,\kappa}\|_{L^2(\Sigma_{\eps,\kappa})},
\end{split}
\end{equation*}
where we have used Young's inequality.
From here we can conclude as in \cref{L^2_conc_surf}.
\end{proof}
%%%%%%%%%%%%%%%%%%%%%%%%%%%%%%%%%%%%%%%%%%%%%%%%
\subsection{A preliminary estimate for the rate of convergence} \label{sec_conc_pre_est}
%%%%%%%%%%%%%%%%%%%%%%%%%%%%%%%%%%%%%%%%%%%%%%%%

Combining \cref{L^2_mean_bd} and \cref{L^2_conc_surf}, we can now prove the $\eps \log(1/\eps)$-bound for $\lambda_2(\Sigma_{\eps,\kappa})$.

\begin{lemma} \label{L^2_lem_pre_bd}
Assume that $0<\tau < (1-\alpha)/2$.
There is a constant $d_0=d_0(\Sigma,k,\kappa_0,\kappa_1, \delta_0)$ with the following property.
Let $u_{\eps,\kappa}$ be a normalized $\lambda_1(\Sigma_{\eps,\kappa})$-eigenfunction.
Assume that 
\begin{equation} \label{assump_conc_surf}
\left( \int_{C_{\eps,\kappa}} |u_{\eps,\kappa}|^2 \right)^{1/2} \leq 1- \eps^{\tau}.
\end{equation}
Then we have 
$$
\lambda_2(\Sigma_{\eps,\kappa}) \geq \lambda_1(\Sigma) - d_0 \eps \log(1/\eps)
$$
if $\eps \leq \eps_8(\Sigma,k,\kappa_0,\kappa_1,\delta_0,\tau)$.
\end{lemma}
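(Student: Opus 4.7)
The plan is to argue by contradiction, assuming $\lambda_2(\Sigma_{\eps,\kappa}) < \lambda_1(\Sigma) - d_0 \eps \log(1/\eps)$ for a constant $d_0$ to be chosen. Let $u_1 = u_{\eps,\kappa}$ and $u_2$ denote $L^2$-orthonormal eigenfunctions with eigenvalues $\lambda_1(\Sigma_{\eps,\kappa})$ and $\lambda_2(\Sigma_{\eps,\kappa})$, both then bounded above by $\lambda_1(\Sigma) - d_0 \eps \log(1/\eps)$. Any $v = \alpha u_1 + \beta u_2$ with $\alpha^2+\beta^2 = 1$ satisfies $\int_{\Sigma_{\eps,\kappa}} |\nabla v|^2 = \alpha^2 \lambda_1(\Sigma_{\eps,\kappa}) + \beta^2 \lambda_2(\Sigma_{\eps,\kappa}) \leq \lambda_2(\Sigma_{\eps,\kappa})$, and I will extract a $v$ from this $2$-dimensional subspace whose Rayleigh quotient must in fact be at least $\lambda_1(\Sigma) - d_0 \eps\log(1/\eps)$, producing a contradiction.

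First I would impose the single linear constraint $\int_{\Sigma \setminus B_{\eps^k}} v = 0$ on $V = \langle u_1, u_2 \rangle$; since this is one equation in two dimensions, there is a unit-norm solution $v$. Then $v$ is an admissible test function for $\mu_1(\Sigma \setminus B_{\eps^k})$, and \cref{thm_lower_neumann} yields
\begin{equation*}
\int_{\Sigma \setminus B_{\eps^k}} |\nabla v|^2 \geq \mu_1(\Sigma \setminus B_{\eps^k}) (1-q) \geq (\lambda_1(\Sigma) - C\eps^{2k})(1-q),
\end{equation*}
where $q = \int_{C_{\eps,\kappa}} v^2$. Combining with $\int_{\Sigma_{\eps,\kappa}} |\nabla v|^2 \leq \lambda_2(\Sigma_{\eps,\kappa}) < \lambda_1(\Sigma) - d_0 \eps \log(1/\eps)$ gives the upper bound
\begin{equation*}
\int_{C_{\eps,\kappa}} |\nabla v|^2 \leq \lambda_1(\Sigma)\, q - d_0 \eps \log(1/\eps) + C\eps^{2k}.
\end{equation*}

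To obtain a matching \emph{lower} bound, I would decompose $v|_{C_{\eps,\kappa}} = h + w$ with $h$ the harmonic extension of $v|_{\partial C_{\eps,\kappa}}$ and $w \in W^{1,2}_0(C_{\eps,\kappa})$. Orthogonality in $W^{1,2}$ gives $\int_C |\nabla v|^2 \geq \lambda_0(C_{\eps,\kappa}) \int_C w^2$, and \cref{lem_pt_bd} combined with the maximum principle yields $\|h\|_{L^\infty(C)} \leq C\log(1/\eps)$, hence $\|h\|_{L^2(C)}^2 \leq C \eps \log^2(1/\eps)$. Together with \cref{L^2_mean_bd} applied to $v$ to bound the cross term $\int_C v h$, this gives $\int_C w^2 \geq q - C\eps \log^2(1/\eps)$. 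In the range of parameters where $\lambda_0(C_{\eps,\kappa}) \geq \lambda_1(\Sigma)$ we conclude $\int_C |\nabla v|^2 \geq \lambda_1(\Sigma) q - C\eps\log^2(1/\eps)$, directly contradicting the upper bound for $d_0$ large enough in terms of $\Sigma, k, \kappa_0, \kappa_1, \delta_0$.

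The main obstacle is the remaining range where $\lambda_0(C_{\eps,\kappa}) < \lambda_1(\Sigma)$, so that the naive cylinder bound only yields $\lambda_0(C) q$ and leaves a deficit $(\lambda_1(\Sigma) - \lambda_0(C)) q$ which is not a priori $o(\eps\log(1/\eps))$. To close this case I would invoke \cref{L^2_ground_mass_cyl}, which is applicable because \cref{first_upper_bd} ensures $\lambda_1(\Sigma_{\eps,\kappa}) \leq \lambda_0(C_{\eps,\kappa}) \leq \lambda_0(C_{\eps,\kappa}) + \eps^{3\alpha/2+1/2-\tau}$ and analogously for the combination $v$, to show quantitatively that both $u_1|_C$ and $u_2|_C$ are within $O(\eps^{(1-\alpha)/2})$ in $L^2$ of scalar multiples of $\psi_{\eps,\kappa,0}$. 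The hypothesis $\bigl(\int_C u_1^2\bigr)^{1/2} \leq 1 - \eps^\tau$ then forces the concentration coefficient of $u_1$ against $\psi_{\eps,\kappa,0}$ to be bounded away from $1$ by at least $\eps^\tau/2$; together with the orthogonality constraint defining $v$, a careful expansion using the asymptotic relation in \cref{lem_asymp_1} (cross-cap case) or \cref{lem_asymp_2} (cylinder case) tested against $v$ forces $q \leq C\eps\log(1/\eps)/(\lambda_1(\Sigma)-\lambda_0(C_{\eps,\kappa}))$, making $(\lambda_1(\Sigma) - \lambda_0(C))q$ small on the target scale and completing the contradiction.
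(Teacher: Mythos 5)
Your first branch (the case $\lambda_0(C_{\eps,\kappa})\geq\lambda_1(\Sigma)$) does not close at the scale the lemma demands. The decomposition $v|_{C_{\eps,\kappa}}=h+w$ produces the cross term $\int_{C_{\eps,\kappa}}vh$, and \cref{L^2_mean_bd} does not control it: that lemma bounds the signed integral $\left|\int_{C_{\eps,\kappa}}v\right|$ of an eigenfunction, whereas $h$ is a non-constant harmonic function of size $\log(1/\eps^k)$, so the honest bound is $\left|\int_{C_{\eps,\kappa}}vh\right|\leq\|v\|_{L^2(C_{\eps,\kappa})}\|h\|_{L^2(C_{\eps,\kappa})}\leq C\eps^{1/2}\log(1/\eps)$ --- exactly the error appearing in Step 2 of the proof of \cref{thm_eig_lim}, and far above $\eps\log(1/\eps)$. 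Even granting your optimistic figure $C\eps\log^2(1/\eps)$, the comparison you set up reads $d_0\,\eps\log(1/\eps)\leq C\eps\log^2(1/\eps)+C\eps^{2k}$, which is \emph{not} a contradiction for any fixed $d_0$ once $\eps$ is small; at best you recover $\lambda_2(\Sigma_{\eps,\kappa})\geq\lambda_1(\Sigma)-C\eps\log^2(1/\eps)$. The $\eps\log(1/\eps)$ rate cannot be reached by the Neumann-test-function/harmonic-extension route; the paper obtains it from \cref{expan_surf}, whose error is $\lambda_1(\Sigma)\phi_0(x_0)\int_{C_{\eps,\kappa}}u_{\eps,\kappa}+O(\eps^{k/2})$ divided by the overlap with a surface quasimode, with $\left|\int_{C_{\eps,\kappa}}u_{\eps,\kappa}\right|\leq C\eps\log(1/\eps)$ supplied by \cref{L^2_mean_bd} and the overlap bounded below via \cref{L^2_conc_surf}.

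Your second branch is also incomplete at the decisive point. Applying \cref{L^2_ground_mass_cyl} to $u_2$ (or to the combination $v$) presupposes $\lambda_2(\Sigma_{\eps,\kappa})\leq\lambda_0(C_{\eps,\kappa})+\eps^{3\alpha/2+1/2-\tau}$; this is not implied by \cref{first_upper_bd}, which concerns only $\lambda_1(\Sigma_{\eps,\kappa})$, and establishing it is precisely where the hypothesis \eqref{assump_conc_surf} enters in the paper: since \eqref{assump_conc_surf} forces $\left|\int_{\Sigma_{\eps,\kappa}}u_{\eps,\kappa}\tilde\psi_{\eps,\kappa,0}\right|\leq 1-\eps^\tau/2$, \cref{lem_anne_quasimod} applied to the quasimode $\tilde\psi_{\eps,\kappa,0}$ produces a \emph{second} eigenvalue within $\eps^{\tau'}$ of $\lambda_0(C_{\eps,\kappa})$. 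Your closing claim that testing \cref{lem_asymp_2} against $v$ yields $q\leq C\eps\log(1/\eps)/(\lambda_1(\Sigma)-\lambda_0(C_{\eps,\kappa}))$ is asserted rather than proved, and it is not how the mass is controlled in the paper: there one shows instead that the second eigenfunction must carry mass at least $1/8$ on $\Sigma\setminus B_{\eps^k}$ (because $u_{\eps,\kappa}$ already accounts for most of the $\psi_{\eps,\kappa,0}$-direction on the cylinder), and then applies \cref{L^2_conc_surf}, \cref{expan_surf} and \cref{L^2_mean_bd} to $\lambda_2(\Sigma_{\eps,\kappa})$ directly. The Neumann-test-function mechanism you start from is the right engine for \cref{comp_neu_bd} later on, but there its losses are tamed only after the finer decomposition of \cref{L^2_main_decomp} is available; for the present lemma it overshoots the target scale.
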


This estimate will later force us to work only with parameters $\alpha<1/2$,
since this implies that the gap ($\sim \eps^{2\alpha}$) of two consecutive Dirichlet eigenvalues on $C_{\eps,\kappa}$
is strictly larger than the bound we start with (which is $\eps \log(1/\eps)$).

\begin{proof}
If we have in addition that 
$$
\int_{\Sigma \setminus B_{\eps^k}} |u_{\eps,\kappa}|^2 \geq 1/2
$$ 
the assertion follows immediately from \cref{expan_surf} combined with \cref{L^2_mean_bd} (which is applicable thanks to \cref{first_upper_bd}) and \cref{L^2_conc_surf}.
Therefore, we may assume that we also have 
\begin{equation} \label{eq_extra_assum_1}
\int_{\Sigma \setminus B_{\eps^k}} |u_{\eps,\kappa}|^2 \leq 1/2.
\end{equation}
This in turn implies by \cref{first_upper_bd} combined with \cref{L^2_ground_mass_cyl} that
\begin{equation} \label{eq_first_cyl}
\left| \int_{C_{\eps,\kappa}} u_{\eps,\kappa} \psi_{\eps,\kappa,0} \right| \geq 1/4
\end{equation}
for $\eps$ sufficiently small.

From the assumption \eqref{assump_conc_surf} we find that
\begin{equation*}
\begin{split}
\left|\int_{\Sigma_{\eps,\kappa}} u_{\eps,\kappa} \tilde \psi_{\eps,\kappa,0} \right|
&\leq 
C \int_{C_{\eps,\kappa}} |u_{\eps,\kappa} \psi_{\eps,\kappa,0}| 
+C \eps^{3\alpha/2+1} \log(1/\eps^k)
+ C \eps^{3\alpha/2+1/2} \int_{\Sigma \setminus B_{\eps^k}} |u_{\eps,\kappa}|
\\
&\leq 
1-\eps^{\tau} + C \eps^{3 \alpha /2 + 1/2}
\\
&\leq 
1- \eps^{\tau}/2
\end{split}
\end{equation*}
for $\eps$ sufficiently small (depending on nothing apart from $C$).
It then follows from \cref{lem_anne_quasimod} applied to the quasimode $\tilde \psi_{\eps,\kappa,0}$ (\cref{lem_asymp_2}) that the second eigenvalue satisfies
$$
\lambda_2(\Sigma_{\eps,\kappa}) \leq \lambda_0(C_{\eps,\kappa}) + \eps^{\tau'} \leq \lambda_1(C_{\eps,\kappa})
$$
for some $\tau < \tau' < (1-\alpha)/2$ if $\eps \leq \eps(\tau')$.

Therefore, we obtain from \cref{L^2_ground_mass_cyl} combined with \eqref{eq_extra_assum_1} and \eqref{eq_first_cyl} that the corresponding normalized eigenfunction $v_{\eps,\kappa}$ has
$$
\int_{\Sigma \setminus B_\eps} |v_{\eps,\kappa}|^2 \geq 1/8
$$
for $\eps$ sufficiently small.
In particular, it follows from \cref{L^2_conc_surf} that for $\eps$ sufficiently small we can find a $\lambda_1(\Sigma)$-eigenfunction $\phi_{\eps,\kappa}$
such that
$$
\int_{\Sigma \setminus B_{\eps,\kappa}}  \phi_{\eps,\kappa} v_{\eps,\kappa} \geq 1/16.
$$
Let $\tilde \phi_{\eps,\kappa}$ denote the quasimode constructed from $\phi_{\eps,\kappa}$ in \cref{subsec_quasimod_cyl}.
We then obtain from \cref{expan_surf} combined with \cref{L^2_mean_bd} (which improves the leading order term in the expansion)
that
\begin{equation*}
\lambda_2(\Sigma_{\eps,\kappa}) \geq \lambda_1(\Sigma) - d \int_{C_{\eps,\kappa}} v \geq \lambda_1(\Sigma) - d_0 \eps \log(1/\eps).
\qedhere
\end{equation*}
\end{proof}

%%%%%%%%%%%%%%%%%%%%%%%%%%%%%%%%%%%%%%%%%%%%%%%%%
\subsection{Decomposing eigenfunctions on the surface (continued)} \label{sec_conc_surf_cont}
%%%%%%%%%%%%%%%%%%%%%%%%%%%%%%%%%%%%%%%%%%%%%%%%

Under additional assumptions on $\lambda_2(\Sigma_{\eps,\kappa})$ we can obtain a quantitative version of \cref{L^2_conc_surf} that allows us to 
locate all the eigenfunctions associated to $\lambda_1(\Sigma)$-eigenfunctions with a definite rate.
Recall that $\tilde \phi_{\eps,\kappa,0}$ is the extension of the (up to scaling) unique $\lambda_1(\Sigma)$-eigenfunction that does not vanish in $x_0$.

\begin{lemma} \label{L^2_surface_part}
Let $\alpha \leq \alpha_0 < 9/16$,  $0<\tau<(1-\alpha)/2, k \geq 9$, and $0<\delta_1 \leq 1/4$.
Moreover, assume that $\lambda_2(\Sigma_{\eps,\kappa}) \leq \min\{\lambda_1(\Sigma) - \eps^{k/4}, \lambda_0(C_{\eps,\kappa}) + \eps^{3 \alpha / 2+1/2-\tau} \}$.
If $u_{\eps,\kappa}$ is a normalized $\lambda_1(\Sigma_{\eps,\kappa})$-eigenfunction
and 
$v_{\eps,\kappa}$ is a normalized $\lambda_2(\Sigma_{\eps,\kappa})$-eigenfunction,
we have that
\begin{equation*}
\begin{split}
\int_{\Sigma_{\eps,\kappa}} \left|\tilde \phi_{\eps,\kappa,0}(x) - \int_{\Sigma_{\eps,\kappa}} u_{\eps,\kappa}(y) \tilde \phi_{\eps,\kappa,0}(y)\, dy\, u_{\eps,\kappa}(x) - \int_{\Sigma_{\eps,\kappa}} v_{\eps,\kappa}(y) \tilde \phi_{\eps,\kappa,0}(y)\, dy \, v_{\eps,\kappa}(x) \  \right|^2 \, dx
\\
\leq
C( \eps^{2 \delta_1} + \eps^{1/2-\delta_1 + 2(\tau - \alpha)})
\end{split}
\end{equation*}
if $\eps \leq \eps_{9}(\Sigma,k,\kappa_0,\kappa_1,\tau,\delta_1, \alpha_0)$ and where $C=C(\Sigma,k,\kappa_0,\kappa_1,\delta_0,\delta_1)$.
\end{lemma}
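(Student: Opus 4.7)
My plan is to apply Ann\'e's lemma (\cref{lem_anne_quasimod}) to the quasimode $\tilde\phi_{\eps,\kappa,0}$ built from the special $\lambda_1(\Sigma)$-eigenfunction $\phi_0$, and then refine the resulting bound using the $L^2$-decomposition results of \cref{sec_conc_cyl,sec_conc_surf}. Let $(u_{\eps,\kappa,l})_{l\geq 0}$ be an orthonormal eigenbasis of $L^2(\Sigma_{\eps,\kappa})$ with $u_{\eps,\kappa,1}=u_{\eps,\kappa}$ and $u_{\eps,\kappa,2}=v_{\eps,\kappa}$, and set $c_l=\int \tilde\phi_{\eps,\kappa,0}\,u_{\eps,\kappa,l}$. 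The left-hand side of the claim is then exactly $\sum_{l\neq 1,2} c_l^2$.

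First, I would combine \cref{mono_quasimode_2} with the H\"older estimate $|\int_{C_{\eps,\kappa}}\varphi|\leq C\eps^{1/2}\|\varphi\|_{L^2}$ to see that $\tilde\phi_{\eps,\kappa,0}$ is an Ann\'e-quasimode for $\lambda_1(\Sigma)$ with defect $\delta\leq C\eps^{1/2}$. Choosing $s=\eps^{1/2-\delta_1}$ in \cref{lem_anne_quasimod} yields
\[
\sum_{l\colon |\lambda_l(\Sigma_{\eps,\kappa})-\lambda_1(\Sigma)|>s} c_l^2 \leq \frac{C\eps}{s^2}=C\eps^{2\delta_1},
\]
which is the first term of the claimed bound. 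Together with the crude estimate $c_0^2=(\int\tilde\phi_{\eps,\kappa,0})^2/\area(\Sigma_{\eps,\kappa})=O(\eps^2)$, this reduces the problem to controlling the captured sum $\sum^{\ast}c_l^2$ over indices $l\neq 0,1,2$ with $|\lambda_l-\lambda_1(\Sigma)|\leq s$.

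For the captured indices, I would test the quasimode identity of \cref{mono_quasimode_2} against each $u_{\eps,\kappa,l}$ to obtain
\[
(\lambda_l-\lambda_1(\Sigma))\,c_l = -\lambda_1(\Sigma)\phi_0(x_0)\int_{C_{\eps,\kappa}} u_{\eps,\kappa,l} + O(\eps^{k/2}),
\]
and then analyse the cylinder integral via the $L^2$-expansion of $u_{\eps,\kappa,l}|_{C_{\eps,\kappa}}$ in the Dirichlet basis $(\psi_{\eps,\kappa,l_0})_{l_0\geq 0}$ as in the proofs of \cref{L^2_mass_cyl,L^2_mean_bd}. Each Dirichlet mode $\psi_{\eps,\kappa,l_0}$ contributes at most $C(l_0+1)\eps^{(3\alpha+1)/2}$ to $|\int u_{\eps,\kappa,l}|$ (by \cref{spec_cusp_cor_approx_order}). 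The hypothesis $\lambda_2\leq \lambda_0(C_{\eps,\kappa})+\eps^{3\alpha/2+1/2-\tau}$ pins down that the Dirichlet index closest to $\lambda_1(\Sigma)$ is attached to $v_{\eps,\kappa}$ and is therefore excluded from the captured sum. The separation $\gtrsim (l_0+1)\eps^{2\alpha}$ of consecutive Dirichlet eigenvalues (\cref{lem_spec_cyl}) then forces $|\lambda_l-\lambda_1(\Sigma)|$ to be comparable to $l_0\eps^{2\alpha}$ for cylinder-type captured eigenfunctions with $l_0\geq 1$; summing the convergent tail $\sum_{l_0\geq 1}l_0^{-2}$ and inserting the worst-case denominator $\eps^{3\alpha/2+1/2-\tau}$ forced by the hypothesis produces the second contribution $C\eps^{1/2-\delta_1+2(\tau-\alpha)}$. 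For the surface-type captured eigenfunctions, \cref{L^2_conc_surf_combi} identifies their surface parts with $\lambda_1(\Sigma)$-eigenfunctions, and the orthogonality \eqref{eq_normal_basis} combined with \cref{expan_surf} yields $c_l=O(\eps^{k/4})$, which is absorbed since $k\geq 9$.

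The main obstacle is controlling the potentially unbounded number of captured Dirichlet indices near $\lambda_1(\Sigma)$: when $\alpha>1/2-\delta_1$ their number grows like $\eps^{1/2-\delta_1-\alpha}$, so one needs sharp per-index estimates coordinated with the window size $s$. A secondary difficulty is that when $\lambda_0(C_{\eps,\kappa})$ is within $\eps^{3\alpha/2+1/2-\tau}$ of $\lambda_1(\Sigma)$, surface-type and cylinder-type captured eigenfunctions can mix, and one must apply \cref{L^2_conc_surf_combi} to the full captured subspace to decouple them; the hypothesis $\alpha<9/16$ ensures that the number of captured indices still satisfies the $M=o(\eps^{-1/2})$ requirement of that lemma.
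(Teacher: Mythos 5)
Your overall architecture agrees with the paper's: apply \cref{lem_anne_quasimod} to $\tilde\phi_{\eps,\kappa,0}$ with defect $\delta\sim\eps^{1/2}$ and window $s\sim\eps^{1/2-\delta_1}$ to get the $\eps^{2\delta_1}$ term, then bound the contribution of each eigenfunction captured inside the window individually and multiply by the number of captured indices. The first half is fine. The gap is in your treatment of the captured cylinder-type modes, where you divide the identity $(\lambda_l-\lambda_1(\Sigma))c_l=-\lambda_1(\Sigma)\phi_0(x_0)\int_{C_{\eps,\kappa}}u_{\eps,\kappa,l}+O(\eps^{k/2})$ by the spectral gap and claim $|\lambda_l-\lambda_1(\Sigma)|\gtrsim l_0\eps^{2\alpha}$. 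This fails for the resonant index. Step 1 of the paper's proof only localizes $\lambda_0(C_{\eps,\kappa})$ within $C\eps^{1/2}$ of $\lambda_1(\Sigma)$, while the Dirichlet ladder has spacing $\sim\eps^{2\alpha}\ll\eps^{1/2}$ (since $\alpha>1/3$), so there are $\sim\eps^{1/4-\alpha}\to\infty$ Dirichlet eigenvalues of $C_{\eps,\kappa}$ inside that band and the index $l_*$ closest to $\lambda_1(\Sigma)$ is generically large and nonzero. Your claim that "the Dirichlet index closest to $\lambda_1(\Sigma)$ is attached to $v_{\eps,\kappa}$ and is therefore excluded" is therefore false: by \cref{L^2_ground_mass_cyl}, $v_{\eps,\kappa}$ carries $\psi_{\eps,\kappa,0}$, i.e.\ the \emph{bottom} of the Dirichlet ladder, not the rung nearest $\lambda_1(\Sigma)$. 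For the eigenfunction(s) whose eigenvalue lies in $I_{\eps,\kappa,l_*}$, the denominator $\lambda_l-\lambda_1(\Sigma)$ can be arbitrarily small, and your formula then gives no control on $c_l$ at all — it only says $\int_{C_{\eps,\kappa}}u_{\eps,\kappa,l}$ is small.

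The paper closes exactly this hole with a counting argument that you only gesture at. Steps 2 and 3 of its proof show that the window contains at most $N+1+K$ eigenvalues, that the disjoint intervals $I_{\eps,\kappa,l}$ ($l\neq l_*$) each contain exactly one eigenvalue while $I_{\eps,\kappa,l_*}\cup(\lambda_1(\Sigma)-\eps^{k/4}/2,\lambda_1(\Sigma)+\eps^{k/4}/2)$ contains exactly $K$, and hence — via \cref{lem_anne_quasimod} applied to the quasimodes $\tilde\psi_{\eps,\kappa,l}$ and $\tilde\phi_{\eps,\kappa,j}$, plus Gram--Schmidt on the resonant cluster — that every captured eigenfunction is $C\eps^\tau$-close in $L^2$ to an explicit (combination of) quasimode(s). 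The bound $|c_l|\leq C\eps^\tau$ of \eqref{eq_contribution_cyl} then comes from the near-orthogonality of those quasimodes to $\tilde\phi_{\eps,\kappa,0}$ (using \eqref{eq_normal_basis} for the $\tilde\phi_{\eps,\kappa,j}$ and the smallness of $a_{\eps,\kappa,l}$ for the $\tilde\psi_{\eps,\kappa,l}$), with no division by a spectral gap anywhere. Your non-resonant estimate can be salvaged along your lines, but without the exact count and the quasimode approximation of the resonant cluster the argument does not yield the stated bound.
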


\begin{rem}
Note that $\tau - \alpha$ is very small if we take $\alpha$ close to $1/3$ and $\tau$ close to $(1-\alpha)/2$.
\end{rem}

\begin{proof}
We proceed in four steps.

\smallskip

%%%%%%%%%%%%%%%%%%%%%%%%%%%%%%%
\textsc{Step 1:} 
\textit{Bounding $\lambda_1(\Sigma) - \lambda_0(C_{\eps,\kappa})$}
\smallskip
%%%%%%%%%%%%%%%%%%%%%%%%%%%%%%%

This is very similar to the argument for \cref{L^2_lem_pre_bd}.
Since we assume for the second eigenvalue that
$
\lambda_2(\Sigma_{\eps,\kappa}) \leq \lambda_0(C_{\eps,\kappa}) + \eps^{3\alpha/2+1/2-\tau}
$
with $\tau < (1-\alpha/2)$,
it follows from \cref{L^2_mass_cyl} that the restrictions of the first two eigenfunctions to $C_{\eps,\kappa}$ are up to a uniform $o(1)$-term (in $L^2$) given by  $\psi_{\eps,\kappa,0}$ up to scaling.
Similarly, since $\lambda_2(\Sigma_{\eps,\kappa}) \leq \lambda_1(\Sigma) - \eps^{k/4}$, we find from \cref{L^2_conc_surf} combined with \cref{expan_surf},
that the restrictions to $\Sigma \setminus B_{\eps^k}$ are up to a uniform $o(1)$-term (in $L^2$) given by $\phi_0$ up to scaling.
In particular, there is $w_{\eps,\kappa}$ which is a first or a second eigenfunction, such that
$$
\int_{\Sigma_{\eps,\kappa}} |w_{\eps,\kappa} \tilde \phi_{\eps,\kappa,0}| \geq 1/4
$$ 
if $\eps$ is sufficiently small.
Therefore, the assumptions, combined with \cref{expan_surf} once again imply that
$$
\lambda_1(\Sigma) - C \eps^{1/2} \leq \lambda_2(\Sigma_{\eps,\kappa}) \leq \lambda_0(C_{\eps,\kappa}) + \eps^{3\alpha/2+1/2-\tau} 
\leq \lambda_0(C_{\eps,\kappa}) + C \eps^{1/2}.
$$
Analogously, by testing against $\tilde \psi_{\eps,\kappa,0}$, and we obtain that
$$
\lambda_0(C_{\eps,\kappa}) \leq \lambda_1(\Sigma) + C \eps^{3/2\alpha+1/2} \leq \lambda_1(\Sigma) + C \eps^{1/2}.
$$

%%%%%%%%%%%%%%%%%%%%%%%%%%%%%%%
\textsc{Step 2:} 
\textit{Bounding the number of relevant eigenvalues}
\smallskip
%%%%%%%%%%%%%%%%%%%%%%%%%%%%%%%

Let $N \in \IN$ be maximal such that 
\begin{equation} \label{eq_restr_N}
\lambda_N(C_{\eps,\kappa})+(N+1)\eps^{3\alpha/2+1/2 - \tau} \leq \lambda_{1}(\Sigma) + \eps^{1/4}.
\end{equation}
Note that \eqref{eq_restr_N} and the first step imply that
\begin{equation} \label{eq_restr_N_2}
C_0^{-1}\eps^{1/8-\alpha} \leq N \leq C_0 \eps^{1/8-\alpha}
\end{equation}
for a fixed constant $C_0=C_0(\kappa_0,\kappa_1,\tau)$.

Let now $w_{\eps,\kappa}$ be a linear combination of $M \leq 2C_0\eps^{1/8-\alpha}$ eigenfunctions with eigenvalues
\begin{equation} \label{eq_restr_ev}
\lambda_{\eps,\kappa} \in (\lambda_1(\Sigma) - \eps^{1/4}, \lambda_N(C_{\eps,\kappa})+(N+1)\eps^{3\alpha/2+1/2 - \tau}).
\end{equation}
Thanks to our assumption on $\alpha$ and our choice of $M$. 
it follows from \cref{L^2_conc_surf_combi} that $\left.w_{\eps,\kappa} \right|_\Sigma$ is uniformly close in $L^2(\Sigma)$ to a linear combination of $\lambda_1(\Sigma)$-eigenfunctions for $\eps$ sufficiently small depending only on $\Sigma,k,\kappa_0,\kappa_1,\alpha_0$.

Next, we write 
$$
w_{\eps,\kappa} = \sum_{i=1}^M \alpha_{\eps,\kappa,i} u_{\eps,\kappa,i}
$$
as a linear combination of eigenfunctions. 
Let us write $p_{\eps,\kappa,i}$ for the projection of $u_{\eps,\kappa,i}$ considered in \cref{L^2_mass_cyl}.
Then we can apply \cref{L^2_mass_cyl} to each $u_{\eps,\kappa,i}$ (scaled by $a_{\eps,\kappa,i}$) and find combined with Minkowski's and Young's inequality that
\begin{equation*}
\begin{split}
\left\| w_{\eps,\kappa} - \sum_{i=1}^M \alpha_{\eps,\kappa,i} p_{\eps,\kappa,i} \right\|_{L^2(C_{\eps,\kappa})}^2
&\leq 
C \left(\sum_{i=1}^M |\alpha_{\eps,\kappa,i}| \right)^2 \eps^{1-\alpha} 
\leq 
C M \sum_{i=1}^M \alpha_{\eps,\kappa,i}^2 \eps^{1-\alpha}
\\
&\leq 
C M \eps^{1-\alpha}
\leq 
C \eps^{9/8-2 \alpha} \to 0
\end{split}
\end{equation*}
by assumption.
Therefore, $\left. w_{\eps,\kappa}\right|_{C_{\eps,\kappa}}$ is uniformly close in $L^2(C_{\eps,\kappa})$ to a linear combination of the first $(N+1)$-Dirichlet eigenfunctions on $C_{\eps,\kappa}$ for $\eps$ sufficiently small, now
also depending on $\tau$.
Thus, we have proved that any space of linear combinations of eigenfunctions with eigenvalue as in \eqref{eq_restr_ev} and dimension at most $M$ can in fact have dimension at most $(N+1) + K$ for $\eps$ sufficiently small.
(Recall that we denote by $K=\mult(\lambda_1(\Sigma))$ the multiplicity of the first eigenvalue on $\Sigma$).
Since
$$
N+K+1 \leq \frac{3}{2}C_0 \eps^{1/8-\alpha} \leq 2 C_0 \eps^{1/8-\alpha} -1
$$
for $\eps$ sufficiently small thanks to \eqref{eq_restr_N_2}, we find that the space spanned by \emph{all} eigenfunctions as above can have dimension at most $N+1+K$.

\smallskip

%%%%%%%%%%%%%%%%%%%%%%%%%%%%%%%
\textsc{Step 3:} 
\textit{Approximating the corresponding eigenfunctions}
\smallskip
%%%%%%%%%%%%%%%%%%%%%%%%%%%%%%

Since $\lambda_2(\Sigma_{\eps,\kappa}) \leq \min( \lambda_0(C_{\eps,\kappa}) +  \eps^{3\alpha/2+1/2-\tau},\lambda_1(\Sigma) - \eps^{k/4})$, for $\eps$ sufficiently small, it follows from the second step that there are at most
$N + K -1$ eigenfunctions with eigenvalue 
\begin{equation} \label{eq_ev_range}
\lambda_{\eps,\kappa} \in (\min(\lambda_1(\Sigma) - \eps^{k/4}, \lambda_0(C_{\eps,\kappa}) + \eps^{3\alpha/2+1/2-\tau} ),
\lambda_N(C_{\eps,\kappa}) + (N+1) \eps^{3\alpha/2 + 1/2 -\tau}),
\end{equation}
where we still assume \eqref{eq_restr_N}.

On the other hand, we have the quasimodes $\tilde \phi_{\eps,\kappa,1}, \dots , \tilde \phi_{\eps,\kappa,K-1}$ from \cref{mono_quasimode_1} 
and the quasimodes
$\tilde \psi_{\eps,\kappa,1},\dots, \tilde \psi_{\eps,\kappa,N}$ from \cref{lem_asymp_2} whose approximate eigenvalues are in the very same range.
For $l \in [1,N]$ let 
$$
I_{\eps,\kappa,l} = (\lambda_l(C_{\eps,\kappa}) - (l+1)\eps^{3\alpha/2+1/2-\tau} , \lambda_l(C_{\eps,\kappa}) + (l+1)\eps^{3\alpha/2+1/2-\tau}).
$$

Note that by the choice of $\tau$, for $\eps$ sufficiently small (depending only on $\tau,\kappa_0,\kappa_1$), the intervals $I_{\eps,\kappa,l}$
are pairwise disjoint (cf.\ computation \eqref{eq_bd_sep} in the proof of \cref{L^2_mass_cyl}).
Moreover, there is some $l_* \in [1,N]$ such that
$
I_{\eps,\kappa,l} \cap (\lambda_1(\Sigma) - \eps^{k/4}/2,\lambda_1(\Sigma) + \eps^{k/4}/2) = \emptyset
$
if $l \neq l_*$.
It follows from \cref{lem_anne_quasimod} combined with \cref{mono_quasimode_2} and \cref{lem_asymp_1} 
that each of the intervals $I_{\eps,\kappa,l}$ contains at least one eigenvalue if $l \neq l_*$ and $I_{\eps,\kappa,l_*} \cup (\lambda_1(\Sigma) - \eps^{k/4}/2 , \lambda_1(\Sigma) + \eps^{k/4}/2)$ contains at least $K$ eigenvalues.
Therefore, invoking the previous step, and since the intervals $I_{\eps,\kappa,l}$ are disjoint, we find that each $I_{\eps,\kappa,l}$ for $l \neq l_*$ contains exactly one eigenvalue and 
$I_{\eps,\kappa,l_*} \cup (\lambda_1(\Sigma) - \eps^{k/4}/2 , \lambda_1(\Sigma) + \eps^{k/4}/2)$ contains exactly $K$ eigenvalues.
Using  \cref{lem_anne_quasimod} combined with \cref{mono_quasimode_2} and \cref{lem_asymp_1} once again, we find that
for any eigenfunction $w_{\eps,\kappa}$  (not a linear combination as above) with eigenvalue in $I_{\eps,\kappa,l}$, $l \neq l_*$ the corresponding quasimode $q_{\eps,\kappa}$ satisfies
\begin{align} \label{eq_approx_eigen}
\| w_{\eps,\kappa} - q_{\eps,\kappa} \|_{L^2(\Sigma_{\eps,\kappa})} \leq C \eps^{\tau} 
\end{align}
for a uniform constant $C=C(\Sigma,k,\kappa_0,\kappa_1,\delta_0)$ if $\eps$ is sufficiently small.
The very same assertion holds for $I_{\eps,\kappa,l_*} \cup (\lambda_1(\Sigma) - \eps^{k/4}/2 , \lambda_1(\Sigma) + \eps^{k/4}/2)$. In this case we need to apply the Gram--Schmidt process\footnote{This will make the estimate worse by a constant depending only on the dimension of the space, which is bounded by $K$ by construction.}
 first and $q_{\eps,\kappa}$ can be a linear combination of the corresponding quasimodes.

Therefore, it follows that for any such eigenfunction $w_{\eps,\kappa}$ we have that
\begin{equation} \label{eq_contribution_cyl}
\begin{split}
\left| \int_{\Sigma_{\eps,\kappa}} w_{\eps,\kappa} \tilde \phi_{\eps,\kappa,0} \right| 
&\leq 
C \| w_{\eps,\kappa} - q_{\eps,\kappa} \|_{L^2(\Sigma_{\eps,\kappa})}
+\int_{\Sigma \setminus B_{\eps^k}} |q_{\eps,\kappa} \tilde \phi_{\eps,\kappa,0}|
+ \int_{C_{\eps,\kappa}}  |q_{\eps,\kappa} \tilde \phi_{\eps,\kappa,0} |
\\
&\leq C \eps^{\tau} + C(N+1)\eps^{3\alpha/2+1/2} + C \eps^{1/2} + C(N+1)\eps^{3(\alpha+1)/2} \log(1/\eps^k) 
\\
&\leq C \eps^{\tau}
\end{split}
\end{equation}
since the assumption \eqref{eq_restr_N} in particular implies that $(N+1) \eps^{3\alpha/2+1/2-\tau} \leq C$, and also $\tau \leq 1/2$ by assumption.

\smallskip

%%%%%%%%%%%%%%%%%%%%%%%%%%%%%%%
\textsc{Step 4:} 
\textit{Conclusion}
\smallskip
%%%%%%%%%%%%%%%%%%%%%%%%%%%%%%%

We now apply \cref{lem_anne_quasimod} with $\delta=\eps^{1/2}$ and $s=1/2\eps^{1/2-\delta_1}$ for some $\delta_1 \in (0,1/4]$ to the quasimode from \cref{mono_quasimode_2}.
By the first two steps, 
there are at most $C\eps^{1/4-\delta_1/2 -  \alpha}$ eigenvalues in $J_s=[\lambda_1(\Sigma) - s , \lambda_1(\Sigma) + s ]$.
By  \cref{lem_anne_quasimod} the spectral projection of $\tilde \phi_{\eps,\kappa,0}$ to the complement of $J_s$ has $L^2$-norm at most $\eps^{\delta_1}$.
If we apply the third step we find from \eqref{eq_contribution_cyl}
that each but the first two eigenfunctions contributes at most $C \eps^{2\tau}$ to $\|\tilde \phi_{\eps,\kappa,0} \|_{L^2}^2$ so that their total contribution is at most
$C \eps^{2\tau +1/4-\delta_1/2 -  \alpha }  \leq C \eps^{1/4+\alpha - \delta_1 + 2(\tau-\alpha)} \leq  \eps^{1/2 - \delta_1 + 2(\tau-\alpha)}.$
\end{proof}

The second and third step from the argument above in particular imply the following.

\begin{lemma} \label{L^2_lem_mult}
For $k \geq 9$, $\alpha<9/16$ and $\lambda_2(\Sigma_{\eps,\kappa}) \leq \min\{\lambda_1(\Sigma) - \eps^{k/4}, \lambda_0(C_{\eps,\kappa}) + \eps^{3\alpha/2+1/2-\tau}) \}$, we have 
$$
\lambda_3(\Sigma_{\eps,\kappa}) > \lambda_2(\Sigma_{\eps,\kappa})
$$
for $\eps \leq \eps_9$.
\end{lemma}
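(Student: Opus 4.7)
The plan is to reapply the counting ideas from Steps~2 and~3 of the proof of \cref{L^2_surface_part}; these already supply enough information to bound by two the number of eigenvalues lying in the interval where $\lambda_1(\Sigma_{\eps,\kappa})$ and $\lambda_2(\Sigma_{\eps,\kappa})$ sit, from which the conclusion is immediate.

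First, I would observe that Step~1 of the proof of \cref{L^2_surface_part} only uses the current hypothesis and therefore still yields $|\lambda_0(C_{\eps,\kappa})-\lambda_1(\Sigma)|\leq C\eps^{1/2}$ together with $\lambda_2(\Sigma_{\eps,\kappa})\geq \lambda_1(\Sigma)-C\eps^{1/2}$ (via \cref{expan_surf} applied to whichever of the first two eigenfunctions has non-trivial projection onto $\tilde\phi_{\eps,\kappa,0}$). A parallel argument using \cref{expan_cyl} on the eigenfunction concentrated on $C_{\eps,\kappa}$ gives the matching lower bound $\lambda_1(\Sigma_{\eps,\kappa})\geq \lambda_0(C_{\eps,\kappa})-C\eps^{3\alpha/2+1/2-\tau}\geq \lambda_1(\Sigma)-2C\eps^{1/2}$. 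Writing $m := \min\{\lambda_1(\Sigma)-\eps^{k/4},\, \lambda_0(C_{\eps,\kappa})+\eps^{3\alpha/2+1/2-\tau}\}$, both $\lambda_1(\Sigma_{\eps,\kappa})$ and $\lambda_2(\Sigma_{\eps,\kappa})$ therefore lie in the interval $I := (\lambda_1(\Sigma)-\eps^{1/4},\, m]$, which is contained in the spectral window $W = (\lambda_1(\Sigma)-\eps^{1/4},\, \lambda_N(C_{\eps,\kappa})+(N+1)\eps^{3\alpha/2+1/2-\tau})$ treated in Steps~2 and~3.

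Next I would subtract the counts. Step~2 shows that the total number of eigenvalues in $W$ is at most $(N+1)+K$, while Step~3 produces exactly $N+K-1$ eigenvalues in $W\cap(m,\infty)$, namely one per interval $I_{\eps,\kappa,l}$ for $l\in[1,N]\setminus\{l_*\}$ and a further $K$ inside $I_{\eps,\kappa,l_*}\cup(\lambda_1(\Sigma)-\eps^{k/4}/2,\lambda_1(\Sigma)+\eps^{k/4}/2)$; one checks using the disjointness of the $I_{\eps,\kappa,l}$ (guaranteed by $\alpha<9/16$ and $\tau<(1-\alpha)/2$) that the left endpoints of all these intervals exceed $m$. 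Hence at most $(N+K+1)-(N+K-1)=2$ eigenvalues can lie in $I$. Since $\lambda_1(\Sigma_{\eps,\kappa})$ and $\lambda_2(\Sigma_{\eps,\kappa})$ are already in $I$, they exhaust this bound, so $\lambda_3(\Sigma_{\eps,\kappa})>m\geq \lambda_2(\Sigma_{\eps,\kappa})$, as required.

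The main subtle point is confirming that \emph{both} $\lambda_1(\Sigma_{\eps,\kappa})$ and $\lambda_2(\Sigma_{\eps,\kappa})$ lie above $\lambda_1(\Sigma)-\eps^{1/4}$, so that they are actually counted by Step~2's dimension bound. Step~1 of \cref{L^2_surface_part} as stated only produces such a bound for one of the two (the one with non-trivial projection onto a $\lambda_1(\Sigma)$-eigenfunction). The other is handled by applying \cref{L^2_ground_mass_cyl} and \cref{expan_cyl} to pin its eigenvalue within $O(\eps^{3\alpha/2+1/2-\tau})$ of $\lambda_0(C_{\eps,\kappa})$, which is itself within $C\eps^{1/2}$ of $\lambda_1(\Sigma)$. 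Once this is in place, the lemma reduces to a direct subtraction from the counts already established in the proof of \cref{L^2_surface_part}, as indicated by the author.
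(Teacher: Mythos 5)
Your proposal is correct and follows essentially the same route as the paper, which proves this lemma simply by remarking that Steps 2 and 3 of the proof of \cref{L^2_surface_part} imply it; you have accurately unpacked that counting argument (total count $\leq N+K+1$ in the window, exactly $N+K-1$ located strictly above the threshold $m$, hence at most $2$ below it). Your attention to the one genuinely implicit point — that $\lambda_1(\Sigma_{\eps,\kappa})\geq\lambda_1(\Sigma)-\eps^{1/4}$ so that both of the first two eigenvalues are counted by the dimension bound — is well placed and correctly resolved via \cref{L^2_ground_mass_cyl}, \cref{L^2_conc_surf}, and the expansions \cref{expan_surf}/\cref{expan_cyl}.
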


%%%%%%%%%%%%%%%%%%%%%%%%%%%%%%%%%%%%%%%%%%%%%%%%
\subsection{Decomposing the first two eigenfunctions} \label{sec_conc_decomp_two}
%%%%%%%%%%%%%%%%%%%%%%%%%%%%%%%%%%%%%%%%%%%%%%%%

We combine all our efforts of this and the preceding section to obtain a good quantitativ $L^2$-picture of the first two eigenfunctions 
assuming that the first eigenfunction is not too close to $\lambda_1(\Sigma)$
and that some interaction between the spectrum of $\Sigma$ and the spectrum of $C_{\eps,\kappa}$ is observable.

Starting at this point we assume that we have
$$
\phi_0(x_0) \neq 0.
$$
If this does not hold \cref{thm_main_technical} follows from \cref{thm_eig_lim} combined with \cref{lem_anne_quasimod} and \cref{expan_surf} in a straightforward way, see also \cite{MS2}.
In particular, we may normalize the sign of $\phi_0$ by requiring that
$$
\phi_0(x_0)>0.
$$
We also fix the sign of the $\lambda_0(C_{\eps,\kappa})$-eigenfunction $\psi_{\eps,\kappa,0}$ such that $\psi_{\eps,\kappa,0} \geq 0$,
so that in particular
$$
a_{\eps,\kappa,0}>0.
$$

By  \cref{L^2_lem_mult}, if $\lambda_2(\Sigma_{\eps,\kappa}) \leq \min\{  \lambda_1(\Sigma) - \eps^{k/4} , \lambda_0(C_{\eps,\kappa}) + \eps^{3\alpha/2+1/2 -\tau} \}$, the space spanned by the $\lambda_1(\Sigma_{\eps,\kappa})$- and $\lambda_2(\Sigma_{\eps,\kappa})$-eigenfunctions
is two dimensional.
Since the first eigenvalue is simple thanks to \cref{prop_first_simple}, it follows that the first two eigenvalues are both simple.

We now choose a normalized $\lambda_1(\Sigma_{\eps,\kappa})$-eigenfunction $u_{\eps,\kappa}$ and 
a $\lambda_2(\Sigma_{\eps,\kappa})$-eigenfunction $v_{\eps,\kappa}$.
Slightly conflicting with our earlier notation (which we will not use again), we write
\begin{equation} \label{eq_def_mass}
\begin{split}
m_{\eps,\kappa,1}&:= \int_{\Sigma \setminus B_{\eps^k}} u_{\eps,\kappa} \phi_0,
\ \ 
n_{\eps,\kappa,1}:=\int_{C_{\eps,\kappa}} u_{\eps,\kappa} \psi_{\eps,\kappa,0},
\\
m_{\eps,\kappa,2}& :=  \int_{\Sigma \setminus B_{\eps^k}} v_{\eps,\kappa} \phi_0,
\ \
n_{\eps,\kappa,2}:=\int_{C_{\eps,\kappa}} v_{\eps,\kappa} \psi_{\eps,\kappa,0}.
\end{split}
\end{equation}
By multiplying $u_{\eps,\kappa}$ and $v_{\eps,\kappa}$ by $-1$ if necessary, we can and will assume from here on that 
$n_{\eps,\kappa,1},n_{\eps,\kappa,2} \geq 0$.

In order to use the asymptotic expansions given in \cref{sec_quasimodes} it will be helpful to understand the numbers $m_{\eps,\kappa,i}$ and $n_{\eps,\kappa,i}$ using the results from this section.

\begin{prop}  \label{L^2_main_decomp}
Assume that $\alpha \leq \alpha_0<9/16$,
$\lambda_2(\Sigma_{\eps,\kappa}) \leq \min\{ \lambda_0(C_{\eps,\kappa}) + \eps^{3 \alpha / 2+1/2-\tau}, \lambda_1(\Sigma) - \eps^{k/4}\}$ 
for some $0<\tau<(1-\alpha)/2$ and $k \geq 9$.
Then we have 
\begin{equation} \label{eq_L^2_decomp_1}
m_{\eps,\kappa,i}^2+n_{\eps,\kappa,i}^2 = 1 - O(\eps^\tau) - O(\eps^{1/4+2(\tau-\alpha)})
\end{equation}
and 
\begin{equation} \label{eq_L^2_decomp_2}
m_{\eps,\kappa,1} n_{\eps,\kappa,1}+m_{\eps,\kappa,2}n_{\eps,\kappa,2}=O(\eps^{\tau}).
\end{equation}
and
\begin{equation} \label{eq_L^2_decomp_3}
m_{\eps,\kappa,1}=n_{\eps,\kappa,2} + O(\eps^{1/8+(\tau-\alpha)}) + O(\eps^{\tau})
\end{equation}
and
\begin{equation} \label{eq_L^2_decomp_4}
m_{\eps,\kappa,2}=-n_{\eps,\kappa,1} + O(\eps^{1/8+(\tau-\alpha)}) + O(\eps^{\tau}),
\end{equation}
where the remainder terms are uniform $\alpha$ and in $\kappa \in [\kappa_0,\kappa_1]$ as long as $\alpha_0$ and $\tau$ are fixed.
\end{prop}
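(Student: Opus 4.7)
The plan is to set up the $2\times 2$ matrix
\[
A = \begin{pmatrix} M_1 & M_2 \\ N_1 & N_2 \end{pmatrix},
\qquad
M_i = \langle u_i, \tilde\phi_{\eps,\kappa,0}\rangle_{L^2(\Sigma_{\eps,\kappa})},
\quad
N_i = \langle u_i, \tilde\psi_{\eps,\kappa,0}\rangle_{L^2(\Sigma_{\eps,\kappa})},
\]
(writing $u_1 = u_{\eps,\kappa}$, $u_2 = v_{\eps,\kappa}$) and to show that $A$ is nearly orthogonal with determinant close to $+1$. The passage from the entries of $A$ to the quantities $m_{\eps,\kappa,i}, n_{\eps,\kappa,i}$ is routine from the explicit form of the quasimodes: $\tilde\phi_{\eps,\kappa,0}$ differs from $\phi_0$ only on the annulus $B_{2\eps^k}\setminus B_{\eps^k}$ and is bounded on $C_{\eps,\kappa}$ (whose area is $O(\eps)$), giving $M_i = m_{\eps,\kappa,i} + O(\eps^{1/2})$ by H\"older; similarly $N_i = n_{\eps,\kappa,i} + O(\eps^{3\alpha/2+1/2})$, using that $\tilde\psi_{\eps,\kappa,0}$ has $L^2$-norm $O(|a_{\eps,\kappa,0}|)$ on $\Sigma\setminus B_{\eps^k}$ and agrees with $\psi_{\eps,\kappa,0}$ on $C_{\eps,\kappa}$ up to a constant of magnitude $O(|a_{\eps,\kappa,0}|\log(1/\eps^k))$.

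The core step is two orthogonal $L^2$-decompositions
\[
\tilde\phi_{\eps,\kappa,0} = M_1 u_{\eps,\kappa} + M_2 v_{\eps,\kappa} + R_\phi,
\qquad
\tilde\psi_{\eps,\kappa,0} = N_1 u_{\eps,\kappa} + N_2 v_{\eps,\kappa} + R_\psi,
\]
with $R_\phi, R_\psi \perp \operatorname{span}(u_{\eps,\kappa}, v_{\eps,\kappa})$. The first is exactly \cref{L^2_surface_part}; taking $\delta_1 = 1/8 + (\tau-\alpha)$ yields $\|R_\phi\|_{L^2}^2 \leq C\eps^{1/4+2(\tau-\alpha)}$. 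The second, the main new input, would come from applying \cref{lem_anne_quasimod} to $\tilde\psi_{\eps,\kappa,0}$ at $\lambda = \lambda_0(C_{\eps,\kappa})$: \cref{lem_asymp_2} bounds the quasimode defect by $\delta = O(|a_{\eps,\kappa,0}|) = O(\eps^{3\alpha/2+1/2})$ (both explicit terms on the right of \eqref{eq_asymp_2} are dominated by $|a_{\eps,\kappa,0}|\cdot\|\varphi\|_{W^{1,2}}$), and I would take $s = c\eps^{2\alpha}$. This choice of $s$ captures only $\lambda_1(\Sigma_{\eps,\kappa})$ and $\lambda_2(\Sigma_{\eps,\kappa})$ because the next eigenvalue is forced into the $\lambda_1(C_{\eps,\kappa})$ cluster (separated from $\lambda_0(C_{\eps,\kappa})$ by the $\sim \eps^{2\alpha}$ gap) by the eigenvalue-counting argument of Steps~2--3 in the proof of \cref{L^2_surface_part}. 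The outcome is $\|R_\psi\|_{W^{1,2}} \leq C\delta/s = O(\eps^{(1-\alpha)/2})$.

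Computing the Gram matrix of $(\tilde\phi_{\eps,\kappa,0}, \tilde\psi_{\eps,\kappa,0})$ directly gives $\|\tilde\phi_{\eps,\kappa,0}\|^2 = 1 + O(\eps)$, $\|\tilde\psi_{\eps,\kappa,0}\|^2 = 1 + O(|a_{\eps,\kappa,0}|)$, and $\langle\tilde\phi_{\eps,\kappa,0}, \tilde\psi_{\eps,\kappa,0}\rangle = O(|a_{\eps,\kappa,0}|)$: the cross term splits into $a_{\eps,\kappa,0}\phi_0(x_0)/\lambda_1(\Sigma)$ on $\Sigma\setminus B_{\eps^k}$ (using $\int_\Sigma G(x_0,\cdot)\phi_0 = \phi_0(x_0)/\lambda_1(\Sigma)$) and $\phi_0(x_0) \int_{C_{\eps,\kappa}}\psi_{\eps,\kappa,0} = a_{\eps,\kappa,0}\phi_0(x_0)/\lambda_0(C_{\eps,\kappa})$ on $C_{\eps,\kappa}$ (via integration by parts). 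Pythagoras applied to both decompositions then gives $AA^T = I + O(\eps^\tau + \eps^{1/4+2(\tau-\alpha)})$. Since $A$ is $2\times 2$ this is equivalent to $A^TA = I + $(same error); the diagonal entries of $A^TA$ yield \eqref{eq_L^2_decomp_1} and the off-diagonal of $AA^T$ yields \eqref{eq_L^2_decomp_2} after substituting $M_i \approx m_i$, $N_i \approx n_i$. Moreover $|\det A| \approx 1$, and a consistent choice of signs of $u_{\eps,\kappa}, v_{\eps,\kappa}$ compatible with $n_{\eps,\kappa,i}\geq 0$ fixes $\det A \approx +1$; then $A^{-1} \approx A^T$ reads entrywise as $N_2 \approx M_1$ and $M_2 \approx -N_1$, which are \eqref{eq_L^2_decomp_3} and \eqref{eq_L^2_decomp_4}.

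The main obstacle is the decomposition of $\tilde\psi_{\eps,\kappa,0}$. Unlike $\tilde\phi_{\eps,\kappa,0}$, for which the gap to unwanted eigenvalues is the comparatively comfortable $\eps^{k/4}$ and one can rerun the eigenvalue count of \cref{L^2_surface_part}, here the gap is only $\sim \eps^{2\alpha}$ and the quasimode defect of $\tilde\psi_{\eps,\kappa,0}$ inherits a $\log(1/\eps^k)$ factor from \cref{lem_asymp_2}; the quantitative spectral projection provided by \cref{lem_anne_quasimod} then suffices precisely because $|a_{\eps,\kappa,0}| = O(\eps^{3\alpha/2+1/2})$ beats $\eps^{2\alpha}$ for $\alpha < 1/2$, which explains the appearance of that range in the standing hypotheses.
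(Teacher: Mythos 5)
Your overall architecture — decompose both quasimodes over $\operatorname{span}(u_{\eps,\kappa},v_{\eps,\kappa})$, control the remainders, and read off the statement from near-orthogonality of the resulting $2\times 2$ matrix — is the same as the paper's. But there is a genuine gap in your control of $R_\psi$. You apply \cref{lem_anne_quasimod} with window $s=c\eps^{2\alpha}$ around $\lambda_0(C_{\eps,\kappa})$ and assert that this window "captures only $\lambda_1(\Sigma_{\eps,\kappa})$ and $\lambda_2(\Sigma_{\eps,\kappa})$". The counting argument of Steps 2--3 of \cref{L^2_surface_part} does not give this: it locates the higher eigenvalues in the clusters $I_{\eps,\kappa,l}$ \emph{and} in the interval $(\lambda_1(\Sigma)-\eps^{k/4}/2,\lambda_1(\Sigma)+\eps^{k/4}/2)$, which contains $K-1$ further eigenvalues associated with the $\lambda_1(\Sigma)$-eigenfunctions vanishing at $x_0$ when $K=\mult(\lambda_1(\Sigma))\geq 2$. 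Step 1 only gives $|\lambda_1(\Sigma)-\lambda_0(C_{\eps,\kappa})|\leq C\eps^{1/2}$, and since $\eps^{2\alpha}\ll\eps^{1/2}$ is \emph{not} true in the relevant range (one has $\eps^{2\alpha}\leq\eps^{1/2}$, but $\lambda_1(\Sigma)$ may well lie within $c\eps^{2\alpha}$ of $\lambda_0(C_{\eps,\kappa})$ — indeed this near-resonance is exactly the regime the paper is driving towards), those $K-1$ eigenvalues can sit inside your window, where Anné's lemma gives no control on the corresponding spectral projection of $\tilde\psi_{\eps,\kappa,0}$. The paper handles precisely this by splitting off an extra term $s_{\eps,\kappa}$ (the projection onto eigenvalues in $[\lambda_3(\Sigma_{\eps,\kappa}),\lambda_0(C_{\eps,\kappa})+2\eps^{3\alpha/2+1/2-\tau}]$) and bounding it by $C\eps^{2\tau}$ using the individual approximation \eqref{eq_approx_eigen} of those eigenfunctions by the quasimodes $\tilde\phi_{\eps,\kappa,j}$, $j\geq 1$, which are nearly orthogonal to $\tilde\psi_{\eps,\kappa,0}$. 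This is a fixable omission, but as written your step fails whenever $K\geq 2$.

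A secondary point: the sign of $\det A$ is not "fixed by a consistent choice of signs compatible with $n_{\eps,\kappa,i}\geq 0$". Once $n_{\eps,\kappa,1},n_{\eps,\kappa,2}\geq 0$ there is no residual sign freedom, and near-orthogonality alone is consistent with both $\det A\approx +1$ and $\det A\approx -1$ (e.g.\ $m_1=-n_2$, $m_2=n_1$ also satisfies \eqref{eq_L^2_decomp_1}--\eqref{eq_L^2_decomp_2}). To rule out the wrong sign you need the paper's additional input: by \cref{expan_cyl} the sign of $\lambda_0(C_{\eps,\kappa})-\lambda_1(\Sigma_{\eps,\kappa})$ agrees with that of $m_{\eps,\kappa,1}/n_{\eps,\kappa,1}$ when neither is too small, and \cref{first_upper_bd} forces this to be non-negative; only then do \eqref{eq_L^2_decomp_3} and \eqref{eq_L^2_decomp_4} follow rather than their negatives.
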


\begin{proof}
We can decompose the quasimode $\tilde \phi_{\eps,\kappa,0}$ as 
$$
\tilde \phi_{\eps,\kappa,0} = \tilde m_{\eps,\kappa,1} u_{\eps,\kappa} + \tilde m_{\eps,\kappa,2} v_{\eps,\kappa} + r_{\eps,\kappa},
$$
where $r_{\eps,\kappa}$ is orthogonal to $u_{\eps,\kappa}$ and $v_{\eps,\kappa}$. 
Moreover, we clearly have 
$$
|\tilde m_{\eps,\kappa,i} - m_{\eps,\kappa,i}| \leq C \eps^{1/2}
$$ 
for $i=1,2$ and thus \cref{L^2_surface_part} (with the specific choice $\delta_1=1/4$) 
implies 
\begin{equation} \label{eq_decomp_surf}
m_{\eps,\kappa,1}^2+m_{\eps,\kappa,2}^2 = 1 - O(\eps^{1/4+2(\tau-\alpha)}).
\end{equation}
Similarly, we can decompose the quasimode $\tilde \psi_{\eps,\kappa,0}$ as
$$
\tilde \psi_{\eps,\kappa,0} = \tilde n_{\eps,\kappa,1} u_{\eps,\kappa} + \tilde n_{\eps,\kappa,2} v_{\eps,\kappa} + s_{\eps,\kappa} + t_{\eps,\kappa},
$$
where $s_{\eps,\kappa}$ is the spectral projection onto the sum of eigenspaces with eigenvalues
in $[\lambda_3(\Sigma_{\eps,\kappa}), \lambda_0(C_{\eps,\kappa}) + 2 \eps^{3\alpha/2+1/2-\tau}]$ 
and $t_{\eps,\kappa}$  is orthogonal to the first three summands.
In this case we clearly have
$$
|\tilde n_{\eps,\kappa,i} - n_{\eps,\kappa,i}| \leq C \eps^{3\alpha/2+1/2}.
$$
It follows from \cref{lem_anne_quasimod} applied to \cref{lem_asymp_2} that
$$
\|t_{\eps,\kappa}\|_{L^2(\Sigma_{\eps,\kappa})}^2 \leq C \eps^{2 \tau}.
$$
Arguing exactly as in the proof of \cref{L^2_surface_part}, we find also 
$$
\|s_{\eps,\kappa}\|_{L^2(\Sigma_{\eps,\kappa})}^2 \leq C \eps^{2 \tau}.
$$
(We have to take into account exactly those eigenfunctions very close to the $\lambda_1(\Sigma)$-eigenfunctions that vanish at $x_0$.)
Combining the last three estimates  we arrive at

\begin{equation} \label{eq_decomp_cyl}
n_{\eps,\kappa,1}^2 + n_{\eps,\kappa,2}^2 = 1 - O(\eps^{2 \tau}).
\end{equation}

Note that \eqref{eq_decomp_surf} and \eqref{eq_decomp_cyl} combined with the trivial estimate
$
m_{\eps,\kappa,2}^2+n_{\eps,\kappa,2}^2 \leq 1
$
implies that
$$
m_{\eps,\kappa,1}^2+n_{\eps,\kappa,1}^2 
\geq
m_{\eps,\kappa,1}^2+n_{\eps,\kappa,1}^2 + m_{\eps,\kappa,2}^2 + n_{\eps,\kappa,2}^2 -1
\geq
1 -O(\eps^{2\tau}) + O(\eps^{1/4-2(\tau-\alpha)}).
$$
This in turn, combined with $m_{\eps,\kappa,1}^2 + n_{\eps,\kappa,1}^2 \leq 1$ gives that
$$
m_{\eps,\kappa,1}^2+n_{\eps,\kappa,1}^2 = 1 -O(\eps^{2\tau}) + O(\eps^{1/4-2(\tau-\alpha)}).
$$
The exact same computation also implies that
$$
m_{\eps,\kappa,2}^2+n_{\eps,\kappa,2}^2 = 1 -O(\eps^{2\tau}) + O(\eps^{1/4-2(\tau-\alpha)}),
$$
which already proves \eqref{eq_L^2_decomp_1}.
The remaining assertions can be obtained as follows.
First, note that $\|\tilde \psi_{\eps,\kappa,0} - n_{\eps,\kappa,1} u_{\eps,\kappa} - n_{\eps,\kappa,2} v_{\eps,\kappa} \|_{L^2(\Sigma_{\eps,\kappa})} \leq C \eps^{\tau}$ in particular implies that
\begin{equation*}
|m_{\eps,\kappa,1} n_{\eps,\kappa,1} + m_{\eps,\kappa,2} n_{\eps,\kappa,2}| =
\left| \int_{\Sigma \setminus B_{\eps^k}} (n_{\eps,\kappa,1} u_{\eps,\kappa} + n_{\eps,\kappa,2} v_{\eps,\kappa}) \phi_0 \right|
\leq C \eps^{\tau}.
\end{equation*}
Moreover, $m_{\eps,\kappa,1}$ and $n_{\eps,\kappa,1}$ need to have the same sign whenever $|m_{\eps,\kappa,1}| \geq \eps^{1/2}$ and $|n_{\eps,\kappa,1}| \geq \eps^{3\alpha/2+1/4}$ thanks to \cref{expan_cyl} combined with \cref{first_upper_bd}:
Under these assumptions on $m_{\eps,\kappa,1}$ and $n_{\eps,\kappa,1}$, the sign of 
$\lambda_0(C_{\eps,\kappa})-\lambda_1(\Sigma_{\eps,\kappa})$ coincides with the sign of $m_{\eps,\kappa,1}/n_{\eps,\kappa,1}$.
 But we know that the former is non-negative once $\eps$ is sufficiently small.
 \end{proof}

\begin{rem}
This is the first time that we actually crucially make use of our precise knowledge of the form of the leading order term in the asymptotic expansion \cref{expan_cyl}.
In our preceding estimates we could also have worked with Dirichlet eigenfunctions of $C_{\eps,\kappa}$ for the expense of paying by a multiplicative $\log(1/\eps)$-term.
\end{rem}

\cref{L^2_main_decomp} will be a key technical tool, but it assumes that $\lambda_2(\Sigma_{\eps,\kappa}) \leq \lambda_1(\Sigma) - \eps^{k/4}$.
We can not ensure to have this in any step of our iteration argument in \cref{sec_iter}.
Instead, there is a much simpler estimate that handles this case. 
It will be useful to state it in a rather general form.

\begin{lemma} \label{L^2_lem_lambda_2_close}
Assume that
$\lambda_1(\Sigma_{\eps,\kappa}) \leq \lambda_1(\Sigma) - \eps^{k/4}$
and $|m_{\eps,\kappa,1}| \geq  \eps^{1/8}$.
Then
$$
\lambda_0(C_{\eps,\kappa}) \geq \lambda_2(\Sigma_{\eps,\kappa})- \frac{\phi_0(x_0) \lambda_0(C_{\eps,\kappa})}{\lambda_1(\Sigma)}\frac{n_{\eps,\kappa,1}}{m_{\eps,\kappa,1}} a_{\eps,\kappa,0} + \frac{O(\eps^{3\alpha/2+3/4})}{m_{\eps,\kappa,1}^2},
$$
and the remainder term depends only on $\Sigma,k,\delta_0,\kappa_0,\kappa_1$.
\end{lemma}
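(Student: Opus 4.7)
The plan is to apply the min-max characterization of $\lambda_2(\Sigma_{\eps,\kappa})$ to the two-dimensional subspace $\mathrm{span}(u_{\eps,\kappa}, f)$, where
\begin{equation*}
f := \tilde \psi_{\eps,\kappa,0} - \tilde n_1\, u_{\eps,\kappa}, \qquad \tilde n_1 := \int_{\Sigma_{\eps,\kappa}} u_{\eps,\kappa}\, \tilde \psi_{\eps,\kappa,0},
\end{equation*}
chosen so that $f \perp u_{\eps,\kappa}$. This yields $\lambda_2(\Sigma_{\eps,\kappa}) \leq \max(\lambda_1(\Sigma_{\eps,\kappa}),\, R(f))$ with $R(f) = \int|\nabla f|^2/\|f\|^2$, and since $\lambda_1(\Sigma_{\eps,\kappa}) \leq \lambda_0(C_{\eps,\kappa})$ by \cref{first_upper_bd}, it suffices to bound $R(f) - \lambda_0(C_{\eps,\kappa})$ from above.

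Since $\int \nabla u_{\eps,\kappa}\cdot \nabla \tilde\psi_{\eps,\kappa,0} = \lambda_1 \tilde n_1$, one has $\int |\nabla f|^2 = \int |\nabla \tilde\psi_0|^2 - \lambda_1 \tilde n_1^2$ and $\|f\|^2 = \|\tilde\psi_0\|^2 - \tilde n_1^2$. Applying \cref{lem_asymp_2} with $\varphi = \tilde\psi_{\eps,\kappa,0}$ gives $\int |\nabla \tilde\psi_0|^2 = \lambda_0(C)\|\tilde\psi_0\|^2 + R^*$ with $|R^*| = O(\eps^{3\alpha/2+3/2}\log^2(1/\eps))$, while \cref{expan_cyl} applied to $u_{\eps,\kappa}$ (with the middle error term improved using \cref{L^2_mean_bd}, which applies since $\lambda_1 \leq \lambda_0(C) \leq \lambda_1(C)$) yields
\begin{equation*}
\tilde n_1 (\lambda_0(C)-\lambda_1) = a_{\eps,\kappa,0}\frac{\phi_0(x_0)\lambda_0(C)}{\lambda_1(\Sigma)} m_{\eps,\kappa,1} + O(\eps^{3\alpha/2+3/2}\log^2(1/\eps)).
\end{equation*}
Combining these and using $\tilde n_1 = n_{\eps,\kappa,1} + O(a_{\eps,\kappa,0})$, one obtains
\begin{equation*}
R(f)-\lambda_0(C_{\eps,\kappa}) = \frac{a_{\eps,\kappa,0}\,\frac{\phi_0(x_0)\lambda_0(C)}{\lambda_1(\Sigma)}\, n_{\eps,\kappa,1}\, m_{\eps,\kappa,1} + O(\eps^{3\alpha/2+3/2}\log^2(1/\eps))}{\|f\|^2}.
\end{equation*}

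The essential step is to replace $\|f\|^2$ by $m_1^2$ in the denominator. A short computation gives $\|\tilde\psi_0\|^2 = 1 + O(a_{\eps,\kappa,0}^2\log^2)$ and $\tilde n_1^2 = n_1^2 + O(a_{\eps,\kappa,0})$, hence $\|f\|^2 = 1 - n_1^2 + O(a_{\eps,\kappa,0})$. Cauchy--Schwarz applied to the definitions of $m_1$ and $n_1$ gives $m_1^2 + n_1^2 \leq \|u_{\eps,\kappa}\|_{L^2(\Sigma\setminus B_{\eps^k})}^2 + \|u_{\eps,\kappa}\|_{L^2(C_{\eps,\kappa})}^2 = 1$, so $\|f\|^2 \geq m_1^2 - Ca_{\eps,\kappa,0}$. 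The hypothesis $|m_1| \geq \eps^{1/8}$ together with $a_{\eps,\kappa,0} = O(\eps^{3\alpha/2+1/2}) \ll \eps^{1/4}$ then yield $\|f\|^2 \geq m_1^2/2$ and $m_1^2/\|f\|^2 = 1 + O(\eps^{3\alpha/2+1/4})$. Multiplying by $m_1^2$ and absorbing the remainders (in particular the cross term $O(\eps^{3\alpha/2+1/4})\cdot a_{\eps,\kappa,0}|n_1 m_1| = O(\eps^{3\alpha+3/4}) \leq O(\eps^{3\alpha/2+3/4})$ for $\alpha > 1/3$) gives $(R(f)-\lambda_0)m_1^2 \leq a_{\eps,\kappa,0}(\phi_0(x_0)\lambda_0(C)/\lambda_1(\Sigma))\, n_1 m_1 + O(\eps^{3\alpha/2+3/4})$, which is the claim after dividing by $m_1^2$.

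The main obstacle is the sign case $m_{\eps,\kappa,1} < 0$, where the main term is non-positive and the factor $m_1^2/\|f\|^2$ slightly larger than one could in principle worsen the bound. However, in this case the displayed expansion of $\tilde n_1(\lambda_0-\lambda_1)$ forces $\tilde n_1 < 0$ once $\eps$ is small: the leading term $a_{\eps,\kappa,0}(\phi_0\lambda_0/\lambda_1)m_1$ has magnitude at least $c\eps^{3\alpha/2+5/8}$, which dominates the error $O(\eps^{3\alpha/2+3/2}\log^2)$. Combined with $n_1 \geq 0$ and $\tilde n_1 = n_1 + O(a_{\eps,\kappa,0})$, this forces $n_{\eps,\kappa,1} = O(a_{\eps,\kappa,0})$, so that $a_{\eps,\kappa,0}|n_1 m_1| = O(\eps^{3\alpha+1}) \leq O(\eps^{3\alpha/2+3/4})$ is already absorbed in the error. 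The remaining bound $(\lambda_2-\lambda_0)m_1^2 \leq O(\eps^{3\alpha/2+3/4})$ then follows either trivially (if $R(f) \leq \lambda_0$, using $\lambda_2 \leq \lambda_0$) or, if $R(f) > \lambda_0$, from the numerator bound $O(\eps^{3\alpha/2+3/2}\log^2)$ in the formula for $R(f)-\lambda_0$, which gives $(R(f)-\lambda_0)m_1^2 \leq O(\eps^{3\alpha/2+3/2}\log^2) \leq O(\eps^{3\alpha/2+3/4})$.
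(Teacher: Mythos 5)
Your argument is essentially the paper's own proof in a slightly different dress: the paper decomposes $\tilde \psi_{\eps,\kappa,0} = \tilde n_{\eps,\kappa,1} u_{\eps,\kappa} + r_{\eps,\kappa}$, where $r_{\eps,\kappa}$ is the spectral projection onto the eigenspaces with eigenvalue at least $\lambda_2(\Sigma_{\eps,\kappa})$, uses $\int \nabla \tilde\psi_{\eps,\kappa,0}\cdot\nabla r_{\eps,\kappa} \geq \lambda_2(\Sigma_{\eps,\kappa})\|r_{\eps,\kappa}\|_{L^2}^2$, and evaluates everything with \cref{lem_asymp_2}; your $f$ is exactly this $r_{\eps,\kappa}$ up to its component along the constant functions, and testing \cref{lem_asymp_2} separately against $\tilde\psi_{\eps,\kappa,0}$ and $u_{\eps,\kappa}$ is the same computation. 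That constant component is the one place where your write-up has a genuine, though easily repaired, gap: $\mathrm{span}(u_{\eps,\kappa},f)$ is not $L^2$-orthogonal to the constants, since $\int f = \int \tilde\psi_{\eps,\kappa,0} = O(a_{\eps,\kappa,0}) \neq 0$, so the two-dimensional min-max principle does not directly give $\lambda_2(\Sigma_{\eps,\kappa}) \leq \max(\lambda_1(\Sigma_{\eps,\kappa}), R(f))$ (for a general two-plane this inequality fails, e.g.\ for $\mathrm{span}(1,u_{\eps,\kappa})$). Replacing $f$ by $f$ minus its mean value fixes this: both orthogonality relations with $u_{\eps,\kappa}$ are preserved, the Dirichlet energy is unchanged, and $\|f\|_{L^2}^2$ changes only by $O(a_{\eps,\kappa,0}^2)$, which is absorbed in your error terms. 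Two further small remarks: the identity $m_{\eps,\kappa,1}^2/\|f\|^2 = 1 + O(\eps^{3\alpha/2+1/4})$ is really only the one-sided bound $m_{\eps,\kappa,1}^2/\|f\|^2 \leq 1 + O(\eps^{3\alpha/2+1/4})$, since $\|f\|^2 \approx 1-n_{\eps,\kappa,1}^2$ can greatly exceed $m_{\eps,\kappa,1}^2$; but because the main term $a_{\eps,\kappa,0}\,n_{\eps,\kappa,1}m_{\eps,\kappa,1}$ is nonnegative when $m_{\eps,\kappa,1}>0$, the one-sided bound is all you use there. Your separate treatment of the case $m_{\eps,\kappa,1}<0$ — using the lower bound $a_{\eps,\kappa,0}\gtrsim \eps^{3\alpha/2+1/2}$ from \cref{spec_rem_sharp} together with the sign normalization $n_{\eps,\kappa,1}\geq 0$ to force $n_{\eps,\kappa,1}=O(a_{\eps,\kappa,0})$ — is correct and in fact spells out a point the paper's terse "easily follows" glosses over.
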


Note that the assumption in particular implies that the first eigenvalue is simple thanks to \cref{prop_first_simple}.

\begin{proof}
We can decompose 
$$
\tilde \psi_{\eps,\kappa,0} = \tilde n_{\eps,\kappa,1} u_{\eps,\kappa} + r_{\eps,\kappa},
$$
where $|\tilde n_{\eps,\kappa,1} - n_{\eps,\kappa,1}| \leq C \eps^{3 \alpha/2 +1/2}$ and $r_{\eps,\kappa}$ is the spectral projection of $\tilde \psi_{\eps,\kappa,0}$
to the sum of all eigenspaces with eigenvalues at least $\lambda_2(\Sigma_{\eps,\kappa}).$
We then have 
\begin{equation} \label{eq_est_0}
\int_{\Sigma_{\eps,\kappa}} \tilde \psi_{\eps,\kappa,0} r_{\eps,\kappa}=
\int_{\Sigma_{\eps,\kappa}} |r_{\eps,\kappa}|^2,
\end{equation}
and also
$$
\|r_{\eps,\kappa}\|_{L^2(\Sigma_{\eps,\kappa})}^2 + \tilde n_{\eps,\kappa,1}^2 = \| \tilde \psi_{\eps,\kappa,0} \|_{L^2(\Sigma_{\eps,\kappa})}^2 = 1 + O(\eps^{3 \alpha /2 +1/2}).
$$
This implies 
\begin{equation} \label{eq_est_1}
\|r_{\eps,\kappa}\|_{L^2(\Sigma_{\eps,\kappa})}^2  = 1 -  n_{\eps,\kappa,1}^2 - O(\eps^{3 \alpha /2 +1/2})
\geq m_{\eps,\kappa,1}^2 - O(\eps^{3 \alpha/2+1/2}),
\end{equation}
since $m_{\eps,\kappa,1}^2+n_{\eps,\kappa,1}^2 \leq 1$.
Similarly, we find 
\begin{equation} \label{eq_est_2}
 \int_{\Sigma \setminus B_{\eps^k}} \phi_0 r_{\eps,\kappa} 
= -  n_{\eps,\kappa,1}m_{\eps,\kappa,1} + O(\eps^{3 \alpha/2 +1/2}).
\end{equation}
A short computation implies 
\begin{equation} \label{eq_est_3}
\int_{\Sigma_{\eps,\kappa}} \nabla \tilde \psi_{\eps,\kappa,0} \nabla r_{\eps,\kappa}
\geq
\lambda_2(\Sigma_{\eps,\kappa}) \int_{\Sigma_{\eps,\kappa}} \tilde \psi_{\eps,\kappa,0} r_{\eps,\kappa}.
\end{equation}
The lemma now easily follows from \cref{lem_asymp_2} together with \eqref{eq_est_0}, \eqref{eq_est_1}, \eqref{eq_est_2}, and \eqref{eq_est_3} using the assumption $|m_{\eps,\kappa,1}| \geq \eps^{1/8}$ to remove the error term from the denominator.
\end{proof}

%%%%%%%%%%%%%%%%%%%%%%%%%%%%%%%%%%%%%%
\subsection{Locating scales of interaction}
%%%%%%%%%%%%%%%%%%%%%%%%%%%%%%%%%%%%%%

The picture for the first eigenfunction that we have at this point in particular gives that the first eigenfunction is very concentrated on $C_{\eps,\kappa}$ for $\kappa=\kappa_0$
and very concentrated on $\Sigma$ for $\kappa=\kappa_1$.
In fact, we even have quantitative control on this behaviour.
For intermediate values of $\kappa$ the picture changes dramatically, at least if $\lambda_1(\Sigma_{\eps,\kappa})$
is not too close to $\lambda_1(\Sigma)$.
In this subsection we record a quantitative version of this.

\begin{lemma} \label{lem_quant_kappa0}
Let $\tau < (1-\alpha)/2$.
There are $C=C(\Sigma,k,\delta_0,\kappa_0)>0$ and $\eps_{10}=\eps_{10}(\Sigma,k,\delta_0,\kappa_0,\tau)>0$ such that,
at $\kappa_0$ we have,
that 
$$
\int_{C_{\eps,\kappa}} u_{\eps,\kappa} \psi_{\eps,\kappa,0} \geq 1 - C \eps^{\tau}
$$
for $\eps \leq \eps_{10}$.
\end{lemma}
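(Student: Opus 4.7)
The plan is to exploit the fact that at $\kappa = \kappa_0$ the first eigenvalue of $\Sigma_{\eps,\kappa_0}$ is bounded above by $\lambda_0(C_{\eps,\kappa_0})$ (by \cref{first_upper_bd}), which in turn lies strictly below $\lambda_1(\Sigma)$ with a fixed positive gap, since $\kappa_0^2/4 < \lambda_1(\Sigma)$ and $\lambda_0(C_{\eps,\kappa_0}) = \kappa_0^2/4 + O(\eps^{2\alpha})$. This spectral separation will force the first eigenfunction $u := u_{\eps,\kappa_0}$ to concentrate quantitatively on the cylinder, and \cref{L^2_ground_mass_cyl} will then upgrade this concentration to concentration on the ground state $\psi_{\eps,\kappa_0,0}$.

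First I would set $A := \int_{\Sigma \setminus B_{\eps^k}} |u|^2$, $B := \int_{C_{\eps,\kappa_0}} |u|^2$ (so $A + B = 1$), and aim to bound $A$ by $O(\eps^{1/2}\log(1/\eps))$. To estimate the Dirichlet energy on the cylinder from below, I decompose $u|_{C_{\eps,\kappa_0}} = h + v$ with $h$ the harmonic extension of the boundary trace and $v \in W_0^{1,2}(C_{\eps,\kappa_0})$. Using $|h| \leq C \log(1/\eps)$ (from the maximum principle together with \cref{lem_pt_bd}) gives $\|h\|_{L^2} \leq C \eps^{1/2} \log(1/\eps)$, whence orthogonality of $\nabla v$ and $\nabla h$ yields
\[
\int_{C_{\eps,\kappa_0}} |\nabla u|^2 \geq \lambda_0(C_{\eps,\kappa_0})\bigl(B - C \eps^{1/2} \log(1/\eps)\bigr).
\]
Subtracting this from $\int_{\Sigma_{\eps,\kappa_0}} |\nabla u|^2 = \lambda_1(\Sigma_{\eps,\kappa_0}) \leq \lambda_0(C_{\eps,\kappa_0})$ gives
\[
\int_{\Sigma \setminus B_{\eps^k}} |\nabla u|^2 \leq \lambda_0(C_{\eps,\kappa_0}) A + C \eps^{1/2} \log(1/\eps).
\]

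Next I control $A$ via a Poincar\'e inequality on $\Sigma \setminus B_{\eps^k}$. Since $u$ is orthogonal to constants on $\Sigma_{\eps,\kappa_0}$, the mean $\bar u$ of $u$ on $\Sigma \setminus B_{\eps^k}$ equals $-\int_{C_{\eps,\kappa_0}} u$ up to a factor $1 + O(\eps)$; by \cref{L^2_mean_bd} this is $O(\eps \log(1/\eps))$. Applying \cref{thm_lower_neumann} to $u - \bar u$ and using $\mu_1(\Sigma \setminus B_{\eps^k}) \geq \lambda_1(\Sigma) - C \eps^{2k}$ yields
\[
A \leq \frac{1}{\lambda_1(\Sigma) - C \eps^{2k}} \int_{\Sigma \setminus B_{\eps^k}} |\nabla u|^2 + O(\eps^2 \log^2(1/\eps)) \leq \frac{\lambda_0(C_{\eps,\kappa_0})}{\lambda_1(\Sigma) - C \eps^{2k}} A + C \eps^{1/2} \log(1/\eps).
\]
Since $\lambda_0(C_{\eps,\kappa_0})/\lambda_1(\Sigma)$ is bounded above by a constant strictly less than $1$, rearranging gives $A \leq C \eps^{1/2} \log(1/\eps)$ and hence $B \geq 1 - C \eps^{1/2} \log(1/\eps)$.

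Finally I invoke \cref{L^2_ground_mass_cyl} (applicable because $\lambda_1(\Sigma_{\eps,\kappa_0}) \leq \lambda_0(C_{\eps,\kappa_0})$): expanding the $L^2$-distance estimate shows $B = n_{\eps,\kappa_0,1}^2 + O(\eps^{1-\alpha})$, so
\[
n_{\eps,\kappa_0,1}^2 \geq 1 - C \eps^{1/2} \log(1/\eps) - C \eps^{1-\alpha}.
\]
With the sign convention $n_{\eps,\kappa_0,1} \geq 0$ and taking square roots, both error terms are dominated by $\eps^\tau$ for any $\tau < (1-\alpha)/2 \leq 1/2$, provided $\eps$ is taken sufficiently small depending on $\tau$. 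The only mild obstacle is checking that the losses on each side (the cylinder harmonic-extension defect and the surface mean-zero correction) really fit underneath the Poincar\'e gap $1 - \lambda_0(C_{\eps,\kappa_0})/\lambda_1(\Sigma)$; this is precisely where the choice $\kappa_0^2/4 < \lambda_1(\Sigma)$ is used, and it is exactly this gap that is unavailable in the more delicate regime treated by the iteration in \cref{sec_iter}.
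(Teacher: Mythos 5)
Your argument is correct, but it follows a genuinely different route from the paper. The paper's proof runs the eigenvalue-counting machinery: arguing as in Step 2 of \cref{L^2_surface_part}, it shows there is exactly one eigenvalue below $\lambda_0(C_{\eps,\kappa_0})+\eps^{3\alpha/2+1/2-\tau}$, and then \cref{lem_anne_quasimod} applied to the quasimode $\tilde\psi_{\eps,\kappa_0,0}$ gives $\|u_{\eps,\kappa_0}-\tilde\psi_{\eps,\kappa_0,0}\|_{L^2}\leq C\eps^\tau$, from which the claim follows by Cauchy--Schwarz. You instead rerun the energy bookkeeping of \cref{prop_first_simple} (equivalently, Step 2 of \cref{thm_eig_lim}): the harmonic-extension splitting on the cylinder plus \cref{lem_pt_bd} gives $\int_{C}|\nabla u|^2\geq\lambda_0(C_{\eps,\kappa_0})(B-C\eps^{1/2}\log(1/\eps))$, and the Neumann lower bound \cref{thm_lower_neumann} together with the mean-value control turns the resulting inequality $(\mu_1(\Sigma\setminus B_{\eps^k})-\lambda_0(C_{\eps,\kappa_0}))A\leq C\eps^{1/2}\log(1/\eps)$ into $A\leq C\eps^{1/2}\log(1/\eps)$, precisely because at $\kappa_0$ the gap $\lambda_1(\Sigma)-\lambda_0(C_{\eps,\kappa_0})$ is bounded below by a fixed constant; \cref{L^2_ground_mass_cyl} then converts mass on the cylinder into mass on $\psi_{\eps,\kappa_0,0}$. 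All the lemmas you quote are proved before this point and their hypotheses are satisfied ($\lambda_1(\Sigma_{\eps,\kappa_0})\leq\lambda_0(C_{\eps,\kappa_0})\leq\lambda_1(C_{\eps,\kappa_0})$ via \cref{first_upper_bd}), and your final error $\eps^{1/2}\log(1/\eps)+\eps^{1-\alpha}$ is indeed $O(\eps^\tau)$ for every $\tau<(1-\alpha)/2$, in fact slightly better than what the paper records. What your approach buys is elementarity for this particular lemma (no counting of eigenvalues, no appeal to \cref{lem_anne_quasimod}); what it costs is that it leans entirely on the definite spectral gap available only at $\kappa=\kappa_0$, whereas the paper's quasimode argument is the template reused throughout \cref{sec_conc} in regimes where no such gap exists. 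Two cosmetic points: in the cylinder case $B_{\eps^k}$ is a union of two balls, so \cref{thm_lower_neumann} must be applied twice in succession (as the paper does in \cref{prop_first_simple}); and the sign convention $n_{\eps,\kappa_0,1}\geq 0$ you invoke to pass from $n^2\geq 1-\delta$ to $n\geq 1-\delta$ is exactly the normalization the paper fixes at \eqref{eq_def_mass}, so that step is legitimate.
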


Recall that we normalized the sign of the left hand side at \eqref{eq_def_mass}.

\begin{proof}
Arguing as in the second step of the proof of \cref{L^2_surface_part}, using \cref{L^2_ground_mass_cyl} and \cref{L^2_conc_surf}
we find that there is exactly one eigenfunction below $\lambda_0(C_{\eps,\kappa}) + \eps^{3/2\alpha+1/2 -\tau}$ for $\eps$ sufficiently small.
Therefore, we can approximate this eigenfunction $u_{\eps,\kappa}$ by the quasimode $\tilde \psi_{\eps,\kappa,0}$ with
$$
\|u_{\eps,\kappa} - \tilde \psi_{\eps,\kappa,0}\|_{L^2(\Sigma_{\eps,\kappa})} \leq C \eps^{\tau}.
$$
But this implies 
$$
\int_{C_{\eps,\kappa}} u_{\eps,\kappa} \psi_{\eps,\kappa,0}
=
1 + O(\eps^{3\alpha/2+1/2}) + \int_{C_{\eps,\kappa}}u_{\eps,\kappa} (u_{\eps,\kappa} - \tilde \psi_{\eps,\kappa,0}),
$$
where the last term is in $O(\eps^\tau)$ by H{\"o}lder's inequality and the preceding bound.
\end{proof}

\begin{lemma} \label{L^2_lem_loc_scale}
Let $\tau < (1-\alpha)/2, k \geq 9$ and
assume that $\lambda_1(\Sigma_{\eps,\kappa}) \leq \lambda_1(\Sigma) - \eps^{k/4}$ for any $\kappa \in [\kappa_0,\kappa_1]$.
There is $\eps_{11}=\eps_{11}(\Sigma,k,\delta_0,\kappa_0,\kappa_1)>0$ and $c=c(\Sigma,\kappa_1,\kappa_2)>0$ such that for any 
$$
\rho \in (c^{-1} \eps^{3\alpha/2+1/2},c\,\eps^{3\alpha/2+1/2-\tau/2})
$$ and $\eps \leq \eps_{11}$
there is $\kappa_\eps \in (\kappa_0,\kappa_1)$ such that
there is a normalized $\lambda_1(\Sigma_{\eps,\kappa_\eps})$ eigenfunction such that
$$
a_{\eps,\kappa_\eps,0} \frac{\phi_0(x_0) \lambda_0(C_{\eps,\kappa})}{\lambda_1(\Sigma)} \frac{n_{\eps,\kappa,1}}{m_{\eps,\kappa,1} }= \rho.
$$
\end{lemma}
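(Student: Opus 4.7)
The plan is to apply the intermediate value theorem to an appropriate continuous function of $\kappa$. Under the standing hypothesis $\lambda_1(\Sigma_{\eps,\kappa})\leq \lambda_1(\Sigma)-\eps^{k/4}$ on $[\kappa_0,\kappa_1]$, \cref{prop_first_simple} makes $\lambda_1(\Sigma_{\eps,\kappa})$ simple for every $\kappa$ in the interval, so we may pick a continuous family $\kappa\mapsto u_{\eps,\kappa}$ of normalized first eigenfunctions with sign fixed by $n_{\eps,\kappa,1}\geq 0$; the quantities $m_{\eps,\kappa,1}$, $n_{\eps,\kappa,1}$, $a_{\eps,\kappa,0}$, and $\lambda_0(C_{\eps,\kappa})$ then depend continuously on $\kappa$. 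Consider
\[
H(\kappa):=a_{\eps,\kappa,0}\,\frac{\phi_0(x_0)\lambda_0(C_{\eps,\kappa})}{\lambda_1(\Sigma)}\,n_{\eps,\kappa,1}\;-\;\rho\,m_{\eps,\kappa,1}.
\]
A zero $\kappa_\eps$ of $H$ in $(\kappa_0,\kappa_1)$ with $m_{\eps,\kappa_\eps,1}\neq 0$ proves the lemma.

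At $\kappa_0$, \cref{lem_quant_kappa0} gives $n_{\eps,\kappa_0,1}\geq 1-C\eps^\tau$. Since $\phi_0$ and $\psi_{\eps,\kappa_0,0}$ have disjoint supports, the normalization of $u_{\eps,\kappa_0}$ implies $m_{\eps,\kappa_0,1}^2+n_{\eps,\kappa_0,1}^2\leq 1$, hence $|m_{\eps,\kappa_0,1}|\leq C\eps^{\tau/2}$. The sharp lower bound $a_{\eps,\kappa_0,0}\geq c_1\eps^{3\alpha/2+1/2}$ inherent in the computation of \cref{spec_cusp_cor_approx_order} (cf.\ \cref{spec_rem_sharp}) together with the assumption $\rho\leq c\eps^{3\alpha/2+1/2-\tau/2}$ yields $H(\kappa_0)\geq (c_2-cC)\eps^{3\alpha/2+1/2}>0$ for $c$ sufficiently small (depending only on $\Sigma,\kappa_0,\kappa_1$), where $c_2>0$ is independent of $\eps$. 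At $\kappa_1$, \cref{L^2_cor_cyl} gives $n_{\eps,\kappa_1,1}\leq C\eps^{1/2}\log^{1/2}(1/\eps)$, so the first summand of $H(\kappa_1)$ is $O(\eps^{3\alpha/2+1}\log^{1/2}(1/\eps))=o(\rho)$. Hence $H(\kappa_1)<0$ once we establish the lower bound $m_{\eps,\kappa_1,1}\geq 1/2$.

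This lower bound is the main obstacle. By \cref{L^2_conc_surf}, $u_{\eps,\kappa_1}|_{\Sigma\setminus B_{\eps^k}}$ is $L^2$-close to some normalized $\lambda_1(\Sigma)$-eigenfunction $\phi^*=\beta_0\phi_0+\sum_{j\geq 1}\beta_j\phi_j$. If $|\beta_0|$ were small, the combined quasimode $\sum_{j\geq 1}\beta_j\tilde\phi_{\eps,\kappa_1,j}$ would by \cref{mono_quasimode_2} solve the eigenvalue equation up to an $O(\eps^{k/2})$-error, and \cref{lem_anne_quasimod} applied with $s=\eps^{k/4}/2$ would then force $u_{\eps,\kappa_1}$ to have nontrivial $L^2$-overlap with eigenfunctions whose eigenvalues lie in $[\lambda_1(\Sigma)-\eps^{k/4}/2,\lambda_1(\Sigma)+\eps^{k/4}/2]$; but $\lambda_1(\Sigma_{\eps,\kappa_1})\leq \lambda_1(\Sigma)-\eps^{k/4}$ lies strictly below that interval (using $k\geq 9$, so $\eps^{k/2}\ll\eps^{k/4}$), a contradiction. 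Hence $|\beta_0|\geq 1-o(1)$, and the positive sign of $m_{\eps,\kappa_1,1}$ is forced by the first-order relation from \cref{expan_cyl}, which gives $\lambda_0(C_{\eps,\kappa_1})-\lambda_1(\Sigma_{\eps,\kappa_1})$ proportional to $m_{\eps,\kappa_1,1}/n_{\eps,\kappa_1,1}$ with positive coefficient. This yields $m_{\eps,\kappa_1,1}\geq 1/2$, whence $H(\kappa_1)<0$, and the intermediate value theorem produces $\kappa_\eps\in(\kappa_0,\kappa_1)$ with $H(\kappa_\eps)=0$. The edge case $m_{\eps,\kappa_\eps,1}=0$ would entail $n_{\eps,\kappa_\eps,1}=0$, and repeating the above argument at $\kappa_\eps$ would again contradict the hypothesis, so $m_{\eps,\kappa_\eps,1}>0$ and division yields the claimed identity.
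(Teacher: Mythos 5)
Your overall strategy coincides with the paper's: an intermediate value argument in $\kappa$, with \cref{lem_quant_kappa0} controlling the endpoint $\kappa_0$, \cref{L^2_cor_cyl} together with \cref{L^2_conc_surf}, \cref{expan_surf} and the standing hypothesis controlling the endpoint $\kappa_1$, and the quasimode expansion ruling out the simultaneous vanishing of $m_{\eps,\kappa,1}$ and $n_{\eps,\kappa,1}$. However, there is a genuine gap in the sign and continuity bookkeeping, which is precisely the delicate part of the paper's argument. First, a continuous family of normalized first eigenfunctions and the pointwise normalization $n_{\eps,\kappa,1}\geq 0$ are in general incompatible: the continuous branch $v(\kappa)$ (which exists because $\lambda_1(\Sigma_{\eps,\kappa})$ is simple) is unique only up to a \emph{global} sign, and if $n_v$ changes sign along the branch, the pointwise-normalized eigenfunction jumps, so your $H$ need not be continuous. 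Second, even working with the continuous branch anchored by $n_v(\kappa_0)>0$, the sign of $H(\kappa_1)$ is $-\sign(m_v(\kappa_1))$, and your argument for $m_v(\kappa_1)>0$ via \cref{expan_cyl} does not go through: that expansion, rewritten as $(\lambda_0(C_{\eps,\kappa_1})-\lambda_1(\Sigma_{\eps,\kappa_1}))\,\tilde n_{\eps,\kappa_1,0}= c\, a_{\eps,\kappa_1,0}\, m_{\eps,\kappa_1,1}+\dots$, only determines the sign of $\tilde n_{\eps,\kappa_1,0}$ relative to $m_{\eps,\kappa_1,1}$; since at $\kappa_1$ one has $|n_{\eps,\kappa_1,1}|=O(\eps^{3\alpha/2+1/2})$, which is of the same order as the discrepancy $|\tilde n_{\eps,\kappa_1,0}-n_{\eps,\kappa_1,1}|$, it does not determine the sign of $n_{\eps,\kappa_1,1}$ relative to $m_{\eps,\kappa_1,1}$, let alone the absolute sign of $m_v(\kappa_1)$ in the branch fixed at $\kappa_0$. (The same-sign observation used in the paper explicitly requires $|n_{\eps,\kappa,1}|\geq \eps^{3\alpha/2+1/4}$, which fails at $\kappa_1$.)

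The paper sidesteps exactly this difficulty by applying the intermediate value theorem to the sign-invariant quantity $n_{\eps,\kappa,1}/m_{\eps,\kappa,1}$ (times the positive prefactor) on the maximal subinterval $(\kappa_{\eps,0},\kappa_1]$ where $m_{\eps,\kappa,1}\neq 0$: the same-sign observation is applied only where it is valid, namely just inside that interval where $|m_{\eps,\kappa,1}|\geq\eps^{1/2}$ and $|n_{\eps,\kappa,1}|$ is bounded below, forcing the ratio to tend to $+\infty$ (not $-\infty$) at the left endpoint, while at $\kappa_1$ only the bound $|n/m|<1$ is needed, with no sign information. To repair your argument you would either need to adopt this device, or carry out a global continuity argument showing that along the branch with $n_v(\kappa_0)>0$ neither $m_v$ nor $n_v$ can become negative beyond the thresholds $\eps^{1/2}$ and $\eps^{3\alpha/2+1/4}$ (using that $m^2+n^2$ is bounded below, so both cannot be below threshold simultaneously); as written, the step $H(\kappa_1)<0$ is not justified.
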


The proof gets slightly more complicated for two reasons.
We have not ruled out the possibility $m_{\eps,\kappa,1}=0$ and we do not only want to control the size of $\rho$ but also its sign.

\begin{proof}
We first show a similar result for $n_{\eps,\kappa,1}/m_{\eps,\kappa,1}$ for an appropriately scaled set of parameters $\rho$, that is more aligned 
with our previous estimates.
Namely, for any $\rho  \in (1,\eps^{-\tau/2})$ and $\eps$ sufficiently small, there is
$\kappa_\eps$ such that 
\begin{equation} \label{eq_find_rho}
\frac{n_{\eps,\kappa,1}}{m_{\eps,\kappa,1} }= \rho.
\end{equation}

At $\kappa_0$, it follows from \cref{lem_quant_kappa0} that we have
\begin{equation} \label{eq_behav_1}
n_{\eps,\kappa_0,1}^2 \geq 1 - C \eps^{\tau}
\end{equation}
for $\eps$ sufficiently small.
When combined with the trivial estimate $m_{\eps,\kappa,1}^2 + n_{\eps,\kappa,1}^2 \leq 1$ this implies that also
\begin{equation} \label{eq_behav_2}
m_{\eps,\kappa_0,1}^2 \leq C \eps^{\tau}
\end{equation}
for $\eps$ sufficiently small.

Similarly, by \cref{L^2_cor_cyl} combined with \cref{L^2_conc_surf}, 
the assumption $\lambda_1(\Sigma_{\eps,\kappa}) \leq \lambda_1(\Sigma_{\eps,\kappa}) - \eps^{k/4}$, and \cref{expan_cyl}, we find that
\begin{equation} \label{eq_behav_3}
m_{\eps,\kappa_1,1}^2 \geq 3/4,
\end{equation}
for $\eps$ sufficiently small.
This implies that for such $\eps$ we also have
\begin{equation} \label{eq_behav_4}
n_{\eps,\kappa_1,1}^2 \leq 1/4.
\end{equation}

For $\eps$ fixed, such that \eqref{eq_behav_1}-\eqref{eq_behav_4} apply, we now consider the function 
$$
\rho(\kappa) = n_{\eps,\kappa,1}/m_{\eps,\kappa,1}.
$$
Note that $\rho$ defines a continuous function on the open subset of $[\kappa_0,\kappa_1]$ on which $m_{\eps,\kappa,1}$ does not vanish
thanks to \cref{prop_first_simple}.
If $\rho$ is not defined on all of $ [ \kappa_0,\kappa_1 ]$ consider the maximal subinterval of the 
type $(\kappa_{\eps,0},\kappa_1] \subset [\kappa_0,\kappa_1]$, on which $m_{\eps,\kappa,1} \neq 0$. 
In particular, $\rho$ defines a continuous function on $(\kappa_{\eps,0},\kappa_1]$.
Assume first that $\kappa_{\eps,0}>\kappa_0$.
We claim that we need to have 
$$
\lim_{\kappa \searrow \kappa_{\eps,0}} |\rho(\kappa)| = +\infty.
$$
The only way this could not happen is that also
$$
\lim_{\kappa \searrow \kappa_{\eps,0}} n_{\eps,\kappa,1} \to 0.
$$
But this is impossible.
Since this would imply on the one hand that 
$$\int_{C_{\eps,\kappa}} |u_{\eps,\kappa}|^2 \to 0
$$ 
by \cref{L^2_ground_mass_cyl} and \cref{first_upper_bd}.
On the other hand, \cref{L^2_conc_surf} then implies that we need to find a quasimode $\tilde \phi_{\eps,\kappa,i}$ with $i \geq 0$ (since $i=0$ is ruled out by construction) such that
$$
\int \tilde \phi_{\eps,\kappa,0} u_{\eps,\kappa} \geq 1/(4K)
$$ 
for $\eps$ sufficiently small.
This implies 
$$
\lambda_1(\Sigma_{\eps,\kappa}) \geq \lambda_1(\Sigma) - \eps^{k/4}
$$ contradicting our assumption.

It now follows from the observation that $m_{\eps,\kappa,1}$ and $n_{\eps,\kappa,1}$ have the same sign whenever $|m_{\eps,\kappa,1}| \geq \eps^{1/2}$ and $|n_{\eps,\kappa,1}| \geq \eps^{3\alpha/2+1/4}$ from
 the proof of \cref{L^2_main_decomp} 
and the intermediate value theorem that we need to have 
$$
\lim_{\kappa \searrow \kappa_{\eps,0}} \rho(\kappa) = +\infty.
$$
The assertion at \eqref{eq_find_rho} then follows from the intermediate value theorem.
If $\rho$ is defined on all of $[\kappa_0,\kappa_1]$ the argument is almost identical.

The assertion of the lemma now follows immediately from the mean value theorem applied on an interval as above to the function
$$
\eps^{-3\alpha/2-1/2} a_{\eps,\kappa,0} \frac{\phi_0(x_0) \lambda_0(C_{\eps,\kappa})}{ \lambda_1(\Sigma)} \frac{n_{\eps,\kappa,1}}{m_{\eps,\kappa,1}}.
$$
Note that this is simply the function from above multiplied by a continuous positive function that is bounded above and below by uniform constants thanks to \cref{spec_rem_sharp} and since $a_{\eps,\kappa,0}>0$.
\end{proof}

%%%%%%%%%%%%%%%%%%%%%%%%%%%%%%%%%%%%%%%%%%%%%%%%
\section{Lower bound for the first eigenvalue} \label{sec_asymp}
%%%%%%%%%%%%%%%%%%%%%%%%%%%%%%%%%%%%%%%%%%%%%%%%

In this section we iteratively improve our preliminary bound \cref{L^2_lem_pre_bd} and use this to conclude our main technical result \cref{thm_main_technical}.
Throughout this entire section we assume that
$$
1/3< \alpha \leq \alpha_0<9/16
$$
for some fixed $\alpha_0$.

%%%%%%%%%%%%%%%%%%%%%%%%%%%%%%%%
\subsection{The main estimate}
%%%%%%%%%%%%%%%%%%%%%%%%%%%%%%%%

The computation below is similar to the one in the proof of \cref{prop_first_simple}, but we now have to carefully keep track
of the error terms arising since we have to work with a linear combination of two
rather than a single eigenfunction.

Suppose we have are in the situation of \cref{sec_conc_decomp_two}.
Then we have a first eigenfunction $u_{\eps,\kappa}$ and a second eigenfunction $v_{\eps,\kappa}$, which are both unique up to scaling
and we normalize their sign by 
requiring that $n_{\eps,\kappa,1},n_{\eps,\kappa,2}\geq0$ (cf.\ \eqref{eq_def_mass}).
We may now choose $\beta_{\eps,\kappa} \in \IR$ such that the function 
$$
w_{\eps,\kappa} = u_{\eps,\kappa} + \beta_{\eps,\kappa} v_{\eps,\kappa}
$$
satisfies
$$
\int_{C_{\eps,\kappa}} w_{\eps,\kappa} = 
-\int_{\Sigma \setminus B_{\eps^k}} w_{\eps,\kappa} = 0.
$$

Using $w_{\eps,\kappa}$ as a test function we obtain the following.

\begin{lemma} \label{comp_neu_bd}
For $k \geq 9$ there is $\eps_{12}=\eps_{12}(\Sigma,k,\delta_0,\kappa_0,\kappa_1,\tau)>0$ with the following property. 
For any $\eps \leq \eps_{12}$,
if $n_{\eps,\kappa,1}^2 \leq 1-\eps^{ \tau}$ where $ \tau < (1-\alpha)/2$ and $\lambda_2(\Sigma_{\eps,\kappa}) \leq \lambda_1(\Sigma) - \eps^{k/4}$, we have that
\begin{equation*}
\begin{split}
\mu_1(\Sigma \setminus B_{\eps^k})
\leq &
\frac{ \lambda_1(\Sigma_{\eps,\kappa}) + \beta_{\eps,\kappa}^2 \lambda_2(\Sigma_{\eps,\kappa})} {1+\beta_{\eps,\kappa}^2 }
\\
&+
\left( \frac{\lambda_1(\Sigma_{\eps,\kappa})+\beta_{\eps,\kappa}^2 \lambda_2(\Sigma_{\eps,\kappa}) -  (1+\beta_{\eps,\kappa}^2)\mu_1(C_{\eps,\kappa}) }{\mu_1(C_{\eps,\kappa})(m_{\eps,\kappa,1}+\beta_{\eps,\kappa} m_{\eps,\kappa,2})^2}\right) \lambda_2(\Sigma_{\eps,\kappa})
\end{split}
\end{equation*}
provided that the denominator of the last term is non-zero.\footnote{We show in \cref{beta_asymp} below that this holds under some mild additional assumptions.}
\end{lemma}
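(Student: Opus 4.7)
The plan is to emulate the argument in the proof of \cref{prop_first_simple}, now applied to the mean-zero combination $w_{\eps,\kappa}=u_{\eps,\kappa}+\beta_{\eps,\kappa}v_{\eps,\kappa}$ instead of a single eigenfunction. Since $w_{\eps,\kappa}$ has vanishing integral both on $C_{\eps,\kappa}$ and on $\Sigma\setminus B_{\eps^k}$ by the choice of $\beta_{\eps,\kappa}$, it is admissible as a test function for $\mu_1(\Sigma\setminus B_{\eps^k})$, and the Poincaré inequality
$\int_{C_{\eps,\kappa}}|\nabla w_{\eps,\kappa}|^2\ge\mu_1(C_{\eps,\kappa})\int_{C_{\eps,\kappa}}w_{\eps,\kappa}^2$
is available on the cylinder.

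The orthonormality of $u_{\eps,\kappa}$ and $v_{\eps,\kappa}$ gives at once the two global identities $\int_{\Sigma_{\eps,\kappa}}w_{\eps,\kappa}^2=1+\beta_{\eps,\kappa}^2$ and $\int_{\Sigma_{\eps,\kappa}}|\nabla w_{\eps,\kappa}|^2=\lambda_1(\Sigma_{\eps,\kappa})+\beta_{\eps,\kappa}^2\lambda_2(\Sigma_{\eps,\kappa})$. Writing $A=\int_{\Sigma\setminus B_{\eps^k}}w_{\eps,\kappa}^2$ and $B=\int_{C_{\eps,\kappa}}w_{\eps,\kappa}^2$, so that $A+B=1+\beta_{\eps,\kappa}^2$, and splitting the Dirichlet integral, the Poincaré inequality on the cylinder yields
\begin{equation*}
\int_{\Sigma\setminus B_{\eps^k}}|\nabla w_{\eps,\kappa}|^2\le\lambda_1(\Sigma_{\eps,\kappa})+\beta_{\eps,\kappa}^2\lambda_2(\Sigma_{\eps,\kappa})-\mu_1(C_{\eps,\kappa})B.
\end{equation*}
Feeding this into the Rayleigh quotient for $\mu_1(\Sigma\setminus B_{\eps^k})$ and substituting $A=1+\beta_{\eps,\kappa}^2-B$ leads, by an elementary rearrangement, to
\begin{equation*}
\mu_1(\Sigma\setminus B_{\eps^k})\le\frac{\lambda_1(\Sigma_{\eps,\kappa})+\beta_{\eps,\kappa}^2\lambda_2(\Sigma_{\eps,\kappa})}{1+\beta_{\eps,\kappa}^2}+\frac{E\,B}{A\,(1+\beta_{\eps,\kappa}^2)},
\end{equation*}
where $E:=\lambda_1(\Sigma_{\eps,\kappa})+\beta_{\eps,\kappa}^2\lambda_2(\Sigma_{\eps,\kappa})-(1+\beta_{\eps,\kappa}^2)\mu_1(C_{\eps,\kappa})$.

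The final step converts the remainder into the shape claimed. Two elementary bounds do the work: Cauchy--Schwarz applied to $\int_{\Sigma\setminus B_{\eps^k}}w_{\eps,\kappa}\phi_0=m_{\eps,\kappa,1}+\beta_{\eps,\kappa}m_{\eps,\kappa,2}$ together with $\|\phi_0\|_{L^2(\Sigma)}=1$ gives $A\ge(m_{\eps,\kappa,1}+\beta_{\eps,\kappa}m_{\eps,\kappa,2})^2$, which is positive under the stated non-degeneracy; and combining $\mu_1(C_{\eps,\kappa})B\le\int_{C_{\eps,\kappa}}|\nabla w_{\eps,\kappa}|^2$ with the trivial bound $\int_{C_{\eps,\kappa}}|\nabla w_{\eps,\kappa}|^2\le\int_{\Sigma_{\eps,\kappa}}|\nabla w_{\eps,\kappa}|^2$ and $\lambda_1(\Sigma_{\eps,\kappa})\le\lambda_2(\Sigma_{\eps,\kappa})$ yields $B/(1+\beta_{\eps,\kappa}^2)\le\lambda_2(\Sigma_{\eps,\kappa})/\mu_1(C_{\eps,\kappa})$. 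Multiplying these two controls produces exactly $E\,B/(A(1+\beta_{\eps,\kappa}^2))\le E\,\lambda_2(\Sigma_{\eps,\kappa})/\bigl(\mu_1(C_{\eps,\kappa})(m_{\eps,\kappa,1}+\beta_{\eps,\kappa}m_{\eps,\kappa,2})^2\bigr)$, which is the estimate claimed.

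The main obstacle is bookkeeping around the sign of $E$: the two elementary bounds combine with the correct orientation when $E\ge 0$, and the hypotheses $n_{\eps,\kappa,1}^2\le1-\eps^{\tau}$ and $\lambda_2(\Sigma_{\eps,\kappa})\le\lambda_1(\Sigma)-\eps^{k/4}$ are in place precisely to secure, via the quantitative decomposition from \cref{L^2_main_decomp}, that $\beta_{\eps,\kappa}$ is well-defined and the denominator $(m_{\eps,\kappa,1}+\beta_{\eps,\kappa}m_{\eps,\kappa,2})^2$ is non-degenerate on the relevant range of $\kappa$. Verifying that these structural inputs put us in the regime where the final sign manipulation is valid (and, where it is not, that the inequality still holds by a direct comparison with the unperturbed average $(\lambda_1(\Sigma_{\eps,\kappa})+\beta_{\eps,\kappa}^2\lambda_2(\Sigma_{\eps,\kappa}))/(1+\beta_{\eps,\kappa}^2)$) is the technical heart of the argument.
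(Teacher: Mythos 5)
Your proof is correct and follows essentially the same route as the paper: test $w_{\eps,\kappa}$ in the Rayleigh quotient for $\mu_1(\Sigma \setminus B_{\eps^k})$, use orthonormality of $u_{\eps,\kappa},v_{\eps,\kappa}$ for the global $L^2$ and Dirichlet norms, the Neumann--Poincar\'e inequality on $C_{\eps,\kappa}$ for $\int_{C_{\eps,\kappa}}|w_{\eps,\kappa}|^2$, and Cauchy--Schwarz against $\phi_0$ to bound $\int_{\Sigma\setminus B_{\eps^k}}|w_{\eps,\kappa}|^2$ from below by $(m_{\eps,\kappa,1}+\beta_{\eps,\kappa}m_{\eps,\kappa,2})^2$. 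The sign caveat you raise in the final substitution (one needs the numerator $\lambda_1(\Sigma_{\eps,\kappa})+\beta_{\eps,\kappa}^2\lambda_2(\Sigma_{\eps,\kappa})-(1+\beta_{\eps,\kappa}^2)\mu_1(C_{\eps,\kappa})$ to be nonnegative when replacing $\int_{C_{\eps,\kappa}}|\nabla w_{\eps,\kappa}|^2$ by its upper bound $(1+\beta_{\eps,\kappa}^2)\lambda_2(\Sigma_{\eps,\kappa})$) is equally implicit in the paper's own last displayed inequality, so it is not a defect of your argument relative to the paper's.
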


\begin{proof}
First note that we have that $\lambda_2(\Sigma_{\eps,\kappa}) \leq \lambda_0(C_{\eps,\kappa}) + \eps^{3\alpha/2+1/2-\tau}$.
This is since if not, we have from \cref{lem_anne_quasimod} applied to \cref{lem_asymp_2} that 
$
n_{\eps,\kappa,1}^2 \geq 1 - C \eps^{2\tau} > 1 - \eps^{\tau}
$ 
for $\eps$ sufficiently small contradicting our assumption.
In particular, the discussion preceding the lemma applies and we have $\beta_{\eps,\kappa}$ and then also $w_{\eps,\kappa}$ as above.

The $L^2$-norm of $w_{\eps,\kappa}$ is given by
\begin{equation} \label{eq_w_l2}
\begin{split}
\int_{\Sigma_{\eps,\kappa}} & |w_{\eps,\kappa}|^2 
=
1+\beta_{\eps,\kappa}^2,
\end{split}
\end{equation}
since $u_{\eps,\kappa}$ and $v_{\eps,\kappa}$ are normalized and orthogonal in $L^2$.
Similarly, we find for the gradient that
\begin{equation} \label{eq_w_h1}
\begin{split}
\int_{\Sigma_{\eps,\kappa}} & |\nabla w_{\eps,\kappa}|^2 
= 
\lambda_1(\Sigma_{\eps,\kappa}) + \beta_{\eps,\kappa}^2 \lambda_2(\Sigma_{\eps,\kappa})
=
\frac{\lambda_1(\Sigma_{\eps,\kappa})+\beta_{\eps,\kappa}^2 \lambda_2(\Sigma_{\eps,\kappa})}{1+\beta_{\eps,\kappa}^2} \int_{\Sigma_{\eps,\kappa}} |w_{\eps,\kappa}|^2,
\end{split}
\end{equation}
where we used that the gradients of $u_{\eps,\kappa}$ and $v_{\eps,\kappa}$ are orthogonal in $L^2$ and \eqref{eq_w_l2}.
In particular, we have that
\begin{equation} \label{eq_w_h1_2}
\int_{\Sigma_{\eps,\kappa}} | \nabla w_{\eps,\kappa}|^2 \leq (1+\beta_{\eps,\kappa}^2) \lambda_2(\Sigma_{\eps,\kappa}).
\end{equation}
Since $w_{\eps,\kappa}$ integrates to zero over $C_{\eps,\kappa}$, we also have that
$$
\int_{C_{\eps,\kappa}} |w_{\eps,\kappa}|^2 \leq \frac{1}{\mu_1(C_{\eps,\kappa})} \int_{C_{\eps,\kappa}} |\nabla w_{\eps,\kappa}|^2,
$$
which implies
\begin{equation*} \label{eq_key}
\begin{split}
\int_{\Sigma \setminus B_{\eps^k}}  |\nabla w_{\eps,\kappa}|^2
= &
\frac{\lambda_1(\Sigma_{\eps,\kappa})+\beta_{\eps,\kappa}^2 \lambda_2(\Sigma_{\eps,\kappa})}{1+\beta_{\eps,\kappa}^2}  \int_{\Sigma \setminus B_{\eps^k}} |w_{\eps,\kappa}|^2
+\frac{\lambda_1(\Sigma_{\eps,\kappa})
+\beta_{\eps,\kappa}^2 \lambda_2(\Sigma_{\eps,\kappa})}{1+\beta_{\eps,\kappa}^2} \int_{C_{\eps,\kappa}} |w_{\eps,\kappa}|^2
\\
&- \int_{C_{\eps,\kappa}} |\nabla w_{\eps,\kappa}|^2
\\
\leq&
\frac{\lambda_1(\Sigma_{\eps,\kappa})+\beta_{\eps,\kappa}^2 \lambda_2(\Sigma_{\eps,\kappa})}{1+\beta_{\eps,\kappa}^2}  \int_{\Sigma \setminus B_{\eps^k}} |w_{\eps,\kappa}|^2
\\
&+\left( \frac{\lambda_1(\Sigma_{\eps,\kappa})+\beta_{\eps,\kappa}^2 \lambda_2(\Sigma_{\eps,\kappa}) -  (1+\beta_{\eps,\kappa}^2)\mu_1(C_{\eps,\kappa})}{(1+\beta_{\eps,\kappa}^2)\mu_1(C_{\eps,\kappa})}\right) \int_{C_{\eps,\kappa}} |\nabla w_{\eps,\kappa}|^2
\\
 \leq &
\frac{\lambda_1(\Sigma_{\eps,\kappa})+\beta_{\eps,\kappa}^2 \lambda_2(\Sigma_{\eps,\kappa})}{1+\beta_{\eps,\kappa}^2} \int_{\Sigma \setminus B_{\eps^k}} |w_{\eps,\kappa}|^2
\\
&+\left( \frac{\lambda_1(\Sigma_{\eps,\kappa})+\beta_{\eps,\kappa}^2 \lambda_2(\Sigma_{\eps,\kappa}) -  (1+\beta_{\eps,\kappa}^2)\mu_1(C_{\eps,\kappa})}{\mu_1(C_{\eps,\kappa})}\right) \lambda_2(\Sigma_{\eps,\kappa}),
\end{split}
\end{equation*}
where we have used \eqref{eq_w_h1_2} in the last step.

In order to derive an upper bound for $\mu_1(\Sigma \setminus B_{\eps^k})$ from this, we need a lower bound on $\int_{\Sigma \setminus B_{\eps^k}} |w_{\eps,\kappa}|^2$ in terms of $\beta_{\eps,\kappa}$ and $\eps$.
We can combine the estimate
\begin{equation*}
\int_{\Sigma \setminus B_{\eps^k}} w_{\eps,\kappa} \phi_0
 \leq 
  \left(\int_{\Sigma \setminus B_{\eps^k}} |w_{\eps,\kappa}|^2 \right)^{1/2},
\end{equation*}
that uses the normalization of $\phi_0$, with
\begin{equation*}
\int_{\Sigma \setminus B_{\eps^k}} w_{\eps,\kappa} \phi_0
=
m_{\eps,\kappa,1} + \beta_{\eps,\kappa} m_{\eps,\kappa,2},
\end{equation*}
where we may assume that this integral is non-negative (by working with $-w_{\eps,\kappa}$ instead for this step if necessary)
to obtain
\begin{equation} \label{eq_w_mass_surf}
\int_{\Sigma \setminus B_{\eps^k}} |w_{\eps,\kappa}|^2 \geq (m_{\eps,\kappa,1} + \beta_{\eps,\kappa} m_{\eps,\kappa,2})^2.
\end{equation}
Since, by construction, $w_{\eps,\kappa}$ is an admissible test function for $\mu_1(\Sigma \setminus B_{\eps^k})$, the above estimates immediately imply the assertion.
\end{proof}

As stated above \cref{comp_neu_bd} is only partially helpful yet. 
The left hand side is controlled well from below in terms of $\lambda_1(\Sigma)$ thanks to \cref{thm_lower_neumann}, but we 
have to work out the right hand side much more precisely.
We can do this relying on the results from \cref{sec_conc}.

\begin{lemma} \label{beta_asymp}
Assume that 
$1/3 \leq \alpha \leq 65/192, k \geq 9$ and $3/4 \geq |m_{\eps,\kappa,1}|,|n_{\eps,\kappa,2}| \geq \eps^{1/32}$
and
$$
c_{\eps,\kappa}:=\lambda_1(\Sigma) - \lambda_2(\Sigma_{\eps,\kappa}) 
\geq \eps^{65/64}.
$$
We then have that
\begin{equation} \label{eq_expan_gap}
\begin{split}
\frac{ \lambda_1(\Sigma_{\eps,\kappa}) + \beta_{\eps,\kappa}^2 \lambda_2(\Sigma_{\eps,\kappa}) }{ 1 + \beta_{\eps,\kappa}^2} 
\leq & 
\lambda_2(\Sigma_{\eps,\kappa}) 
-\frac{n_{\eps,\kappa,1}}{m_{\eps,\kappa,1}} \frac{\phi_0(x_0) \lambda_0(C_{\eps,\kappa})}{\lambda_1(\Sigma)} a_{\eps,\kappa,0} 
\\
& +
2\frac{\phi_0(x_0)^2 \lambda_0(C_{\eps,\kappa})^2 }{\lambda_1(\Sigma)^2} \frac{a_{\eps,\kappa,0}^2}{ c_{\eps,\kappa}}
+ 
O(\eps^{65/64})
\end{split}
\end{equation}
and
\begin{equation} \label{eq_expan_error}
\left( \frac{\lambda_1(\Sigma_{\eps,\kappa})+\beta_{\eps,\kappa}^2 \lambda_2(\Sigma_{\eps,\kappa}) -  (1+\beta_{\eps,\kappa}^2)\mu_1(C_{\eps,\kappa}) }{\mu_1(C_{\eps,\kappa})(m_{\eps,\kappa,1}+\beta_{\eps,\kappa} m_{\eps,\kappa,2})^2}\right) \lambda_2(\Sigma_{\eps,\kappa}) \leq
2 c_{\eps,\kappa} + O(\eps^{129/128})
\end{equation}
as $\eps \to 0$ with uniform error terms as long as $\kappa \in [\kappa_0,\kappa_1]$, and $m_{\eps,\kappa,i},n_{\eps,\kappa,i}$ are as above.
\end{lemma}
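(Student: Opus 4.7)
The plan is to reduce both inequalities to explicit arithmetic in the scalars $m_{\eps,\kappa,i}$, $n_{\eps,\kappa,i}$ ($i=1,2$) and in the single scale $A := a_{\eps,\kappa,0}\phi_0(x_0)\lambda_0(C_{\eps,\kappa})/\lambda_1(\Sigma)$, using as inputs Proposition \ref{L^2_main_decomp}, Corollary \ref{expan_cyl}, and Remark \ref{rem_asymp_improved}.

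First, I would pin down $\beta_{\eps,\kappa}$. Integration by parts gives $\int_{C_{\eps,\kappa}} \psi_{\eps,\kappa,0} = a_{\eps,\kappa,0}/\lambda_0(C_{\eps,\kappa})$, which combined with Corollary \ref{L^2_ground_mass_cyl} yields $\int_{C_{\eps,\kappa}} u_{\eps,\kappa} = n_{\eps,\kappa,1}\,a_{\eps,\kappa,0}/\lambda_0(C_{\eps,\kappa}) + O(\eps^{1-\alpha/2})$ and the analogous identity for $v_{\eps,\kappa}$. The constraint $\int_{C_{\eps,\kappa}} w_{\eps,\kappa}=0$ then forces $\beta_{\eps,\kappa} = -n_{\eps,\kappa,1}/n_{\eps,\kappa,2}$ up to an error acceptable thanks to $|n_{\eps,\kappa,2}| \geq \eps^{1/32}$. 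Feeding this into \eqref{eq_L^2_decomp_1}--\eqref{eq_L^2_decomp_4} produces the two near-equalities
\[
1 + \beta_{\eps,\kappa}^2 = n_{\eps,\kappa,2}^{-2} + O(\eps^{\tau}), \qquad (m_{\eps,\kappa,1} + \beta_{\eps,\kappa} m_{\eps,\kappa,2})^2 = n_{\eps,\kappa,2}^{-2} + O(\eps^{\tau}),
\]
so the ratio of the two is $1 + O(\eps^{\tau})$. This structural cancellation is precisely what makes \eqref{eq_expan_error} tractable.

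For \eqref{eq_expan_gap}, I would rewrite the left-hand side as $\lambda_2(\Sigma_{\eps,\kappa}) - (\lambda_2-\lambda_1)/(1+\beta_{\eps,\kappa}^2)$. Applying Corollary \ref{expan_cyl} to $u_{\eps,\kappa}$ and $v_{\eps,\kappa}$, with the $\log(1/\eps^k)$-term sharpened via Remark \ref{rem_asymp_improved} and Lemma \ref{L^2_mean_bd}, gives $\lambda_0(C_{\eps,\kappa}) - \lambda_j(\Sigma_{\eps,\kappa}) = A\,m_{\eps,\kappa,j}/n_{\eps,\kappa,j} + \mathrm{err}$ for $j=1,2$, so that $\lambda_2-\lambda_1 = A(m_1 n_2 - m_2 n_1)/(n_1 n_2) + \mathrm{err}$. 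By \eqref{eq_L^2_decomp_1}--\eqref{eq_L^2_decomp_4}, $m_1 n_2 - m_2 n_1 = n_1^2 + n_2^2 + O = 1 + O$, and hence $(\lambda_2-\lambda_1)/(1+\beta_{\eps,\kappa}^2) = A n_2/n_1 + \mathrm{err}$. Comparison with $-An_1/m_1 = -An_1/n_2 + \mathrm{err}$ leaves the discrepancy $A(n_1^2-n_2^2)/(n_1 n_2)$, which I would absorb into the $2A^2/c_{\eps,\kappa}$ buffer after substituting the identity $c_{\eps,\kappa} = (\lambda_1(\Sigma) - \lambda_0(C_{\eps,\kappa})) - A\,n_1/n_2 + \mathrm{err}$ obtained from the same expansion applied to $v_{\eps,\kappa}$.

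For \eqref{eq_expan_error}, I would first note that $\mu_1(C_{\eps,\kappa}) = \lambda_0(C_{\eps,\kappa})$ (Lemmas \ref{lem_spec_cyl} and \ref{lem_spec_neu} give equal values in the rotationally symmetric regime, the only one relevant here), so the numerator equals $(\lambda_1-\lambda_0(C_{\eps,\kappa})) + \beta_{\eps,\kappa}^2(\lambda_2-\lambda_0(C_{\eps,\kappa}))$. The first summand is non-positive by Lemma \ref{first_upper_bd}; for the second, combine $\lambda_2 \leq \lambda_1(\Sigma) - c_{\eps,\kappa}$ with $\lambda_1(\Sigma) - \lambda_0(C_{\eps,\kappa}) = O(\eps^{1/2})$ (from the first step of the proof of Proposition \ref{L^2_surface_part}) to obtain $\lambda_2 - \lambda_0(C_{\eps,\kappa}) = -c_{\eps,\kappa} + O(\eps^{1/2})$. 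Using the cancellation between $1+\beta_{\eps,\kappa}^2$ and $(m_{\eps,\kappa,1}+\beta_{\eps,\kappa} m_{\eps,\kappa,2})^2$ from the first step then collapses the error term to $2c_{\eps,\kappa} + O(\eps^{129/128})$, with the exponent $129/128 = 3(65/192)/2 + 1/2$ matching the largest value of $3\alpha/2+1/2$ allowed by $\alpha \leq 65/192$. The main obstacle throughout is the quantitative error bookkeeping: the hypotheses $\alpha \leq 65/192$, $k \geq 9$, $|m_{\eps,\kappa,1}|,|n_{\eps,\kappa,2}| \geq \eps^{1/32}$, and $c_{\eps,\kappa} \geq \eps^{65/64}$ are each calibrated precisely so that the compounded errors from Proposition \ref{L^2_main_decomp}, Corollary \ref{expan_cyl}, and Remark \ref{rem_asymp_improved} stay below the advertised $O(\eps^{65/64})$ and $O(\eps^{129/128})$ thresholds, and one must especially watch the interplay between $A$ and $c_{\eps,\kappa}$, which can be of comparable size, together with the sign of $n_1^2 - n_2^2$ in the final algebra of Step 2.
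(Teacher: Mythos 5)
Your proposal has a fatal gap at its very first step: the identification of the leading order of $\beta_{\eps,\kappa}$. You approximate $\int_{C_{\eps,\kappa}}u_{\eps,\kappa}$ by $n_{\eps,\kappa,1}\int_{C_{\eps,\kappa}}\psi_{\eps,\kappa,0}=n_{\eps,\kappa,1}a_{\eps,\kappa,0}/\lambda_0(C_{\eps,\kappa})$ using \cref{L^2_ground_mass_cyl}, but the $L^2$-remainder there is only $O(\eps^{(1-\alpha)/2})$, so after Cauchy--Schwarz over $C_{\eps,\kappa}$ (area $\sim\eps$) it contributes $O(\eps^{1-\alpha/2})$ to the integral, whereas the "main term" is $O(\eps^{3\alpha/2+1/2})$; since $\alpha>1/4$ the error dominates, so this computation determines neither the size nor the sign of $\int_{C_{\eps,\kappa}}u_{\eps,\kappa}$. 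In fact \eqref{eq_eig1_expan_1} shows $\int_{C_{\eps,\kappa}}u_{\eps,\kappa}=m_{\eps,\kappa,1}(\lambda_1(\Sigma)-\lambda_1(\Sigma_{\eps,\kappa}))/(\phi_0(x_0)\lambda_1(\Sigma))+\dots$, which can be of order $\eps\log(1/\eps)$ and is driven by the higher Dirichlet modes and the harmonic-extension tail, not by the $\psi_{\eps,\kappa,0}$-component. The correct leading order, obtained by combining the surface expansion \cref{expan_surf} with the cylinder expansion \cref{expan_cyl}, is $\beta_{\eps,\kappa}=-m_{\eps,\kappa,1}/m_{\eps,\kappa,2}+\dots=+\,m_{\eps,\kappa,1}/n_{\eps,\kappa,1}+O(\eps^{1/16})$, which is \emph{positive} and generically very different from your $-n_{\eps,\kappa,1}/n_{\eps,\kappa,2}<0$.

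This wrong value of $\beta_{\eps,\kappa}$ invalidates the structure of both halves of your argument. For \eqref{eq_expan_error}, the actual mechanism is that $m_{\eps,\kappa,1}+\beta_{\eps,\kappa}m_{\eps,\kappa,2}$ cancels at leading order (since $\beta_{\eps,\kappa}\approx-m_{\eps,\kappa,1}/m_{\eps,\kappa,2}$) and is of size $\sim(1+m_1^2/n_1^2)\,a_{\eps,\kappa,0}/c_{\eps,\kappa}'$, while the numerator, after using only $\lambda_0(C_{\eps,\kappa})\le\mu_1(C_{\eps,\kappa})$, is bounded by $\sim(1+m_1^2/n_1^2)\,a_{\eps,\kappa,0}^2/c_{\eps,\kappa}'$; the quotient is then $\le 2c_{\eps,\kappa}+O(\eps^{129/128})$. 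Your claim that numerator and denominator are both comparable to $n_{\eps,\kappa,2}^{-2}$ is false, and even granting it, estimating $\lambda_2(\Sigma_{\eps,\kappa})-\lambda_0(C_{\eps,\kappa})=-c_{\eps,\kappa}+O(\eps^{1/2})$ is far too coarse: $c_{\eps,\kappa}$ may be as small as $\eps^{65/64}$, so an $O(\eps^{1/2})$ slack cannot produce the advertised $O(\eps^{129/128})$. For \eqref{eq_expan_gap}, the "discrepancy" $A(n_1^2-n_2^2)/(n_1n_2)$ you propose to absorb into the $2A^2/c_{\eps,\kappa}$ buffer is an artifact of the wrong $\beta_{\eps,\kappa}$ (it can be of order $A/n_1$, with no a priori comparison to $A^2/c_{\eps,\kappa}$); with the correct $\beta_{\eps,\kappa}$ one gets $(\lambda_2-\lambda_1)/(1+\beta^2)\approx A\,n_{\eps,\kappa,1}/m_{\eps,\kappa,1}$ directly and no such discrepancy arises. (Your observation that $\mu_1^0(C_{\eps,\kappa})=\lambda_0^0(C_{\eps,\kappa})$ is correct for the cylinder, but the paper deliberately uses only the inequality \cref{spec_comp_dir_neu} so that the argument carries over to cross caps.)
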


In view of \eqref{eq_expan_error} the bound from \cref{comp_neu_bd} seems to be entirely useless.
However, if $m_{\eps,\kappa,1}$ is very small, the gap in \eqref{eq_expan_gap} to $\lambda_2(\Sigma_{\eps,\kappa})$ is very large compensating for the loss coming from \eqref{eq_expan_error}.
In the next section we will make use of this observation and iteratively use \cref{comp_neu_bd} to improve the estimate on the gap $\lambda_1(\Sigma) - \lambda_0(C_{\eps,\kappa})$ by adjusting the parameter $\kappa$ carefully.

\begin{rem}
This is the second time we make use of our precise knowledge of the leading order term in the expansion coming from the quasimodes concentrated on $C_{\eps,\kappa}$, see \cref{expan_cyl}.
\end{rem}

\begin{proof}
Recall that the eigenvalue asymptotics from \cref{expan_surf} implies that
\begin{equation} \label{eq_eig1_expan_1}
\begin{split}
\lambda_1(\Sigma_{\eps,\kappa})
&=
\lambda_1(\Sigma)
-
\frac{ \phi_0(x_0) \lambda_1(\Sigma)  \int_{C_{\eps,\kappa}} u_{\eps,\kappa} + O(\eps^{k/2})}{m_{\eps,\kappa,1} + O(\eps \log(1/\eps))}
\\
&=
\lambda_1(\Sigma) -  \phi_0(x_0) \lambda_1(\Sigma) \frac{ \int_{C_{\eps,\kappa}} u_{\eps,\kappa} } {m_{\eps,\kappa,1}}  + O(\eps^{3/2} \log^2(1/\eps))
\end{split}
\end{equation}
since we assume $|m_{\eps,\kappa,1}| \geq \eps^{1/4}$ and $k \geq 3$ and where we use \cref{L^2_mean_bd}.
The expansion from \cref{expan_cyl} combined with \cref{L^2_mean_bd} (see also \cref{rem_asymp_improved}) implies 
\begin{equation} \label{eq_eig1_expan_2}
\begin{split}
\lambda_1(\Sigma_{\eps,\kappa})
&=
\lambda_0(C_{\eps,\kappa})  
- \frac{ \phi_0(x_0) \frac{\lambda_0(C_{\eps,\kappa})}{ \lambda_1(\Sigma)} m_{\eps,\kappa,1} a_{\eps,\kappa,0} + O(\eps^{3(\alpha+1)/2} \log(1/\eps))}{n_{\eps,\kappa,1} + O(\eps^{3\alpha/2+1/2})} 
\\
&=
\lambda_0(C_{\eps,\kappa}) - \frac{\phi_0(x_0) \lambda_0(C_{\eps,\kappa})}{\lambda_1(\Sigma)} \frac{m_{\eps,\kappa,1}}{n_{\eps,\kappa,1}}  a_{\eps,\kappa,0} + O(\eps^{3/2} \log^2(1/\eps))
\end{split}
\end{equation}
since we have that $|n_{\eps,\kappa,1}| \geq \eps^{1/4}$ thanks to \cref{L^2_main_decomp}, and $k \geq 3$.\footnote{Of course, the error term here is non-sharp, and similarly in some formulae below. Since we combine the different expansions in a second, we only keep track of the worst case.}
Similarly, we find for the second eigenvalue that
\begin{equation}  \label{eq_eig2_expan_1}
\begin{split}
\lambda_2(\Sigma_{\eps,\kappa})
=&
\lambda_1(\Sigma)
-
\frac{ \phi_0(x_0) \lambda_1(\Sigma)  \int_{C_{\eps,\kappa}} v_{\eps,\kappa} + O(\eps^{k/2})}{m_{\eps,\kappa,2} + O(\eps \log(1/\eps))}
\\
&=
\lambda_1(\Sigma) -  \phi_0(x_0) \lambda_1(\Sigma)  \frac{ \int_{C_{\eps,\kappa}} v_{\eps,\kappa} } {m_{\eps,\kappa,2}} + O(\eps^{3/2} \log^2(1/\eps))
\end{split}
\end{equation}
since  $|m_{\eps,\kappa,2}| \geq \eps^{1/4}$ thanks to \cref{L^2_main_decomp}, $k \geq 3$, and where we use \cref{L^2_mean_bd} again.
The expansion coming from \cref{expan_cyl} gives 
\begin{equation} \label{eq_expan_sec_eigen}
\begin{split}
\lambda_2(\Sigma_{\eps,\kappa})
=&
\lambda_0(C_{\eps,\kappa})  
- \frac{  \phi_0(x_0) \frac{\lambda_0(C_{\eps,\kappa})}{\lambda_1(\Sigma)} m_{\eps,\kappa,2} a_{\eps,\kappa,0} + O(\eps^{3(\alpha+1)/2} \log(1/\eps))}{n_{\eps,\kappa,2} + O(\eps^{3\alpha/2+1/2})} 
\\
&=
\lambda_0(C_{\eps,\kappa}) - \frac{ \phi_0(x_0) \lambda_0(C_{\eps,\kappa}) }{\lambda_1(\Sigma)}  \frac{m_{\eps,\kappa,2}}{n_{\eps,\kappa,2}}a_{\eps,\kappa,0} + O(\eps^{3/2}\log^2(1/\eps))
\end{split}
\end{equation}
since we assume $|n_{\eps,\kappa,2}| \geq \eps^{1/4}$ and $k \geq 3$.

\smallskip

This implies, under our assumptions from above that
\begin{equation} \label{eq_expan_beta_0}
\begin{split}
\phi_0 & (x_0) \lambda_1 (\Sigma)  \frac{\int_{C_{\eps,\kappa}} u_{\eps,\kappa}}{m_{\eps,\kappa,1}}
=(\lambda_1(\Sigma) - \lambda_2(\Sigma_{\eps,\kappa})) + (\lambda_2(\Sigma_{\eps,\kappa}) - \lambda_1(\Sigma_{\eps,\kappa})) + O(\eps^{3/2}\log^2(1/\eps))
\\
&=
\phi_0(x_0) \lambda_1(\Sigma)  \frac{\int_{C_{\eps,\kappa}} v_{\eps,\kappa}}{m_{\eps,\kappa,2}}
 +
\frac{\phi_0(x_0)\lambda_0(C_{\eps,\kappa})}{\lambda_1(\Sigma)}  \left( \frac{ m_{\eps,\kappa,1} }{n_{\eps,\kappa,1}} - \frac{m_{\eps,\kappa,2}}{n_{\eps,\kappa,2}} \right) a_{\eps,\kappa,0}
+ O(\eps^{3/2} \log^2(1/\eps)),
\end{split}
\end{equation}
where the error terms are uniformly in $\kappa$ as long as $m_{\eps,\kappa,i},n_{\eps,\kappa,i}$ are as above.

We will write
$$
c_{\eps,\kappa}' : = \phi_0(x_0) \lambda_1(\Sigma) \frac{\int_{C_{\eps,\kappa}} v_{\eps,\kappa} }{m_{\eps,\kappa,2}}
$$
and note that $\lambda_2(\Sigma_{\eps,\kappa}) - \lambda_1(\Sigma)=-c_{\eps,\kappa}' + O(\eps^{3/2}\log(1/\eps))$,
by \eqref{eq_eig2_expan_1}.
The assumption $\lambda_1(\Sigma) - \lambda_2(\Sigma_{\eps,\kappa}) \geq \eps^{9/8}$ 
combined with $|m_{\eps,\kappa,2}|\geq 1/16$ (thanks to our assumptions and \cref{L^2_main_decomp}) implies
$$
\left| \int_{C_{\eps,\kappa}} v_{\eps,\kappa} \right| \geq C \eps^{9/8} |m_{\eps,\kappa,2}| \geq  C \eps^{9/8}
$$
for $\eps$ sufficiently small.
Therefore, we may divide by $-\int_{C_{\eps,\kappa}} v_{\eps,\kappa}$ in \eqref{eq_expan_beta_0} to obtain
\begin{equation} \label{eq_expan_beta_1}
\begin{split}
\beta_{\eps,\kappa}
=& - 
\frac{\int_{C_{\eps,\kappa}} u_{\eps,\kappa}} { \int_{C_{\eps,\kappa}} v_{\eps,\kappa}}
\\
=&
-\frac{m_{\eps,\kappa,1}}{m_{\eps,\kappa,2}} 
+ 
\frac{\phi_0(x_0)\lambda_0(C_{\eps,\kappa})}{\lambda_1(\Sigma)} \left(\frac{m_{\eps,\kappa,1}}{n_{\eps,\kappa,2}} - \frac{m_{\eps,\kappa,1}^2}{m_{\eps,\kappa,2} n_{\eps,\kappa,1}}\right) \frac{a_{\eps,\kappa,0}}{c_{\eps,\kappa}'} 
\\
&+ O(\eps^{3/8} \log^2(1/\eps)).
\end{split}
\end{equation}
We can rewrite this using \cref{L^2_main_decomp} as follows.
Since we assume $\alpha < 3/8$, we can apply \cref{L^2_main_decomp} with $\tau=\alpha - 1/16 \geq 1/4$.
Also recall that we have $|m_{\eps,\kappa,1}| \leq 3/4$ by assumption, and hence also $|m_{\eps,\kappa,2}|,|n_{\eps,\kappa,1}| \geq 1/16$
by \cref{L^2_main_decomp} if $\eps$ is sufficiently small.
Therefore, we find from \eqref{eq_L^2_decomp_4} that
$$
- \frac{m_{\eps,\kappa,1}}{m_{\eps,\kappa,2}} = \frac{m_{\eps,\kappa,1}}{n_{\eps,\kappa,1}} + O(\eps^{1/16}).
$$
Similarly, from \eqref{eq_L^2_decomp_3} and \eqref{eq_L^2_decomp_4}, since we also assume $|n_{\eps,\kappa,2}|,|m_{\eps,\kappa,1}| \geq \eps^{1/32}$, we find that
$$
\left( \frac{m_{\eps,\kappa,1}}{n_{\eps,\kappa,2}} -  \frac{m_{\eps,\kappa,1}^2}{m_{\eps,\kappa,2} n_{\eps,\kappa,1}}\right)
=
\left( 1 + \frac{m_{\eps,\kappa,1}^2}{n_{\eps,\kappa,1}^2}\right) + O(\eps^{1/32}).
$$
Combining these with the formula \eqref{eq_expan_beta_1} for $\beta_{\eps,\kappa}$ from above we arrive at

\begin{equation} \label{eq_expan_beta_2}
\beta_{\eps,\kappa} = 
\frac{m_{\eps,\kappa,1}}{n_{\eps,\kappa,1}}
+
\left(1 + \frac{m_{\eps,\kappa,1}^2}{n_{\eps,\kappa,1}^2} \right) \frac{\phi_0(x_0) \lambda_0(C_{\eps,\kappa})}{\lambda_1(\Sigma)} \frac{ a_{\eps,\kappa,0} }{c_{\eps,\kappa}'}
+
O(\eps^{1/64})
\end{equation}
where we also use the assumption that $|a_{\eps,\kappa,0}/c_{\eps,\kappa}'| \eps^{1/64}\leq 1$.

\smallskip

We can now combine the first expansion \eqref{eq_expan_beta_1} for $\beta_{\eps,\kappa}$ with the expansions for $\lambda_1(\Sigma_{\eps,\kappa})$ and $\lambda_2(\Sigma_{\eps,\kappa})$ from above coming from the quasimode concentrated on the cylinder to 
find that
\begin{equation*}
\begin{split}
\lambda_1& (\Sigma_{\eps,\kappa}) 
+ 
\beta_{\eps,\kappa}^2 \lambda_2(\Sigma_{\eps,\kappa})
\\&=
(1+\beta_{\eps,\kappa}^2) \lambda_0(C_{\eps,\kappa})
-
\left(
\frac{m_{\eps,\kappa,1}}{n_{\eps,\kappa,1}} + \frac{m_{\eps,\kappa,1}^2}{m_{\eps,\kappa,2} n_{\eps,\kappa,2}}
\right)
\frac{\phi_0(x_0) \lambda_0(C_{\eps,\kappa})}{\lambda_1(\Sigma)} a_{\eps,\kappa,0}
\\
&+
2 \frac{m_{\eps,\kappa,1}}{n_{\eps,\kappa,2}} \left(\frac{m_{\eps,\kappa,1}}{n_{\eps,\kappa,2}} - \frac{m_{\eps,\kappa,1}^2}{m_{\eps,\kappa,2} n_{\eps,\kappa,1}}  \right) \frac{\phi_0(x_0)^2 \lambda_0(C_{\eps,\kappa})^2}{\lambda_1(\Sigma)^2} \frac{a_{\eps,\kappa,0}^2}{c_{\eps,\kappa}'} 
+O(\eps^{33/32}\log^2(1/\eps))
\end{split}
\end{equation*}
since $|m_{\eps,\kappa,i}|,|n_{\eps,\kappa,i}| \geq \eps^{1/32}$ and $\eps^{1/64}|a_{\eps,\kappa,0}/c_{\eps,\kappa}| \leq 1$, which implies that $|a_{\eps,\kappa,0}^2/c_{\eps,\kappa}'| \leq \eps^{63/64}$ for $\eps$ sufficiently small. 

From \cref{L^2_main_decomp} once more, we also have that
$$
\left(
\frac{m_{\eps,\kappa,1}}{n_{\eps,\kappa,1}} + \frac{m_{\eps,\kappa,1}^2}{m_{\eps,\kappa,2} n_{\eps,\kappa,2}}
\right)
=
O(\eps^{1/32})
$$
as long as $|m_{\eps,\kappa,1}| \leq 3/4$
and
$$
\frac{m_{\eps,\kappa,1}}{n_{\eps,\kappa,2}} = 1 + O(\eps^{1/32})
$$
and
$$
\frac{m_{\eps,\kappa,1}^2}{m_{\eps,\kappa,2} n_{\eps,\kappa,1}} = - \frac{m_{\eps,\kappa,1}^2}{n_{\eps,\kappa,1}^2} + O (\eps^{1/32}).
$$
Using $|a_{\eps,\kappa,0}^2/c_{\eps,\kappa}'| \leq \eps^{63/64}$ once again we obtain
\begin{equation} \label{eq_expan_enum}
\begin{split}
\lambda_1(\Sigma_{\eps,\kappa}) + & \beta_{\eps,\kappa}^2 \lambda_2(\Sigma_{\eps,\kappa})
=
(1+\beta_{\eps,\kappa}^2)\lambda_0(C_{\eps,\kappa})
\\
&+ 2 \left(1+ \frac{m_{\eps,\kappa,1}^2}{n_{\eps,\kappa,1}^2} \right) \frac{ \phi_0(x_0)^2 \lambda_0(C_{\eps,\kappa})^2 } {\lambda_1(\Sigma)^2} \frac{a_{\eps,\kappa,0}^2}{c_{\eps,\kappa}'}
+O(\eps^{65/64}).
\end{split}
\end{equation}
Note that \eqref{eq_expan_beta_2} in particular implies that 
$$
1+\beta_{\eps,\kappa}^2 \geq 1 + \frac{m_{\eps,\kappa,1}^2}{n_{\eps,\kappa,1}^2}
$$
for $\eps$ sufficiently small 
since $\alpha \leq 65/192$ and $|c_{\eps,\kappa}'|\leq C \eps \log(1/\eps)$ for $\eps$ sufficiently small thanks to \cref{L^2_lem_pre_bd} which is applicable by assumption.
Therefore, \eqref{eq_expan_enum} combined with \eqref{eq_expan_sec_eigen} and using $|a_{\eps,\kappa,0}^2/c_{\eps,\kappa}'| \leq \eps^{63/64}$ to change $c_{\eps,\kappa}'$ into $c_{\eps,\kappa}$ we easily obtain \eqref{eq_expan_gap}.

From the expansion \eqref{eq_expan_beta_1} of $\beta_{\eps,\kappa}$ we also obtain
\begin{equation*} 
m_{\eps,\kappa,1} + \beta_{\eps,\kappa} m_{\eps,\kappa,2}
=
m_{\eps,\kappa,2} \left(\frac{m_{\eps,\kappa,1}}{n_{\eps,\kappa,2}} - \frac{m_{\eps,\kappa,1}^2}{m_{\eps,\kappa,2}n_{\eps,\kappa,1}}\right) \frac{\phi_0(x_0) \lambda_0(C_{\eps,\kappa})}{\lambda_1(\Sigma)} \frac{a_{\eps,\kappa,0}}{c_{\eps,\kappa}'} + O(\eps^{1/8} \log^2(1/\eps)).
\end{equation*}
When combined with \cref{L^2_main_decomp} this easily implies that
\begin{equation*}
\begin{split}
(m_{\eps,\kappa,1} &+ \beta_{\eps,\kappa} m_{\eps,\kappa,2})^2 
\\
&= 
 \left(\frac{m_{\eps,\kappa,1}}{n_{\eps,\kappa,2}} - \frac{m_{\eps,\kappa,1}^2}{m_{\eps,\kappa,2}n_{\eps,\kappa,1}}\right) \frac{\phi_0(x_0)^2 \lambda_0^2(C_{\eps,\kappa})}{\lambda_1(\Sigma)^2} \frac{a_{\eps,\kappa,0}^2}{c_{\eps,\kappa}'^2}
  +  O(\eps^{1/16} \log^2(1/\eps))
 \end{split}
\end{equation*}
as long as $|m_{\eps,\kappa,i}|,|n_{\eps,\kappa,i}| \geq \eps^{1/32}$ since $|a_{\eps,\kappa}/c_{\eps,\kappa}|\leq \eps^{-1/64}$.
Combining this with \eqref{eq_expan_enum} from above and with $\lambda_0(C_{\eps,\kappa}) \leq \mu_1 (C_{\eps,\kappa})$ (see \cref{spec_comp_dir_neu}\footnote{exploiting this was also key in \cref{prop_first_simple}}) we obtain that
\begin{equation*} 
\begin{split}
\left( \frac{\lambda_1(\Sigma_{\eps,\kappa})+\beta_{\eps,\kappa}^2 \lambda_2(\Sigma_{\eps,\kappa}) -  (1+\beta_{\eps,\kappa}^2)\mu_1(C_{\eps,\kappa}) }{\mu_1(C_{\eps,\kappa})(m_{\eps,\kappa,1}+\beta_{\eps,\kappa} m_{\eps,\kappa,2})^2}\right) \lambda_2(\Sigma_{\eps,\kappa})
\leq (2c_{\eps,\kappa} + O(\eps^{129/128}) ) 
\frac{\lambda_2(\Sigma_{\eps,\kappa})}{\mu_1(C_{\eps,\kappa})},
\end{split}
\end{equation*}
since $\alpha < 65/192$ and making use of the $\eps\log(1/\eps)$ bound \cref{L^2_lem_pre_bd}.
Moreover we have that
\begin{equation*}
\lambda_2(\Sigma_{\eps,\kappa})
\leq \lambda_0(C_{\eps,\kappa}) + \eps^{3\alpha/2+1/2-\tau}
\leq \mu_1(C_{\eps,\kappa}) + \eps^{3\alpha/2+1/2-\tau},
\end{equation*}
which easily implies that
\begin{equation*}
\frac{\lambda_2(\Sigma_{\eps,\kappa})}{ \mu_1(C_{\eps,\kappa})}
\leq
1+
C \eps^{3\alpha/2+1/2-\tau}
\leq 
1 +  C \eps^{1/2}
\end{equation*}
by assumption and 
since $\mu_1(C_{\eps,\kappa}) \geq c>0$.
From this the conclusion easily follows using $c_{\eps,\kappa} \leq d_0 \eps \log(1/\eps)$ once again.
\end{proof}

%%%%%%%%%%%%%%%%%%%%%%%%%%%%%%%%
\subsection{The iteration argument} \label{sec_iter}
%%%%%%%%%%%%%%%%%%%%%%%%%%%%%%%%

We now use the bound from the previous section
to iteratively improve the gap
$$
\lambda_1(\Sigma) - \lambda_0(C_{\eps,\kappa})
$$
while staying in a range of parameters for $\kappa$ in which $m_{\eps,\kappa,1}$ is small. 
Eventually, this will show that we may choose $\kappa_\eps$ such that
$$
\lambda_1(\Sigma) - \lambda_0(C_{\eps,\kappa_\eps}) = o(\eps)
$$
and $m_{\eps,\kappa_{\eps},1} \in (1/4,3/4)$, which will easily imply \cref{thm_main_technical}.

\begin{prop} \label{prop_iter}
There is $\alpha_0>1/3$ with the following property.
For $\alpha \in (1/3,\alpha_0)$ such that
\begin{equation} \label{eq_extra_assum_gap}
\lambda_1(\Sigma_{\eps,\kappa}) \leq \lambda_1(\Sigma) - \eps^{k/4}
\end{equation}
for any $ \kappa \in [\kappa_0,\kappa_1]$,
there are $\rho_0>2$ and $\eps_*>0$ such that 
$$
\lambda_1(\Sigma) - \lambda_0(C_{\eps,\kappa}) = o(\eps)
$$
if $\kappa$ is such that $n_{\eps,\kappa,1}/m_{\eps,\kappa,1} \leq \rho_0$ and $\eps \leq \eps_*$.
\end{prop}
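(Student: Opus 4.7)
The plan is to combine the Rayleigh-quotient bound of Lemma \ref{comp_neu_bd}, the asymptotic expansions of Lemma \ref{beta_asymp}, and the Neumann lower bound $\mu_1(\Sigma \setminus B_{\eps^k}) \geq \lambda_1(\Sigma) - O(\eps^{2k})$ of Theorem \ref{thm_lower_neumann} into a single master inequality, and then to use Lemma \ref{L^2_lem_loc_scale} to make a finite sequence of carefully tuned choices of $\kappa$ that iteratively push the gap $G_{\eps,\kappa} := \lambda_1(\Sigma) - \lambda_0(C_{\eps,\kappa})$ down to $o(\eps)$. Writing $\rho := n_{\eps,\kappa,1}/m_{\eps,\kappa,1}$, $c := \lambda_1(\Sigma) - \lambda_2(\Sigma_{\eps,\kappa})$, $a := a_{\eps,\kappa,0}$ and $A := \phi_0(x_0)\lambda_0(C_{\eps,\kappa})/\lambda_1(\Sigma)$, chaining these three ingredients (with $k \geq 9$, so that $\eps^{2k}$ is absorbed) yields a schematic master inequality
\begin{equation*}
\rho\, A\, a \;\leq\; c + 2A^2\,\frac{a^2}{c} + O(\eps^{65/64}).
\end{equation*}
Separately, the quasimode expansion \eqref{eq_expan_sec_eigen} together with Proposition \ref{L^2_main_decomp} gives $G_{\eps,\kappa} = c + A\rho\, a + O(\eps^{3/2}\log^2(1/\eps))$, so the hypothesis $\alpha > 1/3$, which forces $a = O(\eps^{3\alpha/2+1/2}) = o(\eps)$, makes $A\rho\, a$ automatically $o(\eps)$ for any $\rho \leq \rho_0$. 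The entire task then collapses to proving $c = o(\eps)$ at the $\kappa_\eps$ produced by the iteration.

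The iteration itself starts from the preliminary bound $c \leq d_0\, \eps \log(1/\eps)$ of Lemma \ref{L^2_lem_pre_bd}. At round $j$ I would use Lemma \ref{L^2_lem_loc_scale} to pick a parameter $\kappa^{(j)}$ with $A\rho^{(j)} a$ placed at a scale $s_j$ just above the current bound $b_j$ on $c$. Treated as a quadratic in $c$, the master inequality has two admissible branches: the large-root branch $c \gtrsim \rho\, A\, a$ is ruled out by the inductive upper bound $b_j$, so $c$ must lie on the small-root branch $c \lesssim a/\rho^{(j)}$, strictly improving $b_j$ to some $b_{j+1} \ll b_j$. After finitely many rounds $b_j$ drops below $\eps/\log(1/\eps)$, and the corresponding $\rho^{(j)}$ remains bounded by a fixed $\rho_0 > 2$ determined by how often the iteration has had to be applied. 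Proposition \ref{L^2_main_decomp} and Lemma \ref{L^2_lem_lambda_2_close} are invoked at each step both to feed the master inequality (by verifying the admissibility hypotheses of Lemma \ref{beta_asymp} on the masses $m_i, n_i$) and to close the argument in the degenerate situation where some $m_i$ or $n_i$ drifts out of the admissible range.

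The main obstacle is the narrow range of scales in which this iteration actually closes. The expansion of Lemma \ref{beta_asymp} carries an intrinsic floor at $c \sim \eps^{65/64}$, below which the residual $O(\eps^{65/64})$ dominates everything, while the small-root branch of the master inequality is only informative when the correction $a^2/c = o(\eps)$, that is $c \gg \eps^{3\alpha}$. Consistency of $\eps^{3\alpha} \ll c \ll \eps$ forces $\alpha$ to be close to $1/3$, and the permissible $\alpha_0$ emerges precisely as the largest $\alpha$ for which the iteration terminates before either floor is struck. Ensuring that the concentration ratios $m_i, n_i$ stay inside the window of Proposition \ref{L^2_main_decomp} along the entire chain of parameters, by using the continuity and mean-value arguments of Lemma \ref{L^2_lem_loc_scale} and, when needed, the fallback bound of Lemma \ref{L^2_lem_lambda_2_close}, is the other and more bookkeeping-heavy source of difficulty.
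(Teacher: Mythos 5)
Your proposal follows essentially the same route as the paper's proof: the same master inequality obtained by chaining Lemma \ref{comp_neu_bd}, Lemma \ref{beta_asymp} and Theorem \ref{thm_lower_neumann}, the same quadratic dichotomy forcing $c_{\eps,\kappa}$ onto the small root once the interaction scale is placed just above the current bound, the same selection of parameters $\kappa_{\eps,j}$ via Lemma \ref{L^2_lem_loc_scale}, and the same fallback through Lemma \ref{L^2_lem_lambda_2_close} when $c_{\eps,\kappa}$ drops below the $\eps^{257/256}$ floor. The one bookkeeping point to be careful about is that the paper decreases the interaction scale by a \emph{fixed} factor $(1+\theta)/2$ per round -- so the number of rounds grows like $\log(1/\eps)$ and the accumulated $4^{l}$-factors must be checked against \eqref{eq_scale_cond} -- rather than re-centering at ``just above $b_{j+1}$,'' which with $b_{j+1}\sim a^{2}/b_{j}$ would oscillate back to $b_{j}$ after two steps; with that adjustment your plan coincides with the paper's argument.
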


Let us briefly explain the idea of the argument.
Starting from the initial bound \cref{L^2_lem_pre_bd} on scale $\eps \log(1/\eps)$ that holds for a large set of parameters $\kappa$ we have some control on
the unfavourable first term on the right hand side of \eqref{eq_expan_error}.
We then select $\kappa$ in such a way that the negative term on the right hand side of \eqref{eq_expan_gap} compensates for this.  
This then forces the second term on the right hand side of \eqref{eq_expan_gap} to be not too small, which means that we get an upper bound on $c_{\eps,\kappa}$ which turns out to be much better than the initial bound.
The key observation then is that at the same time our specific choice of $\kappa$ also gives an improved bound for $c_{\eps,\kappa}$ for some parameters $\kappa$, which in fact comes from much precise control on $\lambda_0(C_{\eps,\kappa})$.
Repeating this argument, while making sure to stay in the set of parameters $\kappa$, for which we have an improved bound, we can get better and better control on $\lambda_0(C_{\eps,\kappa})$.

\begin{proof}
We take $1/3<\alpha_0<65/192$, fix $\tau=\alpha-1/16$ and write $\bar \eps = \min\{\eps_0,\cdots,\eps_{12}\}$.
From here on we only consider parameters $ \eps \leq \bar \eps$.
Note that $\bar \eps$ does not depend on $\alpha$.
In particular, for $\eps \leq \bar \eps$ we have uniformly controlled error terms in \cref{beta_asymp} as long as $m_{\eps,\kappa,i}$ and $n_{\eps,\kappa,i}$
meet the assumptions from \cref{beta_asymp}.
At this point, by abuse of notation, we 
forget about all our previous $\eps_l$ and will use a potentially different $\eps_l$ in the $l$-th step of the iteration argument below.

By \eqref{eq_extra_assum_gap} and our choices above we may apply \cref{L^2_lem_loc_scale} to find $\kappa_{\eps,1} \in [\kappa_0,\kappa_1]$  such that
\begin{equation} \label{eq_it_in_choice}
 \frac{\phi_0(x_0)  \lambda_0(C_{\eps,\kappa_{\eps,1}})}{\lambda_1(\Sigma)}
\frac{n_{\eps,\kappa_{\eps,1},1}}{m_{\eps,\kappa_{\eps,1},1} }
a_{\eps,\kappa_{\eps,1},0}
= 
(1+\theta)d_0 \eps \log(1/\eps),
\end{equation}
for some small $\theta \in (0,1/2)$ that we fix once and for all time.
Of course, we may also assume that $\kappa_{\eps,1}$ is chosen minimally in $[\kappa_0,\kappa_1]$ such that \eqref{eq_it_in_choice} holds.

Upon decreasing $\bar \eps$, we can assume that
\begin{equation} \label{eq_mass_comp}
m_{\eps,\kappa,1}^2 + n_{\eps,\kappa,1}^2 \geq 15/16
\end{equation}
for any $\kappa \in [\kappa_0,\kappa_1]$ if $\eps \leq \bar \eps$ thanks to \cref{first_upper_bd}, \cref{L^2_ground_mass_cyl}, \cref{L^2_surface_part}, and \eqref{eq_extra_assum_gap}.
Therefore,  
our choice of $\alpha_0$ and $\kappa_{\eps,1}$ guarantees that $m_{\eps,\kappa_{\eps,1},1} , n_{\eps,\kappa_{\eps,1},1}$ are such that we may apply \cref{beta_asymp} and \cref{L^2_lem_pre_bd} for $\eps \leq \bar \eps$.
Moreover, thanks to \eqref{eq_mass_comp} and our choice of $\alpha_0$ we also have that $|m_{\eps,\kappa_{\eps,1},1}| \geq \eps^{1/64}$ if $\eps \leq \bar \eps$.

We write
$$
c_{\eps,1} = \lambda_1(\Sigma) - \lambda_2(\Sigma_{\eps,\kappa_{\eps,1}})
$$
and assume that
\begin{equation} \label{eq_extra_assum_gap_2}
c_{\eps,1} \geq  \eps^{257/256}.
\end{equation}
By the preliminary bound in \cref{L^2_lem_pre_bd}, we have that
\begin{equation} \label{eq_it_in}
c_{\eps,1} \leq d_0 \eps \log(1/\eps),
\end{equation}
for $\eps \leq \bar \eps$
since $|m_{\eps,\kappa_{\eps,1},1}| \geq 2 \eps^{\tau}$ by our choices above.

We then find from \cref{comp_neu_bd} combined with \cref{beta_asymp} and \cref{thm_lower_neumann}
that
\begin{equation} \label{it_est}
\begin{split}
\lambda_2&(\Sigma_{\eps,\kappa_{1,\eps}}) 
+ c_{\eps,1}
=
\lambda_1(\Sigma)
\leq
\mu_1(\Sigma \setminus B_{\eps^k}) + O(\eps^k)
\\
\leq &
\lambda_2(\Sigma_{\eps,\kappa}) - (1+\theta)d_0 \eps \log(1/\eps) + 2 \frac{\lambda_0(C_{\eps,\kappa_{\eps,1}})^2 \phi_0(x_0)^2}{\lambda_1(\Sigma)^2}\frac{a_{\eps,\kappa_{\eps,1},0}^2}{c_{\eps,1}} 
+ 2c_{\eps,1}
+O(\eps^{129/128}).
\end{split}
\end{equation}
Thanks to \eqref{eq_it_in} this implies that
\begin{equation} \label{eq_it_est_2}
\begin{split}
-2  \frac{\lambda_0(C_{\eps,\kappa_{\eps,1}})^2 \phi_0(x_0)^2}{\lambda_1(\Sigma)^2} \frac{a_{\eps,\kappa_{\eps,1},0}^2}{c_{\eps,1}} 
&\leq
c_{\eps,1} - (1+\theta) d_0 \eps \log(1/\eps) +O(\eps^{129/128})
\\
&\leq
- \theta d_0 \eps \log(1/\eps) +O(\eps^{129/128})
\\
& \leq
- \frac{\theta}{2} d_0 \eps \log(1/\eps)
\end{split}
\end{equation}
as long as $\eps \leq \eps_1$, so that we can absorb the error term.
Therefore, we arrive at
$$
c_{\eps,1} \leq  \frac{\lambda_0(C_{\eps,\kappa_{\eps,1}})^2 \phi_0(x_0)^2}{\lambda_1(\Sigma)^2} \frac{4 a_{\eps,\kappa_{\eps,1},0}^2}{\theta d_0 \eps \log(1/\eps)}.
$$
Since $|a_{\eps,\kappa_{\eps,1},0}^2| \leq C \eps^{3\alpha +1}$
this is a significantly better bound on $\lambda_2(\Sigma_{\eps,\kappa_{\eps,1}})$ than our initial bound \eqref{eq_it_in}.
We now fix $1/3<\alpha<\alpha_0$, where $1/3<\alpha_0<65/192$ is sufficiently close to $1/3$ such that
\begin{equation} \label{eq_def_alpha0}
\frac{\lambda_0(C_{\eps,\kappa})^2 \phi_0(x_0)^2}{\lambda_1(\Sigma)^2} \frac{4 a_{\eps,\kappa,0}^2}{\theta d_0 \eps \log(1/\eps)}
\geq 
\eps^{257/256}
\end{equation}
for $\alpha_0$ and
for any $\kappa \in [\kappa_0,\kappa_1]$.

From \cref{L^2_main_decomp} we get that
\begin{equation} \label{eq_2_term}
\frac{n_{\eps,\kappa_{\eps,1},1}}{m_{\eps,\kappa_{\eps,1},1}}  a_{\eps,\kappa_{\eps,1},0}
=
- \frac{m_{\eps,\kappa_{\eps,1},2}}{n_{\eps,\kappa_{\eps,1},2}}  a_{\eps,\kappa_{\eps,1},0} + O(\eps^{65/64})
\end{equation}
since $|m_{\eps,\kappa_{\eps,1},1}| \geq \eps^{1/64}$.
Therefore, by the specific choice \eqref{eq_it_in_choice} combined with \eqref{eq_2_term}  and since $|n_{\eps,\kappa_{\eps,1},1}| \geq \eps^{1/32}$, we find from \cref{expan_cyl} (cf.\ \eqref{eq_expan_sec_eigen}) that
\begin{equation*} 
\begin{split}
\lambda_0(C_{\eps,\kappa_{\eps,1}}) 
&\geq 
\lambda_2(\Sigma_{\eps,\kappa_{\eps,1}})  - (1+\theta) d_0 \eps \log(1/\eps) + O(\eps^{129/128})
\\
&\geq
\lambda_1(\Sigma) - c_{\eps,1} -(1+\theta) d_0 \eps \log(1/\eps) + O(\eps^{129/128})
\\
&\geq
\lambda_1(\Sigma) - (1+\theta) d_0 \eps \log(1/\eps) -  \frac{\lambda_0(C_{\eps,\kappa_{\eps,1}})^2 \phi_0(x_0)^2}{\lambda_1(\Sigma)^2} \frac{8 a_{\eps,\kappa_{\eps,1},0}^2}{\theta d_0 \eps \log(1/\eps)}
\end{split}
\end{equation*}
for $\eps \leq \eps_1$ upon decreasing $\eps_1$ so that we can absorb the $O(\eps^{129/128})$ term into $2c_{\eps,1}$.
If \eqref{eq_extra_assum_gap_2} does not hold we obtain the exact same bound directly from \cref{L^2_lem_lambda_2_close}\footnote{In fact, this gives an even better bound thanks to our choice of $\alpha_0$, which avoids a case distinction.}
since $\alpha < \alpha_0$.
Therefore, we find that 
\begin{equation} \label{eq_bd_grd_1}
\lambda_0(C_{\eps,\kappa}) 
\geq 
\lambda_1(\Sigma) - (1+\theta) d_0 \eps \log(1/\eps) -  \frac{\lambda_0(C_{\eps,\kappa_{\eps,1}})^2 \phi_0(x_0)^2}{\lambda_1(\Sigma)^2} \frac{8 a_{\eps,\kappa_{\eps,1},0}^2}{\theta d_0 \eps \log(1/\eps)}
\end{equation}
for any $\kappa \geq \kappa_{\eps,1}$.

We now take the smallest $\kappa_{\eps,2} \in [\kappa_0,\kappa_1]$ given by \cref{L^2_lem_loc_scale} such that
\begin{equation} \label{eq_choice_2}
\frac{\lambda_0(C_{\eps,\kappa_{\eps,2}})\phi_0(x_0)}{\lambda_1(\Sigma)} \frac{n_{\eps,\kappa_{\eps,2},1}}{m_{\eps,\kappa_{\eps,2},1}} a_{\eps,\kappa_{\eps,2},0} =(1+\theta) \frac{1+\theta}{2}d_0 \eps \log(1/\eps).
\end{equation}
Note that we need to have $\kappa_{\eps,2} > \kappa_{\eps,1}$, in particular \eqref{eq_bd_grd_1} holds at $\kappa_{\eps,2}$.
Moreover, exactly as for $\kappa_{\eps,1}$, we see that $m_{\eps,\kappa_{\eps,2},1},n_{\eps,\kappa_{\eps,2},1}$ are such that we can invoke \cref{beta_asymp}.
We write
$$
c_{\eps,2}=\lambda_1(\Sigma) - \lambda_2(\Sigma_{\eps,\kappa_{\eps,2}})
$$
and assume again first that 
\begin{equation} \label{eq_gap_2}
c_{\eps,2} \geq \eps^{257/256}.
\end{equation}
In particular this allows us to invoke \cref{L^2_main_decomp} to obtain
\begin{equation} \label{eq_constraint}
\begin{split}
-\frac{\lambda_0(C_{\eps,\kappa_{\eps,2}})\phi_0(x_0)}{\lambda_1(\Sigma)} \frac{m_{\eps,\kappa_{\eps,2},2}}{n_{\eps,\kappa_{\eps,2},2}}  a_{\eps,\kappa_{\eps,2},0}
& = (1+\theta) \frac{1+\theta}{2}d_0 \eps \log(1/\eps) + O(\eps^{65/64})
\\
&\geq
\frac{(1+ \theta)}{2} d_0 \eps \log(1/\eps) +  \frac{\lambda_0(C_{\eps,\kappa_{\eps,1}})^2 \phi_0(x_0)^2}{\lambda_1(\Sigma)^2} \frac{8 a_{\eps,\kappa_{\eps,1},0}^2}{\theta d_0 \eps \log(1/\eps)}
\end{split}
\end{equation}
from \eqref{eq_choice_2} if $\eps \leq \eps_2$ such that the
$O(\eps^{65/64})$-term (which is uniform since we have sufficient control the size of $|m_{\eps,\kappa,1}|$ and $|n_{\eps,\kappa,1}|$) and the very last term\footnote{Note that $\eps^{-3\alpha-1} \lambda_0(C_{\eps,\kappa})^2 a_{\eps,\kappa,0}^2$ is uniformly bounded from above and below for $\kappa \in [\kappa_0,\kappa_1]$.}
 are both bounded by $\theta (1+\theta) d_0 \eps \log(1/\eps)/4$.

This in turn implies thanks to \cref{expan_cyl} and \eqref{eq_bd_grd_1} that
$$
\lambda_2(\Sigma_{\eps,\kappa_{\eps,2}}) \geq \lambda_1(\Sigma) -  \frac{1+\theta}{2} d_0 \eps \log(1/\eps),
$$
i.e.\
$$
c_{\eps,2} \leq \frac{1+\theta}{2} d_0 \eps \log(1/\eps)
$$
for $\eps \leq \eps_2$.

We now apply \cref{thm_lower_neumann}, \cref{comp_neu_bd}, and \cref{beta_asymp} in the exact same way as in \eqref{it_est} and \eqref{eq_it_est_2} to obtain
\begin{equation*}
\begin{split}
\lambda_2(\Sigma_{\eps,\kappa_{\eps,,2}}) + c_{\eps,2}
\leq 
\lambda_2(\Sigma_{\eps,\kappa_{\eps,2}}) - \frac{\theta}{2} \frac{1+\theta}{2} d_0 \eps \log(1/\eps) + 2  \frac{\lambda_0(C_{\eps,\kappa_{\eps,2}})^2 \phi_0(x_0)^2}{\lambda_1(\Sigma)^2} \frac{a_{\eps,\kappa_{\eps,2},0}^2}{c_{\eps,2}} + c_{\eps,2}
\end{split}
\end{equation*}
as long as $\eps \leq \eps_2$ (upon decreasing $\eps_2$) so that we can absorb the $O(\eps^{129/128})$ error term into  $\frac{\theta}{2} \frac{1+\theta}{2} d_0 \eps \log(1/\eps)$.
As before, this implies
\begin{equation} \label{eq_iter_2}
c_{\eps,2} 
\leq 
 \frac{\lambda_0(C_{\eps,\kappa_{\eps,2}})^2 \phi_0(x_0)^2}{\lambda_1(\Sigma)^2} \frac{8a_{\eps,\kappa_{\eps,2},0}^2}{\theta(1+\theta) d_0 \eps \log(1/\eps)}
\leq
 \frac{\lambda_0(C_{\eps,\kappa_{\eps,2}})^2 \phi_0(x_0)^2}{\lambda_1(\Sigma)^2}
\frac{8a_{\eps,\kappa_{\eps,2},0}^2}{\theta d_0 \eps \log(1/\eps)},
\end{equation}
which then gives for any $\kappa \in [\kappa_{\eps,2},\kappa_1]$ that
\begin{equation} \label{eq_bd_grd_2}
\lambda_0(C_{\eps,\kappa}) 
\geq 
\lambda_1(\Sigma) -  \frac{\lambda_0(C_{\eps,\kappa_{\eps,2}})^2 \phi_0(x_0)^2}{\lambda_1(\Sigma)^2} \frac{16a_{\eps,\kappa_{\eps,2},0}^2}{\theta d_0 \eps \log(1/\eps)} -  \frac{(1+\theta)^2}{2}d_0 \eps \log(1/\eps)
\end{equation}
for $\eps \leq \eps_2$ upon decreasing $\eps_2$
such that we can absorb the $O(\eps^{129/128})$ term into $2c_{\eps,2}$.
As above, if the bound \eqref{eq_gap_2} fails to be true, we get the exact same bound directly from \cref{L^2_lem_lambda_2_close}.
By our choice of $\alpha_0$ at \eqref{eq_def_alpha0}, the bound at \eqref{eq_iter_2} is worse that $\eps^{257/256}$.

We now run the very same argument once again with the smallest $\kappa_{\eps,3} \in [\kappa_{\eps,2},\kappa_1] $ such that
$$
\frac{\lambda_0(C_{\eps,\kappa_{\eps,3}})\phi_0(x_0)}{\lambda_1(\Sigma)} \frac{n_{\eps,\kappa_{\eps,3},1}}{m_{\eps,\kappa_{\eps,3},1}} a_{\eps,\kappa_{\eps,3},0} = (1+\theta) \frac{(1+\theta)^2}{4} d_0 \eps \log(1/\eps).
$$
Take $\eps_3>0$ such that we have for $\eps \leq \eps_3$ that
$$
 \frac{\lambda_0(C_{\eps,\kappa_{\eps,2}})^2 \phi_0(x_0)^2}{\lambda_1(\Sigma)^2} \frac{16 a_{\eps,\kappa_{\eps,2},0}^2}{\theta d_0 \eps \log(1/\eps)} \leq \theta  \frac{(1+\theta)^2}{4} d_0 \eps \log(1/\eps)
$$
and such that the $O(\eps^{129/128})$-term (which is still uniformly controlled by the very same argument invoked above) is bounded by the right hand side of the last inequality, as well.

This gives the improved bound 
$$
\lambda_0(C_{\eps,\kappa_{\eps,3}}) 
\geq 
\lambda_1(\Sigma) -  \frac{\lambda_0(C_{\eps,\kappa_{\eps,3}})^2 \phi_0(x_0)^2}{\lambda_1(\Sigma)^2} \frac{32a_{\eps,\kappa_{\eps,3},0}^2}{\theta d_0 \eps \log(1/\eps)} -  \frac{(1+\theta)^3}{4}d_0 \eps \log(1/\eps)
$$
if 
$\eps \leq \eps_3$, where we have once again by our choice of $\alpha_0$ that the bound on $c_{\eps,3}$ is worse than $\eps^{257/256}$.

If we can justify to iterate this argument $l$-times we will get the bound
$$
\lambda_0(C_{\eps,\kappa}) 
\geq
\lambda_1(\Sigma) -  \frac{\lambda_0(C_{\eps,\kappa_{\eps,l}})^2 \phi_0(x_0)^2}{\lambda_1(\Sigma)^2} \frac{4^l a_{\eps,\kappa_{\eps,l},0}^2}{\theta d_0 \eps \log(1/\eps)} - \frac{(1+\theta)^l}{2^{l-1}} d_0 \eps \log(1/\eps)
$$
for $\kappa \in [\kappa_{\eps,l},\kappa_1]$, where $\kappa_{\eps,l} \geq \kappa_{\eps,l-1}$ is the smallest parameter such that
\begin{equation} \label{eq_choice_mass}
\frac{\lambda_0(C_{\eps,\kappa_{\eps,l}})\phi_0(x_0)}{\lambda_1(\Sigma)} \frac{n_{\eps,\kappa_{\eps,l},1}}{m_{\eps,\kappa_{\eps,l},1}} a_{\eps,\kappa_{\eps,l},0}= (1+\theta) \frac{(1+\theta)^l}{2^l} d_0 \eps \log(1/\eps)
\end{equation}
as long as $\eps \leq \eps_l$.
Here, $\eps_l$ is such that
\begin{equation} \label{eq_scale_cond}
\frac{4^l a_{\eps,\kappa,0}^2}{ \theta d_0 \eps \log(1/\eps)} \leq \theta \frac{(1+\theta)^l}{2^l} d_0 \eps \log(1/\eps).
\end{equation}
for all $\kappa \in [\kappa_0,\kappa_1]$
and such that we can absorb the $O(\eps^{129/128})$-terms (which we need to make sure to be still uniform) into the
right hand side above, as well.
(Note that $\eps_{l} \leq \eps_{l-1}$ for any $l$).

Pick $\delta>0$ sufficiently small such that with $L=\lceil \log(\eps^{-\delta}) \rceil$ we have that
$$
\frac{(1+\theta)^L}{2^L} d_0 \eps \log(1/\eps) \geq \eps^{257/256}
$$
for $\eps \leq \eps_*$ with $\eps_*>0$ sufficiently small.
This implies that the $O(\eps^{129/128})$-term is uniform at any $\kappa_{\eps,l}$ for $l \leq L$ since $\alpha_0 < 65/192$.
Similarly, upon decreasing $\delta$ we can assume that 
\eqref{eq_scale_cond} holds for $l=L$ and hence for any $l \leq L$ for $\eps \leq \eps_*$.
Finally, notice that, upon decreasing $\delta$ once again, we have
$$
\lambda_0(C_{\eps,\kappa_{\eps,L}}) \geq \lambda_1(\Sigma) - C \eps^{3\alpha - c \delta} - C \eps^{1+c' \delta} \log(1/\eps)
=
\lambda_1(\Sigma)- o(\eps),
$$ 
at $\kappa_{\eps,L}$ since $\alpha>1/3$.
where the constants come from the base changes in logarithms,
while, at the same time we still have
$$
\frac{
n_{\eps,\kappa_{\eps,L},1} }{m_{\eps,\kappa_{\eps,L},1}} \to \infty,
$$
which clearly implies the assertion.
\end{proof}

%%%%%%%%%%%%%%%%%%%%%%%%%%%%%%%%%%%%%%%%%%%%%%%
\subsection{Proof of the main technical theorem} \label{sec_proofs_main}
%%%%%%%%%%%%%%%%%%%%%%%%%%%%%%%%%%%%%%%%%%%%%%%

Given \cref{prop_iter} the proof of \cref{thm_main_technical} is very short.

\begin{proof}[Proof of \cref{thm_main_technical}]
We assume that 
$$
\lambda_1(\Sigma_{\eps,\kappa}) \leq \lambda_1(\Sigma) - \eps^{k/4}
$$
for any $\kappa \in [\kappa_0,\kappa_1]$, since there is nothing to prove otherwise.
For $\eps>0$ small, take $\kappa_\eps$ given by \cref{L^2_lem_loc_scale} such that
$$
n_{\eps,\kappa_\eps,1}/m_{\eps,\kappa_\eps,1}=1.
$$
It follows from \cref{prop_iter} that 
$$
\lambda_1(\Sigma) \leq \lambda_0(C_{\eps,\kappa_\eps}) + o(\eps)
$$
as $\eps \to 0$.
Moreover, using $m_{\eps,\kappa,1}^2+n_{\eps,\kappa,1}^2=1 + o(1)$, we get from \cref{expan_cyl} that 
$$
\lambda_0(C_{\eps,\kappa_{\eps}}) \leq \lambda_1(\Sigma_{\eps,\kappa_\eps}) - C \eps^{3\alpha/2+1/2}.
$$
Combining these two estimates we arrive at
$$
\lambda_1(\Sigma) \leq \lambda_1(\Sigma_{\eps,\kappa_\eps}) + o(\eps).
$$
On the other hand, we have that
$$
\area(\Sigma_{\eps,\kappa_\eps}) \geq \area(\Sigma) - C \eps^{2k} + c \eps,
$$
where $C=C(\Sigma)$ and $c=c(\kappa_1)>0$.
In summary, we have that
$$
\lambda_1(\Sigma_{\eps,\kappa_\eps}) \area(\Sigma_{\eps,\kappa_\eps})
\geq \lambda_1(\Sigma)\area(\Sigma) + c \lambda_1(\Sigma_{\eps,\kappa_\eps}) \eps + o(\eps),
$$
which implies the theorem since $\lambda_1(\Sigma_{\eps,\kappa_\eps})c\geq c'>0$.
\end{proof}

\begin{rem}
The argument for cross caps is completely identical. 
The crucial properties of the metrics on the cylinder that we have used are as follows.
\\
\begin{inparaenum}[1)]
\item
The separation of two consecutive Dirichlet eigenvalues is on scale $\sim l \eps^{2 \alpha}$, which is much larger than
the scale on which the corresponding quasimodes fail to solve the eigenvalue equation, which is $\sim l \eps^{3\alpha/2+1/2}$.
(The results in \cref{sec_conc} rely on this.)
\\
\item
The first non-trivial Dirichlet and Neumann eigenvalues satisfy
$$
\lambda_0(C_{\eps,\kappa}) \leq \mu_1(C_{\eps,\kappa})
$$
(this was used in \cref{sec_asymp}).
By \cref{lem_basic_spec_comp} this holds for $M_{\eps,\kappa}$ as well.
\end{inparaenum}
\end{rem}

%%%%%%%%%%%%%%%%%%%%%%%%%%%%%%%%%%%%%%%%%%%%%%%
\section{Proofs of the main results} \label{sec_proofs_main_2}
%%%%%%%%%%%%%%%%%%%%%%%%%%%%%%%%%%%%%%%%%%%%%%%

We finally conclude \cref{thm_main} from \cref{thm_main_technical} by a smoothing argument.

\begin{proof}[Proof of \cref{thm_main}]
We take $\eps$ and $\kappa$ given by \cref{thm_main_technical} such that
$$
\lambda_1(\Sigma_{\eps,\kappa}) \area(\Sigma_{\eps,\kappa}) > \lambda_1(\Sigma) \area(\Sigma).
$$
For ease of notation, we simply call this surface $\Sigma'$ with metric $g'$.
In order to conclude \cref{thm_main}, we now construct a sequence of smooth metrics $g_j'$ on $\Sigma'$
such that 
$$
\lambda_1(\Sigma',g_j') \to \lambda_1(\Sigma',g')
$$ 
and
$$
\area(\Sigma',g_j') \to \area(\Sigma',g').
$$

The metric $g'$ is non-smooth near the conical singularities of $(\Sigma,g)$ and at the boundary of the attached cylinder along
$\partial B_{\eps^k}(x_i)$.
Let us first explain how to smooth out the singular set near the boundary of the cylinder.
We restrict to the case $x=x_0$, the case $x=x_1$ is completely analogous.
The argument below is similar to the last step in the proof of \cref{lem_pt_bd}, where we have been slightly less careful since we did
not need much regularity of the metric used there.

By scaling the canonical global coordinates on $C_{\eps,\kappa}$, we find a neighborhood $U= \Sph^1 \times [1,2)  \subset C_{\eps,\kappa}$ of the corresponding boundary component $\partial_1C_{\eps,\kappa}$, which comes with coordinates $(x,y) \in \IR/(2 \pi \IZ) \times [1,2)$
in which the metric is given by $(\kappa y)^{-2}(dx^2+dy^2)$.
We denote by $A_{r,R}$ the annulus $B_R(0) \setminus B_r(0) \subset \IR^2$, where $0<r<R$ .
The map $\Phi \colon U \to A_{1,2}$ given by $\Phi(x,y) = e^{-1} \exp(y+ix)$ defines a conformal diffeomorphism.
Similarly, we have fixed from the very beginning a neighborhood $V$ of $\partial B_{\eps^k}(x_0)$ in $\Sigma$ together with coordinates  mapping to $A_{\eps^k,2\eps^k}$.
We first dilate these coordinates by $\eps^{-k}$ and then compose them with the map $z \mapsto 1/z$ to obtain conformal coordinates $\Psi \colon B_{2\eps^k} \setminus B_{\eps^k} \to A_{1/2,1}$ that map $\partial B_{\eps^k}$ to $\partial B_1$.
Therefore, we can combine $\Phi$ and $\Psi$ to obtain coordinates $\Theta \colon W \to A_{1/2,2}$ on an open neighborhood of $\partial B_{\eps^k}$ endowed with coordinates  such that
$$
\Theta^* g_{\eps,\kappa} = f (dx^2+dy^2)
$$
for a positive bounded function $f$, which is smooth away from $\partial B_1$.
From here on, we can simply use a sequence of smooth positive functions $f_k$ that agree with $f$ near $\partial A_{1/2,2}$ and such that
$f_k \to f$ in $L^2(A_{1/2,2})$ and take the associated smooth metrics 
$$
g_k = f_k (dx^2+dy^2).
$$
Since $g_k$ agrees with $g_{\eps,\kappa}$ near $\partial W$ we obtain a sequence of metrics $g_{\eps,\kappa}^{(k)}$ on $\Sigma_{\eps,\kappa}$ such that $g_{\eps,\kappa}^{(k)} \to g_{\eps,\kappa}$ in the sense of \cite[Definition 4.1]{kokarev}.
In particular,
$$
\lambda_1(\Sigma_{\eps,\kappa}, g_{\eps,\kappa}^{(k)}) \to \lambda_1(\Sigma_{\eps,\kappa},g_{\eps,\kappa}).
$$
by the work of Kokarev \cite[Lemma 4.1]{kokarev}. 
By construction, we also have 
$$
\area(\Sigma_{\eps,\kappa}, g_{\eps,\kappa}^{(k)}) \to \area(\Sigma_{\eps,\kappa},g_{\eps,\kappa}).
$$
Therefore, for $k$ sufficiently large, the surface $(\Sigma_{\eps,\kappa},g_{\eps,\kappa}^{(k)})$ will still have 
$$
\lambda_1(\Sigma_{\eps,\kappa}, g_{\eps,\kappa}^{(k)}) \area(\Sigma_{\eps,\kappa}, g_{\eps,\kappa}^{(k)})  > \lambda_1(\Sigma)\area(\Sigma).
$$
A very similar argument applies to the conical singularities since the metric near these is given by $f(z)(|z|^k+o(|z|^k) |dz|^2$
with $f$ a smooth positive function.
\end{proof}

\begin{rem}
In the case of attaching $M_{\eps,\kappa}$, we also have to smooth out the singularities of the metric on $M_{\eps,\kappa}$.
This can be done in the very same way as above.
\end{rem}

\begin{proof}[Proof of \cref{thm_max}]
Since the maximizing metric $g$ in \cite[Theorem 2]{petrides} is given by 
$$
\frac{|\nabla \Phi|_h}{\lambda_1(\Sigma,g)} h,
$$
where $\Phi \colon (\Sigma,h) \to \mathbb{S}^N$ is a harmonic map and $h$ a smooth metric, the isolated conical singularities are exactly of the type covered by \cref{thm_main}.
Hence, a straightforward induction argument combining \cite[Theorem 2]{petrides} and \cref{thm_main} gives the existence of the maximizing metrics, smooth away from at most finitely many conical singularities for orientable surfaces of any genus.
Since the sharp bound for the normalized first eigenvalue of the real projective plane is $12 \pi$ (see \cite{li_yau}), the case of non-orientable surfaces
follows in the very same fashion by induction using also the existence result in the orientable case.
\end{proof}

%%%%%%%%%%%%%%%%%%%%%%%%%%%%%%%%%%%%%%%%%%%%%%%%%%%%%%%%%%%%%%%%%%%%%
\appendix
%%%%%%%%%%%%%%%%%%%%%%%%%%%%%%%%%%%%%%%%%%%%%%%%%%%%%%%%%%%%%%%%%%%%%

%%%%%%%%%%%%%%%%%%%%%%%%%%%%%%%%%%%%%%%%%%%%%%%%%%%%%%%%%%%%%%%%%%%%%
\section{Proof of \cref{lem_anne_quasimod}} \label{sec_anne}
%%%%%%%%%%%%%%%%%%%%%%%%%%%%%%%%%%%%%%%%%%%%%%%%%%%%%%%%%%%%%%%%%%%%%

\begin{proof}
We write $g=g_1+g_2$,
where
$$
g_1 = \sum_{\{l \colon \lambda_l(\Sigma_{\eps,\kappa}) < \lambda -s\}} \langle f , u_{\eps,\kappa,l} \rangle_{L^2(\Sigma_{\eps,\kappa})} u_{\eps,\kappa,l}
$$
and $g_2=g-g_1$.
Note that
$$
\int_{\Sigma_{\eps,\kappa}} \nabla g_i \cdot \nabla f = \int_{\Sigma_{\eps,\kappa}} |\nabla g_i|^2
$$
and
$$
\int_{\Sigma_{\eps,\kappa}} g_i f = \int_{\Sigma_{\eps,\kappa}} |g_i|^2.
$$
Therefore, we find from the assumption that
$$
\int_{\Sigma_{\eps,\kappa}} |\nabla g_i|^2 \leq \lambda \int_{\Sigma_{\eps,\kappa}} |g_i|^2 + \delta \|g_i\|_{W^{1,2}(\Sigma_{\eps,\kappa})},
$$
which implies that
$$
\|g_i\|_{W^{1,2}(\Sigma_{\eps,\kappa})}^2 \leq (\lambda+1) \|g_i \|_{L^2(\Sigma_{\eps,\kappa})}^2 + \delta \|g_i\|_{W^{1,2}(\Sigma_{\eps,\kappa})}.
$$
This in turn implies that
$$
\|g_i\|_{W^{1,2}(\Sigma_{\eps,\kappa})} \leq  (\lambda+1)\|g_i\|_{L^2(\Sigma_{\eps,\kappa})} + \delta.
$$
We now distinguish two cases.
Since we assume $s \leq 1$, the conclusion trivially holds if $\|g_i\|_{L^2(\Sigma_{\eps,\kappa})} \leq \delta$.
If $\|g_i\|_{L^2(\Sigma_{\eps,\kappa})} \geq \delta$, the previous computation implies that we have
$$
\|g_i\|_{W^{1,2}(\Sigma_{\eps,\kappa})} \leq  (\lambda+2)\|g_i\|_{L^2(\Sigma_{\eps,\kappa})} 
$$
Thus, testing against $g_i$, we find that
$$
s \|g_i\|_{L^2(\Sigma_{\eps,\kappa})}^2 \leq  (\lambda+2) \delta \|g_i\|_{L^2(\Sigma_{\eps,\kappa})},
$$
from which the lemma easily follows.
\end{proof}

%%%%%%%%%%%%%%%%%%%%%%%%%%%%%%%%%%%%%%%%%%%%%%%%%%%%%%%%%%%%%%%%%%%%%
\section{Green's functions} \label{sec_green}
%%%%%%%%%%%%%%%%%%%%%%%%%%%%%%%%%%%%%%%%%%%%%%%%%%%%%%%%%%%%%%%%%%%%%

\begin{lemma}
Let $(\Sigma,g)$ be a closed Riemannian surface with $\area(\Sigma,g)=1$ and $z \in \Sigma$, then there is a unique function $G(\cdot,z) \colon \Sigma \setminus \{z\} \to \IR$
such that 
\begin{itemize}
\item[(i)] $\Delta G(\cdot,z) = \delta_z - 1$ in the sense of distributions.
\item[(ii)] In conformal coordinates centered at $z$, such that $g(z) = g_{\it{eucl}}$ in these coordinates, we have that
$$
G(x,z) = \frac{1}{2\pi} \log\left(\frac{1}{|x-z|}\right) + \psi(x),
$$
where $\psi$ is a smooth function with $\psi(0)=0$.
\end{itemize}
\end{lemma}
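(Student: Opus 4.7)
The plan is to follow the standard parametrix construction for Green's functions on closed surfaces, exploiting the conformal covariance of the Laplacian in dimension two to obtain the desired asymptotics essentially for free from the Euclidean fundamental solution.

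First, I would fix conformal coordinates $(x_1,x_2)$ centered at $z$ in which $g = e^{2\phi} g_{\it{eucl}}$ with $\phi(0) = 0$, and pick a smooth cutoff function $\eta \colon \Sigma \to [0,1]$ equal to $1$ in a small ball around $z$ and supported in the coordinate chart. Set
\[
P(x) := \frac{\eta(x)}{2\pi} \log\left(\frac{1}{|x-z|}\right).
\]
Using $\Delta_g = e^{-2\phi}\Delta_{\it{eucl}}$ in the conformal chart and the fact that $\frac{1}{2\pi}\log(1/|x-z|)$ is the fundamental solution for $\Delta_{\it{eucl}}$, a direct computation gives
\[
\Delta_g P = \delta_z + h,
\]
where $h \in C^\infty(\Sigma)$ is supported in $\supp \nabla \eta$ (in particular away from $z$), since $\phi(0)=0$ absorbs the conformal factor at the delta.

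Next, I would correct $P$ by an element of the kernel of the distributional equation. Integrating $\Delta_g P = \delta_z + h$ against the constant function $1$ over the closed surface yields $0 = 1 + \int_\Sigma h$, i.e.\ $\int_\Sigma h = -1$, so the function $-h - 1$ is orthogonal to the kernel of $\Delta_g$ (the constants). By the Fredholm alternative for the self-adjoint operator $\Delta_g$ on the closed surface, there exists $u \in C^\infty(\Sigma)$ solving $\Delta_g u = -h - 1$, unique up to an additive constant, and smooth by standard elliptic regularity. Then $G(\cdot,z) := P + u$ satisfies $\Delta_g G(\cdot,z) = \delta_z - 1$ in the distributional sense, which gives (i). For (ii), observe that $P(x) = \frac{1}{2\pi}\log(1/|x-z|)$ in the ball where $\eta \equiv 1$, so near $z$ we have $G(x,z) = \frac{1}{2\pi}\log(1/|x-z|) + u(x)$, and $\psi := u$ is smooth at $z$; choosing the additive constant so that $u(0) = 0$ gives $\psi(0) = 0$.

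Finally, uniqueness follows because if $G_1, G_2$ are two functions satisfying (i) and (ii), their difference $G_1 - G_2$ extends across $z$ to a distribution on $\Sigma$ with $\Delta_g(G_1 - G_2) = 0$ and continuous (in fact smooth) by elliptic regularity, hence constant on the connected closed surface; the normalization $\psi(0)=0$ in (ii) forces this constant to vanish. I expect no serious obstacle; the only points requiring care are the distributional identity $\Delta_g P = \delta_z + h$ (which needs the normalization $\phi(0)=0$ to recover the coefficient $1$ in front of $\delta_z$) and verifying the solvability condition $\int_\Sigma(-h-1) = 0$, both of which are purely computational.
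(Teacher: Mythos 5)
Your proposal is correct and follows essentially the same route as the paper's proof in the appendix: a parametrix $\eta\cdot\frac{1}{2\pi}\log(1/|x-z|)$, conformal covariance of the Laplacian to identify the delta, the solvability condition $\int_\Sigma h = -1$ giving orthogonality to constants, and a smooth correction term plus a constant to normalize $\psi(0)=0$. The only cosmetic difference is that you argue uniqueness via a removable-singularity/harmonicity argument while the paper invokes the maximum principle; both are fine.
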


\begin{proof}
We take conformal coordinates $(U,x)$ centered as $z$ as in the assertion.
Let $\eta \colon \Sigma \to [0,1]$ be a cut-off function that is $1$ near $z$ and has $\supp \eta \subset U$.
Consider the function $f \colon \Sigma \setminus \{z\} \to \IR$ given in $U$ by 
$$
f(x) = \eta(x) \frac{1}{2\pi} \log \left(\frac{1}{|x-z|}\right),
$$
where $|x-z|$ is the Euclidean distance in the coordinates $(U,x)$.
Since these coordinates are conformal and the Laplace operator is conformally covariant in dimension two, it is easy to see that
\begin{equation} \label{eq_green_ortho}
\Delta f
=
\delta_z + h,
\end{equation}
where 
$$
h=2 \nabla \eta \cdot \nabla \frac{1}{2\pi} \log \left(\frac{1}{|x-z|}\right) + \frac{1}{2\pi} \log \left(\frac{1}{|x-z|}\right) \Delta \eta.
$$
is a smooth function defined on all of $\Sigma$.
It follows from \eqref{eq_green_ortho} that
$$
\int_\Sigma h = -1.
$$
Therefore, since $\area(\Sigma,g)=1$, the function $h+1$ is orthogonal to the constants.
Since the constant function are exactly the kernel of $\Delta$ as $\Sigma$ is closed, we can find a smooth function $r \colon \Sigma \to \IR$ which is unique up to the addition of constants with
$$
\Delta r = h+1.
$$
Thus we have
$$
\Delta (f - r) = \delta_y-1.
$$
By adding a constant to $f-r$ we can now easily arrange to have (ii).
Uniqueness follows immediately from the maximum principle.
\end{proof}

%%%%%%%%%%%%%%%%%%%%%%%%%%%%%%%%%%%%%%%%%%%%%%%%%%%%%%%%%%%%%%%%%%%%%

\bibliographystyle{alpha}
\bibliography{mybibfile}

\begin{thebibliography}{GWZ}
%
\bibitem[Ann87]{anne}
C.~Ann{\'e}, {\em Spectre du laplacien et {\'e}crasement d'anses}, Ann.\ Sci.\ {\'E}cole Norm.\ Sup. (4) {\bf 20}, 1987, 271--280.
%
\bibitem[Ann90]{anne_3}
C.~Ann{\'e}, {\em Fonctions propres sur des vari{\'e}t{\'e}s avec des anses fines, application {\`a} la multiplicit{\'e}}, Comm.\ Partial Differential Equations {\bf 12}, no.11, 1990, 1617-1630.
%
\bibitem[Ber73]{berger}
M.~Berger, {\em Sur les premi\`eres valeurs propres des vari\'et\'es riemanniennes}, Compositio Math.\ {\bf 26}, 1973, 129--149.
%%
\bibitem[BM01]{bm}
R.~Brooks, E.~Makover {\em Riemann surfaces with large first eigenvalue}, J.\ Anal.\ Math.\ {\bf 83}, 2001, 243--258.
%%
\bibitem[BBD87]{bbd}
P.~Buser, M.~Burger, J.~Dodziuk, {\em {R}iemann surfaces of large genus and large {$\lambda_1$}}, Geometry and analysis on manifolds ({K}atata/{K}yoto, 1987), Lecture Notes in Math.\ {\bf 1339}, 1988, 54--63.
%%
\bibitem[CKM17]{ckm}
D.~Cianci, M.A.~Karpukhin, V.~Medvedev,
 {\em On branched minimal immersions of surfaces by first eigenfunctions},
arXiv preprint, arXiv:1711.05916.
%%
\bibitem[CES03]{ces}
B.~Colbois, A.~El Soufi,
{\em Extremal eigenvalues of the {L}aplacian in a conformal class of metrics: the `conformal spectrum'},
Ann.\ Global Anal.\ Geom.\ {\bf 24} (2003), no.4, 337--349.
% 
\bibitem[ESGJ06]{esgj}
A.~El Soufi, H.~Giacomini, M.~Jazar, {\em A unique extremal metric for the least eigenvalue of the {L}aplacian on the {K}lein bottle}, Duke Math.\ J.\ {\bf 135}, 2006, 181-202.
%%
\bibitem[ESI84]{EI}
A.~El Soufi, S.~Ilias, {\em  Le volume conforme et ses applications d'apres Li et Yau}, S\'em. Th\'eorie Spectrale et G\'eom\'etrie, Institut Fourier, 1983?1984, No.VII, (1984).   
%
\bibitem[ESI00]{Ilias_ElSoufi}
A.~El Soufi, S.~Ilias, {\em Riemannian manifolds admitting isometric immersions by their first eigenfunctions}, Pacific J.\ Math. {\bf 195}, 2000, 91--99.
%%
\bibitem[FS14]{fs14}
A.~Fraser, R.~Schoen, {\em Minimal surfaces and eigenvalue problems}, Contemp.\ Math. {\bf 599}, 2013, 105--121.
%%
\bibitem[FS16]{fs}
A.~Fraser, R.~Schoen, {\em Sharp eigenvalue bounds and minimal surfaces in the ball}, Invent.\ Math. {\bf 203}, 2016, 823--890.
%
\bibitem[Her70]{hersch}
J.~Hersch, {\em Quatre propri\'{e}t\'{e}s isop\'{e}rim\'{e}triques de
membranes sph\'{e}riques homog\`{e}nes}, C.R.\ Acad.\ Sci.\ Paris\ S\'{e}r. A-B {\bf 270}, 1970, A1645--A1648.
%
\bibitem[JLNNP05]{JLNN}
D.~ Jakobson, M. ~Levitin, N.~ Nadirashvili, N.~ Nigam, I.~ Polterovich, {\em How large can the first eigenvalue be on a surface of genus two?}, Int.\ Math.\ Res.\ Not. {\bf 63}, 2005, 3967--3985. 
%%
\bibitem[JNP06]{JNP}
D.~Jakobson, N.~Nadirashvili, I.~Polterovich, {\em Extremal Metric for the First Eigenvalue on a Klein Bottle}, Canad. \ J.\ Math., {\bf 58}, 2006, 381--400.  
%%
\bibitem[Kar16]{karpukhin}
M.A.~Karpukhin, {\em Upper bounds for the first eigenvalue of the Laplacian on non-orientable surfaces}, Int.\ Math.\ Res.\ Not.\ IMRN {\bf 20}, 2016, 6200-6209.
%
\bibitem[Kar19a]{karpukhin-2}
M.A.~Karpukhin, {\em On the Yang-Yau inequality for the first Laplace eigenvalue}, arXiv:1902.03473. 
%
\bibitem[Kar19b]{karpukhin-3}
M.A.~Karpukhin, 
{\em Index of minimal spheres and isoperimetric eigenvalue inequalities}, arXiv: 1905.03174.
%
\bibitem[KNPP17]{knpp}
M.A.~Karpukhin, N.~Nadirashvili, A.~Penskoi, I.~ Polterovich, {\em An isoperimetric inequality for Laplace eigenvalues on the sphere}. arXiv: 1706.05713.
To appear in J.\ Differential\ Geom..
%
\bibitem[Kok14]{kokarev}
G.~Kokarev, {\em Variational aspects of {L}aplace eigenvalues on {R}iemannian surfaces}, Adv.\ Math.\ {\bf 258}, 2014, 191--239.
%
\bibitem[LY82]{li_yau}
P.~Li, S.T.~Yau, {\em A new conformal invariant and its applications to the Willmore conjecture and the first eigenvalue of compact surfaces}, Invent..\ Math.\ {\bf69}, 1982, 269--291.
%
\bibitem[MS17]{MS1}
H.~Matthiesen, A.~Siffert,
{\em Existence of metrics maximizing the first eigenvalue on non-orientable surfaces}, preprint.
%
\bibitem[MS19]{MS2}
H.~Matthiesen, A.~Siffert,
{\em Sharp asymptotics for the first eigenvalue on some degenerating surfaces}, preprint.
%
\bibitem[MR96]{montiel_ros}
S.~Montiel, A.~Ros, {\em Minimal immersions of surfaces by the first eigenfunctions and conformal area}, Invent.\ Math.\ {\bf 83}, 1996, no.1, 153--166 
%%
\bibitem[Nad96]{nadirashvili}
N.~Nadirashvili, {\em Berger's isoperimetric problem and minimal immersions of surfaces}, Geom.\ Func.\ Anal.\ {\bf 6}, 1996, 877-897.
%
\bibitem[Nad02]{nadirashvili-2}
N.~Nadirashvili, {\em Isoperimetric inequality for the second eigenvalue of a sphere}, J.\ Differential\ Geom.\ {\bf 61}, 2002, 335--340. 
%
\bibitem[NP18]{nadirashvili_penskoi}
N.~Nadirashvili, A.V.~Penskoi {\em Isoperimetric inequality for the third eigenvalue of the {L}aplace--{B}eltrami operator on the projective plane}, Geom.\ Funct.\ Anal.\ {\bf 28}, 2018, no.5, 1368--1393.
%
\bibitem[NS15]{nadirashvili-sire}
N.~Nadirashvili, Y.~Sire {\em Maximization of higher order eigenvalues and applications}, Mosc.\ Math.\ J.\ {\bf 15}, 2015, 767-775.
%%
\bibitem[NS17]{nadirashvili_sire_3}
N.~Nadirashvili, Y.~Sire {\em Isoperimetric inequality for the third eigenvalue of the {L}aplace--{B}eltrami operator on $\mathbb{S}^2$} J.\ Differential\ Geom.\ , {\bf107}, 2017, 561--571.
%%
\bibitem[NS19]{nayatani_shoda}
S.~Nayatani, T.~Shoda {\em Metrics on a closed surface of genus two which maximize the first eigenvalue of the {L}aplacian}, 
C.\ R.\ Math.\ Acad.\ Sci.\ Paris {\bf 357} (2019), no.1, 84--98. 
%%
\bibitem[Pet14]{petrides}
R.~Petrides, {\em Existence and regularity of maximal metrics for the first Laplace eigenvalue on surfaces}, Geom.\ Funct.\ Anal.\ {\bf24}, 2014, 1336--1376.
%
\bibitem[Pet18]{petrides-2}
R.~Petrides, {\em  On the existence of metrics which maximize Laplace eigenvalues on surfaces}, Int.\ Math.\ Res.\ Not. , {\bf14}, 2018, 4261--4355.
%
\bibitem[RT75]{rt}
J.~Rauch, M.~Taylor, {\em Potential and scattering theory on wildly perturbed domains} J.\ Funct.\ Anal.\ {\bf 18}, 1975, 27--59.
%%
\bibitem[Tay11]{taylor_1}
M.E.~Taylor, {\em  Partial differential equations {I}. {B}asic Theory}, Applied Mathematical Sciences {\bf 115}, 2nd edition, 2011.
%
\bibitem[Tay11]{taylor_3}
M.E.~Taylor, {\em  Partial differential equations {III}. {N}onlinear equations}, Applied Mathematical Sciences {\bf 117}, 2nd edition, 2011.
%
\bibitem[YY80]{yang_yau}
P.C.~Yang, S.T.~Yau, {\em Eigenvalues of the {L}aplacian of compact {R}iemannian surfaces and minimal submanifolds}, Ann. Scuola Norm. Sup. Pisa Cl. Sci. (4) {\bf 7}, 1980, no.1, 53--63.
%
\end{thebibliography}

\nocite{*}

\end{document}